\newtheorem{theorem}{Theorem}[section]
\newtheorem{corollary}[theorem]{Corollary}
\newtheorem{lemma}[theorem]{Lemma}
\newtheorem{prop}[theorem]{Proposition}
\theoremstyle{definition}
\newtheorem{definition}[theorem]{Definition}
\newtheorem{example}[theorem]{Example}
\newtheorem{remark}[theorem]{Remark}
\newtheorem*{ack}{Acknowledgments}
\DeclareMathOperator{\im}{im}
\DeclareMathOperator{\id}{id}
\DeclareMathOperator{\Ext}{Ext}
\DeclareMathOperator{\ab}{ab}
\DeclareMathOperator{\Prim}{Prim}
\DeclareMathOperator{\gr}{gr}
\DeclareMathOperator{\Sym}{Sym}
\DeclareMathOperator{\Hilb}{Hilb}
\DeclareMathOperator{\Aut}{Aut}
\DeclareMathOperator{\Iso}{Iso}
\DeclareMathOperator{\Tr}{Tr}
\DeclareMathOperator{\QTr}{QTr}
\newcommand{\dga}{\ensuremath{\textsc{dga}}}
\newcommand{\cdga}{\ensuremath{\textsc{dga}}}
\newcommand{\cga}{\ensuremath{\textsc{cga}}}
\newcommand{\N}{\mathbb{N}}
\newcommand{\R}{\mathbb{R}}
\newcommand{\Q}{\mathbb{Q}}
\newcommand{\C}{\mathbb{C}}
\newcommand{\Z}{\mathbb{Z}}
\newcommand{\K}{\mathbb{K}}
\renewcommand{\k}{\Bbbk}
\newcommand{\g}{\mathfrak{g}}
\newcommand{\h}{\mathfrak{h}}
\newcommand{\M}{\mathcal{M}}
\newcommand{\fa}{\mathfrak{a}}
\newcommand{\fh}{\mathfrak{h}}
\newcommand{\fg}{\mathfrak{g}}
\newcommand{\fn}{\mathfrak{n}}
\newcommand{\fM}{\mathfrak{M}}
\newcommand{\fm}{\mathfrak{m}}
\newcommand{\fL}{\mathfrak{L}}
\newcommand{\tr}{\mathfrak{tr}}
\newcommand{\qtr}{\mathfrak{qtr}}
\newcommand{\fr}{\mathfrak{r}}
\newcommand{\Lie}{\mathfrak{lie}}
\newcommand{\bL}{\mathbf{L}}
\newcommand{\cC}{\mathcal{C}}
\newcommand{\cM}{\mathcal{M}}
\newcommand{\cN}{\mathcal{N}}
\newcommand{\cF}{\mathcal{F}}
\newcommand{\cG}{\mathcal{G}}
\DeclareMathAlphabet{\pazocal}{OMS}{zplm}{m}{n}
\newcommand{\surj}{\twoheadrightarrow}
\newcommand{\inj}{\hookrightarrow}
\newcommand{\cstar}{\,\hat{*}\,}
\newcommand{\abs}[1]{\left| #1 \right|}
\newcommand\isom{\xrightarrow{
 \,\smash{\raisebox{-0.6ex}{\ensuremath{\scriptstyle\simeq}}}\,}}
\newcommand{\bwedge}{\mbox{\normalsize $\bigwedge$}}
\newcommand{\bcup}{\mbox{\normalsize $\bigcup$}}
\newcommand{\boplus}{\mbox{\normalsize $\bigoplus$}}
\newcommand{\bprod}{\mbox{\normalsize $\prod$}}
\def\dot{\mathchar"013A}  
\newcommand{\hdot}{{\raise1.2pt\hbox to0.3em{\Large $\dot$}}} 
\newcommand{\qA}{\mathrm{q}A}
\newcommand{\qg}{{\mbox{\rm{\small{q}}}}\fg}
\newcommand{\q}{\mathrm{q}}
\begin{document}

\title[Formality properties of groups and Lie algebras]%
{Formality properties of finitely generated groups and \\ Lie algebras}

\author[Alexander~I.~Suciu]{Alexander~I.~Suciu$^1$}
\address{Department of Mathematics,
Northeastern University,
Boston, MA 02115, USA}
\email{\href{mailto:a.suciu@northeastern.edu}{a.suciu@northeastern.edu}}
\urladdr{\href{http://web.northeastern.edu/suciu/}%
{http://web.northeastern.edu/suciu/}}
\thanks{$^1$Supported in part by the National Science Foundation 
(grant DMS--1010298), the National Security Agency 
(grant H98230-13-1-0225), and the Simons Foundation 
(collaboration grant for mathematicians 354156)}

\author{He Wang}
\address{Department of Mathematics and Statistics MS0084,
University of Nevada, Reno, NV 89557, USA}
\email{\href{mailto:wanghemath@gmail.com}%
{wanghemath@gmail.com}, \href{mailto:hew@unr.edu}%
{hew@unr.edu}}
\urladdr{\href{http://wolfweb.unr.edu/homepage/hew/}%
{http://wolfweb.unr.edu/homepage/hew/}}

\subjclass[2010]{Primary
20F40.  
Secondary
16S37,  
16W70, 
17B70,   
20F14,  
20F18,   
20J05,  
55P62,  
57M05
}

\keywords{Central series, Malcev Lie algebra, 
holonomy Lie algebra, Chen Lie algebra, minimal model, $1$-formality, 
graded-formality, filtered-formality, nilpotent group, Seifert manifold}

\begin{abstract}
We explore the graded-formality and filtered-formality properties of  
finitely generated groups by studying the various Lie algebras 
over a field of characteristic $0$ attached to such groups, including 
the Malcev Lie algebra, the associated graded Lie algebra, 
the holonomy Lie algebra, and the Chen Lie algebra.  
We explain how these notions behave with 
respect to split injections, coproducts, direct products, as well as field 
extensions, and how they are inherited by solvable and nilpotent quotients. 
A key tool in this analysis is the $1$-minimal model of the group, and 
the way this model relates to the aforementioned Lie algebras. 
We illustrate our approach with examples drawn 
from a variety of group-theoretic and topological contexts, such 
as finitely generated torsion-free nilpotent 
groups, link groups, and fundamental groups of Seifert fibered manifolds.  
\end{abstract}

\maketitle

\section{Introduction}
\label{sect:intro}

The main focus of this paper is on the formality properties of 
finitely generated groups, as reflected in the structure of the 
various graded or filtered Lie algebras, as well as commutative, 
differential graded algebras attached to such groups. 

\subsection{From groups to Lie algebras}
\label{subset:groups to Lie}
Throughout, we will let $G$ be a finitely generated group, and we will  
let $\k$ be a coefficient field of characteristic $0$. Our main focus will 
be on several $\k$-Lie algebras attached to such a group, and the way 
they all connect to each other.  

By far the best known of these Lie algebras 
is the {\em associated graded Lie algebra}, $\gr(G;\k)$, 
introduced by W.~Magnus in the 1930s, and further developed by E.~Witt, P.~Hall, 
 M.~Lazard, and many others, cf.~\cite{Magnus-K-S} and references therein.  
This is a finitely generated graded Lie algebra, whose graded pieces are the 
successive quotients of the lower central series of $G$ (tensored with $\k$),  
and whose Lie bracket is induced from the group commutator.  The quintessential 
example is the free Lie algebra $\Lie(\k^n)$, which is the associated graded 
Lie algebra of the free group on $n$ generators, $F_n$. 

Closely related is the {\em holonomy Lie algebra}, $\fh(G;\k)$, 
introduced by Kohno in \cite{Kohno}, building on work of Chen \cite{Chen73}, 
and further studied by Markl--Papadima \cite{Markl-Papadima} and
Papadima--Suciu \cite{Papadima-Suciu04}.  This is a quadratic 
Lie algebra, obtained as  the quotient of the free Lie algebra on 
$H_1(G;\k)$ by the ideal generated by the image of 
the dual of the cup product map in degree $1$.  
The holonomy Lie algebra comes equipped with a natural epimorphism 
$\Phi_G\colon \fh(G; {\k}) \surj \gr(G;\k)$, and thus can 
be viewed as the quadratic approximation to the associated 
graded Lie algebra.

The most intricate of these Lie algebras (yet, in many ways, the 
most important) is the {\em Malcev Lie algebra}, $\fm(G;\k)$. 
As shown by Malcev in \cite{Malcev51}, every finitely generated, 
torsion-free nilpotent group $N$ is the fundamental group 
of a nilmanifold, whose corresponding $\k$-Lie algebra is $\fm(N;\k)$.  
Taking now the nilpotent quotients of $G$, we may define 
$\fm(G;\k)$ as the inverse limit of the resulting tower of nilpotent 
Lie algebras, $\fm(G/\Gamma_k G;\k)$.  By construction,  the 
Malcev Lie algebra $\fm(G;\k)$, endowed with the inverse limit filtration,
is a complete, filtered Lie algebra; the prounipotent group corresponding 
to this pronilpotent Lie algebra is denoted by $\fM(G;\k)$.
In \cite{Quillen68, Quillen69}, 
Quillen showed that $\fm(G;\k)$ is the set of all primitive elements in
$\widehat{\k{G}}$, the completion of the group algebra of $G$ 
with respect to the filtration by powers of the augmentation ideal, 
and that the associated graded Lie algebra of $\fm(G;\k)$ with respect to the 
inverse limit filtration is isomorphic to $\gr(G;\k)$. Furthermore, 
the set of all group-like elements in $ \widehat{\k{G}}$, 
with multiplication and filtration inherited from $\widehat{\k{G}}$, 
forms a complete, filtered group isomorphic to $\fM(G;\k)$.

\subsection{Formality notions}
\label{subsec:formal}

In his foundational paper on rational homotopy theory \cite{Sullivan}, 
Sullivan associated to each path-connected space $X$ a `minimal 
model,' $\cM(X)$, which can be viewed as an algebraic approximation 
to the space.  If, moreover, $X$ is a CW-complex with 
finitely many $1$-cells, then 
the Lie algebra dual to the first stage of the minimal model 
is isomorphic to the Malcev Lie algebra $\fm(G;\Q)$ associated 
to the fundamental group $G=\pi_1(X)$.
The space $X$ is said to be {\em formal}\/ if the commutative, 
graded differential algebra $\cM(X)$ is quasi-isomorphic to the 
cohomology ring $H^{\hdot}(X;\Q)$, endowed with the zero differential. 
If there exists a $\dga$~ morphism from the $i$-minimal model $\cM(X,i)$
to $H^{\hdot}(X;\Q)$  
inducing isomorphisms in cohomology up to degree $i$ and 
a monomorphism in de
gree $i+1$, then $X$ is called {\em $i$-formal}.  

A finitely generated group $G$ is said to be \emph{$1$-formal}\/ 
(over $\Q$) if it has a classifying space $K(G,1)$ which is $1$-formal.
The study of the various Lie algebras attached to the fundamental 
group of a space provides a fruitful way to look at the formality 
problem.  Indeed, the group $G$ is $1$-formal if and only if the
Malcev Lie algebra $\fm(G;\Q)$ is isomorphic to the rational 
holonomy Lie algebra of $G$, completed with respect to 
the lower central series (LCS) filtration.  

We find it useful to separate the $1$-formality property of a 
group $G$ into two complementary properties: 
graded-formality and filtered-formality.  More precisely, we say 
that $G$ is {\em graded-formal}\/ (over $\k$) if the associated graded Lie 
algebra $\gr(G;\k)$ is isomorphic, as a graded Lie algebra, to  
the holonomy Lie algebra $\h(G;\k)$.  Likewise, 
we say that $G$ is \emph{filtered-formal}\/ (over $\k$) 
if the Malcev Lie algebra $\fm=\fm(G;\k)$ is isomorphic, as a 
filtered Lie algebra, to the completion of its associated 
graded Lie algebra, $\widehat{\gr}(\fm)$, where both $\fm$ and 
$\widehat{\gr}(\fm)$ are endowed with the respective inverse limit filtrations. 
As we show in Proposition \ref{prop:qwformal},
the group $G$ is $1$-formal if and only if it is both 
graded-formal and filtered-formal.

All four possible combinations of these formality properties do occur. 
First, examples of $1$-formal groups include  right-angled 
Artin groups, groups with first Betti number equal to $0$ or $1$, 
fundamental groups of compact K\"ahler manifolds, and 
fundamental groups of complements of complex algebraic 
hypersurfaces. 
 Second, there are many torsion-free, nilpotent 
groups (Examples \ref{ex:filt massey} and \ref{ex:upper}) 
as well as fundamental groups of link complements 
(Example \ref{ex:link}) which are filtered-formal, but not graded-formal.  
 Third, there are also finitely presented groups, such as those from 
Examples \ref{ex:qtr groups}, \ref{ex:quadrelation},  
and \ref{ex:semiproduct} which are graded-formal 
but not filtered-formal.  
Fourth, there are  groups which enjoy none of these 
formality properties; indeed, if $G_1$ is a group of the second type 
and $G_2$ is a group of the third type, then 
Theorem \ref{thm:product formality} below shows that 
the product $G_1\times G_2$ and the free product $G_1* G_2$ are 
neither graded-formal, nor filtered-formal.

\subsection{Field extensions and formality}
\label{subsec:fieldextension} 

We start by reviewing in \S\ref{sect:Liealgebras} some 
basic notions pertaining to filtered and graded Lie algebras.  
We say that a Lie algebra $\g$ (over a field $\k$ of characteristic $0$) 
is {\em filtered formal}\/ if it 
admits complete, separated filtration, and there exists    
a filtration-preserving isomorphism $\g\cong \widehat{\gr}(\g)$ 
which induces the identity on associated graded Lie algebras.
Our first result, which generalizes a recent theorem of Cornulier \cite{Cornulier14}, 
shows that filtered-formality behaves well with respect to field extensions.  
The proof we give in Theorem \ref{thm:ffdescent} is based 
on recent work of Enriquez \cite{Enriquez}  and Maassarani \cite{Maassarani}.  

\begin{theorem} 
\label{thm:intro fflie}
Let $\fg$ be a $\k$-Lie algebra endowed with a complete, separated 
filtration such that $\gr(\fg)$ is finitely generated in degree $1$.   
If $\k\subset \K$ is a field extension, then $\fg$ 
is filtered-formal if and only if the 
$\K$-Lie algebra $\fg \otimes_{\k} \K$ is  filtered-formal. 
\end{theorem}

We continue in \S\ref{sect:koszul} with a review of the notions 
of quadratic and Koszul algebras.  In \S\ref{sec:formality}, 
we analyze in detail the relationship between the $1$-minimal model 
$\cM(A,1)$ and the dual Lie algebra $\fL(A)$ of a differential graded 
$\k$-algebra $(A,d)$.  The reason for doing this is a  
result of Sullivan \cite{Sullivan}, which gives a functorial isomorphism of 
pronilpotent Lie algebras, $\fL(A)\cong \fm(G;\k)$, provided $\cM(A,1)$ 
is a $1$-minimal model for a finitely generated group $G$. 
The book by F\'elix, Halperin, and Thomas \cite{FHT2} 
provides a good reference for this subject. 

Of particular interest is the case when $A$ is a  connected, 
graded commutative algebra with $A^1$ finite-dimensional, endowed 
with the differential $d=0$. In Theorem \ref{thm:model-holonomy}, 
we show that $\fL(A)$ is isomorphic (as a  filtered Lie algebra) 
to the degree completion of the holonomy Lie algebra of $A$.
In the case when $A^{\le 2}=H^{\le 2}(G;\k)$, for some 
finitely generated group $G$, this result recovers the aforementioned 
characterization of the $1$-formality property of $G$. In Theorem 
\ref{thm:filtmin} we give an alternate interpretation of filtered 
formality:  A group $G$ is filtered-formal if 
and only if $G$ has a $1$-minimal model whose differential is 
homogeneous with respect to the canonical Hirsch weights.

As is well-known, a space $X$ with finite Betti numbers is formal 
over $\Q$ if and only it is formal over $\k$, for any field $\k$ of characteristic $0$.  
This foundational result was proved independently and in various degrees of generality 
by Halperin and Stasheff \cite{Halperin-Stasheff79}, Neisendorfer and Miller
\cite{Neisendorfer-Miller78}, and Sullivan \cite{Sullivan}. 
Motivated by these classical results, as well as the aforementioned 
work of Cornulier, we investigate 
the way in which the formality properties of 
spaces and groups behave under field extensions.  
Our next result, which is a combination of  
Corollary \ref{cor:pfext}, Corollary \ref{lem:gf-ext}, 
Proposition \ref{prop:field-filtered}, and Corollary \ref{cor:1fdescent}, 
can be stated as follows.
 
\begin{theorem} 
\label{thm:intro fieldextension}
Let $X$ be a path-connected space, with finitely generated fundamental group $G$.  
Let $\k$ be a field of characteristic zero, and let  $\k\subset \K$ be a field extension. 

\begin{enumerate}
\item  \label{ffe1}
Suppose $X$ has finite Betti numbers $b_1(X),\dots,b_{i+1}(X)$.  
Then $X$ is $i$-formal over $\k$ if and only if $X$ is $i$-formal over $\K.$
\item  \label{ffe2}
The group $G$ is $1$-formal over $\k$ if and only if $G$ is $1$-formal over $\K.$
\item  \label{ffe3}
The group $G$ is graded-formal over $\k$ if and only if $G$ is graded-formal over $\K.$
\item  \label{ffe4}
The group $G$ is filtered-formal over $\k$ if and only if $G$ is filtered-formal over $\K.$
\end{enumerate}
\end{theorem}

In summary, under appropriate finiteness conditions, all the formality 
properties that we study in this paper are independent 
of the ground field $\k\supset \Q$. Hence, we will sometimes 
avoid mentioning the coefficient 
field when referring to these formality notions. 
The descent property for partial formality from 
Theorem \ref{thm:intro fieldextension}, part \eqref{ffe1} 
has been used in \cite{PS16} to establish the $(n-1)$-formality 
over $\Q$ of compact Sasakian manifolds of dimension $2n+1$. 

\subsection{Propagation of formality}
\label{subsec:propagate} 

Next, we turn our attention to the way in which the various formality 
notions for groups behave with respect to split injections, coproducts, and direct products.  
Our first result in this direction is a combination of Theorem \ref{thm:graded1formality} 
and \ref{thm:formality}, and can be stated as follows.
 
\begin{theorem} 
\label{thm:intro formality}
Let $G$ be a finitely generated group, and let $K\le G$ be a 
subgroup.   Suppose there is a split monomorphism 
$\iota\colon K\rightarrow G$. Then:
\begin{enumerate}
\item \label{inj1}
If $G$ is graded-formal, then $K$ is also graded-formal. 
\item \label{inj2}
If $G$ is filtered-formal, then $K$ is also filtered-formal. 
\item \label{inj3}
If $G$ is $1$-formal, then $K$ is also $1$-formal. 
\end{enumerate}
\end{theorem}

In particular, if a semi-direct product $G_1\rtimes G_2$ has one 
of the above formality properties, then $G_2$ also has that property;   
in general, though, $G_1$ will not, as illustrated in Example \ref{ex:semiproduct1}.

As shown by Dimca et al.~\cite{Dimca-Papadima-Suciu}, both the 
product and the coproduct of two $1$-formal groups is again $1$-formal. 
Also, as shown by Plantiko \cite{Plantiko96}, the product and 
coproduct of two graded-formal groups is again graded-formal.
We sharpen these results in the next theorem, which is a combination of 
Propositions \ref{prop:productquadratic} and \ref{prop:productweakly}. 

\begin{theorem}
\label{thm:product formality}
Let  $G_1$ and $G_2$ be two finitely generated groups. 
The following conditions are equivalent.
\begin{enumerate}
\item \label{gff1}  $G_1$ and $G_2$ are graded-formal 
(respectively, filtered-formal, or $1$-formal).
\item \label{gff2}  $G_1* G_2$  is graded-formal 
(respectively, filtered-formal, or $1$-formal).
\item \label{gff3}  $G_1\times G_2$ is graded-formal 
(respectively, filtered-formal, or $1$-formal).
\end{enumerate}
\end{theorem}

Both Theorem \ref{thm:intro formality} and Theorem \ref{thm:product formality} can
be used to decide the formality properties of new groups from those of known groups.
In general, though, even when both $G_1$ and $G_2$ are $1$-formal, we cannot 
conclude that an arbitrary semi-direct product $G_1\rtimes G_2$ is $1$-formal
(see Example \ref{ex:semiproduct}).
The various formality properties are not necessarily inherited 
by quotient groups. However, as we shall see in 
Theorem \ref{thm:chenlie intro} and  Theorem \ref{thm:formalityQuo}, 
respectively, filtered-formality is passed on to the derived 
quotients and to the nilpotent quotients of a group.

\subsection{Derived series and Lie algebras} 
\label{intro:chen}
 
In \S \ref{section:chen lie}, we investigate some of the relationships 
between the lower central series and derived series of a group 
and the derived series of the corresponding Lie algebras.
In \cite{Chen51}, Chen studied the lower central series quotients 
of the maximal metabelian quotient of a finitely generated free group, 
and computed their graded ranks.  More generally, following 
Papadima and Suciu \cite{Papadima-Suciu04}, 
we may define the \textit{Chen Lie algebras}\/ of a group $G$ as 
the associated graded Lie algebras of its solvable quotients,  
$\gr(G/G^{(i)};\k)$. Our next theorem (which combines  
Theorem \ref{thm:chenlieiso} and Corollary \ref{cor:chenlie bis}) 
sharpens and extends the main result of \cite{Papadima-Suciu04}.

\begin{theorem} 
\label{thm:chenlie intro}
Let $G$ be a finitely generated group. 
For each $i\ge 2$, the projection $G\surj G/G^{(i)}$ induces a 
natural epimorphism of graded $\k$-Lie algebras, 
$\Psi_G^{(i)}\colon \gr(G;\k)/\gr(G;\k)^{(i)} 
\surj \gr(G/G^{(i)};\k)$. Moreover, 
\begin{enumerate}
\item \label{ch1}
If $G$ is a filtered-formal group, then each solvable quotient 
$G/G^{(i)}$ is also filtered-formal, and the map $\Psi_G^{(i)}$ 
is an isomorphism.

\item \label{ch2}
If $G$ is a $1$-formal group, then 
$\h(G;\k)/\h(G;\k)^{(i)} \cong \gr(G/G^{(i)};\k)$.
\end{enumerate}
\end{theorem}

Given a finitely presented group $G$, the solvable quotients $G/G^{(i)}$ 
need not be finitely presented (see \cite{PS17}).  Thus, finding presentations 
for the Chen Lie algebra $\gr(G/G^{(i)})$ can be an arduous task.  Nevertheless, 
Theorem \ref{thm:chenlie intro} provides a method for finding such  
presentations, under suitable formality assumptions.  The theorem 
can also be used as an obstruction to $1$-formality.

\subsection{Nilpotent groups and Lie algebras} 
\label{subsec:tfnilp}

Our techniques apply especially well to the class of 
 finitely generated, torsion-free nilpotent groups. Carlson and Toledo 
\cite{Carlson-Toledo95} studied the $1$-formality properties of such 
groups, while Plantiko \cite{Plantiko96} gave a sufficient conditions 
for such groups to be non-graded-formal. For nilpotent Lie algebras, 
the notion of filtered-formality has been studied by Leger \cite{Leger63}, 
Cornulier \cite{Cornulier14}, Kasuya \cite{Kasuya}, and others.  
In particular, it is shown in \cite{Cornulier14} that the systolic 
growth of a finitely generated nilpotent group $G$ 
is asymptotically equivalent to its growth if and only if the Malcev Lie 
algebra $\fm(G;\k)$ is filtered-formal (or, `Carnot'), while in 
\cite{Kasuya} it is shown that the variety of flat connections 
on a filtered-formal (or, `naturally graded'), $n$-step nilpotent 
Lie algebra $\mathcal{g}$ has a singularity at the origin cut 
out by polynomials of degree at most $n+1$.

We investigate in \S \ref{sect:nilp} the filtered-formality of 
nilpotent groups and Lie algebras.  The next result combines Theorem
\ref{thm:step2nilpotent} and Proposition \ref{prop:koszul-formality}.

\begin{theorem}
\label{thm:nilp intro}
Let $G$ be a finitely generated, torsion-free nilpotent group. 
\begin{enumerate}
\item \label{ffn-1}
If $G$ is a $2$-step nilpotent group with torsion-free 
abelianization, then $G$ is filtered-formal.
\item \label{ffn-2}
Suppose $G$ is filtered-formal. Then 
the universal enveloping algebra $U(\gr(G;\k))$ is Koszul 
if and only if $G$ is abelian.
\end{enumerate}
\end{theorem}

As mentioned in \S\ref{subsec:propagate}, nilpotent quotients 
of filtered-formal groups are filtered-formal; 
in particular, each $n$-step, free nilpotent group $F/\Gamma_n F$ 
is filtered-formal.   A classical example is the unipotent group $U_n(\Z)$,  
which is known to be filtered-formal by Lambe and Priddy \cite{Lambe-Priddy82},
but not graded-formal for $n\geq 3$.
In \cite{Cornulier14}, Cornulier showed that the filtered-formality 
of a finite-dimensional nilpotent Lie algebra is independent of the ground field, 
thereby answering a question of Johnson \cite{Jo}.  
The much more general Theorem \ref{thm:intro fflie} 
allows us to recover this result in Proposition \ref{prop:carnot}. 

\subsection{Further applications} 
\label{subsec:apps}

We end in \S \ref{sect:seifert} with a detailed study of fundamental groups 
of (orientable) Seifert fibered manifolds from a rational homotopy viewpoint. 
Starting from the minimal model of such a manifold $M$,  
as described in \cite{Putinar98}, we find a presentation 
for the Malcev Lie algebra $\fm(\pi_1(M);\k)$, and we use this information 
to derive a presentation for $\gr(\pi_1(M);\k)$. As an application, we 
show that Seifert manifold groups are filtered-formal, and determine 
precisely which ones are graded-formal. The techniques 
developed here have been used in \cite{PS16} to prove a more general result 
about the filtered-formality of Sasakian groups.

This work was motivated in good part by papers 
\cite{Bartholdi-E-E-R, Calaque-E-E} of Etingof et al.~on 
triangular and quasi-triangular groups, also 
known as the (upper) pure virtual braid groups.  
In  \cite{SW-pvb}, we apply the techniques developed 
here to study the formality properties of such groups. 
Related results for the McCool groups (or, the 
welded pure braid groups) and other 
braid-like groups are given in \cite{SW-braids, SW-mccool}.
Finally, the relationship between filtered formality and expansions 
of groups is explored in \cite{SW-expansion}.

\section{Filtered and graded Lie algebras}
\label{sect:Liealgebras}
 
In this section we study the interactions between filtered 
Lie algebras, their completions, and their associated graded 
Lie algebras, mainly as they relate to the notion of filtered-formality.

\subsection{Graded Lie algebras}
\label{subsec:grlie}

We start by reviewing some standard material on Lie algebras, 
following the exposition from 
\cite{Ekedahl-Merkulov11, Polishchuk-Positselski, Quillen69, Serre}. 

Fix a ground field $\k$ of characteristic $0$.  Let $\g$ be a Lie algebra over 
$\k$, i.e., a $\k$-vector space $\g$ endowed with a bilinear operation 
$[\,,\,]\colon \g\times \g\to \g$ satisfying the Lie identities.  We say that 
$\g$ is a {\em graded Lie algebra}\/ if $\g$ decomposes as 
$\g=\bigoplus_{i\ge 1} \g_i$ and the Lie bracket sends $\g_i\times \g_j$ 
to $\g_{i+j}$, for all $i$ and $j$.  A morphism of graded Lie algebras is a 
$\k$-linear map $\varphi\colon \g\to \h$ which preserves the Lie brackets 
and the degrees. In particular, $\varphi$ induces $\k$-linear maps 
$\varphi_i\colon \g_i\to \h_i$ for all $i\ge 1$.

The most basic example of a graded Lie algebra is constructed as follows.  
Let $V$ a $\k$-vector space. The tensor algebra $T(V)$ has a natural 
Hopf algebra structure, with comultiplication $\Delta$ and counit $\varepsilon$ 
the algebra maps given by $\Delta(v)= v\otimes 1+1\otimes v$ and $\varepsilon(v)= 0$, 
for $v\in V$. The {\em free Lie algebra}\/ on $V$ is the set  of primitive 
elements, i.e., $\Lie(V)=\{x \in T(V) \mid \Delta(x)=x \otimes 1+
1\otimes x\}$, with Lie bracket $[x,y]=x\otimes y-y\otimes x$ 
and grading induced from $T(V)$. 

A Lie algebra $\fg$ is said to be \textit{finitely generated}\/ if there 
is an epimorphism $\varphi\colon \Lie(V)\to \fg$ for some finite-dimensional 
$\k$-vector space $V$. 
If, moreover, the Lie ideal $\mathfrak{r}=\ker (\varphi)$ is finitely 
generated as a Lie algebra, then $\fg$ is called \textit{finitely presented}. 
Now suppose all elements of $V$ are assigned degree $1$ in $T(V)$.  Then 
the inclusion $\iota\colon \Lie(V)\to T(V)$ identifies $\Lie_1(V)$ with $T_1(V)=V$.  
Furthermore, $\iota$ maps $\Lie_2(V)$ to $T_2(V)=V\otimes V$ 
by sending $[v,w]$ to $v\otimes w-w\otimes v$ for each $v,w\in V$;   
we thus may identify $\Lie_2(V)\cong V\wedge V$ by sending 
$[v,w]$ to $v\wedge w$.   

If $\fg=\Lie(V)/\fr$, with $V$ a (finite-dimensional) vector space 
concentrated in degree $1$,  
then we say $\fg$ is {\em (finitely) generated in degree $1$}.  
If, moreover, the Lie ideal $\mathfrak{r}$ is homogeneous, 
then $\fg$ is a graded Lie algebra.  In particular, if $\g$ is 
finitely generated in degree $1$ and the homogeneous ideal 
$\mathfrak{r}$ is generated in degree $2$, then we say 
 $\fg$ is a \textit{quadratic Lie algebra}. 

\subsection{Filtrations}
\label{subsec:series}
We will be very much interested in this work in Lie algebras 
endowed with a filtration, usually but not always enjoying 
an extra `multiplicative' property.  At the most basic level, 
a {\em filtration}\/ $\cF$ on a Lie algebra $\fg$ is a nested 
sequence of Lie ideals, $\g=\cF_1\g\supset \cF_2\g\supset \cdots$.  

A well-known such filtration is the {\em derived series}, $\cF_i\g=\g^{(i-1)}$, 
defined by $\fg^{(0)}=\fg$ and $\fg^{(i)}=[\fg^{(i-1)}, \fg^{(i-1)}]$ for $i\geq 1$. 
The derived series is preserved by Lie algebra maps.  
The quotient Lie algebras $\fg/\fg^{(i)}$ are solvable; moreover, 
if $\fg$ is a graded Lie algebra, all these solvable quotients 
inherit a graded Lie algebra structure. 

The existence of a filtration $\cF$ on a Lie algebra $\fg$  
makes $\fg$ into a topological vector space, by defining 
a basis of open neighborhoods of an element $x\in\fg$ 
to be $\{x+\cF_k \fg\}_{k\in \N}$. The fact that each basis 
neighborhood $\cF_k\fg$ is a Lie subalgebra implies that the Lie 
bracket map $[\,,\,]\colon \g \times \g \to \g$ is continuous; thus, 
$\fg$ is, in fact, a topological Lie algebra. 
We say that $\fg$ is \emph{complete}\/ (respectively, {\em separated}) 
if the underlying topological vector space enjoys those properties. 

Given an ideal $\mathfrak{a}\subset \fg$, there is an induced filtration 
on it, given by $\cF_k \mathfrak{a}=\cF_k\g\cap \mathfrak{a}$.  Likewise, 
the quotient Lie algebra, $\fg/\mathfrak{a}$, has a naturally induced 
filtration with terms $\cF_k\g/\cF_k \mathfrak{a}$. 
Let $\overline{\mathfrak{a}}$ be the closure of $\mathfrak{a}$ 
in the filtration topology.   Then $\overline{\mathfrak{a}}$ is a closed ideal 
of $\g$. Moreover, by the continuity of the Lie bracket, we have that 
\begin{equation}
\label{eq:closureBracket}
\overline{[\bar{\fa},\bar{\fr}]}=\overline{[\fa,\fr]}.
\end{equation}
Finally, if $\g$ is complete (or separated), then $\fg/\overline{\mathfrak{a}}$ 
is also complete (or separated). 

\subsection{Completions}
\label{subsec:completion}
For each $j\ge k$, there is a canonical projection $\g/\cF_j\g\to \g/\cF_k\g$, 
compatible with the projections from $\g$ to its quotient Lie algebras $\g/\cF_k\g$.  
The {\em completion}\/ of the Lie algebra $\fg$ with respect to the filtration 
$\cF$ is defined as the limit of this inverse system, i.e., 
\begin{equation}
\label{eq:ghat}
\widehat{\fg}:= \varprojlim\nolimits_{k}\fg/\cF_k\fg=\Big\lbrace (g_1,g_2,\dots)\in 
\prod\nolimits_{i=1}^{\infty}\fg/\cF_i\fg \:\big|\: g_j \equiv g_k\bmod  \cF_k\fg\ 
\text{for all $j>k$}
\Big.\Big\rbrace.
\end{equation}

Using the fact that $\cF_k(\fg)$ is an ideal of $\fg$,
it is readily seen that $\widehat{\fg}$ is a Lie algebra, with Lie bracket 
defined componentwise.  Furthermore, $\widehat{\fg}$ has 
a natural inverse limit filtration, $\widehat{\cF}$, given by 
\begin{equation}
\label{eq:ghat filt}
\widehat{\cF}_k\widehat{\fg}:=\widehat{\cF_k\fg}= 
\varprojlim\nolimits_{i\ge k}\cF_k\fg/\cF_i\fg = 
\big\lbrace (g_1,g_2,\dots)\in 
\widehat{\fg}\mid g_i=0 \ \text{for all $i< k$}\big\rbrace.
\end{equation}

Note that $\widehat{\cF}_k\widehat{\fg} = \overline{\cF_k\fg} $, 
and so each term of the filtration $\widehat{\cF}$ is a closed 
Lie ideal of $\widehat{\fg}$.   Furthermore, the Lie algebra 
$\widehat{\g}$, endowed with this filtration, is both complete and separated. 

Let $\iota\colon \g\to \widehat{\g}$ be the canonical map to the completion.  
Then $\iota$ is a morphism of Lie algebras, preserving the respective 
filtrations.  Clearly, $\ker(\iota)=\bigcap_{k\ge 1} \cF_k \g$.  
Hence, $\iota$ is injective if and only if $\g$ is separated.  
Furthermore, $\iota$ is surjective if and only if $\g$ is complete. 

\subsection{Filtered Lie algebras}
\label{subsec:filt lie}

A \textit{filtered Lie algebra}\/ (over the field $\k$) 
is a Lie algebra $\fg$ endowed with a $\k$-vector filtration 
$\{\cF_k\fg\}_{k\geq 1}$ satisfying the `multiplicativity' condition 
\begin{equation}
\label{eq:mult filt}
[\cF_r\fg,\cF_s\fg]\subseteq \cF_{r+s}\fg
\end{equation}
for all $r, s\ge 1$.  
Obviously, this condition implies that each subspace $\cF_k\fg$ 
is a Lie ideal, and so, in particular, $\cF$ is a Lie algebra filtration. 
Let 
\vspace*{-3pt}
\begin{equation}
\label{eq:grfg}
\gr^{\cF}(\fg):=\boplus_{k\geq 1}\cF_k\fg/ \cF_{k+1}\fg.
\end{equation}
be the associated graded vector space to the filtration $\cF$ on $\g$. 
Condition \eqref{eq:mult filt} implies that the Lie bracket map on 
$\g$ descends to a map $[\,,\,]\colon \gr^{\cF}(\fg) \times \gr^{\cF}(\fg) \to \gr^{\cF}(\fg)$, 
which makes $\gr^{\cF}(\fg)$ into a graded Lie algebra, 
with graded pieces given by decomposition \eqref{eq:grfg}. 

A morphism of filtered Lie algebras is a linear map 
$\phi\colon \fg\to \fh$ preserving Lie brackets and the 
given filtrations, $\cF$ and $\mathcal{G}$. Such a map  
induces morphisms between  nilpotent quotients, 
$\phi_k\colon \fg/\cF_{k+1}\fg \to \fh/\cG_{k+1}\fh $, and 
a morphism of associated graded Lie algebras, 
$\gr(\phi)\colon \gr^{\cF}(\fg)\rightarrow \gr^{\cG}(\fh)$. 

If $\g$ is a filtered Lie algebra with a multiplicative 
filtration $\cF$, then its completion, $\widehat{\g}$, 
is again a filtered Lie algebra with the completed multiplicative filtration $\widehat{\cF}$. 
By construction, the canonical map to the completion, $\iota\colon \g\to \widehat{\g}$, 
is a morphism of filtered Lie algebras.  It is readily seen that the induced 
morphism, $\gr(\iota)\colon \gr^{\cF}(\fg) \to \gr^{\widehat{\cF}}(\widehat{\fg})$, 
is an isomorphism.  Moreover, if $\g$ is both complete 
and separated, then the map $\iota\colon \g\to \widehat{\g}$ itself is 
an isomorphism of filtered Lie algebras. 

\begin{lemma}
\label{lem:grfilt}
Let  $\phi\colon \g\to \h$ be a morphism of complete, separated, 
filtered Lie algebras, and suppose $\gr(\phi)\colon  \gr^{\cF}(\g)\to \gr^{\cG}(\h)$ 
is an isomorphism. Then $\phi$ is also an isomorphism.
\end{lemma}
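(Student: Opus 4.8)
The plan is to deduce the isomorphism of filtered objects from the hypothesis on the associated graded by passing through the finite quotients $\g/\cF_k\g$ and $\h/\cG_k\h$ and then taking an inverse limit. Since $\g$ and $\h$ are complete and separated, the canonical maps to the completions are bijective; thus $\g\cong\varprojlim_k \g/\cF_k\g$ and $\h\cong\varprojlim_k \h/\cG_k\h$, compatibly with $\phi$. Consequently, it suffices to show that the induced maps $\bar\phi_k\colon \g/\cF_k\g\to \h/\cG_k\h$ are isomorphisms of $\k$-vector spaces for every $k\ge 1$: an isomorphism of inverse systems induces an isomorphism of the limits.

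First I would establish these isomorphisms by induction on $k$. For $k=1$ both quotients vanish, since $\cF_1\g=\g$ and $\cG_1\h=\h$. For the inductive step, the morphism $\phi$ sends the short exact sequence
\[
0\to \cF_k\g/\cF_{k+1}\g \to \g/\cF_{k+1}\g \to \g/\cF_k\g\to 0
\]
to the corresponding sequence for $\h$, with left-hand vertical map the degree-$k$ component of $\gr(\phi)$, middle map $\bar\phi_{k+1}$, and right-hand map $\bar\phi_k$. The left map is an isomorphism by hypothesis, the right map is an isomorphism by the inductive assumption, and the $5$-lemma then forces $\bar\phi_{k+1}$ to be an isomorphism as well.

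Taking inverse limits over the compatible system of isomorphisms $\bar\phi_k$ yields an isomorphism of filtered $\k$-vector spaces which, under the identifications $\g\cong\widehat{\g}$ and $\h\cong\widehat{\h}$, coincides with $\phi$. Hence $\phi$ is a filtration-preserving bijection. Since $\phi$ is a Lie algebra morphism, its set-theoretic inverse is automatically bracket-preserving, and the inverse-limit filtration is respected in both directions, so $\phi$ is an isomorphism of filtered Lie algebras.

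I expect there to be no genuine obstacle, only bookkeeping: one must check that the two short exact sequences above are natural with respect to $\phi$ (so that the $5$-lemma square actually commutes) and that the index conventions $\cF_1\g=\g$ and $\gr_k=\cF_k/\cF_{k+1}$ are applied consistently. The two hypotheses enter exactly once each and are both essential: separatedness is what identifies $\g$ and $\h$ with subalgebras of their completions and thereby underlies injectivity, while completeness is what supplies the limit of the approximating system and thereby underlies surjectivity. Dropping either one allows $\gr(\phi)$ to be an isomorphism while $\phi$ is not.
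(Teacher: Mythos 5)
Your proof is correct and follows essentially the same route as the paper's: an induction on $k$ showing the induced maps $\g/\cF_k\g\to\h/\cG_k\h$ are isomorphisms, followed by passage to the inverse limit and the identification of each algebra with its completion via completeness and separatedness. You merely make explicit the five-lemma step that the paper compresses into "an easy induction."
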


\begin{proof}
By assumption, the homomorphisms 
$\gr_k(\phi)\colon \cF_{k}\g/\cF_{k+1}\g \to \cG_k\h/\cG_{k+1} \h$ 
are isomorphisms, for all $k\ge 1$.  An easy induction on $k$ 
shows that all maps $\phi_k\colon \fg/\cF_{k+1}\fg \to \fh/\cG_{k+1}\fh $ 
are isomorphisms. Therefore, the map 
$\hat{\phi} \colon \widehat{\g} \to \widehat{\h}$ is 
an isomorphism. On the other hand, both $\g$ and $\h$ are 
complete and separated, and so $\g=\widehat{\g}$ 
and $\h=\widehat{\h}$.  Hence $\phi=\hat{\phi}$, and we are done.
\end{proof}

\subsection{The degree completion}
\label{subsec:deg completion}
Any Lie algebra $\fg$ comes equipped with a lower central series (LCS) filtration, 
$\{\Gamma_k(\fg)\}_{k\geq 1}$, defined by $\Gamma_1(\fg)=\fg$ 
and $\Gamma_k(\fg)=[\Gamma_{k-1}(\fg),\fg]$ for $k\geq 2$.  
Clearly, this is a multiplicative filtration.  Any other such 
filtration $\{\cF_k(\fg)\}_{k\leq 1}$ on $\fg$ is coarser than this filtration; 
that is, $\Gamma_k\fg\subseteq \cF_k\fg$, for all $k\ge 1$.
Any Lie algebra morphism $\phi\colon \fg\rightarrow \fh$ preserves LCS filtrations.  
Furthermore, the quotient Lie algebras $\fg/\Gamma_k\fg$ are nilpotent. 
For simplicity, we shall write $\gr(\g):=\gr^{\Gamma}(\g)$ for the 
associated graded Lie algebra and $\widehat{\g}$ for the completion 
of $\g$ with respect to the LCS filtration $\Gamma$.  Furthermore, 
we shall take $\widehat{\Gamma}_k=\overline{\Gamma}_k$ as 
the canonical filtration on $\widehat{\g}$.

Every graded Lie algebra, $\fg=\bigoplus_{i\geq 1} \fg_i$, has 
a canonical decreasing filtration induced by the grading, 
$\cF_k\fg=\bigoplus_{i\geq k} \fg_i$. 
Moreover, if $\fg$ is generated in degree $1$, then 
this filtration coincides with the LCS filtration $\Gamma_k(\fg)$. 
In particular, the associated graded Lie algebra with respect to $\cF$  
coincides with $\g$. In this case,
the completion of $\fg$ with respect 
to the lower central series (or, degree) filtration is called the 
\textit{degree completion}\/ of $\fg$, and is simply denoted by 
$\widehat{\fg}$.  It is readily seen that $\widehat{\fg}\cong \prod_{i\geq 1} \fg_i$.  
Therefore, the morphism $\iota\colon \g\to \widehat{\g}$ is injective, 
and induces an isomorphism $\fg \cong \gr^{\widehat{\Gamma}}( \widehat{\fg} )$.  
Moreover, if $\fh$ is a graded Lie subalgebra of $\fg$, then 
$\widehat{\fh}=\overline{\fh}$ and 
\begin{equation}
\label{eq:subLiecompletion}
\gr^{\widehat{\Gamma}}(\widehat{\fh})=\fh.
\end{equation}

\begin{lemma}
\label{lem:presbar}
If $\fL$ is a free Lie algebra generated in degree $1$, and $\fr$ is a homogeneous 
ideal, then the projection $\pi\colon \fL\to \fL/\fr$ induces an isomorphism 
$\widehat{\fL}/\overline{\fr}\isom\widehat{\fL/\fr}$.
\end{lemma}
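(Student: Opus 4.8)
The plan is to realize the desired map as a morphism of complete, separated, filtered Lie algebras and then invoke Lemma \ref{lem:grfilt}, reducing everything to a computation on associated graded objects.

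First I would construct the map. The projection $\pi\colon \fL\to \fL/\fr$ is a morphism of graded, hence filtered, Lie algebras, so by functoriality of completion it induces a morphism $\widehat{\pi}\colon \widehat{\fL}\to \widehat{\fL/\fr}$ of filtered Lie algebras satisfying $\widehat{\pi}\circ\alpha_{\fL}=\alpha_{\fL/\fr}\circ\pi$. Since $\pi$ kills $\fr$ and the target is separated, continuity of $\widehat{\pi}$ forces it to vanish on the closure $\overline{\fr}$; hence $\widehat{\pi}$ factors through a filtered morphism $\bar{\pi}\colon \widehat{\fL}/\overline{\fr}\to \widehat{\fL/\fr}$. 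Both source and target are complete and separated: the target is a degree completion, and the source is the quotient of the complete, separated Lie algebra $\widehat{\fL}$ by the closed ideal $\overline{\fr}$.

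By Lemma \ref{lem:grfilt}, it then suffices to prove that $\gr(\bar{\pi})$ is an isomorphism, and I would do this by computing both associated graded Lie algebras. On the target, since $\fL/\fr$ is graded and generated in degree $1$, its degree-completion identification (the source of \eqref{eq:grfg alpha}) gives $\gr^{\widehat{\Gamma}}(\widehat{\fL/\fr})\cong \fL/\fr$. On the source, I would use the short exact sequence of associated graded Lie algebras
\[
0\to \gr(\overline{\fr})\to \gr(\widehat{\fL})\to \gr(\widehat{\fL}/\overline{\fr})\to 0
\]
attached to the closed ideal $\overline{\fr}$ (with its induced filtration) inside $\widehat{\fL}$ (with the quotient filtration on $\widehat{\fL}/\overline{\fr}$); exactness here is the standard strictness computation comparing the induced and quotient filtrations. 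Because $\fr$ is homogeneous, it is a graded Lie subalgebra of $\fL$, so the induced filtration on $\overline{\fr}$ agrees with its intrinsic one, and \eqref{eq:subLiecompletion} yields $\gr(\overline{\fr})=\fr$, while \eqref{eq:grfg alpha} gives $\gr(\widehat{\fL})=\fL$. Feeding these into the sequence produces a natural isomorphism $\gr(\widehat{\fL}/\overline{\fr})\cong \fL/\fr$.

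It remains to check that, under these two identifications, $\gr(\bar{\pi})$ becomes the identity of $\fL/\fr$. I would verify this by applying $\gr$ to the relation $\bar{\pi}\circ q\circ\alpha_{\fL}=\alpha_{\fL/\fr}\circ\pi$, where $q\colon \widehat{\fL}\to\widehat{\fL}/\overline{\fr}$ is the projection: using that $\gr(\alpha)$ is the identity for a graded Lie algebra generated in degree $1$, and that $\gr(q)$ is exactly the quotient surjection $\fL\surj\fL/\fr$ coming from the exact sequence, one obtains $\gr(\bar{\pi})\circ\pi=\pi$, whence $\gr(\bar{\pi})=\id$ by surjectivity of $\pi$. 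The main obstacle is not any deep difficulty but precisely this bookkeeping: keeping the induced versus quotient filtrations straight, and confirming that the abstract isomorphism produced by the exact sequence is the one actually induced by $\pi$. Once $\gr(\bar{\pi})$ is recognized as an isomorphism, Lemma \ref{lem:grfilt} shows $\bar{\pi}$ is an isomorphism, completing the argument.
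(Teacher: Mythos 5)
Your proof is correct and follows essentially the same route as the paper's: both construct the induced map $\widehat{\fL}/\overline{\fr}\to\widehat{\fL/\fr}$ by continuity, reduce to the associated graded level, and conclude by the criterion of Lemma \ref{lem:grfilt}. The only divergence is in verifying the graded-level isomorphism: the paper exhibits a splitting of the epimorphism $\gr(\widehat{\fL}/\overline{\fr})\surj\fL/\fr$ induced by the maps $\Gamma_n\fL+\fr\to\widehat{\Gamma}_n\widehat{\fL}+\overline{\fr}$, whereas you compute $\gr(\widehat{\fL}/\overline{\fr})\cong\fL/\fr$ directly from the exact sequence of associated graded objects attached to the closed ideal $\overline{\fr}$, together with \eqref{eq:subLiecompletion} and \eqref{eq:grfg alpha} --- a harmless variation on the same step.
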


\begin{proof}
Without loss of generality, we may assume that $\fr\subset [\fL,\fL]$.  
The projection $\pi\colon \fL\to \fL/\fr$ extends to an epimorphism 
between the degree completions, 
$\hat{\pi}\colon \widehat{\fL}\to \widehat{\fL/\fr}$. This morphism takes 
the ideal generated by $\fr$ to $0$; thus, by continuity, it 
induces an epimorphism of complete, filtered Lie algebras, 
$\widehat{\fL}/\bar\fr\surj \widehat{\fL/\fr}$. 
Taking associated graded, we obtain an epimorphism 
$\gr(\hat\pi)\colon \gr(\widehat{\fL}/\bar\fr)\surj \gr(\widehat{\fL/\fr})=\fL/\fr$.  
This epimorphism admits a splitting, induced by the maps  
$\Gamma_n{\fL}+\fr\to  \widehat{\Gamma}_n{\widehat{\fL}}+\bar\fr$; 
thus, $\gr(\hat\pi)$ is an isomorphism.  
The claim now follows from Lemma \ref{lem:grfilt}.
\end{proof}

\subsection{Filtered-formality}
\label{subsec:filtered formal}
We now consider in more detail the relationship between a filtered 
Lie algebra $\fg$ and the completion of its associated graded Lie 
algebra, $\widehat{\gr}(\fg)$, endowed with the inverse limit filtration.  
Note that both Lie algebras share the same associated graded Lie 
algebra, namely, $\gr(\g)$.  In general, though, $\fg$ may not  be  
isomorphic to $\widehat{\gr}(\fg)$.  Of course, this happens  
if $\fg$ is not complete or separated, but it may happen 
even in the case when $\g$ is a (finite-dimensional) nilpotent 
Lie algebra. We shall illustrate this point in Examples \ref{ex:Cornulier} and 
\ref{ex:LPex7} below. 

The following definition will play a key role in the sequel. 

\begin{definition}
\label{def:filt formal}
A complete, separated, filtered Lie algebra $\fg$ is  \textit{filtered-formal}\/ 
if there is a filtered Lie algebra isomorphism $\fg\cong \widehat{\gr}(\fg)$ 
which induces the identity on associated graded Lie algebras.
\end{definition}

This notion appears in the work of Bezrukavnikov \cite{Bez} and Hain \cite{Hain97},  
as well as in the work of Calaque  et. al  \cite{Calaque-E-E} 
under the name of `formality', and in the work of Lee \cite{Lee}, 
under the name of `weak-formality'.  The reasons for our choice of 
terminology will become more apparent in \S \ref{sect:malcev}.

If $\fg$ is a filtered-formal Lie algebra, there exists a graded 
Lie algebra $\h$ such that $\fg$ is isomorphic to 
$\widehat{\fh}=\prod_{i\geq 1} \fh_i$.
Conversely, if $\fg=\widehat{\h}$ is the completion of a 
graded Lie algebra $\h=\bigoplus_{i\ge 1} \h_i$, then $\fg$ is filtered-formal. 
Moreover, if $\fh$ has homogeneous presentation $\fh=\Lie(V)/\mathfrak{r}$, 
with $V$ finitely generated and concentrated in degree $1$, then, 
by Lemma \ref{lem:presbar}, the complete, filtered Lie algebra $\fg=\prod_{i\ge 1} \h_i$ has 
presentation $\fg=\widehat{\Lie}(V)/\overline{\mathfrak{r}}$.  

\begin{lemma}
\label{lem:filtiso}
Let $\fg$ be a complete, separated, filtered Lie algebra. 
If there is a graded Lie algebra $\fh$ 
and a Lie algebra isomorphism $\fg\cong \widehat{\fh}$ preserving 
filtrations, then $\fg$ is filtered-formal.
\end{lemma}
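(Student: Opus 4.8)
The plan is to manufacture, out of the given filtered isomorphism $\varphi\colon\fg\isom\widehat{\fh}$, a filtered isomorphism $\Psi\colon\fg\isom\widehat{\gr}(\fg)$ that induces the \emph{identity} on associated graded Lie algebras, since this is exactly what Definition \ref{def:filt formal} requires. Throughout, $\widehat{\fh}$ denotes the degree completion of the graded Lie algebra $\fh$, equipped with its completed degree filtration $\widehat{\cF}$, and $\gr(\fg)$ denotes the associated graded of $\fg$ with respect to its given filtration.

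First I would pass to associated graded objects. Because $\varphi$ preserves filtrations, it induces an isomorphism of graded Lie algebras $\gr(\varphi)\colon\gr(\fg)\isom\gr^{\widehat{\cF}}(\widehat{\fh})$. Applying \eqref{eq:grfg alpha} to $\fh$ with its degree filtration, the canonical completion map $\alpha\colon\fh\to\widehat{\fh}$ identifies $\gr^{\widehat{\cF}}(\widehat{\fh})$ with $\fh$ itself. Composing, I obtain a graded Lie algebra isomorphism $\psi\colon\gr(\fg)\isom\fh$; in particular $\gr(\fg)$ and $\fh$ have isomorphic degree completions. I would then complete $\psi$: as a morphism of graded Lie algebras it extends to a filtered isomorphism of degree completions $\widehat{\psi}\colon\widehat{\gr}(\fg)\isom\widehat{\fh}$, and I set $\Psi:=\widehat{\psi}^{\,-1}\circ\varphi\colon\fg\to\widehat{\gr}(\fg)$. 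As a composite of filtered isomorphisms between complete, separated filtered Lie algebras, $\Psi$ is itself a filtered isomorphism, so it is the natural candidate.

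The one place where the argument is more than bookkeeping is verifying that $\Psi$ induces the \emph{identity} on associated graded, rather than merely some isomorphism; this is the crux. For this I would invoke the naturality of the canonical completion map together with \eqref{eq:grfg alpha}. By construction of $\widehat\psi$, the square $\alpha_{\fh}\circ\psi=\widehat{\psi}\circ\alpha_{\gr(\fg)}$ commutes; applying $\gr$ and using that each $\gr(\alpha)$ is the standard identification of a graded Lie algebra with the associated graded of its completion, one obtains $\gr(\widehat{\psi})=\psi$ under the identifications $\gr^{\widehat{\cF}}(\widehat{\gr}(\fg))\cong\gr(\fg)$ and $\gr^{\widehat{\cF}}(\widehat{\fh})\cong\fh$. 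Since $\gr(\varphi)=\psi$ under the same identifications, functoriality of $\gr$ yields $\gr(\Psi)=\gr(\widehat{\psi})^{-1}\circ\gr(\varphi)=\psi^{-1}\circ\psi=\id_{\gr(\fg)}$, i.e.\ $\gr(\Psi)$ is the canonical identification.

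Hence $\Psi\colon\fg\isom\widehat{\gr}(\fg)$ is a filtered Lie algebra isomorphism inducing the identity on associated graded Lie algebras, so $\fg$ is filtered-formal, as desired. I expect the main obstacle to be precisely the compatibility of the various identifications in the last step: one must keep track of $\gr(\alpha)$ carefully enough to see that the composite lands on the identity and not on an arbitrary graded automorphism, and this is where the naturality statement \eqref{eq:grfg alpha} does the essential work. Everything else is a routine concatenation of filtered isomorphisms. As a sanity check, taking $\fh=\gr(\fg)$ and $\varphi=\id$ recovers the assertion of the preceding example that the completion of a graded Lie algebra is filtered-formal.
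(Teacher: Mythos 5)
Your proof is correct and follows essentially the same route as the paper: pass to associated graded to get $\psi$, complete it, and compose with the given isomorphism, checking that the result induces the identity on $\gr(\fg)$. You spell out the last compatibility check in more detail than the paper (which simply asserts it), but the argument is the same.
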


\begin{proof}
By assumption, there exists a filtered Lie algebra isomorphism 
$\phi\colon \fg\rightarrow \widehat{\fh}$. The map $\phi$ induces 
an isomorphism of graded Lie algebras, $\gr(\phi)\colon \gr(\fg)\rightarrow \fh$.  
In turn, the map $\psi:= (\gr(\phi))^{-1}$ induces an isomorphism 
$\hat{\psi}\colon \widehat{\fh} \rightarrow \widehat{\gr}(\fg)$ 
of completed Lie algebras. Hence, the composite  
$\tilde\phi:=\hat{\psi}\circ\phi\colon \fg \to \widehat{\gr}(\fg)$
is an isomorphism of filtered Lie algebras inducing 
the identity on $\gr(\fg)$.  The conclusion follows 
from Lemma \ref{lem:grfilt}.
\end{proof}

\begin{corollary}
\label{cor:ff gen1}
Let $\fg$ be a complete, separated, filtered Lie algebra, and suppose  
the associated graded Lie algebra $\gr(\fg)$ is generated in degree $1$.  
Furthermore, suppose there is a morphism of filtered Lie algebras,  
$\phi\colon \fg \to \widehat{\gr}(\fg)$, such that $\gr_1(\phi)$ is an 
isomorphism. Then $\fg$ is filtered-formal.
\end{corollary} 

\begin{proof}
Consider the morphism $\gr(\phi)\colon \gr(\g) \to \gr(\g)$.  
Since $\gr(\g)$ is generated in degree $1$, and since $\gr_1(\phi)$ is an 
isomorphism, the map $\gr(\phi)$ is an isomorphism.  
By Lemma \ref{lem:grfilt}, the map $\phi$ itself is an isomorphism. 
The conclusion follows from Lemma \ref{lem:filtiso}.
\end{proof}

\subsection{Descent of filtered-formality}
\label{subsec:descentff}

We now show that filtered-formality is compatible with extension of 
scalars, and, more importantly, that filtered-formality enjoys a descent 
property, under some mild finiteness assumptions.  As usual, all the 
ground fields will be of characteristic $0$. We start with an easy lemma,
which follows from the fact that completion commutes with tensor products.

\begin{lemma}
\label{lem:ffascent}
Let $\fg$ be a filtered-formal $\k$-Lie algebra, 
and let  $\k\subset \K$ be a  field extension.  Then 
the $\K$-Lie algebra $\fg \otimes_{\k} \K$ is also filtered-formal. 
\end{lemma}

The proof of the next result is based on recent work of Enriquez \cite{Enriquez} 
and Maassarani \cite{Maassarani}.  The key tool is Proposition 7.6 from 
\cite{Enriquez}, which in turn was inspired by  work of Drinfeld \cite{Dr}.  
The structure of the proof follows to a large extent the approach from \cite{Maassarani}, 
where a particular example (the Malcev Lie algebra of the fundamental group 
of the orbit configuration space of a finite subgroup of $\operatorname{PSL}_2(\C)$ 
acting on $\mathbb{CP}^1$) is treated. 

\begin{theorem}
\label{thm:ffdescent}
Let $\fg$ be a complete, separated, filtered $\k$-Lie algebra 
such that $\gr(\fg)$ is finitely generated in degree $1$.   
If $\k\subset \K$ is a field extension, then $\fg$ 
is filtered-formal if and only if the 
$\K$-Lie algebra $\fg \otimes_{\k} \K$ is  filtered-formal. 
\end{theorem}

\begin{proof}
The forward implication follows at once from Lemma \ref{lem:ffascent}. 
For the backward implication, suppose $\fg \otimes_{\k} \K$ is filtered-formal 
over $\K$.  Again in view of Lemma \ref{lem:ffascent}, we may assume 
without loss of generality that $\k=\Q$ and $\K=\overline{\K}$.  

Set $\h=\widehat{\gr}(\fg)$, and let $\{\cG_k\}_{k\ge 1}$ be the inverse 
limit filtration on $\h$, coming from the degree filtration on $\gr(\g)$. 
For simplicity, let us write $\g^i=\fg/\cF_{i+1}\fg$ 
and $\h^i=\fh/\cG_{i+1}\fh$ for the respective quotient Lie algebras. 
As noted in  \cite[Lem.~6.1]{Maassarani}, the image of $\cF_k(\g)$ in $\g^i$ 
is $\Gamma_k(\g^i)$.  In particular, 
$\g^1$ is canonically isomorphic to the abelianizations of all $\g^i$  
and of $\g$.  A similar statement holds for  $\h$.

For each $i\ge 1$, let $T_i=\Iso_1(\g^i,\h^i)$ be the affine 
$\Q$-scheme of filtration-preserving Lie algebra isomorphisms from 
$\g^i$ to $\h^i$ inducing the identity on abelianizations.  
As shown in \cite[Prop.~6.2]{Maassarani}, these schemes form 
in natural way an inverse system; let $T=\varprojlim_{i} T_i$.   
Similarly, let $U_i=\Aut_1(\g^i)$ be 
the unipotent $\Q$-group of automorphisms of $\g^i$ inducing 
the identity on abelianization, and let 
$U=\varprojlim_{i} U_i$ be the corresponding prounipotent 
$\Q$-group scheme.   It is then readily seen that each $U_i$ 
is a torsor under the natural left action of $T_i$, i.e., the action 
of $U_i(\k)$ on $T_i(\k)$ is free and transitive 
whenever $\Q\subset \k$ is a field extension such that 
$T_i(\k)$ is non-empty.  Furthermore, as noted in 
\cite[Prop.~6.6]{Maassarani}, the $U_i$-actions on the torsors 
$T_i$ are compatible with the canonical projections;   
thus, $T$ is also a torsor under the action of $U$. 

By assumption, $\g \otimes_{\Q} \K$ is filtered-formal.  In view of 
Corollary \ref{cor:ff gen1}, this condition is equivalent to the 
existence of a filtered Lie algebra isomorphism 
$\g \otimes_{\Q} \K \to \h\otimes_{\Q} \K$ inducing the canonical 
identification $\g^1 \otimes_{\Q} \K = \h^1 \otimes_{\Q} \K$.  That is, 
our assumption is equivalent to the fact that $T(\K)\ne \emptyset$.   
It remains to show that $T(\Q)\ne \emptyset$.

When $\K=\C$, this claim follows at once from Proposition 7.6 in 
\cite{Enriquez}.  The proof of that proposition involves two steps: 
first a descent from $\C$ to $\overline{\Q}$, and then 
from $\overline{\Q}$  to $\Q$.  To handle an arbitrary extension 
$\Q\subset \K$, we only need to modify the first step, and descend 
from $\K=\overline{\K}$ to $\overline{\Q}$.  This is done by means of the 
same type of Hilbert's Nullstellensatz argument as the one sketched in \cite{Enriquez}; 
we refer to the proof of \cite[Cor.~5.8]{Neisendorfer-Miller78} for 
more details on how such an argument works.  
\end{proof}

As we shall see in Proposition \ref{prop:carnot}, the above theorem 
generalizes a recent result of Cornulier (Theorem 3.14 from \cite{Cornulier14}).

\subsection{Products and coproducts}
\label{subsec:prcopr}

The category of Lie algebras admits both products and coproducts.  
We end this section by showing that filtered-formality behaves 
well with respect to these operations. 

\begin{lemma}
\label{lem:filt prod}
Let  $\fm$ and $\fn$ be two filtered-formal Lie algebras. Then 
$\fm\times \fn$ is also filtered-formal.
\end{lemma}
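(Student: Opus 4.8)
The plan is to reduce the statement to the structural characterization of filtered formality furnished by Corollary \ref{cor:ff crit}, combined with the recognition criterion of Lemma \ref{lem:filtiso}, and then to exploit the fact that degree completion commutes with finite products. The guiding idea is that a filtered-formal Lie algebra is, up to filtered isomorphism, the completion of a graded Lie algebra, and that graded Lie algebras are manifestly closed under products.

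First I would apply Corollary \ref{cor:ff crit} to each factor: since $\fm$ and $\fn$ are filtered-formal, there exist graded Lie algebras $\g=\bigoplus_{i\ge 1}\g_i$ and $\h=\bigoplus_{i\ge 1}\h_i$ together with filtered isomorphisms $\fm\cong\widehat{\g}=\prod_{i\ge 1}\g_i$ and $\fn\cong\widehat{\h}=\prod_{i\ge 1}\h_i$. Next, form the direct product $\g\times\h$ in the category of Lie algebras, with bracket taken componentwise. Because each factor is graded and the bracket respects degrees, $\g\times\h$ is again a graded Lie algebra, with $(\g\times\h)_i=\g_i\oplus\h_i$ for every $i\ge 1$.

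The heart of the argument is the identification of its degree completion as a filtered Lie algebra:
\[
\widehat{\g\times\h}=\prod_{i\ge 1}(\g_i\oplus\h_i)\cong\Big(\prod_{i\ge 1}\g_i\Big)\times\Big(\prod_{i\ge 1}\h_i\Big)=\widehat{\g}\times\widehat{\h}.
\]
The canonical rearrangement above respects the componentwise bracket, and it matches the two filtrations term by term: the degree-$k$ filtration piece $\prod_{i\ge k}(\g_i\oplus\h_i)$ on the left is carried precisely onto the product-filtration piece $(\prod_{i\ge k}\g_i)\times(\prod_{i\ge k}\h_i)$ on the right. Since a finite product of complete, separated filtered Lie algebras (with the product filtration $\cF_k(\fm\times\fn)=\cF_k\fm\times\cF_k\fn$, the categorical product) is again complete and separated, the Lie algebra $\fm\times\fn$ is complete and separated, and the composite $\fm\times\fn\cong\widehat{\g}\times\widehat{\h}\cong\widehat{\g\times\h}$ is a filtered isomorphism onto the completion of the graded Lie algebra $\g\times\h$. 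Lemma \ref{lem:filtiso} then immediately yields that $\fm\times\fn$ is filtered-formal.

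I expect the only point requiring genuine care to be the bookkeeping in the displayed identification, namely confirming that the rearrangement $\prod_i(\g_i\oplus\h_i)\cong\prod_i\g_i\times\prod_i\h_i$ is simultaneously a Lie algebra isomorphism and a filtered isomorphism, with the filtrations agreeing at every level $k$. Everything else is a formal consequence of Corollary \ref{cor:ff crit} and Lemma \ref{lem:filtiso}, so I anticipate no substantial obstacle beyond this routine verification.
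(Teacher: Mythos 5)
Your proof is correct and follows essentially the same route as the paper's: both invoke Corollary \ref{cor:ff crit} to write $\fm\cong\widehat{\fg}$ and $\fn\cong\widehat{\fh}$ for graded Lie algebras $\fg,\fh$, and then identify $\widehat{\fg}\times\widehat{\fh}$ with $\widehat{\fg\times\fh}$ by rearranging the product over degrees. Your explicit verification that the rearrangement matches the filtrations term by term, followed by the appeal to Lemma \ref{lem:filtiso}, merely spells out what the paper leaves implicit.
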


\begin{proof}
By assumption, there exist graded Lie algebras $\fg$ and $\fh$ 
such that $\fm\cong \widehat{\fg}=\prod_{i\geq 1} \fg_i$ and 
$\fn\cong \widehat{\fh}=\prod_{i\geq 1} \fh_i$.
Then $\fm\times \fn$ is isomorphic to 
$\big(\bprod_{i\geq 1} \fg_i \big)\times \big(\bprod_{i\geq 1} \fh_i \big)=
 \bprod_{i\geq 1} (\fg_i\times \fh_i)=\widehat{\fg\times \fh}$.
Hence, $\fm\times \fn$ is filtered-formal.  
\end{proof}

Now let $*$ denote the usual coproduct (or, free product) of Lie algebras, 
and let $\hat{*}$ be the coproduct in the category of complete, filtered 
Lie algebras.  By definition, 
\begin{equation}
\label{eq:cstar}
\fm \cstar \fn =\widehat{{\,\fm*\fn}^{\phantom{a}}}=  
\varprojlim\nolimits_{k}\, (\fm * \fn)/\Gamma_k(\fm * \fn).
\end{equation}
We refer to Lazarev and Markl \cite{LazarevMarkl} for a detailed 
study of this notion.

\begin{lemma}
\label{lem:filt coprod}
Let  $\fm$ and $\fn$ be two filtered-formal Lie algebras. Then 
$\fm\cstar \fn$ is also filtered-formal.
\end{lemma}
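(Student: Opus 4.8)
The plan is to mirror the structure of the proof of Lemma \ref{lem:filt prod}, replacing the ordinary product with the coproduct and the direct product of graded pieces with the free product of graded Lie algebras. First I would invoke Corollary \ref{cor:ff crit} to write $\fm\cong\widehat{\fg}$ and $\fn\cong\widehat{\fh}$ for suitable graded Lie algebras $\fg=\bigoplus_{i\ge 1}\fg_i$ and $\fh=\bigoplus_{i\ge 1}\fh_i$, each generated in a way that makes its degree completion identifiable. By Lemma \ref{lem:filtiso}, it then suffices to exhibit a filtration-preserving isomorphism from $\fm\cstar\fn$ to the degree completion of some graded Lie algebra; the natural candidate is the free product $\fg*\fh$, which is again graded (with $(\fg*\fh)_k=\fg_k\oplus\fh_k\oplus(\text{mixed brackets})$), generated in degree $1$ if $\fg$ and $\fh$ are.

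The key step is the identification
\begin{equation}
\label{eq:coprodhat}
\widehat{\fg}\cstar\widehat{\fh}\cong\widehat{\fg*\fh},
\end{equation}
that is, the coproduct of the completions (taken in the category of complete, filtered Lie algebras, as in \eqref{eq:cstar}) agrees with the degree completion of the free product. Given \eqref{eq:coprodhat}, we are done: $\fm\cstar\fn\cong\widehat{\fg}\cstar\widehat{\fh}\cong\widehat{\fg*\fh}$, and since $\fg*\fh$ is a graded Lie algebra, Lemma \ref{lem:filtiso} shows the left-hand side is filtered-formal. To prove \eqref{eq:coprodhat} I would argue by a universal-property comparison: both sides receive compatible maps from $\widehat{\fg}$ and $\widehat{\fh}$, and by the definition $\fm\cstar\fn=\widehat{\fm*\fn}$ together with the fact that completion is left adjoint to the inclusion of complete filtered Lie algebras, the coproduct in the completed category is computed as the completion of the ordinary coproduct. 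The remaining point is that completing $\widehat{\fg}*\widehat{\fh}$ gives the same object as completing $\fg*\fh$, which follows because the canonical maps $\fg\to\widehat{\fg}$ induce an isomorphism on all finite LCS quotients, hence on the inverse limit defining the coproduct.

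The main obstacle I expect is verifying that the LCS filtration on the free product $\fg*\fh$ is compatible with the grading in the way required, so that its degree completion really is the coproduct in the complete category, rather than merely surjecting onto it. Concretely, one must check that $\Gamma_k(\fg*\fh)$ coincides with the degree-$\ge k$ part of the grading on $\fg*\fh$; this is exactly the statement that the free product of two degree-$1$-generated graded Lie algebras is itself generated in degree $1$, so that the observation recorded after \eqref{eq:subLiecompletion} applies. Once that compatibility is in hand, the inverse limit defining $\fm\cstar\fn$ in \eqref{eq:cstar} telescopes to $\prod_{i\ge 1}(\fg*\fh)_i=\widehat{\fg*\fh}$, and the filtered-formality of $\fm\cstar\fn$ follows from Lemma \ref{lem:filtiso} applied to the graded Lie algebra $\fg*\fh$.
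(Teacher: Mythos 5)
Your proof follows the same overall strategy as the paper's: both arguments reduce the lemma to the single identification $\fm\cstar\fn=\widehat{\fm*\fn}\cong\widehat{\fg*\fh}$ (your equation for $\widehat{\fg}\cstar\widehat{\fh}$), after which Lemma \ref{lem:filtiso} applied to the graded Lie algebra $\fg*\fh$ finishes the job. The difference lies in how that identification is justified, and this is where your write-up is thinnest. The paper takes the map $\widehat{\alpha*\beta}\colon\widehat{\fg*\fh}\to\widehat{\fm*\fn}$ induced by the inclusions $\fg\inj\fm$ and $\fh\inj\fn$, cites \cite[(9.3)]{LazarevMarkl} for the fact that it is an isomorphism on associated graded Lie algebras, and then invokes Lemma \ref{lem:grfilt}. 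You instead assert that since $\fg\to\widehat{\fg}$ and $\fh\to\widehat{\fh}$ induce isomorphisms on all finite LCS quotients, the same ``hence'' holds for the free products and therefore for the inverse limits. That ``hence'' conceals the actual content: you need (i) that $(\fa*\fb)/\Gamma_k(\fa*\fb)$ depends only on $\fa/\Gamma_k\fa$ and $\fb/\Gamma_k\fb$ — true, because the ideal generated by $\Gamma_k\fa$ inside $\fa*\fb$ lies in $\Gamma_k(\fa*\fb)$ by a weight count, but this should be said — and (ii) that $\fg/\Gamma_k\fg\to\widehat{\fg}/\Gamma_k\widehat{\fg}$ is an isomorphism for the \emph{plain} lower central series of the abstract Lie algebra $\widehat{\fg}=\prod_i\fg_i$, which is what the definition \eqref{eq:cstar} uses; formula \eqref{eq:gghat} is stated for the closed filtration $\widehat{\Gamma}_k$, and $\Gamma_k\widehat{\fg}$ is a priori only dense in $\prod_{i\ge k}\fg_i$. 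Point (ii) can be settled directly when $\fg$ is generated in degree one by a finite-dimensional space, but for a general filtered-formal $\fm$ it is exactly the kind of technical statement the paper outsources to Lazarev--Markl. So: right route, correct skeleton, but the one step you wave through is the step that carries the proof.
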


\begin{proof}
As before, write $\fm=\widehat{\fg}$ and $\fn=\widehat{\fh}$, 
for some graded Lie algebras $\g$ and $\h$.
The canonical inclusions $\alpha\colon \fg\inj \fm$ and 
$\beta\colon\fh\inj \fn$ induce a monomorphism of filtered Lie algebras, 
$\widehat{\alpha*\beta}\colon \widehat{\fg *\fh} \rightarrow \widehat{\fm*\fn}$. 
Using \cite[(9.3)]{LazarevMarkl}, we infer that the induced morphism 
between associated graded Lie algebras, 
$\gr(\widehat{\alpha*\beta})\colon \gr(\widehat{\fg *\fh}) \rightarrow 
\gr(\widehat{\fm*\fn})$, is an isomorphism. 
Lemma \ref{lem:grfilt} now implies that  $\widehat{\alpha*\beta}$ 
is an isomorphism of filtered Lie algebras, 
thereby verifying the filtered-formality of $\fm \cstar \fn$.
\end{proof}

\section{Graded algebras and Koszul duality}
\label{sect:koszul}

The notions of graded and filtered algebras are defined completely analogously 
for an (associative) algebra $A$: the multiplication 
map is required to preserve the grading, respectively the filtration on $A$.    
In this section we discuss several relationships between Lie algebras 
and associative algebras, focussing on the notion of quadratic 
and Koszul algebras.  

\subsection{Universal enveloping algebras}
\label{subsec:univ}

Given a Lie algebra $\fg$ over a field $\k$ of characteristic $0$, 
let $U(\fg)$ be its universal enveloping algebra. This is the filtered 
algebra obtained as the quotient of the tensor algebra $T(\fg)$ 
by the (two-sided) ideal $I$ generated by all elements of the form 
$a\otimes b-b\otimes a-[a,b]$ with $a, b\in \fg$. By the 
Poincar\'{e}--Birkhoff--Witt theorem, the canonical map 
$\iota \colon \fg\to U(\fg)$ is an injection, and the induced 
map, $\Sym(\g)\to \gr(U(\g))$, is an isomorphism of graded 
(commutative) algebras. In this section, all tensor products are over $\k$.

Now suppose $\fg$ is a finitely generated, graded Lie algebra.  
Then $U(\g)$ is isomorphic (as a graded vector space) to a 
polynomial algebra in variables indexed by bases  
for the graded pieces of $\g$, with degrees set accordingly. 
Hence, its Hilbert series is given by 
\begin{equation}
\label{eq:PBW}
\Hilb(U(\fg),t)=\bprod_{i\geq 1}(1-t^i)^{-\dim(\fg_i)}.
\end{equation}
For instance, if $\g=\Lie(V)$ 
is the free Lie algebra on a finite-dimensional vector space $V$ 
with all generators in degree $1$, then 
$\dim(\fg_i)=\tfrac{1}{i}\sum_{d | i}\mu(d)\cdot n^{i/d}$, 
where $n=\dim V$ and $\mu\colon \N \to \{-1,0,1\}$ is the M\"obius function.

Finally, suppose $\fg=\Lie(V)/ \fr$ is a finitely presented, 
graded Lie algebra, with generators in degree $1$ and relation ideal $\fr$ 
generated by homogeneous elements $g_1,\dots, g_m$.  
Then $U(\fg)$ is the quotient of $T(V)$ by the two-sided ideal generated by 
$\iota(g_1), \dots, \iota(g_m)$, where $\iota\colon \Lie(V)\inj T(V)$ 
is the canonical inclusion. In particular, if $\fg$ is a quadratic 
Lie algebra, then $U(\fg)$ is a quadratic algebra.

\subsection{Quadratic algebras}
\label{subsec:quad}

Now let $A$ be a graded $\k$-algebra.  We will  assume throughout 
that $A$ is non-negatively graded, i.e., $A=\bigoplus_{i\ge 0} A_i$, 
and connected, i.e., $A_0=\k$.  Every such algebra may be realized 
as the quotient of a tensor algebra $T(V)$ by a homogeneous, 
two-sided ideal $I$. We will further assume that  $\dim V<\infty$. 

An algebra $A$ as above  is said to be \emph{quadratic}\/ if $A_1=V$ and the 
ideal $I$ is generated in degree $2$, i.e., $I=\langle I_2 \rangle$, 
where $I_2=I\cap (V\otimes V)$.  
Given a quadratic algebra $A=T(V)/I$, identify 
$V^*\otimes V^* \cong (V\otimes V)^*$, and 
define the {\em quadratic dual}\/ of $A$ to be the algebra 
$A^{!}=T(V^*)/I^{\perp}$,
where $I^{\perp}\subset T(V^*)$ is the ideal generated by the vector 
subspace $I_2^{\perp}:=\{\alpha\in V^*\otimes V^* \mid \alpha(I_2)=0\}$. 
Clearly, $A^{!}$ is also a quadratic algebra, and $(A^{!})^{!}=A$.    
For any graded algebra $A=T(V)/I$, we can define its quadrature closure
as $\qA=T(V)/\langle I_2\rangle$.  For more details on all this, 
we refer to \cite{Polishchuk-Positselski}.

\begin{prop}
\label{prop:holo closure}
Let $\fg$ be a finitely generated graded Lie algebra generated in degree $1$. 
There is then a unique, functorially defined quadratic Lie algebra, 
$\qg$, such that $U(\qg) = \q U(\fg)$. 
\end{prop}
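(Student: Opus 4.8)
The plan is to construct $\bar\fg$ from the canonical degree-$1$ presentation of $\fg$, to verify the identity $U(\bar\fg)=\overline{U(\fg)}$ by a direct comparison of the degree-$2$ parts of two two-sided ideals in $T(V)$, and finally to obtain uniqueness from the fact that a quadratic Lie algebra can be read back off its enveloping algebra.

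First I would set up the presentation canonically. Since $\fg$ is finitely generated and generated in degree $1$, the space $V:=\fg_1$ is finite-dimensional, and the identity on $V$ extends to a surjection $\Lie(V)\surj\fg$; write $\fr$ for its kernel, a homogeneous ideal. Because $V$ maps isomorphically onto $\fg_1$, we have $\fr_1=0$, so $\fr\subseteq\bigoplus_{i\ge 2}\Lie_i(V)$. I then define
\[
\bar\fg:=\Lie(V)\big/\langle \fr_2\rangle,
\]
the quotient by the Lie ideal generated by the degree-$2$ component $\fr_2\subseteq\Lie_2(V)$. By construction $\bar\fg$ is finitely generated in degree $1$ with relations generated in degree $2$, hence is a quadratic Lie algebra; moreover nothing in its definition involves a choice, so it is functorial in $\fg$.

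The heart of the matter is the identity $U(\bar\fg)=\overline{U(\fg)}$, which I would prove by computing both sides as quotients of $T(V)$. Using the standard description of the enveloping algebra of a presented Lie algebra, $U(\fg)=T(V)/I$, where $I$ is the two-sided ideal generated by $\iota(\fr)$ and $\iota\colon\Lie(V)\inj T(V)$ is the canonical inclusion. The key observation is that, since every homogeneous generator of $I$ lies in degree $\ge 2$, a degree-$2$ element $a\,\iota(x)\,b$ of $I$ forces $\deg a=\deg b=0$ and $\deg x=2$; hence $I_2=\iota(\fr_2)$ and therefore $\overline{U(\fg)}=T(V)/\langle I_2\rangle=T(V)/\langle\iota(\fr_2)\rangle$. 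On the other hand, the same description gives $U(\bar\fg)=T(V)/\langle\iota(\fr_2)\rangle$, so the two algebras coincide. Functoriality of the identity is then automatic: a morphism $\fg\to\fg'$ restricts to $\fg_1\to\fg_1'$, extends to $\Lie(V)\to\Lie(V')$ carrying $\fr_2$ into $\fr_2'$, and so descends to $\bar\fg\to\bar\fg'$ compatibly with the induced morphism on quadratic closures.

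For uniqueness I would show that a quadratic Lie algebra is recovered canonically from its enveloping algebra. Suppose $\fg'=\Lie(V')/\langle R'\rangle$ is quadratic, with $R'\subseteq\Lie_2(V')$, and $U(\fg')=\overline{U(\fg)}=:A$. Comparing degree-$1$ parts gives $V'=A_1=V$, and since $\fg'$ is quadratic, $A$ is a quadratic algebra whose defining ideal is generated in degree $2$ by $\iota(R')$; as this ideal equals $\langle\iota(\fr_2)\rangle$, its degree-$2$ part yields $\iota(R')=\iota(\fr_2)$. Because $\iota$ is injective on $\Lie_2(V)$, this forces $R'=\fr_2$ and hence $\fg'=\bar\fg$. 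The one step that genuinely requires care is the degree-$2$ bookkeeping that gives $I_2=\iota(\fr_2)$: one must be sure that forming the enveloping algebra introduces no spurious degree-$2$ relations and that the quadratic closure kills exactly $\iota(\fr_2)$. This is precisely where the normalization $\fr_1=0$ (i.e.\ $V=\fg_1$) and the injectivity of $\iota$ in degree $2$ enter, and once the identification $I_2=\iota(\fr_2)$ is in place, existence, functoriality, and uniqueness all follow.
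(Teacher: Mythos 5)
Your proposal is correct and follows essentially the same route as the paper's proof: present $\fg$ as $\Lie(V)/\fr$ with $V=\fg_1$, set $\bar{\fg}=\Lie(V)/\langle\fr_2\rangle$, and match the degree-$2$ parts of the defining ideals in $T(V)$ to get $U(\bar{\fg})=T(V)/\langle\iota(\fr_2)\rangle=\overline{U(\fg)}$. The paper's own proof is a three-line version of this; your added bookkeeping (the identification $I_2=\iota(\fr_2)$ using $\fr_1=0$, plus the explicit functoriality and uniqueness arguments) is sound and simply spells out what the paper leaves implicit.
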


\begin{proof}
Suppose $\g$ has presentation $\Lie(V)/\fr$. Then $U(\fg)$ has a 
presentation $T(V)/(\iota(\fr))$.
Set $\qg=\Lie(V)/\langle \fr_2 \rangle$, where $\fr_2=\fr\cap \Lie_2(V)$; then 
$U(\qg)$ has presentation $T(V)/\langle \iota(\fr_2) \rangle$. 
One can see that $\iota(\fr_2)=\iota(\fr)\cap V\otimes V$. 
\end{proof}

A {\em commutative graded algebra}\/ (for short, a \cga) is a 
graded $\k$-algebra as above, which in addition is graded-commutative, 
i.e., if $a\in A_i$ and $b\in A_j$, then $ab=(-1)^{ij} ba$.  
If all generators of $A$ are in degree $1$, then $A$ can be written as $A=\bigwedge(V)/J$, 
where $\bigwedge(V)$ is the exterior algebra on the $\k$-vector space $V=A_1$,  
and $J$ is a homogeneous ideal in $\bigwedge(V)$ with $J_1=0$.  
If, furthermore, $J$ is generated in degree $2$, then $A$ is a 
quadratic \cga.   The next lemma follows straight from the definitions.

\begin{lemma}
\label{lem:vee}
Let $W\subset V\wedge V$ be a linear subspace, and 
let $A=\bigwedge(V)/\langle W\rangle$ be the corresponding 
quadratic \cga. Then $A^!=T(V^*)/\langle \iota(W^{\vee})\rangle$, where 
\begin{equation}
\label{eq:jvee}
W^{\vee}:=\{\alpha\in V^*\wedge V^* \mid \alpha(W)=0\}=
W^{\perp}\cap (V^*\wedge V^* ) ,
\end{equation}
and $\iota \colon V^*\wedge V^* \inj V^*\otimes V^*$ is the 
inclusion map, given by 
$x\wedge y \mapsto x\otimes y-y\otimes x$.
\end{lemma}

For instance, if $A=\bigwedge(V)$, then $A^!=\Sym (V^*)$. 
Likewise, if $A=\bigwedge(V)/\langle V\wedge V\rangle=\k\oplus V$, 
then $A^!=T(V^*)$.

\subsection{Holonomy Lie algebras}
\label{subsec:hololie}

Let $A$ be a graded, graded-commutative algebra. Recall we are 
assuming that $A_0=\k$ and $\dim A_1<\infty$.  
Because of graded-commutativity, the multiplication map $A_1\otimes A_1\to A_2$ 
factors through a linear map $\mu_A\colon A_1\wedge A_1\to A_2$. 
Dualizing this map, and identifying $(A_1\wedge A_1)^* \cong A_1^*\wedge A_1^*$, 
we obtain a linear map, $\partial_A=(\mu_A)^*\colon A_2^*\rightarrow A_1^*\wedge A_1^*$.
Finally, identify $A_1^*\wedge A_1^*$ with $\Lie_2(A_1^*)$ via 
the map $x\wedge y \mapsto [x,y]$.

\begin{definition}
\label{def:holo alg}
The \textit{holonomy Lie algebra}\/ of $A$ is the quotient 
\begin{equation}
\label{eq:hololiealg}
\fh(A)=\Lie(A_1^*)/\langle\im (\partial_A)\rangle 
\end{equation}
of the free Lie algebra on $A_1^*$ by the ideal generated by the 
image of $\partial_A$ under the above identification.
Alternatively, using the notation from \eqref{eq:jvee}, we have that 
\begin{equation}
\label{eq:hololie bis}
\fh(A)=\Lie(A_1^*)/\langle \ker(\mu_A)^{\vee} \rangle.
\end{equation}
\end{definition}

Plainly, $\fh(A)$ is a quadratic Lie algebra.  
This construction is functorial:  if $\varphi\colon A\to B$ is a 
morphism of $\cga$s as above, the induced map,  
$\Lie(\varphi_1^*)\colon \Lie(B_1^*)\to \Lie(A_1^*)$, factors through a 
morphism of graded Lie algebras, $\fh(\varphi)\colon \fh(B)\to \fh(A)$;    
moreover, if $\varphi$ is injective, then $\h(\varphi)$ is surjective.
The Lie algebra $\h(A)$ depends only on information encoded in the 
map $\mu_A\colon A_1\wedge A_1\to A_2$.  
More precisely, let $\qA$ be the quadratic closure of $A$; then 
\begin{equation}
\label{eq:bara}
\qA=\bwedge(A_1) /\langle K\rangle,
\end{equation}
where $K=\ker(\mu_A)\subset A_1\wedge A_1$. 
Then $\qA$ is a commutative, quadratic algebra, 
which comes equipped with a canonical morphism $\qA\to A$, 
which is an isomorphism in degree $1$ and is injective in degree $2$.  
It is readily verified that the induced morphism 
between holonomy Lie algebras, $\fh(A)\to \fh(\qA)$, is an isomorphism.

The following proposition is a slight generalization of a result of Papadima--Yuzvinsky
(\cite[Lem.~4.1]{Papadima-Yuzvinsky}).  

\begin{prop}
\label{prop:Papadima-Y}
Let $A$ be a commutative graded algebra. 
Then $U(\fh(A))$ is a quadratic algebra, and $U(\fh(A))=(\qA)^!$.
\end{prop}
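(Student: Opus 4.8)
The plan is to reduce the claim $U(\fh(A))=\bar A^!$ to the case of a quadratic \cga, where the quadratic dual can be computed directly, and to match that computation against the presentation of the universal enveloping algebra coming from the holonomy Lie algebra. First I would invoke the observation made just before the statement: the canonical map $q\colon \bar A\to A$ induces an isomorphism $\fh(A)\isom\fh(\bar A)$ on holonomy Lie algebras. Since $\bar A$ depends on $A$ only through $K=\ker(\mu_A)\subset A_1\wedge A_1$, and $\fh(A)=\Lie(A_1^*)/\langle K^{\vee}\rangle$ by \eqref{eq:hololie bis}, both sides of the desired identity are unchanged if we replace $A$ by $\bar A$. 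Thus it suffices to prove the proposition when $A$ itself is a quadratic \cga, say $A=\bigwedge(V)/\langle K\rangle$ with $V=A_1$.

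Next I would compute both sides explicitly in this quadratic situation. On one hand, Lemma \ref{lem:vee} gives $A^!=T(V^*)/\langle \iota(K^{\vee})\rangle$, with $K^{\vee}=\{\alpha\in V^*\wedge V^*\mid \alpha(K)=0\}$ and $\iota\colon V^*\wedge V^*\inj V^*\otimes V^*$ the map $x\wedge y\mapsto x\otimes y-y\otimes x$. On the other hand, $\fh(A)=\Lie(V^*)/\langle K^{\vee}\rangle$, where $K^{\vee}\subset \Lie_2(V^*)$ under the identification $x\wedge y\mapsto [x,y]$. Since $\fh(A)$ is quadratic (its relations sit in $\Lie_2(V^*)$), the discussion in \S\ref{subsec:univ} shows that $U(\fh(A))$ is the quotient of $T(V^*)$ by the two-sided ideal generated by $\iota'(K^{\vee})$, where $\iota'\colon \Lie_2(V^*)\inj T_2(V^*)=V^*\otimes V^*$ is the canonical inclusion of the free Lie algebra. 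The crux is then simply that the two inclusions agree: under the identification $\Lie_2(V^*)\cong V^*\wedge V^*$ (sending $[x,y]$ to $x\wedge y$), the map $\iota'$ coincides with $\iota$, because the free-Lie-algebra inclusion sends $[x,y]$ to $x\otimes y-y\otimes x$, exactly as in Lemma \ref{lem:vee}. Hence the two ideals $\langle \iota'(K^{\vee})\rangle$ and $\langle \iota(K^{\vee})\rangle$ in $T(V^*)$ coincide, giving $U(\fh(A))=A^!=\bar A^!$.

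Finally I would assemble the argument, noting that $U(\fh(A))$ is quadratic because $\fh(A)$ is quadratic, which is the first assertion. I expect the only genuinely delicate point to be the compatibility of the two identifications of $V^*\wedge V^*$ with $\Lie_2(V^*)\subset V^*\otimes V^*$—that is, checking that the sign conventions in the wedge-to-tensor map of Lemma \ref{lem:vee} and in the bracket-to-tensor map $[x,y]\mapsto x\otimes y-y\otimes x$ defining the free Lie algebra are the same, so that no spurious factor intervenes when passing between $A^!$ and $U(\fh(A))$. Everything else is a routine unwinding of the presentations already recorded in \S\ref{subsec:univ} and Lemma \ref{lem:vee}, together with the reduction to the quadratic closure.
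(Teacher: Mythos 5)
Your proposal is correct and follows essentially the same route as the paper: both arguments identify $\bar{A}^!$ via Lemma \ref{lem:vee} as $T(V^*)/\langle \iota(K^{\vee})\rangle$ and match this against the presentation of $U(\fh(A))$ coming from $\fh(A)=\Lie(V^*)/\langle K^{\vee}\rangle$, using the identification $[v,w]\mapsto v\otimes w-w\otimes v$ from \S\ref{subsec:grlie}. Your explicit check that the two wedge-to-tensor inclusions agree is the one point the paper leaves implicit, and is a worthwhile sanity check rather than a new idea.
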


\begin{proof}
By the above, $\qA=\bigwedge(A_1)/\langle K \rangle$, 
where $K=\ker(\mu_A)$. 
On the other hand, by \eqref{eq:hololie bis} we have that 
$\fh(A)=\Lie(A_1^*)/\langle K^{\vee} \rangle$. 
Hence, by Lemma \ref{lem:vee},  
$U(\fh(A))=T(V^*)/\langle \iota(K^{\vee} )\rangle=(\qA)^!$.
\end{proof}

Combining Propositions \ref{prop:holo closure} and \ref{prop:Papadima-Y}, 
we obtain the following corollary, which expresses the quadratic closure of 
a Lie algebra as the holonomy Lie algebra of a certain quadratic algebra. 

\begin{corollary}
\label{cor:quadraticgr}
Let $\fg$ be a finitely generated graded Lie algebra generated in degree $1$.
Then $\fh\big((\q U(\fg))^{!} \big)=\qg$.
\end{corollary}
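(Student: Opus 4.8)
The plan is to derive the corollary by combining Propositions \ref{prop:holo closure} and \ref{prop:Papadima-Y} through quadratic duality, with the key preliminary being to check that $\overline{U(\fg)}^{\,!}$ is in fact a commutative graded algebra, so that the holonomy construction applies to it. First I would write $\fg = \Lie(V)/\fr$ with $V = \fg_1$ in degree $1$ (finite-dimensional since $\fg$ is finitely generated), and invoke Proposition \ref{prop:holo closure} to identify $\overline{U(\fg)} = U(\bar{\fg})$, where $\bar{\fg} = \Lie(V)/\langle\fr_2\rangle$ is quadratic. Since $\bar{\fg}$ is quadratic, $U(\bar{\fg}) = T(V)/\langle\iota(\fr_2)\rangle$ is a quadratic associative algebra, where $\iota\colon \Lie(V)\inj T(V)$ is the canonical inclusion. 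The crucial observation is that $\iota(\fr_2)\subseteq \iota(\Lie_2(V))$, which is precisely the space of antisymmetric tensors $\{v\otimes w - w\otimes v\}\subset V\otimes V$.

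Next I would compute $A := \overline{U(\fg)}^{\,!} = U(\bar{\fg})^{!}$ and show it is a quadratic \cga. Because the relation space $\iota(\fr_2)$ lies among the antisymmetric tensors, its orthogonal complement in $V^*\otimes V^*$ contains the full symmetric square $\Sym^2(V^*)$. Hence $A = T(V^*)/\langle\iota(\fr_2)^{\perp}\rangle$ is a quotient of $T(V^*)/\langle\Sym^2(V^*)\rangle = \bigwedge(V^*)$, so $A$ is graded-commutative; explicitly, $\iota(\fr_2)^{\perp} = \Sym^2(V^*)\oplus \fr_2^{\vee}$, giving $A = \bigwedge(V^*)/\langle\fr_2^{\vee}\rangle$ in the notation of Lemma \ref{lem:vee}. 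This settles that $\fh(A)$ is defined.

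With $A$ a quadratic \cga, I would apply Proposition \ref{prop:Papadima-Y}. Since $A$ is already quadratic, $\bar{A} = A$, and the proposition gives $U(\fh(A)) = \bar{A}^{!} = A^{!} = \big(U(\bar{\fg})^{!}\big)^{!} = U(\bar{\fg})$, using the involutivity $(B^{!})^{!} = B$ of quadratic duality. Thus $\fh(A)$ and $\bar{\fg}$ are two quadratic Lie algebras, both of the form $\Lie(V)/\langle\,\cdot\,\rangle$ with relations in $\Lie_2(V)$, having identical universal enveloping algebras. To finish, I would note that a quadratic Lie algebra generated in degree $1$ is recovered from its enveloping algebra, since the degree-$2$ relation space of $\Lie(V)/\langle S\rangle$ is exactly $\iota^{-1}$ of the degree-$2$ relations of $U = T(V)/\langle\iota(S)\rangle$; this is the identity $\iota(\fr_2) = \iota(\fr)\cap (V\otimes V)$ established in the proof of Proposition \ref{prop:holo closure}. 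Equal enveloping algebras therefore force $\fh(A) = \bar{\fg}$.

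The hard part will be the second step: verifying that $\overline{U(\fg)}^{\,!}$ is genuinely commutative and pinning down its presentation as $\bigwedge(V^*)/\langle\fr_2^{\vee}\rangle$. Everything hinges on the fact that Lie-bracket relations live among the antisymmetric tensors, which forces $\Sym^2(V^*)$ into the dual relations; once this is in hand, the biduality and the reconstruction of a quadratic Lie algebra from its enveloping algebra are formal. An alternative, more computational route would bypass biduality and instead evaluate $\fh(A) = \Lie(A_1^*)/\langle\ker(\mu_A)^{\vee}\rangle$ directly via \eqref{eq:hololie bis}, using $A_1^* = V$, $\ker(\mu_A) = \fr_2^{\vee}$, and the double-complement identity $(\fr_2^{\vee})^{\vee} = \fr_2$ valid in finite dimensions; this yields $\fh(A) = \Lie(V)/\langle\fr_2\rangle = \bar{\fg}$ at once.
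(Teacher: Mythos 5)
Your proof is correct and follows essentially the same route the paper intends: the paper offers no written argument beyond ``combining Propositions \ref{prop:holo closure} and \ref{prop:Papadima-Y},'' and your write-up is precisely that combination with the omitted details supplied (the check that $\overline{U(\fg)}^{\,!}=\bigwedge(V^*)/\langle \fr_2^{\vee}\rangle$ is graded-commutative, and the recovery of a quadratic Lie algebra from its enveloping algebra). Your closing ``alternative route'' via \eqref{eq:hololie bis} and the double-annihilator identity $(\fr_2^{\vee})^{\vee}=\fr_2$ is likewise just the argument of Proposition \ref{prop:quad holo} applied to this $A$, so it is not genuinely different either.
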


Work of L\"{o}fwall \cite[Thm.~1.1]{Lofwall} yields another interpretation of the 
universal enveloping algebra of the holonomy Lie algebra.  

\begin{prop}[\cite{Lofwall}]
\label{prop:yoneda}
Let $[\Ext^1_A(\k,\k)]:=\bigoplus_{i\ge 0} \Ext^i_A(\k,\k)_i$ be the linear strand 
in the Yoneda algebra of $A$.  Then $U(\h(A))\cong [\Ext^1_A(\k,\k)]$. 
\end{prop}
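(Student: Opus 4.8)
The plan is to deduce the statement from the computation already recorded in Proposition \ref{prop:Papadima-Y}, namely $U(\h(A))=\bar{A}^{!}$, and to match this against a direct computation of the linear strand of the Yoneda algebra. Since both $\h(A)$ and the diagonal strand $\bigoplus_i\Ext^i_A(\k,\k)_i$ depend only on the multiplication map $\mu_A\colon A_1\wedge A_1\to A_2$, the first reduction is to replace $A$ by its quadratic closure $\bar{A}$ from \eqref{eq:bara}. On the holonomy side this is harmless because $\h(A)\cong\h(\bar{A})$, as noted just above; on the $\Ext$ side the canonical map $q\colon\bar{A}\to A$, being an isomorphism in degree $1$ and a monomorphism in degree $2$, induces an isomorphism of linear strands, since the diagonal subalgebra is determined entirely by the degree-$1$ and degree-$2$ data of the algebra.

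First I would identify the pieces of the linear strand explicitly. Taking a minimal free resolution of $\k$ over $A$, the homological degree-$1$ term is generated by $A_1$, so $\Ext^1_A(\k,\k)$ is concentrated in internal degree $1$ and equals $A_1^{*}=V^{*}$. Dualizing the reduced bar construction, the diagonal strand in homological degree $i$ and internal degree $i$ is the cohomology of the complex $A_1^{\otimes\bullet}$ whose differential is induced by the single multiplication map $\mu_A\colon A_1\otimes A_1\to A_2$. In particular, the quadratic relations among the degree-$1$ generators of the diagonal subalgebra form exactly the space $K^{\vee}\subset V^{*}\wedge V^{*}$ of \eqref{eq:jvee}, where $K=\ker(\mu_A)$. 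This is the content of L\"ofwall's theorem \cite{Lofwall}: the diagonal subalgebra is generated by $\Ext^1$, it is quadratic, and its relation space is precisely $K^{\vee}$.

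By Lemma \ref{lem:vee} applied to $\bar{A}=\bigwedge(V)/\langle K\rangle$, the quadratic dual is $\bar{A}^{!}=T(V^{*})/\langle\iota(K^{\vee})\rangle$, which is exactly the presentation of the diagonal subalgebra obtained above. Hence the linear strand $\Ext^1_A(\k,\k)$ is isomorphic, as a graded algebra, to $\bar{A}^{!}$. Combining this with the identity $U(\h(A))=\bar{A}^{!}$ from Proposition \ref{prop:Papadima-Y} yields $U(\h(A))\cong\Ext^1_A(\k,\k)$, as desired.

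The main obstacle is the core of L\"ofwall's theorem: showing that the diagonal subalgebra of the Yoneda algebra is generated in homological degree $1$ and is quadratic with relation space $K^{\vee}$. Establishing that the diagonal is generated in degree $1$ requires control of the linear part of a minimal resolution and is the one step I would not expect to be formal; the matching of the quadratic relations with $K^{\vee}$, by contrast, is a direct duality computation once the bar differential has been identified with $\mu_A$. Since this is precisely L\"ofwall's result, I would either cite \cite{Lofwall} for this step or supply the bar-complex argument sketched above.
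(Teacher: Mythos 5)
Your argument is correct and matches the paper, which gives no proof of this proposition beyond citing L\"ofwall's Theorem~1.1: like you, it treats the identification of the diagonal subalgebra of the Yoneda algebra with $\bar{A}^{!}$ as the imported input and combines it with Proposition~\ref{prop:Papadima-Y}. One small imprecision to fix: if $A$ is not generated in degree $1$, the full $\Ext^1_A(\k,\k)$ need not be concentrated in internal degree $1$; only its internal-degree-$1$ component $A_1^{*}$ enters the linear strand, which is all your argument actually uses.
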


In particular, the graded ranks of  $\h=\h(A)$ 
are given by $\prod_{n\geq 1}(1-t^n)^{\dim \h_n} = \sum_{i\ge 0} b_{ii} t^i$, 
where $b_{ii}=\dim_{\k} \Ext^i_A(\k,\k)_i$.
The next proposition shows that every quadratic Lie algebra can be 
realized as the holonomy Lie algebra of a (quadratic) algebra. 

\begin{prop}
\label{prop:quad holo}
Let $\fg$ be a quadratic Lie algebra. There is then a commutative 
quadratic algebra $A$ such that $\fg=\fh(A)$. 
\end{prop}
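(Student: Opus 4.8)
The plan is to reverse-engineer the construction of the holonomy Lie algebra. Given a quadratic Lie algebra $\fg$, I want to produce a commutative quadratic algebra $A$ whose multiplication map $\mu_A\colon A_1\wedge A_1\to A_2$ dualizes to the defining relations of $\fg$. Since $\fg$ is quadratic, it has a presentation $\fg=\Lie(V)/\langle R\rangle$ with $V=\fg_1$ in degree $1$ and $R\subseteq \Lie_2(V)\cong V\wedge V$ the space of quadratic relations. Comparing with Definition \ref{def:holo alg}, which writes $\fh(A)=\Lie(A_1^*)/\langle\ker(\mu_A)^{\vee}\rangle$, I see that I need $A_1^*=V$ (so $A_1=V^*$) and I need $\ker(\mu_A)^{\vee}=R$ as a subspace of $V\wedge V$.

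\medskip

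First I would set $A_1=V^*$, so that $A_1^*=V$ and $A_1\wedge A_1=V^*\wedge V^*$. Using the pairing from \eqref{eq:jvee}, the operation $W\mapsto W^{\vee}$ is a bijection between subspaces of $V\wedge V$ and subspaces of $V^*\wedge V^*$, and it is an involution in the sense that $(W^{\vee})^{\vee}=W$ under the canonical identifications. So I would define the subspace $K:=R^{\vee}\subseteq V^*\wedge V^*=A_1\wedge A_1$, and then take $A$ to be the quadratic commutative algebra
\begin{equation}
\label{eq:myA}
A=\bigwedge(V^*)/\langle K\rangle,
\end{equation}
truncated in degrees above $2$ if I wish a cleaner model (though this is not strictly necessary). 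With this choice, $A_1=V^*$, the multiplication map $\mu_A\colon A_1\wedge A_1\to A_2$ is the quotient projection $V^*\wedge V^*\to (V^*\wedge V^*)/K$, and hence $\ker(\mu_A)=K=R^{\vee}$. Applying the involution, $\ker(\mu_A)^{\vee}=(R^{\vee})^{\vee}=R$, and therefore $\fh(A)=\Lie(V)/\langle R\rangle=\fg$, exactly as desired.

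\medskip

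\textbf{The main point to verify} is that $W\mapsto W^{\vee}$ is genuinely an involution restricting the orthogonal-complement operation to the exterior square, i.e.\ that $(R^{\vee})^{\vee}=R$ for $R\subseteq V\wedge V$. This is a routine linear-algebra fact over a field: the annihilator pairing between $V\wedge V$ and $V^*\wedge V^*$ is a perfect pairing (since $\dim V<\infty$), so taking annihilators twice recovers the original subspace. I would state this as the key identity and invoke finite-dimensionality of $V=\fg_1$, which holds because $\fg$ is finitely generated. The only mild subtlety is bookkeeping with the identifications $\Lie_2(V)\cong V\wedge V$ and $(V\wedge V)^*\cong V^*\wedge V^*$ used implicitly in Definition \ref{def:holo alg} and Lemma \ref{lem:vee}; I would make sure the sign conventions in $x\wedge y\mapsto [x,y]$ and in $\iota$ are compatible, but these do not affect the conclusion since they are applied uniformly on both sides. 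No genuine obstacle arises here—the content of the proposition is precisely that the holonomy construction, read through Lemma \ref{lem:vee}, is invertible on quadratic data.
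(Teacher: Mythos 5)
Your proposal is correct and is essentially identical to the paper's own proof: the paper also sets $A=\bigwedge(V^*)/\langle W^{\vee}\rangle$ for a presentation $\fg=\Lie(V)/\langle W\rangle$ and concludes via \eqref{eq:hololie bis} and the involution $(W^{\vee})^{\vee}=W$. Your extra remarks on finite-dimensionality and the perfectness of the pairing are exactly the (routine) justification the paper leaves implicit.
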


\begin{proof}
By assumption, $\fg=\Lie(V)/\langle W \rangle$, where 
$W$ is a linear subspace of $V\wedge V$.
Define $A=\bigwedge(V^*)/\langle W^{\vee} \rangle$. Then, 
by \eqref{eq:hololie bis}, 
$\fh(A)=\Lie((V^*)^*)/\langle (W^{\vee})^{\vee} \rangle=
\Lie(V)/\langle W\rangle$.
\end{proof}

\subsection{Koszul algebras}
\label{subsec:koszul}
Given any connected, locally finite, graded algebra $A$,
the trivial $A$-module $\k$ has a free, graded $A$-resolution of the form 
$\cdots \xrightarrow{\varphi_3} A^{n_2}  \xrightarrow{\varphi_2}
A^{n_1} \xrightarrow{\varphi_1} A \to \k$; 
such a resolution is \emph{minimal}\/ if all the nonzero 
entries of the matrices $\varphi_i$ have positive degrees.  
The algebra $A$ is \emph{Koszul}\/ if the minimal 
$A$-resolution of $\k$ is linear, 
or, equivalently, $\Ext_A(\k,\k)=[\Ext_A^1(\k,\k)]$. Such an 
algebra is always quadratic, but the converse is far from true.
If $A$ is a Koszul algebra, then the quadratic dual $A^{!}$ is 
also a Koszul algebra, and the following `Koszul duality' formula holds:
\begin{equation}
\label{eq:kdual}
\Hilb(A,t)\cdot \Hilb(A^!,-t)=1.
\end{equation}

Furthermore, if $A$ is a graded algebra of the form 
$A=T(V)/I$, where $I$ is an ideal admitting a (noncommutative) 
quadratic Gr\"{o}bner basis, then $A$ is a Koszul algebra 
(see \cite{Froberg97}).

\begin{corollary}
\label{cor:kdual holo}
Let $A$ be a connected, commutative graded algebra.
If $\qA$ is a Koszul algebra, then $\Hilb(\qA,-t)\cdot \Hilb(U(\h(A)),t)=1$.
\end{corollary}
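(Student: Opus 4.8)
The plan is to combine the Koszul duality formula with the Löfwall--Papadima--Yuzvinsky identification of the universal enveloping algebra of the holonomy Lie algebra as a quadratic dual. By Proposition \ref{prop:Papadima-Y}, we have $U(\h(A)) = \bar{A}^!$, where $\bar{A}$ is the quadratic closure of $A$. So the Hilbert series $\Hilb(U(\h(A)),t)$ is precisely $\Hilb(\bar{A}^!,t)$, and the statement reduces to showing $\Hilb(\bar{A},-t)\cdot \Hilb(\bar{A}^!,t)=1$.

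First I would invoke the hypothesis that $\bar{A}$ is Koszul. The Koszul duality formula \eqref{eq:kdual} gives, for any Koszul algebra, $\Hilb(\bar{A},t)\cdot \Hilb(\bar{A}^!,-t)=1$. This is almost what I want, except the signs are placed on the opposite factors. Substituting $t\mapsto -t$ throughout the identity \eqref{eq:kdual} yields $\Hilb(\bar{A},-t)\cdot \Hilb(\bar{A}^!,t)=1$, since the substitution is valid as an identity of formal power series. Then I would rewrite $\Hilb(\bar{A}^!,t)$ as $\Hilb(U(\h(A)),t)$ using Proposition \ref{prop:Papadima-Y}, obtaining exactly $\Hilb(\bar{A},-t)\cdot \Hilb(U(\h(A)),t)=1$, as claimed.

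The only mild subtlety is making sure that the substitution $t \mapsto -t$ is legitimate and that $\bar{A}^!$ is itself Koszul so that the formula applies in the form needed. The latter follows because the quadratic dual of a Koszul algebra is again Koszul (as recorded just before the corollary), so no separate verification is required; but in fact the cleaner route is to apply \eqref{eq:kdual} directly to the Koszul algebra $\bar{A}$ and then substitute, which only uses that $\bar{A}$ is Koszul. There is no real obstacle here: the proof is essentially a one-line substitution combined with the identification from Proposition \ref{prop:Papadima-Y}, so the entire argument is routine bookkeeping of signs in the Hilbert series.
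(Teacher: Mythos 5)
Your proof is correct and follows the same route the paper intends: the corollary is stated without explicit proof precisely because it is the immediate combination of the identification $U(\h(A))=\bar{A}^!$ from Proposition \ref{prop:Papadima-Y} with the Koszul duality formula \eqref{eq:kdual} applied to $\bar{A}$, followed by the substitution $t\mapsto -t$. Your bookkeeping of which factor carries the sign change is right, and no further argument is needed.
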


\begin{example}
\label{ex:non-koszul}
Consider the quadratic algebra $A=\bigwedge(u_1,u_2,u_3,u_4)/(u_1u_2-u_3u_4)$.  
Clearly, we have $\Hilb(A,t)=1 +4t + 5t^2$.  
If $A$ were Koszul, then formula \eqref{eq:kdual} would give
$\Hilb(A^!,t)=1 + 4 t + 11 t^2 + 24 t^3 + 41 t^4 + 44 t^5 - 29 t^6+\cdots $, 
which is impossible.
\end{example}

\begin{example}
\label{ex:BEER}
The quasitriangular Lie algebra $\qtr_n$ defined in \cite{Bartholdi-E-E-R} 
is generated by $x_{ij}$, $1\leq i\neq j\leq n$ with relations
$[x_{ij},x_{ik}]+[x_{ij},x_{jk}]+[x_{ik},x_{jk}]=0$ for distinct $i,j,k$ 
and $[x_{ij},x_{kl}]=0$ for distinct $i,j,k,l$.   
The Lie algebra $\tr_n$ is the quotient 
Lie algebra of $\qtr_n$ by the ideal generated by
$x_{ij}+x_{ji}$ for distinct $i\neq j$. In \cite{Bartholdi-E-E-R}, 
Bartholdi et al.~show that the quadratic dual algebras 
$U(\qtr_n)^!$ and $U(\tr_n)^!$ are Koszul, and compute their Hilbert series.
They also state that neither $\qtr_n$ nor $\tr_n$ 
is filtered-formal for $n\geq 4$, and sketch a proof of this assertion; we give  
a detailed proof of this fact in \cite{SW-pvb}.
\end{example}

\section{Minimal models and (partial) formality}
\label{sec:formality}

In this section, we discuss two basic notions in non-simply-connected rational 
homotopy theory: the minimal model and the (partial) formality properties 
of a differential graded algebra. 

\subsection{Minimal models of $\dga$s}
\label{subsec:minimalModel}

We follow the approach of Sullivan \cite{Sullivan}, Deligne et al.~\cite{DGMS},  
and Morgan \cite{Morgan}, as further developed in \cite{FHT,FHT2, Griffiths-Morgan13, 
Halperin-Stasheff79, Kohno, Macinic}.
We start with some basic algebraic notions.

\begin{definition}
\label{def:dga}
A \emph{differential graded algebra}\/ (for short, a \dga)  
over a field $\k$ of characteristic $0$ is a graded $\k$-algebra 
$A^{\hdot}=\bigoplus_{n\geq 0}A^n$ equipped with a differential 
$d\colon A\rightarrow A$ of degree $1$ satisfying  
$ab=(-1)^{mn}ba$  and $d(ab)=d(a)\cdot b+(-1)^{\abs{a}}a\cdot d(b)$ 
for any $a\in A^m$ and $b\in A^n$. 
We denote the \dga~ by $(A^{\hdot},d)$ or simply by $A^{\hdot}$ if there is no confusion.
 \end{definition}

A morphism $f\colon A^{\hdot}\rightarrow B^{\hdot}$ between two $\dga$s 
is a degree zero algebra map which commutes with the differentials.
 A \emph{Hirsch extension}\/ (of degree $i$) is a $\dga$ inclusion 
$\alpha\colon (A^{\hdot},d_A)\inj (A^{\hdot}\otimes \bigwedge(V),d)$, where 
$V$ is a $\k$-vector space concentrated in 
degree $i$, while $\bigwedge(V)$ is the free graded-commutative 
algebra generated by $V$, and $d$ sends $V$ into $A^{i+1}$. 
We say this is a {\em finite}\/ Hirsch extension if $\dim V<\infty$.  
Note that all tensor product in this section is over $\k$, hence we will denote
$\otimes$ for short.

\begin{definition}[\cite{Sullivan}]
\label{def:min}
A \dga ~$(A^{\hdot},d)$ is called \emph{minimal}\/ if $A^0=\k$, and  the 
following conditions are satisfied:
\begin{enumerate}
\item $A^{\hdot}=\bigcup_{j\geq0}A^{\hdot}_j$, where $A_0=\k$, and $A_{j}$ is a Hirsch 
extension of $A_{j-1}$, for all $j\geq 0$.
\item The differential is \emph{decomposable}, i.e.,  
$dA^{\hdot}\subset A^+\wedge A^+$, where $A^+=\bigoplus_{i\geq1}A^i$.
 \end{enumerate}
 \end{definition}
 
The first condition implies that $A^{\hdot}$ has an increasing, exhausting 
filtration by the sub-$\dga$s $A_j^{\hdot}$; equivalently, $A^{\hdot}$ is free as a 
graded-commutative algebra on generators of degree $\geq 1$. 
(Note that we use the lower-index for the filtration, and the upper-index 
for the grading.)  The second condition is automatically satisfied 
if $A$ is generated in degree $1$. 

A $\dga$ map $f\colon A\to B$ is said to be a {\em quasi-isomorphism}\/ 
if all the induced maps in  cohomology, $H^j(f)\colon H^{j}(A)\to  H^{j}(B)$,  
are isomorphisms.  Two $\dga$s $A$ and $B$ 
are \emph{weakly equivalent}\/ (written $A\simeq B$) if 
there is a zig-zag of quasi-isomorphisms  connecting them. 
Likewise, for an integer $i\ge 0$, a morphism 
$f\colon A\to B$ is an {\em $i$-quasi-isomorphism}\/ 
if $H^j(f)$ is an isomorphism for  $j\le i$ 
and  an injection for $j=i+1$. Furthermore,
$A$ and $B$ are {\em $i$-weakly equivalent}\/ ($A\simeq_i B$) 
if there is a zig-zag of $i$-quasi-isomorphism connecting $A$ to $B$.
The next two lemmas follow straight from the definitions.

\begin{lemma}
\label{lem:Hirschext}
Any $\dga$ morphism $\phi\colon (A,d_A)\to (B,d_B)$ 
extends to a $\dga$ morphism of Hirsch extensions, 
 $\bar{\phi}\colon (A,d_A)\otimes \bigwedge(x)\to 
(B,d_B)\otimes\bigwedge(y)$, provided that $d(y)=\phi(d(x))$. 
Moreover, if $\phi$ is a (quasi-) isomorphism, then so is $\bar{\phi}$.
\end{lemma}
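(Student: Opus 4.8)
The statement to prove is Lemma~\ref{lem:Hirschext}: any $\dga$ morphism $\phi\colon (A,d_A)\to (B,d_B)$ extends to a $\dga$ morphism of Hirsch extensions $\bar\phi\colon (A,d_A)\otimes\bigwedge(x)\to(B,d_B)\otimes\bigwedge(y)$ provided $d(y)=\phi(d(x))$, and that $\bar\phi$ inherits the (quasi-)isomorphism property from $\phi$.

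The plan is as follows. First I would construct the extension $\bar\phi$ explicitly. Since $A\otimes\bigwedge(x)$ is free as a graded-commutative algebra over $A$ on the single generator $x$, a degree-zero algebra map out of it is determined by its restriction to $A$ together with the image of $x$. So I would define $\bar\phi|_A=\phi$ and $\bar\phi(x)=y$, then extend multiplicatively; this forces $\bar\phi(a\otimes x)=\phi(a)\otimes y$ and similarly on higher powers of $x$ (which vanish if $x$ has odd degree). I would then verify this is a well-defined algebra map — the only thing to check is compatibility with the graded-commutativity relations, which holds because $x$ and $y$ sit in the same degree $i$ and $\phi$ preserves degrees. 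The genuinely substantive point is that $\bar\phi$ commutes with the differentials. On generators coming from $A$ this is immediate from $\phi d_A=d_B\phi$. On the new generator, the two sides are $\bar\phi(d x)=\phi(d x)$ and $d(\bar\phi x)=d(y)$, and these agree precisely by the hypothesis $d(y)=\phi(d(x))$. Since $d$ is a derivation and both $\bar\phi$ and the composites are determined by their values on algebra generators, the Leibniz rule propagates this to all of $A\otimes\bigwedge(x)$, so $\bar\phi$ is a $\dga$ morphism.

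For the second assertion I would use the standard filtration comparison. Both source and target are (finite) Hirsch extensions, so I would exploit the short exact sequences relating the cohomology of a Hirsch extension to that of the base. Concretely, the inclusion of $A$ into $A\otimes\bigwedge(x)$ and the analogous inclusion for $B$ fit into a commuting ladder with $\bar\phi$, and one compares the two via the long exact (Gysin-type / Wang-type) sequences induced by multiplication-by-$x$, or equivalently by filtering $\bigwedge(x)$ by powers of $x$ and running the associated spectral sequence of the filtration. Since $\phi$ is a quasi-isomorphism, it induces isomorphisms on all the cohomology groups appearing in the base terms, and the map $d(x)\mapsto d(y)=\phi(d(x))$ ensures the connecting maps match up; the five-lemma (or the comparison theorem for the spectral sequences) then forces $\bar\phi^*$ to be an isomorphism. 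The same argument works verbatim when $\phi$ is merely an isomorphism of $\dga$s, since in that case $\bar\phi$ is visibly bijective on underlying graded algebras and commutes with $d$.

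I expect the main obstacle to be bookkeeping in the cohomology comparison rather than any conceptual difficulty: one must set up the exact sequence for a Hirsch extension carefully, distinguishing the even- and odd-degree cases for $x$ (in the odd case $\bigwedge(x)=\k\oplus\k x$ is two-dimensional and the analysis reduces to a short Wang sequence, whereas in the even case one has a polynomial generator and a genuine filtration). Given that the lemma is asserted to ``follow straight from the definitions,'' I anticipate the intended proof is the short direct verification of the differential-compatibility in the first paragraph, with the quasi-isomorphism claim handled by citing the standard fact that a quasi-isomorphism of bases induces a quasi-isomorphism of Hirsch extensions (a basic result in the references \cite{FHT, Griffiths-Morgan13, Halperin-Stasheff79} already invoked in this section); I would phrase the write-up to lean on that citation rather than reproving the spectral-sequence comparison from scratch.
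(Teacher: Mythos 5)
Your proposal is correct, and it supplies exactly the argument the paper leaves implicit: the authors give no proof of this lemma, stating only that it ``follows straight from the definitions.'' Defining $\bar{\phi}$ on algebra generators by $\bar{\phi}|_A=\phi$ and $\bar{\phi}(x)=y$, checking compatibility with the differentials on generators via the hypothesis $d(y)=\phi(d(x))$ and propagating by the Leibniz rule, and then comparing cohomology through the word-length filtration of the Hirsch extension (or the Wang-type sequence in the odd-degree case) is the standard and intended route; I see no gaps.
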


\begin{lemma}
\label{lem:hi}
Let $\alpha\colon A\to B$ be the inclusion map of Hirsch extension 
of degree $i+1$.  Then $\alpha$ is an $i$-quasi-isomorphism.  
\end{lemma}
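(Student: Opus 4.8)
The plan is to prove the two required statements --- that $\alpha^*\colon H^j(A)\to H^j(B)$ is an isomorphism for $j\le i$ and a monomorphism for $j=i+1$ --- by a direct, degree-by-degree inspection of the cochain complex $B=A^*\otimes\bigwedge(V)$, where by the definition of a Hirsch extension of degree $i+1$ the vector space $V$ is concentrated in degree $i+1$ and $d(V)\subseteq A^{i+2}$. The key structural fact I would record first is the decomposition of $B$ according to word length in the new generators: since each generator of $V$ has degree $i+1$, a product of $k$ of them lies in degree $k(i+1)$, so
\[
B^n=\bigoplus_{k\ge 0} A^{\,n-k(i+1)}\otimes \bigwedge\nolimits^{k}(V).
\]
Everything then follows from counting which summands can appear in a given low degree.

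First I would treat the degrees $j\le i$. For $n\le i$ only the $k=0$ term in the displayed decomposition is nonzero (as $n-(i+1)<0$), so $\alpha\colon A^n\isom B^n$ is the identity on cochains. One must still check that the cocycles and coboundaries match. For $j\le i-1$ the two complexes agree in degrees $j-1,j,j+1$, giving $H^j(B)=H^j(A)$ outright. In degree $j=i$ the target $B^{i+1}=A^{i+1}\oplus V$ acquires the new summand $V$, but $d$ applied to $A^i\subseteq B^i$ equals $d_A$ and lands in $A^{i+1}$; hence $Z^i(B)=Z^i(A)$, while the coboundaries into $B^i$ come from $B^{i-1}=A^{i-1}$ and so coincide with those of $A$. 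Thus $\alpha^*$ is an isomorphism in every degree $j\le i$.

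It remains to prove injectivity in degree $i+1$, which is where the argument has its (modest) crux: I must verify that no cocycle of $A^{i+1}$ acquires a new primitive in $B$. Here $B^{i+1}=A^{i+1}\oplus V$, and the coboundaries landing in $B^{i+1}$ come only from $d(B^i)=d_A(A^i)$, which is contained in the summand $A^{i+1}$ and equals the space of coboundaries of $A$ in that degree. Consequently, if $a\in A^{i+1}$ is a cocycle with $\alpha(a)=d_B(b)$ for some $b\in B^i=A^i$, then $a=d_A(b)$ is already a coboundary in $A$, so $[a]=0$ in $H^{i+1}(A)$; this is exactly injectivity of $\alpha^*$ in degree $i+1$. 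The only point requiring care is the edge case $i=0$, where $\bigwedge(V)$ is a polynomial rather than exterior algebra and $\bigwedge\nolimits^{2}(V)$ contributes; however, this extra summand sits in degree $2(i+1)\ge i+2$, so it never enters the degrees $\le i+1$ relevant to the claim, and the bookkeeping above goes through unchanged.
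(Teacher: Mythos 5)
Your proof is correct and is precisely the direct verification the paper has in mind when it states that this lemma ``follows straight from the definitions'': the word-length decomposition of $A\otimes\bigwedge(V)$ shows the two complexes agree through degree $i$ and differ in degree $i+1$ only by the extra summand $V$, which receives no coboundaries from $B^i=A^i$. One small slip in your final remark: for $i=0$ the generators of $V$ have odd degree $1$, so $\bigwedge(V)$ is an exterior (not polynomial) algebra---but this is immaterial, since your actual point, that word-length $\ge 2$ terms sit in degree at least $2(i+1)\ge i+2$ and hence never enter degrees $\le i+1$, is what the argument needs and is correct.
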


We say that a $\dga$ $B$ is a \emph{minimal model}\/ for a $\dga$ $A$ 
if $B$ is a minimal $\dga$ and there exists a quasi-isomorphism $f\colon B\rightarrow A$. 
Likewise, we say that a minimal $\dga$ $B$ is an \emph{$i$-minimal model}\/ for $A$ if  $B$ is 
generated by elements of degree at most $i$, and there 
exists an $i$-quasi-isomorphism $f\colon B\rightarrow A$. A basic result in 
rational homotopy theory is the following existence and uniqueness theorem, 
first proved for (full) minimal models by Sullivan \cite{Sullivan}, and in full generality 
by Morgan in \cite[Thm.~5.6]{Morgan}.

\begin{theorem}[\cite{Morgan, Sullivan}]
\label{thm:mm} 
Each connected $\dga$ $(A,d)$ has a minimal model $\cM(A)$, 
unique up to isomorphism. Likewise, for each $i\ge 0$, there is an 
$i$-minimal model $\cM(A,i)$, unique up to isomorphism.
\end{theorem}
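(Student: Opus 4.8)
The plan is to prove the existence and uniqueness of minimal models (and $i$-minimal models) by an inductive construction up the Hirsch filtration, following the standard approach of Sullivan and Morgan. Since the full statement is attributed to these authors and is a foundational result, I would organize the argument around the skeletal construction of the model one degree at a time, using the Hirsch extension lemmas (Lemmas \ref{lem:Hirschext} and \ref{lem:hi}) stated just above.

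For \textbf{existence}, I would build $\cM(A)$ as an increasing union $\bigcup_j A_j^*$ of sub-\dga's, where each $A_j^*$ is obtained from $A_{j-1}^*$ by a sequence of Hirsch extensions in degree $j$, together with compatible morphisms $f_j\colon A_j^*\to A$. Starting from $A_0^*=\k$, the inductive step proceeds in two parts: first I would add generators in degree $j$ to create new cohomology classes so that $f_j$ becomes surjective on $H^j$; concretely, for each class in $H^j(A)$ not hit by $H^j(A_{j-1}^*)$, one adjoins a closed generator $x$ with $dx=0$ and maps it to a representative cocycle. Second, I would add further degree-$j$ generators with nonzero (decomposable) differential to kill any spurious cohomology, i.e. to make $f_j$ injective on $H^j$; here one adjoins $x$ with $dx$ equal to a chosen cocycle in $A_{j-1}^{j+1}$ representing a class that maps to zero in $H^{j+1}(A)$, and extends $f_j$ by sending $x$ to a suitable primitive in $A$. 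The decomposability condition (2) of Definition \ref{def:min} is maintained because all the differentials of new generators are built from products of lower-degree generators. Taking the union over all $j$ produces the minimal $\dga$ $\cM(A)$ with a quasi-isomorphism $f\colon \cM(A)\to A$. For the \emph{$i$-minimal model} $\cM(A,i)$, I would run the identical construction but simply stop adjoining generators once the degree exceeds $i$; Lemma \ref{lem:hi} guarantees that the resulting inclusion is an $i$-quasi-isomorphism, so the composite $f\colon \cM(A,i)\to A$ is an $i$-quasi-isomorphism.

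For \textbf{uniqueness up to isomorphism}, suppose $\cM$ and $\cM'$ are two minimal models of $A$, with quasi-isomorphisms $f\colon\cM\to A$ and $f'\colon\cM'\to A$. The strategy is to lift $f'$ along $f$ to a $\dga$ morphism $g\colon\cM'\to\cM$ covering $f'$ up to homotopy, and then to show any such $g$ is an isomorphism. The lifting is carried out degree by degree over the Hirsch filtration of $\cM'$, invoking Lemma \ref{lem:Hirschext} to extend $g$ across each Hirsch extension; the obstruction to extending at each stage lives in a cohomology group that vanishes precisely because $f$ is a quasi-isomorphism, so the map exists. That $g$ is then an isomorphism follows because $g$ induces an isomorphism on cohomology (both $f$ and $f'$ do) and a quasi-isomorphism between minimal \dga's is automatically an isomorphism—this last fact is itself proved by a filtration-by-filtration comparison, showing $g$ is bijective on each space of indecomposables. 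The same lifting argument, truncated at degree $i$ and using Lemma \ref{lem:hi}, yields uniqueness of $\cM(A,i)$.

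The \textbf{main obstacle} is the uniqueness half, and specifically the homotopy-theoretic lifting of morphisms. The existence construction is essentially bookkeeping once the two-step inductive pattern (adjoin classes, then kill classes) is set up, and the decomposability of the differential is automatic. By contrast, the lifting in the uniqueness proof requires the full apparatus of $\dga$ homotopy: one must define a homotopy between $\dga$ morphisms, verify that the obstruction to extending a partial lift across a Hirsch extension is a well-defined cohomology class, and check that it vanishes. Rather than reconstruct this machinery in detail, I would cite Theorem 5.6 of Morgan \cite{Morgan} for the general (non-simply-connected, $i$-truncated) case and Sullivan \cite{Sullivan} for the full minimal model, since the result is standard and the attribution is already recorded in the statement.
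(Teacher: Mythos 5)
The paper does not prove this theorem at all; it simply attributes it to Sullivan and to Morgan's Theorem 5.6, which is exactly where your proposal ends up as well. Your sketch of the standard argument (existence by the two-step inductive Hirsch-extension construction, uniqueness by obstruction-theoretic lifting plus the fact that a quasi-isomorphism of minimal \dga's is an isomorphism) is the correct outline of what those references contain, so your treatment matches the paper's.
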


It follows from the proof of Theorem \ref{thm:mm} that the minimal model  
$\cM(A)$ is isomorphic to a minimal model built from the $i$-minimal 
model $\cM(A,i)$ by means of Hirsch extensions in degrees $i+1$ and higher.  
Thus, in view of Lemma \ref{lem:hi}, $\cM(A,i)\simeq_i \cM(A)$.  

\subsection{Minimal models and holonomy Lie algebras}
\label{subsec:sullivan-holonomy}

Let $\cM=(\cM^{\hdot},d)$ be a minimal $\dga$ over $\k$, generated in degree $1$. 
Following \cite{Morgan, Kohno, FHT2}, let us consider the filtration
\begin{equation}
\label{eq:filtration-minimal}
\k=\cM_0\subset \cM_1\subset \cM_2\subset \cdots \subset \cM=\bcup_i \cM_i,
\end{equation}
where $\cM_1$ is the subalgebra of $\cM$ generated by $x\in \cM^1$ such that $dx=0$,
and $\cM_{i}$ is the subalgebra of $\cM$ generated by $x\in \cM^1$ such that $dx\in \cM_{i-1}$
for $i> 1$.  Each inclusion $\cM_{i-1}\subset \cM_{i}$ is a 
Hirsch extension of the form $\cM_{i}=\cM_{i-1}\otimes \bigwedge (V_{i})$, 
where $V_i:=\ker\big(H^2(\cM_{i-1})\to H^2(\cM)\big)$.
Taking the degree $1$ part of the filtration \eqref{eq:filtration-minimal}, 
we obtain the filtration $\k=\cM_0^1\subset \cM_1^1\subset  \cdots \subset \cM^1$.

Now assume each of the above Hirsch extensions is finite, i.e., 
$\dim(V_i)<\infty$ for all $i$.  Using the fact that 
$d(V_i)\subset \cM_{i-1}$, we see that each 
dual vector space $\fL_i=(\cM^1_i)^*$ 
acquires the structure of a $\k$-Lie algebra by setting
\vspace*{-3pt}
\begin{equation}
\label{eq:duality}
\langle [u^*,v^*], w\rangle = \langle  u^*\wedge v^*, dw \rangle 
\end{equation}
for $u,v,w\in \cM^1_i$.  Clearly, $d(V_1)=0$, and thus $\fL_1=(V_1)^*$ is 
an abelian Lie algebra. Using the vector space decompositions 
$\cM^1_i=\cM^1_{i-1}\oplus V_i$ and 
$\cM^2_i=\cM^2_{i-1}\oplus (\cM^1_{i-1}\otimes V_i)\oplus \bigwedge^2(V_i)$ 
we easily see that  the canonical projection $\fL_i\surj \fL_{i-1}$ (i.e., the dual 
of the inclusion map $\cM_{i-1}\inj\cM_i$) has kernel $V_i^*$, 
and this kernel is central inside $\fL_i$. 
Therefore, we obtain a tower of finite-dimensional 
nilpotent $\k$-Lie algebras, 
\vspace*{-3pt}
\begin{equation}
\label{eq:nilp lie tower}
\xymatrixcolsep{20pt}
\xymatrix{
0 & \fL_1 \ar@{->>}[l]& \fL_2\ar@{->>}[l] & \cdots\ar@{->>}[l] & \fL_i 
\ar@{->>}[l]& \cdots \ar@{->>}[l].
}.
\end{equation}

The inverse limit of this tower, $\fL=\fL(\cM)$, endowed with the inverse limit filtration, 
is a complete, filtered Lie algebra such that $\fL/\widehat{\Gamma}_{i+1} \fL=\fL_i$, 
for each $i\ge 1$.  Conversely, from a tower as in \eqref{eq:nilp lie tower}, we can construct 
a sequence of finite Hirsch extensions $\cM_i$ as in \eqref{eq:filtration-minimal}.  
Furthermore, the $\dga$ $\cM_i$, with differential defined by \eqref{eq:duality},  
coincides with the Chevalley--Eilenberg complex
 $(\bigwedge(\fL_i^*),d)$ associated to the finite-dimensional 
Lie algebra $\fL_i=\fL(\cM_i)$, as in \cite[\S II]{FHT2} and \cite[\S VII]{Hilton-Stammbach97}. 
In particular,
\begin{equation}
\label{eq:ce}
H^{\hdot}(\cM_i)\cong H^{\hdot}(\fL_i;\k)\, .
\end{equation}

The direct limit of the above sequence of Hirsch extensions, $\cM=\bigcup_i \cM_i$, 
is a minimal $\k$-$\dga$ generated in degree $1$, which we denote by $\cM(\fL)$.  
We obtain in this fashion an adjoint correspondence that sends $\cM$ to the  
pronilpotent Lie algebra $\fL(\cM)$  and conversely, sends a pronilpotent Lie 
algebra $\fL$ to the minimal algebra $\cM(\fL)$.     
Under this correspondence, filtration-preserving $\dga$ morphisms   
$\cM \to \cN$ get sent to filtration-preserving Lie morphisms $\fL(\cN) \to \fL(\cM)$, 
and vice-versa.

\subsection{Positive weights}
\label{subsec:posWeights}

Following \cite{BMSS, Morgan, Sullivan}, 
we say that a $\cga$ $A^{\hdot}$ has \emph{positive weights}\/ if each graded piece 
has a vector space decomposition $A^i=\bigoplus_{\alpha\in\Z}A^{i,\alpha}$ with 
$A^{1,\alpha}=0$ for $\alpha\leq 0$,  such that $xy\in A^{i+j,\alpha+\beta}$ for 
$x\in A^{i,\alpha}$ and $y\in A^{j,\beta}$.  Furthermore, we say that a $\dga$ 
$(A^{\hdot},d)$ has \emph{positive weights}\/ if the underlying $\cga$ $A^{\hdot}$ has 
positive weights, and the differential is homogeneous with respect to those 
weights, i.e., $d(x)\in A^{i+1,\alpha}$ for $x\in A^{i,\alpha}$.

Now let $(\cM^{\hdot},d)$ be a minimal $\dga$ generated in degree one, endowed 
with the canonical filtration $\{\cM_i\}_{i\ge 0}$ constructed in \eqref{eq:filtration-minimal}, 
where each sub-$\dga$ $\cM_i$ given by 
a Hirsch extension of the form $\cM_{i-1}\otimes \bigwedge(V_i)$. 
The underlying $\cga$ $\cM^{\hdot}$ possesses a natural set of positive 
weights, which we will refer to as the {\em Hirsch weights}: simply declare 
$V_i$ to have weight $i$, and extend those weights to $\cM^{\hdot}$ multiplicatively.  
We say that the $\dga$ $(\cM^{\hdot},d)$ has {\em positive Hirsch weights}\/ if 
the differential $d$ is homogeneous with respect to those weights.  If this is the case, 
each sub-$\dga$ $\cM_i$ also has positive Hirsch weights. 

\begin{lemma}
\label{lem:positivegraded}
Let $\cM=(\cM^{\hdot},d)$ be a minimal \dga~ generated in degree one, with dual Lie algebra $\fL$. 
Then $\cM$ has positive Hirsch weights if and only if
$\fL=\widehat{\gr}(\fL)$.
\end{lemma}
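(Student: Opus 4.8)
The plan is to prove both implications by comparing the differential on $\cM$ with the Lie bracket on $\fL$ via the duality \eqref{eq:duality}, exploiting the fact that the Hirsch weight of a generator $v\in V_i$ equals the filtration level at which the dual element $v^*$ enters the tower \eqref{eq:nilp lie tower}. The key observation is that under the identification $\fL_i=(\cM^1_i)^*$, the Hirsch weight filtration on $\cM^1$ is dual to the filtration on $\fL$ coming from the tower, and this latter filtration is precisely the LCS filtration $\Gamma_k\fL$ when $\fL$ is generated in degree one. Concretely, I would set up a weight grading on $\cM^1=\bigoplus_i V_i$ and dualize it to a grading on the graded pieces of $\fL$, so that the statement $\fL=\widehat{\gr}(\fL)$ becomes the statement that $\fL$ admits a compatible grading refining its LCS filtration.

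First I would establish the forward direction. Assume $\cM$ has positive Hirsch weights, so $d$ is homogeneous of weight zero with respect to the weight decomposition $\cM^1=\bigoplus_\alpha \cM^{1,\alpha}$, where $\cM^{1,\alpha}$ is spanned by the generators $V_i$ of weight $\alpha$. Dualizing, I get a direct sum decomposition $\fL=\prod_\alpha \fL^\alpha$ with $\fL^\alpha=(\cM^{1,\alpha})^*$. The defining formula \eqref{eq:duality}, $\langle [u^*,v^*],w\rangle=\langle u^*\wedge v^*,dw\rangle$, combined with weight-homogeneity of $d$, shows that the bracket sends $\fL^\alpha\times\fL^\beta$ into the completion of $\fL^{\alpha+\beta}$; that is, the weight grading makes $\gr(\fL)$ into a graded Lie algebra and identifies $\fL$ with its degree completion. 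Concretely, I would check that the weight filtration coincides with the LCS filtration on $\fL$ (using that $\fL$ is generated in degree one, so the generators $V_1^*$ carry weight one and brackets raise weight additively), whence $\fL\cong\widehat{\gr}(\fL)$ by Corollary \ref{cor:ff crit}.

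For the converse, I would suppose $\fL=\widehat{\gr}(\fL)$, so $\fL$ is the degree completion of a graded Lie algebra $\g=\bigoplus_{k\ge 1}\g_k$ with $\g_k=\gr_k(\fL)$. This grading endows each $\cM^1_i=(\fL_i)^*$ with a dual grading, and I would declare these grades to be the weights: a generator dual to a degree-$k$ element of $\g$ receives weight $k$. The point is then to verify that the differential $d$, defined through \eqref{eq:duality} using the bracket of $\fL$, is homogeneous with respect to these weights. Since the bracket on $\g=\gr(\fL)$ strictly respects the grading (it is a \emph{graded} Lie algebra, sending $\g_\alpha\times\g_\beta\to\g_{\alpha+\beta}$), dualizing shows $d$ maps weight-$\alpha$ elements of $\cM^1$ into weight-$\alpha$ elements of $\cM^2$, and extending multiplicatively gives positive Hirsch weights on all of $\cM$. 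Here I must confirm that the chosen weights genuinely agree with the Hirsch weights, i.e., that the grading filtration on $\g$ matches the tower filtration built into $\cM$; this is where the hypothesis that $\cM$ is generated in degree one is essential, since it forces the LCS and weight filtrations to coincide.

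The main obstacle I anticipate is the bookkeeping that reconciles two a priori different filtrations on $\fL$: the intrinsic LCS filtration and the filtration by the tower \eqref{eq:nilp lie tower} coming from the Hirsch extensions. These need not agree for a general minimal $\dga$, but the genuinely useful content of the lemma is that positive Hirsch weights are exactly the obstruction-free situation in which an internal grading on $\gr(\fL)$ lifts back through the inverse limit to split the filtration on $\fL$. The delicate step is ensuring that the weight decomposition $\fL=\prod_\alpha\fL^\alpha$ obtained by dualizing is not merely a vector-space splitting but is respected by the bracket, which amounts to checking that the ``off-diagonal'' (weight-changing) components of the bracket vanish precisely when $d$ is weight-homogeneous; this equivalence is forced by the duality pairing \eqref{eq:duality}, and once it is spelled out, both directions follow by dualizing the statement that a bracket (respectively a differential) is graded.
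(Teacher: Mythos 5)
Your proposal is correct and follows essentially the same route as the paper: both arguments dualize the weight-homogeneity of $d$ through the pairing \eqref{eq:duality} to the condition $[V_i^*,V_j^*]\subset V_{i+j}^*$, and then identify the Hirsch-weight filtration on $\fL$ with the (completed) LCS filtration so that this condition becomes $\fL=\widehat{\gr}(\fL)$. The only quibble is your appeal to Corollary \ref{cor:ff crit} at the end of the forward direction, which records a consequence of filtered formality rather than a criterion for it; by that point you have already exhibited the equality $\fL=\widehat{\gr}(\fL)$ directly, so no citation is needed.
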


\begin{proof}
As usual, write $\cM=\bigcup \cM_i$, with $\cM_i=\cM_{i-1}\otimes \bigwedge(V_i)$.
Since $\cM$ is generated in degree $1$, the differential  is homogeneous 
with respect to the Hirsch weights if and only if
$d(V_s)\subset \bigoplus_{i+j=s}V_i\wedge V_j$, for all $s\ge 1$.  
Passing now to the dual Lie algebra $\fL=\fL(\cM)$ and using 
\eqref{eq:duality}, we see that this condition is equivalent to having 
$[V_i^*, V_j^*]\subset V_{i+j}^*$, for all $i,j\ge 1$. In turn, this is equivalent 
to saying that each  $\fL_i$ is a graded Lie algebra 
with $\gr_k(\fL_i)=V_k^*$, for each $k\leq i$, which means that the filtered Lie algebra 
$\fL=\varprojlim_i \fL_i$ coincides with the completion of its associated graded 
Lie algebra, $\widehat{\gr}(\fL)$.
\end{proof}

\begin{remark}
\label{rem:h-weights}
The property that 
the differential of $\cM$ be homogeneous with respect to 
the Hirsch weights is stronger than saying that the Lie algebra 
$\fL=\fL(\cM)$ is filtered-formal. The fact that this can happen 
is illustrated in Example \ref{ex:3step}. 
\end{remark}

\begin{remark}
\label{rem:h-weights-bis}
If a minimal $\dga$ is 
generated in degree $1$ and has positive weights, but these weights do not 
coincide with the Hirsch weights, then the dual Lie algebra need not be filtered-formal.  
This phenomenon is illustrated in Example \ref{ex:Cornulier}: 
there is a finitely generated nilpotent 
Lie algebra $\fm$ for which the Chevalley--Eilenberg 
complex $\cM(\fm)=\bigwedge(\fm^*)$ has positive weights, 
but those weights are not the Hirsch weights; moreover, $\fm$ is not 
filtered-formal.
\end{remark}

\subsection{Dual Lie algebra and holonomy Lie algebra}
\label{subsec:dualhol}

Let $(B^{\hdot},d)$ be a \dga, and let $A=H^{\hdot}(B)$ be its cohomology algebra. 
Assume $A$ is connected and $\dim A^1<\infty$, and   
let $\mu\colon A^1\wedge A^1\to A^2$ be the multiplication map.
By the discussion from \S\ref{subsec:minimalModel}, there is a 
$1$-minimal model $\cM(B,1)$ for  our $\dga$, unique up 
to isomorphism. 

A concrete way to build such a model can be found in 
\cite{DGMS, Griffiths-Morgan13, Morgan}. 
The first two steps of this construction are easy to describe. 
Set $V_1=A^1$ and define $\cM(B,1)_1=\bigwedge(V_1)$, 
with differential $d=0$. Next,  set $V_2= \ker(\mu)$ and define 
$\cM(B,1)_2=\bigwedge(V_1\oplus V_2)$, with $d|_{V_2}$ 
equal to the inclusion map $V_2\inj A^1\wedge A^1$.  
Let $\fL(B)=\fL(\cM(B,1))$ be the Lie algebra corresponding to the $1$-minimal 
model of $B$. The next proposition, which generalizes a result of Kohno 
(\cite[Lem.~4.9]{Kohno}), 
relates this Lie algebra to the holonomy Lie algebra 
$\fh(A)$ from Definition \ref{def:holo alg}.  

\begin{prop}
\label{prop:kohnolem}
Let $\phi\colon \bL \to \fL(B)$ be the morphism defined  by extending 
the identity map of $V_1^*$ to the free Lie algebra 
$\bL=\Lie(V_1^*)$, and let $J=\ker(\phi)$. 
There exists then an isomorphism of graded Lie algebras,  
$\fh(A)\cong \bL/\langle J\cap \bL_2\rangle$, where $\fh(A)$ is 
the holonomy Lie algebra of $A=H^{\hdot}(B)$.
\end{prop}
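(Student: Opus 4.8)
The plan is to reduce the statement to the single identity $J\cap\bL_2=\im\partial_A$ of subspaces of $\bL_2$. Since $\fh(A)=\bL/\langle\im\partial_A\rangle$ by Definition \ref{def:holo alg}, and since both $\langle J\cap\bL_2\rangle$ and $\langle\im\partial_A\rangle$ are ideals generated in degree $2$, this identity yields the asserted isomorphism of graded Lie algebras, realized by the identity on the generating space $V_1^*$. I would therefore prove the two inclusions separately.

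For $J\cap\bL_2\subseteq\im\partial_A$, I would use only the second Hirsch stage $\cM_2=\bigwedge(V_1\oplus V_2)$, with $V_2=\ker(\mu)$ and $d|_{V_2}$ the inclusion $V_2\inj V_1\wedge V_1$. Dualizing via \eqref{eq:duality}, the bracket on $\fL_2=V_1^*\oplus V_2^*$ sends a pair $x,y\in V_1^*$ to the functional $w\mapsto\langle x\wedge y,w\rangle$ on $V_2=\ker\mu$, which lies in the central summand $V_2^*$. Thus the associated graded map $\gr_2(\phi)\colon\bL_2\to\gr_2\fL(B)=V_2^*$ is the restriction $z\mapsto z|_{\ker\mu}$, whose kernel is exactly $(\ker\mu)^\perp=\im\partial_A$. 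As every $z\in J\cap\bL_2$ has $\phi(z)=0$ and hence $\gr_2(\phi)(z)=0$, this gives $J\cap\bL_2\subseteq\ker\gr_2(\phi)=\im\partial_A$.

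For the reverse inclusion $\im\partial_A\subseteq J$, I would construct a Lie algebra map $\psi\colon\fh(A)\to\fL(B)$ extending the canonical inclusion $V_1^*\inj\fL(B)$. Since $\phi$ then factors as $\bL\twoheadrightarrow\fh(A)\xrightarrow{\psi}\fL(B)$, the relation ideal $\langle\im\partial_A\rangle=\ker(\bL\to\fh(A))$ lies in $J$, whence $\im\partial_A\subseteq J\cap\bL_2$. To define $\psi$ it suffices, since $\fL(B)=\varprojlim\fL_i$, to check that the defining relations $\im\partial_A$ of $\fh(A)$ map to zero in every nilpotent quotient $\fL_i$. Pairing $\phi(z)$ with a generator $w\in V_j$ gives $\langle z,(dw)_{\bigwedge^2 V_1}\rangle$, the component of $dw$ in $\bigwedge^2 V_1$ being all that $z\in\bL_2=(V_1\wedge V_1)^*$ can detect. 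For $j=1$ this vanishes by degree, and for $j=2$ it equals $z|_{\ker\mu}=0$ because $z\in(\ker\mu)^\perp$; the substance is the case $j\ge 3$.

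\textbf{The main obstacle} is thus to show that the higher Hirsch differentials stay quadratically trivial on $V_1$, i.e.\ that $(dw)_{\bigwedge^2 V_1}\in\ker\mu$ for every generator $w\in V_j$ with $j\ge 3$. I would handle this by choosing the model carefully, arguing inductively that the image of $\bigwedge^2 V_1$ in $H^2(\cM_{j-1})$ injects into $H^2(B)$ via $\mu$, so that the quasi-isomorphism $\rho_{j-1}\colon\cM_{j-1}\to B$ may be adjusted—using the freedom in the primitives $\rho(v)$, $v\in V_2$, which shifts the relevant Massey-type classes by $\im\mu$—so that $\ker(\rho_{j-1}^*)$ meets this image trivially. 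The new generators $V_j$, whose differentials represent classes in $\ker(\rho_{j-1}^*)$, can then be taken with vanishing $\bigwedge^2 V_1$-component, preserving decomposability and minimality. Because the $1$-minimal model, and hence $\fL(B)$, $\phi$, and $J$, are unique up to isomorphism inducing the identity on $V_1^*=H^1(B)$ (Theorem \ref{thm:mm}), computing with this preferred model establishes $\phi(\im\partial_A)=0$ in general. Combining the two inclusions gives $J\cap\bL_2=\im\partial_A$ and the theorem. The delicate point to get right is exactly this transversality adjustment of $\rho$, which is what forces the quadratic part of the higher differentials to land in $\ker\mu$.
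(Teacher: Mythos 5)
Your proof of the inclusion $J\cap\bL_2\subseteq\im\partial_A$ is correct, and it coincides with the computation the paper actually carries out: $\gr_2(\phi)$ is dual to the inclusion $\ker\mu\hookrightarrow V_1\wedge V_1$, so $\ker\gr_2(\phi)=(\ker\mu)^{\perp}=\im\partial_A$, and $J\cap\bL_2\subseteq\ker\gr_2(\phi)$ trivially. The gap is in the reverse inclusion $\im\partial_A\subseteq J$, and the ``transversality adjustment'' you flag as the delicate point cannot be performed: the obstruction is an invariant of $B$, not an artifact of the chosen model. The $\bigwedge^2 V_1$-component of $dw$, for $w\in V_3$, can only be altered by coboundaries (which lie in $d(V_2)=\ker\mu$) and by the shifts of Massey-type classes by $\im\mu$ that come from re-choosing the primitives; neither device can help when a class $\zeta$ supported on $V_1\otimes V_2\oplus\bigwedge^2 V_2$ has image in $\im\mu\setminus\{0\}$ under the map to $H^2(B)$, for then the cocycle $dw=\omega+\zeta$ killing $[\omega+\zeta]$ necessarily has $\omega\notin\ker\mu$. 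Concretely, let $B$ be a model of $K(G,1)$ for $G=\langle x_1,\dots,x_5\mid [x_1,x_2][x_3,[x_4,x_5]]=1\rangle$ from Example \ref{ex:1rel 1f}, so that $\fL(B)\cong\fm(G;\k)=\widehat{\Lie}(\bx)/\overline{\ideal(\,[x_1,x_2]+[x_3,[x_4,x_5]]+\cdots)}$. The element $z=[x_1,x_2]$ lies in $\im\partial_A$, yet $\phi(z)\equiv -[x_3,[x_4,x_5]]\bmod\widehat{\Gamma}_4$, and this class is nonzero in $\gr_3(G;\k)=\bigl(\Lie(\bx)/\ideal([x_1,x_2])\bigr)_3$ and cannot be cancelled by modifying the lifts $\phi(x_1),\phi(x_2)$ (compare multidegrees in the free Lie algebra). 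Hence $\phi(z)\neq 0$ for every choice of $1$-minimal model, so $\im\partial_A\not\subseteq J$; in this example $J\cap\bL_2=0$ while $\im\partial_A\neq 0$.

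What your analysis has in fact isolated is the one assertion that the paper's own proof leaves unargued, namely ``$J\cap\bL_2=\ker(\gr_2(\phi))$'': only the inclusion $\subseteq$ holds in general, and the example above shows it can be strict. The statement that is true, and that the paper's computation of $\gr_2(\phi)$ establishes, is $\fh(A)\cong\bL/\langle\ker\gr_2(\phi)\rangle$, i.e.\ with $J\cap\bL_2$ replaced by the degree-two part of the kernel of the \emph{associated graded} morphism $\gr(\phi)$ (equivalently, by $\{z\in\bL_2\mid \phi(z)\in\widehat{\Gamma}_3\fL(B)\}$). The two readings agree precisely when $\phi(\im\partial_A)=0$; this holds for $d_B=0$, since the construction of \S\ref{subsec:holohat} arranges $d(V_j)\subseteq V_1\wedge V_{j-1}$ for $j\ge 3$, which is the case needed for Theorem \ref{thm:model-holonomy}. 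So rather than trying to force the higher differentials to be quadratically trivial on $V_1$ --- which is impossible exactly when higher Massey products meet the image of the cup product --- you should work throughout with $\gr(\phi)$ and prove the $\gr$-form of the statement.
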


\begin{proof}
Let $\gr(\phi)\colon \bL\to \gr^{\widehat{\Gamma}}(\fL(B))$ be the associated 
graded morphism of $\phi$. Then the first graded piece,  
$\gr_1(\phi)\colon V_1^*\to V_1^*$, is the identity, while
the second graded piece, $\gr_2(\phi)$, can be identified 
with the Lie bracket map $V_1^*\wedge V_1^* \to V_2^*$, 
which is the dual of the differential $d\colon V_2\to V_1\wedge V_1$. 
From the construction of $\cM(B,1)_2$, there is an isomorphism 
$\ker (d^*)\cong \im (\mu^*)$. Since $J\cap \bL_2=\ker (\gr_2(\phi))$, 
we have that $\im (\mu^*)=J\cap \bL_2$, and the claim follows.
\end{proof}

\subsection{The completion of the holonomy Lie algebra}
\label{subsec:holohat}

Let $A^{\hdot}$ be a commutative graded $\k$-algebra with $A^0=\k$. 
Proceeding as above, by taking $B=A$ and $d=0$ so that $H^{\hdot}(B)=A$, 
we can construct a $1$-minimal model $\cM=\cM(A,1)$ for the algebra $A$ 
in a `formal' way, following the approach outlined by Carlson and Toledo in 
\cite{Carlson-Toledo95}.  (A construction of the full, bigraded minimal model 
of a $\cga$ can be found in \cite[\S 3]{Halperin-Stasheff79}.)
 
As before, set $\cM_1=(\bigwedge(V_1),d=0)$ where $V_1=A^1$, and 
$\cM_2=(\bigwedge(V_1\oplus V_2),d)$,
where $V_2=\ker \big(\mu\colon A^1\wedge A^1\to A^2\big)$ and 
$d\colon V_2\inj V_1\wedge V_1$ is 
the inclusion map.  After that, define inductively $\cM_{i}$ as 
$\cM_{i-1}\otimes \bigwedge (V_{i})$, where 
the vector space $V_{i}$ fits into the short exact sequence 
\begin{equation}
\label{eq:cartol}
\xymatrixcolsep{18pt}
\xymatrix{0 \ar[r]& V_{i} \ar[r]&  H^2(\cM_{i-1}) \ar[r]& \im(\mu) \ar[r]& 0},
\end{equation}
while the differential $d$ includes $V_{i}$ into $V_1\wedge V_{i-1}\subset \M_{i-1}$. 
In particular, the subalgebras $\cM_i$ constitute the canonical filtration 
\eqref{eq:filtration-minimal} of $\cM$, and the differential $d$ preserves the Hirsch 
weights on $\cM$.  For these reasons, we call $\cM=\cM(A,1)$ the {\em canonical}\/ 
$1$-minimal model of $A$. 

The next theorem relates the Lie algebra dual to 
the canonical $1$-minimal model of a $\cga$ as above to its holonomy Lie algebra.  
A similar result was obtained by Markl and Papadima in \cite{Markl-Papadima}; 
see also Morgan \cite[Thm.~9.4]{Morgan} and Remark \ref{rem:bezpap}. 

\begin{theorem}
\label{thm:model-holonomy}
Let $A^{\hdot}$ be a connected $\cga$ with $\dim A^1<\infty$.
Let $\fL(A):=\fL(\cM(A,1))$  be the Lie algebra corresponding to 
the canonical $1$-minimal model of $A$, and let $\fh(A)$ be the holonomy 
Lie algebra of $A$.  There exists then an isomorphism of complete, 
filtered Lie algebras between $\fL(A)$ and the degree completion 
$\widehat{\fh}(A)$.
\end{theorem}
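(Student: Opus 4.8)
The plan is to exhibit the isomorphism by comparing the holonomy Lie algebra $\fh(A)$ with the dual Lie algebra $\fL(A)$ through their respective filtered/graded structures, and then invoke the completeness machinery from \S\ref{subsec:filt lie}. First I would set up the comparison map. Let $\bL=\Lie(V_1^*)$ be the free Lie algebra on $V_1^*=(A^1)^*$, and recall from Proposition \ref{prop:kohnolem} that the morphism $\phi\colon \bL\to \fL(A)$ extending the identity on $V_1^*$ has the property that $\fh(A)\cong \bL/\langle J\cap \bL_2\rangle$, where $J=\ker(\phi)$. Since $\fh(A)$ is a quadratic Lie algebra generated in degree $1$, its degree completion $\widehat{\fh}(A)$ is a complete, separated, filtered Lie algebra, and by Lemma \ref{lem:presbar} it has presentation $\widehat{\bL}/\overline{\langle J\cap \bL_2\rangle}$.

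Next I would check that the map $\phi$ extends to a morphism $\widehat{\phi}\colon \widehat{\bL}\to \fL(A)$ of complete, filtered Lie algebras. This is where the specific `formal' construction of the $1$-minimal model in \eqref{eq:cartol} does the essential work: because the differential $d$ includes $V_i$ into $V_1\wedge V_{i-1}$ and therefore preserves the Hirsch weights, Lemma \ref{lem:positivegraded} tells us that $\fL(A)=\widehat{\gr}(\fL(A))$, i.e.\ $\fL(A)$ is itself the completion of a graded Lie algebra and hence filtered-formal. The plan is then to pass to associated graded Lie algebras and show that $\gr(\widehat{\phi})$ is an isomorphism. In degree $1$ it is the identity on $V_1^*$; in degree $2$, Proposition \ref{prop:kohnolem} identifies the kernel of $\gr_2(\phi)$ with $\im\mu^* = J\cap\bL_2$, which is exactly the quadratic relation space defining $\fh(A)$. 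The crux is to verify that in all higher degrees the induced relation ideal on the $\gr$ side is generated by this degree-$2$ piece — equivalently, that no new relations appear beyond the quadratic ones — which follows because the Hirsch-weight-homogeneity of $d$ forces $\gr^{\widehat\Gamma}(\fL(A))$ to be generated in degree $1$ with quadratic relations matching those of $\fh(A)$.

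Having produced an isomorphism on associated graded Lie algebras between $\widehat{\fh}(A)$ and $\fL(A)$, both of which are complete and separated (the former by the degree-completion remarks following \eqref{eq:subLiecompletion}, the latter by Lemma \ref{lem:positivegraded} together with the completeness of $\fL(A)$ as an inverse limit in \eqref{eq:nilp lie tower}), I would conclude by applying Lemma \ref{lem:grfilt}: a morphism of complete, separated, filtered Lie algebras inducing an isomorphism on associated graded objects is itself an isomorphism. This gives the desired filtered Lie algebra isomorphism $\fL(A)\cong\widehat{\fh}(A)$.

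The main obstacle I expect is the degree-$2$ matching and the verification that the relation ideal of $\gr^{\widehat\Gamma}(\fL(A))$ is genuinely quadratic — that is, that the construction \eqref{eq:cartol} introduces no hidden higher relations on the graded level. Proposition \ref{prop:kohnolem} handles the degree-$2$ comparison cleanly, but to propagate this to an isomorphism of the full graded ideals I would need to argue that the Hirsch-weight homogeneity of $d$ makes $\gr^{\widehat\Gamma}(\fL(A))$ a graded Lie algebra generated in degree $1$ whose relations are exactly $\im\mu^*$, so that it coincides with $\fh(A)$ on the nose. Managing this identification carefully — rather than merely at the level of the first two graded pieces — is the technical heart of the argument, and is precisely where the explicit `formal' nature of the minimal model construction must be exploited.
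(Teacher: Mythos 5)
Your overall strategy (produce a filtered morphism, pass to associated graded Lie algebras, and conclude via Lemma \ref{lem:grfilt}) is compatible with the paper's, which instead identifies each nilpotent quotient $\fL_i(A)=\fL(\cM_i)$ with $\fh_i(A)=\fh(A)/\Gamma_{i+1}\fh(A)$ directly and then takes inverse limits. But there is a genuine gap at exactly the point you flag as the ``technical heart'': you claim that the Hirsch-weight homogeneity of $d$ forces $\gr^{\widehat{\Gamma}}(\fL(A))$ to be generated in degree $1$ \emph{with quadratic relations} matching those of $\fh(A)$. Weight homogeneity cannot deliver this. By Lemma \ref{lem:positivegraded} it yields only that $\fL(A)$ coincides with the completion of its associated graded (filtered-formality), and the condition $d(V_s)\subset V_1\wedge V_{s-1}$ yields generation in degree $1$; neither statement constrains the degrees of the relations. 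Indeed, for a filtered-formal but non-graded-formal group --- e.g.\ the Heisenberg group of Example \ref{ex:filt massey} --- the minimal model $\cN(G;\k)$ dual to $\widehat{\gr}(G;\k)$ is generated in degree one with Hirsch-weight homogeneous differential, yet the associated graded of its dual Lie algebra has cubic relations. So the mechanism you invoke would prove a false statement in general; the quadraticity must come from the specific inductive construction of $\cM(A,1)$ via \eqref{eq:cartol}, not from weight homogeneity.

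The missing ingredient is an identification of the \emph{minimal generators} of the relation ideal $\fr_i=\ker(\bL\surj\fL_i(A))$. The paper obtains this from the Hopf formula, $H_2(\fL_i(A);\k)\cong\fr_i/[\bL,\fr_i]$, combined with the Chevalley--Eilenberg identification $H^*(\cM_i)\cong H^*(\fL_i(A);\k)$ and the dual of the defining sequence \eqref{eq:cartol}, which together give $H_2(\fL_i(A);\k)\cong\im(\mu^*)\oplus V_{i+1}^*$. Hence $\fr_i$ is generated by $\im(\mu^*)$ in degree $2$ plus a copy of $V_{i+1}^*$ in degree $i+1$ (the truncation relations), which forces $\fr_i=\fr+\Gamma_{i+1}\bL$ and therefore $\fL_i(A)\cong\fh_i(A)$ for every $i$, whence $\fL(A)\cong\widehat{\fh}(A)$. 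Proposition \ref{prop:kohnolem} only gives you the degree-$2$ comparison; to close your argument you must supply this $H_2$ computation (or an equivalent induction on $i$), and once you have it the passage through Lemma \ref{lem:grfilt} becomes unnecessary.
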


\begin{proof}
By Definition \ref{def:holo alg}, the holonomy Lie algebra of $A$ has presentation 
$\fh(A)=\bL/\fr$, where $\bL=\Lie(V_1^*)$ and $\fr$ is the ideal generated by 
$\im(\mu^*)\subset \bL_2$.  It follows that, for each $i\geq 1$, the nilpotent quotient 
$\fh_{i}(A):= \fh(A)/\Gamma_{i+1}\fh(A)$ has presentation 
$\bL/(\fr+\Gamma_{i+1}\bL)$. 

Consider now the dual Lie algebra $\fL_i(A)=\fL(\cM_i)$.  
By construction, we have a vector space decomposition, 
$\fL_i(A)=\bigoplus_{s\le i} V_s^*$. 
The fact that $d(V_{s})\subset V_1\wedge V_{s-1}$ implies 
that the Lie bracket maps $V_1^*\wedge V_{s-1}^*$ onto 
$V_s^*$, for every $1<s\le i$.  In turn, this implies that $\fL_i(A)$ is an  
$i$-step nilpotent, graded Lie algebra generated in degree $1$, 
with  $\gr_{s}(\fL_{i}(A))=V_s^*$ for $s\le i$. 

Let $\fr_i$ be the kernel of the canonical projection $\pi_i\colon \bL\surj \fL_i(A)$.  
By the Hopf formula, there is an isomorphism of graded vector spaces 
between $H_2(\fL_i(A);\k)$ and $\fr_i/[\bL,\fr_i]$, 
the space of (minimal) generators for the homogeneous ideal $\fr_i$.
On the other hand,  $H^2(\cM_i)\cong H^2(\fL_i;\k)$, by \eqref{eq:ce}.  
Taking the dual of the exact sequence \eqref{eq:cartol}, 
we find that $H_2(\fL_i(A);\k)\cong \im(\mu^*) \oplus V_{i+1}^*$.
We conclude that the ideal $\fr_i$ is generated by $\im(\mu^*)$  
in degree $2$ and a copy of $V_{i+1}^*$ in degree $i+1$. 

Since  $\gr_2(\fr)=\im(\mu^*)$, we infer that 
$\bigoplus_{s\le i}\gr_{s} (\fr_i)= \bigoplus_{s\le i}\gr_{s} (\fr)$.
Since $\fL_i(A)$ is an $i$-step nilpotent Lie algebra, 
$\bigoplus_{s> i}\gr_{s} (\fr_i)=\Gamma_{i+1}\bL$. 
Therefore, $\Gamma_{i+1}\bL+\fr = \fr_i$, and  
thus the identity map of $\bL$ 
induces an isomorphism $\fL_i(A)\cong \fh_i(A)$, for each $i\ge 1$.  
Hence, $\fL(A)\cong \widehat{\fh}(A)$, as filtered Lie algebras.   
\end{proof}

\begin{corollary}
\label{cor:holomin}
The graded ranks of the holonomy Lie algebra of a connected, graded 
algebra $A$ are given by $\dim \fh_i(A) = \dim V_i$, where 
$\cM=\bwedge \big(\boplus _{i\ge 1} V_i \big)$ is the canonical $1$-minimal 
model of $(A,d=0)$. 
\end{corollary}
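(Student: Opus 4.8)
The plan is to read off the statement directly from Theorem \ref{thm:model-holonomy}, whose proof already does essentially all of the work. Recall that this theorem furnishes an isomorphism $\fL(A)\cong \widehat{\fh}(A)$ of complete, filtered Lie algebras, where $\fL(A)=\fL(\cM(A,1))$ and $\widehat{\fh}(A)$ is the degree completion of the holonomy Lie algebra of $A$. First I would pass to associated graded Lie algebras, using the functoriality of $\gr$ with respect to morphisms of filtered Lie algebras, to obtain an isomorphism of graded Lie algebras $\gr(\fL(A))\cong \gr(\widehat{\fh}(A))$.

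Next I would identify both sides degree by degree. On the right, since $\fh(A)$ is a graded Lie algebra generated in degree $1$, the discussion in \S\ref{subsec:filt lie}---in particular the identity $\fg\cong \gr^{\widehat{\Gamma}}(\widehat{\fg})$ for the degree completion---gives $\gr(\widehat{\fh}(A))=\fh(A)$, so that the $i$-th graded piece of the right-hand side is exactly $\fh_i(A)$. On the left, the proof of Theorem \ref{thm:model-holonomy} shows that each nilpotent quotient $\fL_i(A)=\fL(\cM_i)$ is an $i$-step nilpotent, graded Lie algebra with $\gr_s(\fL_i(A))=V_s^*$ for $s\le i$; since these graded pieces stabilize along the tower \eqref{eq:nilp lie tower}, passing to the inverse limit yields $\gr_i(\fL(A))=V_i^*$.

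Combining these identifications in degree $i$ gives $\fh_i(A)\cong V_i^*$, and since each Hirsch extension in the construction of $\cM(A,1)$ is finite (so that $\dim V_i<\infty$, as guaranteed by $\dim A^1<\infty$ and the inductive use of \eqref{eq:cartol}), taking dimensions yields $\dim \fh_i(A)=\dim V_i^*=\dim V_i$, as claimed. There is essentially no hard step here: the content lies entirely in Theorem \ref{thm:model-holonomy}, and the only points requiring care are the weight-by-weight bookkeeping that produces $\gr_i(\fL(A))=V_i^*$ from the structure of the tower \eqref{eq:nilp lie tower}, together with the finite-dimensionality of the $V_i$, which is precisely what makes the equality of graded ranks meaningful.
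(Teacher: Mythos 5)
Your argument is correct and is exactly the route the paper intends: Corollary \ref{cor:holomin} is stated without proof as an immediate consequence of Theorem \ref{thm:model-holonomy}, and your passage to associated graded Lie algebras, together with the identifications $\gr(\widehat{\fh}(A))\cong \fh(A)$ and $\gr_i(\fL(A))=V_i^*$ extracted from the proof of that theorem, is precisely the intended bookkeeping. The finite-dimensionality point you raise at the end is the right (and only) hypothesis needed to turn the isomorphism $\fh_i(A)\cong V_i^*$ into an equality of dimensions.
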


\subsection{Partial formality and field extensions}
\label{subsec:formal ext}

The following notion, introduced by Sullivan in \cite{Sullivan}, and further 
developed in \cite{DGMS, Griffiths-Morgan13, Macinic, Morgan}, will play 
a central role in our study.  

\begin{definition}
\label{def:partialformal}
A  $\dga$ $(A^{\hdot},d)$ over $\k$ is said to be {\em formal}\/ if there exists a 
quasi-isomorphism $\cM(A)\to (H^{\hdot}(A),d=0)$.   Likewise, $(A^{\hdot},d)$ is 
said to be {\em $i$-formal}\/ if there exists an  
$i$-quasi-isomorphism $\cM(A,i)\to (H^{\hdot}(A),d=0)$. 
\end{definition}

In \cite{Macinic}, M\u{a}cinic studies in detail these concepts. 
Evidently, if $A$ is formal, then it is $i$-formal, for all $i\ge 0$, and, 
if $A$ is $i$-formal, then it is $j$-formal for every $j\le i$.  
Moreover, $A$ is $0$-formal if and only if $H^0(A)=\k$. 

\begin{lemma}[\cite{Macinic}]
\label{lem:macinic}
A $\dga$ $(A^{\hdot},d)$ is $i$-formal if and only if $(A^{\hdot},d)$ is $i$-weakly 
equivalent to $H^{\hdot}(A)$ with zero differential.
\end{lemma}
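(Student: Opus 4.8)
The plan is to prove the two implications separately, the forward one being essentially formal and the reverse one resting on the invariance of the $i$-minimal model under $i$-weak equivalence. For the forward implication, suppose $(A^*,d)$ is $i$-formal. By Definition \ref{def:partialformal} there is an $i$-quasi-isomorphism $g\colon \cM(A,i)\to (H^*(A),d=0)$, while the construction of the $i$-minimal model in Theorem \ref{thm:mm} furnishes an $i$-quasi-isomorphism $f\colon \cM(A,i)\to A$. Stringing these together yields the zig-zag
\[
A \xleftarrow{\;f\;} \cM(A,i) \xrightarrow{\;g\;} (H^*(A),d=0),
\]
which is a zig-zag of $i$-quasi-isomorphisms; hence $(A^*,d)\simeq_i (H^*(A),d=0)$. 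No obstruction arises here.

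For the reverse implication, assume $(A^*,d)\simeq_i (H^*(A),d=0)$. The strategy is to show that $\cM(-,i)$ is an invariant of the $i$-weak equivalence class, i.e.\ that $i$-weakly equivalent $\dga$'s have isomorphic $i$-minimal models. Granting this, I would compare the two structural maps $f\colon \cM(A,i)\to A$ and $h\colon \cM(H^*(A),i)\to (H^*(A),d=0)$, each an $i$-quasi-isomorphism supplied by Theorem \ref{thm:mm}. The invariance gives an isomorphism $\cM(A,i)\cong \cM(H^*(A),i)$, and composing it with $h$ produces an $i$-quasi-isomorphism $\cM(A,i)\to (H^*(A),d=0)$; by Definition \ref{def:partialformal} this is precisely the $i$-formality of $(A^*,d)$.

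The heart of the matter, and the step I expect to be the main obstacle, is establishing the invariance of $\cM(-,i)$ under a \emph{single} $i$-quasi-isomorphism (in either direction), after which one inducts along the given zig-zag to conclude that the $i$-minimal models of $A$ and of $(H^*(A),d=0)$ coincide. This is the partial analogue of the classical Whitehead-type lifting theorem for minimal models: if $\psi\colon X\to Y$ is an $i$-quasi-isomorphism and $M$ is an $i$-minimal $\dga$, then post-composition with $\psi$ induces a bijection on $i$-homotopy classes of $\dga$ maps $M\to X$ and $M\to Y$. I would prove this by the usual obstruction-theoretic induction over the Hirsch tower $M=\bigcup_j M_j$ of Definition \ref{def:min}, extending maps one Hirsch extension at a time by means of Lemma \ref{lem:Hirschext}; the control that an $i$-quasi-isomorphism exerts on cohomology in degrees $\le i+1$ is exactly what is needed to solve the lifting equations for generators in degrees $\le i$, while Lemma \ref{lem:hi} guarantees that truncating the tower at level $i$ is harmless. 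The delicate bookkeeping is keeping track of the behavior in degree $i+1$ (injectivity on $H^{i+1}$) as maps are transported across backward arrows of the zig-zag; this truncated lifting calculus is worked out carefully by M\u{a}cinic \cite{Macinic}, whose results I would invoke to finish the argument.
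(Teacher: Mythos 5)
Your argument is correct, but note that the paper itself gives no proof of this lemma: it is quoted verbatim from M\u{a}cinic \cite{Macinic}, so there is no internal proof to compare against. The forward direction is exactly as you say. For the reverse direction, the ``main obstacle'' you identify --- invariance of the $i$-minimal model under a single $i$-quasi-isomorphism in either direction --- does not require you to redo the obstruction-theoretic lifting calculus over the Hirsch tower: it is already packaged in the uniqueness clause of Theorem \ref{thm:mm}. Indeed, if $\psi\colon X\to Y$ is an $i$-quasi-isomorphism and $f\colon \cM(X,i)\to X$ is the structural map, then $\psi\circ f$ is again an $i$-quasi-isomorphism (a composite of maps that are isomorphisms on $H^{j}$ for $j\le i$ and injective on $H^{i+1}$ has the same property), so $\cM(X,i)$ is an $i$-minimal model for $Y$ and hence isomorphic to $\cM(Y,i)$; the same one-line argument applied to the structural map of $Y$ handles the backward arrows of the zig-zag. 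Inducting along the zig-zag gives $\cM(A,i)\cong \cM(H^*(A),i)$, and the conclusion follows as you describe. What your longer route buys is independence from the uniqueness statement (you would in effect be reproving it in the truncated setting); what the shorter route buys is that the lemma becomes an immediate consequence of facts already stated in the paper, and it avoids the mild circularity of invoking \cite{Macinic} for the key step of a lemma that is itself attributed to \cite{Macinic}.
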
 

As a  corollary, we deduce that $i$-formality is invariant 
under $i$-weakly equivalence. 

\begin{corollary}
\label{cor:formal-equiv}
Suppose $A\simeq_i B$.  Then $A$ is 
$i$-formal if and only if $B$ is $i$-formal.  
\end{corollary}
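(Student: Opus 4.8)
The plan is to deduce the statement directly from Lemma \ref{lem:macinic}, which characterizes $i$-formality of $(A^*,d)$ as $i$-weak equivalence to $(H^*(A),d=0)$. The only content beyond that lemma is the transitivity of the relation $\simeq_i$ together with the fact that it descends to cohomology. First I would record that $\simeq_i$ is an equivalence relation on $\dga$s: it is reflexive, since the identity is a quasi-isomorphism and hence an $i$-quasi-isomorphism; symmetric, since a zig-zag of $i$-quasi-isomorphisms may be traversed in the opposite direction; and transitive, since two such zig-zags may be concatenated.

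The key step, which is where the real work lies, is to show that $A\simeq_i B$ implies $(H^*(A),d=0)\simeq_i (H^*(B),d=0)$. The point is that cohomology is functorial as a graded algebra: any $\dga$ morphism $f\colon X\to Y$ induces an algebra map $f^*\colon H^*(X)\to H^*(Y)$, which I regard as a morphism of $\dga$s $(H^*(X),d=0)\to (H^*(Y),d=0)$. Because a $\dga$ with zero differential is its own cohomology, the map this morphism induces on cohomology is again $f^*$; so if $f$ is an $i$-quasi-isomorphism, meaning $f^*$ is an isomorphism in degrees $\le i$ and a monomorphism in degree $i+1$, then the induced morphism of zero-differential $\dga$s is itself an $i$-quasi-isomorphism. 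Applying this to each arrow of a zig-zag realizing $A\simeq_i B$ yields a zig-zag of $i$-quasi-isomorphisms between $(H^*(A),d=0)$ and $(H^*(B),d=0)$, as desired. I expect this to be the main obstacle, precisely because an $i$-quasi-isomorphism need not be an isomorphism on $H^{i+1}$: the rings $H^*(A)$ and $H^*(B)$ may genuinely differ in degree $i+1$, and it is the monomorphism clause in the definition of $i$-quasi-isomorphism that keeps the induced maps within the class of $i$-quasi-isomorphisms, so that $i$-formality is nonetheless shared.

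Finally I would assemble the argument. Suppose $A$ is $i$-formal; by Lemma \ref{lem:macinic} this means $A\simeq_i (H^*(A),d=0)$. Combining the hypothesis $A\simeq_i B$, the key step $(H^*(A),d=0)\simeq_i (H^*(B),d=0)$, and transitivity, I obtain the chain $B\simeq_i A\simeq_i (H^*(A),d=0)\simeq_i (H^*(B),d=0)$, whence $B\simeq_i (H^*(B),d=0)$. By Lemma \ref{lem:macinic} again, $B$ is $i$-formal. The reverse implication follows by interchanging the roles of $A$ and $B$, completing the proof.
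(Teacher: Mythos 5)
Your proof is correct and follows the route the paper intends: the corollary is stated as an immediate consequence of Lemma \ref{lem:macinic}, and your argument---that $\simeq_i$ is an equivalence relation and that an $i$-quasi-isomorphism induces an $i$-quasi-isomorphism between the corresponding zero-differential cohomology algebras, so the chain $B\simeq_i A\simeq_i (H^*(A),0)\simeq_i(H^*(B),0)$ closes up---is exactly the implicit verification. Your observation that the monomorphism clause in degree $i+1$ is what keeps the induced maps within the class of $i$-quasi-isomorphisms, even though $H^{i+1}(A)$ and $H^{i+1}(B)$ may differ, is the right point to flag.
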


Given a $\dga$~$(A,d)$ over a field $\k$ of characteristic $0$, 
and a field extension $\k\subset \K$, let $(A\otimes \K, d\otimes \id_\K)$ be 
the corresponding $\dga$ over $\K$. (If the underlying field $\k$ is understood, we
will usually omit  it from the tensor product $A\otimes_{\k} \K$.)  The following 
result will be crucial for us in the sequel.

\begin{theorem}[Thm.~6.8 in \cite{Halperin-Stasheff79}]
\label{thm:Halperin-Stasheff}
Let $(A^{\hdot},d_A)$ and $(B^{\hdot},d_B)$ be two $\dga$s over $\k$ 
whose cohomology algebras are connected and of finite type. 
Suppose there is an isomorphism of graded algebras, $f\colon H^{\hdot}(A)\to H^{\hdot}(B)$, 
and suppose  $f\otimes \id_{\K}\colon H^{\hdot}(A)\otimes \K\to H^{\hdot}(B)\otimes \K$
can be realized by a weak equivalence between $(A^{\hdot}\otimes \K,d_A\otimes \id_{\K})$ 
and $(B^{\hdot}\otimes \K,d_B\otimes \id_{\K})$. Then $f$ can be realized
by a weak equivalence between $(A^{\hdot},d_A)$ and $(B^{\hdot},d_B)$.
\end{theorem}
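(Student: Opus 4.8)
The plan is to translate the statement into a question about minimal models over $\k$ and then to descend the solvability of the resulting lifting problem along the field extension, the key point being that the obstructions and the symmetries that organize the problem are, respectively, \emph{linear} and \emph{pro-unipotent} characteristic-zero data, both insensitive to extension of scalars. Concretely, using Theorem~\ref{thm:mm}, I would fix a minimal model $\rho\colon \cM\to A$ over $\k$, where $\cM=\bigcup_j \cM_j$ is built from finite Hirsch extensions $\cM_j=\cM_{j-1}\otimes\bigwedge(V_j)$ (finiteness of each $V_j$ being guaranteed by the finite-type hypothesis on $H^*(A)$). Realizing $f$ by a weak equivalence between $A$ and $B$ is then equivalent to producing a \dga\ quasi-isomorphism $\phi\colon \cM\to B$ over $\k$ whose effect on cohomology is $f\circ\rho^*$, under the identification $H^*(\cM)\cong H^*(A)$: such a $\phi$ exhibits $\cM$ as a minimal model of $B$ as well, so that $A\xleftarrow{\rho}\cM\xrightarrow{\phi}B$ realizes $f$, and conversely any weak equivalence realizing $f$ lifts to such a $\phi$ by the universal property of the minimal model. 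The same discussion over $\K$ shows that the hypothesis supplies a $\K$-quasi-isomorphism $\cM\otimes\K\to B\otimes\K$ realizing $f\otimes\id_\K$, where $\cM\otimes\K$ is a minimal model of $A\otimes\K$.

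Next I would analyze this lifting problem stage by stage. Extending a partial map $\phi_{j-1}\colon\cM_{j-1}\to B$ over $\cM_j$ amounts, by Lemma~\ref{lem:Hirschext}, to choosing $\phi_j$ on $V_j$ with $d_B\phi_j(v)=\phi_{j-1}(dv)$ while hitting the prescribed cohomology classes; the obstruction is a $\k$-linear map $V_j\to H^*(B;\k)$, and once it vanishes the valid extensions form a torsor under a finite-dimensional $\k$-vector space of cocycle-valued modifications. Here the descent enters: since $\K$ is free, hence faithfully flat, over $\k$, cohomology commutes with extension of scalars, $H^*(B)\otimes_\k\K\cong H^*(B\otimes_\k\K)$, and the canonical map $H^*(B)\to H^*(B)\otimes_\k\K$ is injective. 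Consequently a stage-$j$ obstruction vanishes over $\k$ if and only if its image vanishes over $\K$. Thus, \emph{for a fixed rational choice of the earlier data}, existence of the next extension descends from $\K$ to $\k$.

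The main obstacle is precisely that the stage-$j$ obstruction depends on the earlier choices, and depends on them \emph{polynomially} rather than linearly, since $\phi_{j-1}(dv)$ involves products of the values of $\phi_{j-1}$ on lower generators. Hence one cannot simply solve and descend one stage at a time: a particular $\k$-rational choice made early could obstruct a later stage even though some $\K$-choice does not. To overcome this I would package all the choices into the standard deformation-theoretic picture, in which the homotopy classes of maps $\cM\to B$ realizing $f$ form the orbit space of a pro-unipotent group $U$ of gauge transformations---the automorphisms of the minimal model filtered by Hirsch weight, whose successive quotients are vector groups---acting over $\k$ on an affine variety of admissible perturbations of the differential. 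The unknown $\k$-realization and the given $\K$-realization then correspond to a $\k$-point and a $\K$-point of this affine $\k$-variety lying in the same $U(\K)$-orbit; since $U$ is pro-unipotent and $\ch\k=0$, every torsor under $U$, or under a stabilizer (itself unipotent), over a field is trivial---equivalently $U$ is an iterated extension of copies of $\mathbb{G}_a$ and $H^1(\k,\mathbb{G}_a)=0$---so two $\k$-points in a common $U(\K)$-orbit already lie in a common $U(\k)$-orbit. This unipotent-descent step, together with the passage to the limit over the infinitely many Hirsch stages, is the delicate heart of the argument; granting it, the $\K$-realization of $f\otimes\id_\K$ descends to a $\k$-realization of $f$, that is, to a weak equivalence between $(A^*,d_A)$ and $(B^*,d_B)$.
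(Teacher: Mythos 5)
This statement is imported verbatim from Halperin--Stasheff (Theorem 6.8 of \cite{Halperin-Stasheff79}); the paper gives no proof of it, only proofs of the subsequent Lemma \ref{lem:formal_id} and Corollary \ref{cor:Kformal}, so there is no in-paper argument to measure you against. Compared with the original source, your route is genuinely different: Halperin and Stasheff work with their bigraded and filtered models and run a direct obstruction-theoretic induction, showing that the obstruction classes to realizing $f$ live in finite-dimensional cohomology groups that inject under $-\otimes_{\k}\K$; you instead repackage the whole tower of choices as a $\k$-point problem for the action of a pro-unipotent gauge group on a (pro-)affine variety of structures, and invoke closedness of unipotent orbits together with $H^1(\k,U)=0$ in characteristic zero. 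Your diagnosis of why the naive stage-by-stage descent fails (the stage-$j$ obstruction depends polynomially, not linearly, on the earlier choices) is exactly right, and it is the reason the linear injectivity of $H^*(B)\to H^*(B)\otimes\K$ alone does not suffice.

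That said, as a proof the proposal has a real gap, and you flag it yourself: the entire content is concentrated in the sentence you ask the reader to grant. To close it you would need to (i) replace $B$ by a finite-type model (e.g.\ its minimal model) so that the relevant spaces of maps and perturbations at each Hirsch stage are genuinely finite-dimensional affine $\k$-varieties with an algebraic action --- finite type of $H^*(B)$ alone does not make $\hom(\cM_j,B)$ finite-dimensional; (ii) prove that the orbit of the gauge group at each finite stage is a closed $\k$-subvariety (Kostant--Rosenlicht) so that a $\K$-point of the orbit which happens to be a $\k$-point of the ambient variety is a $\k$-point of the orbit, and then kill the stabilizer torsor using vanishing of $H^1$ for split unipotent groups in characteristic zero; and (iii) handle the passage to the inverse limit over the stages, where one must check a Mittag--Leffler-type condition to know that compatible stage-wise $\k$-solutions assemble to a global one. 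All three ingredients are true and standard, so the strategy is sound, but in its present form the argument establishes the reduction to the unipotent-descent statement rather than the theorem itself.
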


This theorem has an important corollary, based on the following 
lemma. For completeness, we provide proofs for these two results, 
which are stated without proof in \cite{Halperin-Stasheff79}.

\begin{lemma}[\cite{Halperin-Stasheff79}]
\label{lem:formal_id}
A \dga~ $(A^{\hdot},d_A)$ with $H^{\hdot}(A)$ of finite-type 
is formal if and only if the identity map of $H^{\hdot}(A)$ 
can be realized by a weak equivalence between $(A^{\hdot},d_A)$ and $(H^{\hdot}(A),d=0)$.
\end{lemma}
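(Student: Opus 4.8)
The plan is to prove the two implications separately; the substance lies in the forward direction, while the reverse is essentially a restatement of the existence of the minimal model. Recall that an isomorphism of graded algebras $H^*(A)\to H^*(B)$ is said to be \emph{realized by a weak equivalence} if there is a zig-zag of quasi-isomorphisms connecting $(A^*,d_A)$ to $(B^*,d_B)$ whose induced map on cohomology is the given isomorphism. The finite-type hypothesis on $H^*(A)$ will be invoked only to ensure that the minimal model and the maps out of it behave well degreewise.

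For the forward implication, I would suppose $(A^*,d_A)$ is formal. By Definition \ref{def:partialformal} there is a quasi-isomorphism $f\colon \cM(A)\to (H^*(A),d=0)$, and by Theorem \ref{thm:mm} the minimal model comes equipped with a canonical quasi-isomorphism $g\colon \cM(A)\to A$. Together these form a zig-zag realizing the graded-algebra automorphism $\psi := H^*(f)\circ H^*(g)^{-1}$ of $H^*(A)$, which need not be the identity --- this is precisely the gap to close. The key observation is that $\psi^{-1}$, regarded as a self-map of $(H^*(A),d=0)$, is itself a $\dga$ isomorphism, hence a quasi-isomorphism. I would therefore replace $f$ by $\psi^{-1}\circ f$; the modified zig-zag $A\xleftarrow{\,g\,}\cM(A)\xrightarrow{\,\psi^{-1}\circ f\,}(H^*(A),d=0)$ then induces $\psi^{-1}\circ\psi=\id$ on cohomology, exhibiting the identity of $H^*(A)$ as realized by a weak equivalence.

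For the reverse implication, I would suppose the identity of $H^*(A)$ is realized by a weak equivalence; in particular $A\simeq (H^*(A),d=0)$. By the uniqueness clause of Theorem \ref{thm:mm}, the minimal model $\cM(A)$ is also a minimal model for $(H^*(A),d=0)$, and hence admits a quasi-isomorphism to it; by Definition \ref{def:partialformal} this makes $A$ formal. Concretely, I would produce this quasi-isomorphism by lifting the canonical map $g\colon \cM(A)\to A$ across the given zig-zag, using the cofibrancy (lifting property) of minimal $\dga$s. The main obstacle is the forward direction: formality a priori only supplies a quasi-isomorphism inducing \emph{some} automorphism of $H^*(A)$, and the content of the lemma is the passage to the identity, achieved by the correction by $\psi^{-1}$ above.
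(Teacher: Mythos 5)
Your proposal is correct and uses the same key idea as the paper's proof: the weak equivalence coming from formality induces some automorphism $\psi$ of $H^*(A)$, and since the differential on $H^*(A)$ is zero, $\psi^{-1}$ is itself a $\dga$ quasi-isomorphism of $(H^*(A),d=0)$, so post-composing corrects the zig-zag to induce the identity. The only cosmetic difference is that you run the argument through the explicit zig-zag $A\leftarrow\cM(A)\rightarrow(H^*(A),d=0)$ and elaborate the (genuinely easy) reverse implication via lifting, whereas the paper works with an arbitrary zig-zag and dismisses the converse as obvious.
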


\begin{proof}
The backwards implication is obvious. So assume 
 $(A^{\hdot},d_A)$ is formal, that is, there is a zig-zag of quasi-isomorphisms 
between $(A^{\hdot},d_A)$ and $(H^{\hdot}(A),d=0)$.  This yields an  isomorphism 
in cohomology, $\phi\colon H^{\hdot}(A)\to H^{\hdot}(A)$.  
Composing the inverse of $\phi$ with the given zig-zag of quasi-isomorphisms 
defines a new weak equivalence between $(A^{\hdot},d_A)$ and $(H^{\hdot}(A),d=0)$, 
which induces the identity map in cohomology. 
\end{proof}

\begin{corollary}[\cite{Halperin-Stasheff79}]
\label{cor:Kformal}
A $\k$-\dga~ $(A^{\hdot},d_A)$ with $H^{\hdot}(A)$ of finite-type is formal if 
and only if the $\K$-\dga ~$(A^{\hdot}\otimes \K,d_A\otimes \id_{\K})$ is formal.
\end{corollary}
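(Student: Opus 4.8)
The plan is to deduce Corollary \ref{cor:Kformal} from Theorem \ref{thm:Halperin-Stasheff} together with Lemma \ref{lem:formal_id}, applying both to the special case $B=A$ with $f$ the identity. The forward direction is essentially trivial: if $(A^*,d_A)$ is formal over $\k$, there is a zig-zag of quasi-isomorphisms connecting it to $(H^*(A),d=0)$; tensoring each stage of this zig-zag with $\K$ preserves quasi-isomorphisms, since $\K$ is flat over $\k$ (indeed a field extension is faithfully flat, so $H^*(-\otimes_\k\K)\cong H^*(-)\otimes_\k\K$). This immediately exhibits a zig-zag over $\K$ between $(A^*\otimes\K, d_A\otimes\id_\K)$ and $(H^*(A)\otimes\K, d=0)$. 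Since $H^*(A^*\otimes\K)\cong H^*(A)\otimes\K = H^*(A\otimes\K)$ as graded $\K$-algebras, the latter is precisely the cohomology of the extended $\dga$, so $A^*\otimes\K$ is formal over $\K$.

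For the reverse implication, which is the substantive one, I would apply Theorem \ref{thm:Halperin-Stasheff} with both $\dga$s taken to be $(A^*,d_A)$ and $(B^*,d_B):=(H^*(A),d=0)$, and with $f\colon H^*(A)\to H^*(H^*(A))=H^*(A)$ the identity map of graded algebras. The hypothesis of the theorem asks that $f\otimes\id_\K$ be realizable by a weak equivalence between $(A^*\otimes\K, d_A\otimes\id_\K)$ and $(H^*(A)\otimes\K, d=0)$. This is exactly where the assumed $\K$-formality of $A^*\otimes\K$ enters—but with a crucial refinement: plain $\K$-formality only gives \emph{some} weak equivalence inducing \emph{some} cohomology isomorphism, whereas the theorem requires one inducing the \emph{identity}. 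This is precisely the gap that Lemma \ref{lem:formal_id} is designed to close.

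Concretely, the key steps are: first, observe that $H^*(A^*\otimes\K)\cong H^*(A)\otimes\K$ is of finite type over $\K$ because $H^*(A)$ is of finite type over $\k$, so Lemma \ref{lem:formal_id} applies over $\K$. Second, invoke Lemma \ref{lem:formal_id} to upgrade the assumed $\K$-formality into a weak equivalence between $(A^*\otimes\K, d_A\otimes\id_\K)$ and $(H^*(A)\otimes\K, d=0)$ that induces the \emph{identity} map on $H^*(A)\otimes\K$. Third, identify this identity map with $f\otimes\id_\K$, thereby verifying the hypotheses of Theorem \ref{thm:Halperin-Stasheff}. The theorem then yields a weak equivalence over $\k$ between $(A^*,d_A)$ and $(H^*(A),d=0)$ realizing the identity $f$, which is exactly the statement that $A^*$ is formal over $\k$.

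I expect the main obstacle to be conceptual rather than computational: one must resist concluding $\k$-formality directly from the existence of an \emph{arbitrary} cohomology-realizing weak equivalence, and instead carefully arrange for the weak equivalence to realize the \emph{identity} isomorphism, since that is the precise input required by Theorem \ref{thm:Halperin-Stasheff}. The role of Lemma \ref{lem:formal_id} is to perform exactly this normalization by post-composing with the inverse of the induced cohomology automorphism. A secondary point worth stating explicitly is the commutation $H^*(A\otimes_\k\K)\cong H^*(A)\otimes_\k\K$ and the preservation of the graded-algebra structure under the field extension, which underwrites both the finite-type hypothesis over $\K$ and the identification of the cohomology isomorphisms on the two sides.
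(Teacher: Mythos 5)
Your proof is correct and follows essentially the same route as the paper: the forward direction by extending scalars along the zig-zag, and the converse by using Lemma \ref{lem:formal_id} over $\K$ to normalize the weak equivalence so that it induces the identity on $H^*(A)\otimes\K$, then invoking Theorem \ref{thm:Halperin-Stasheff} with $B=(H^*(A),d=0)$ and $f=\id$. The paper's own proof is a terser version of exactly this argument, so there is nothing to add.
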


\begin{proof}
The forward implication is obvious. For the converse, 
suppose our $\K$-$\dga$ is formal. 
By Lemma \ref{lem:formal_id}, there exists a weak equivalence between 
$(A^{\hdot}\otimes \K,d_A\otimes \id_{\K})$ and $(H^{\hdot}(A)\otimes \K,d=0)$ 
inducing the identity on $H^{\hdot}(A)\otimes \K$.
By Theorem \ref{thm:Halperin-Stasheff}, the map 
$\id\colon H^{\hdot}(A)\to H^{\hdot}(A)$ can be realized 
by a weak equivalence between $(A^{\hdot},d_A)$ and $(H^{\hdot}(A),d=0)$.
That is, $(A^{\hdot},d_A)$ is formal (over $\k$).
\end{proof}

\subsection{Field extensions and $i$-formality}
\label{subsec:partial ext iformal}

We now use the aforementioned result of Halperin and Stasheff 
on full formality to establish an analogous result for partial formality. 
First we need an auxiliary construction, and a lemma.

Let $\cM(A,i)$ be the $i$-minimal model of a \dga~ $(A^{\hdot},d_A)$.
The degree $i+1$ piece, $\cM(A,i)^{i+1}$, is isomorphic to 
$\big(\ker (d^{i+1})\big)\oplus \cC_{i+1}$, where 
$d^{i+1} \colon \cM(A,i)^{i+1} \to \cM(A,i)^{i+2}$ 
is the differential, and $\cC_{i+1}$ is a complement 
to its kernel. It is readily checked that the vector subspace 
$\mathcal{I}_i:=
\cC_{i+1}\oplus \boplus_{s\ge i+2} \cM(A,i)^{s}$
is an ideal of $\cM(A,i)$, left invariant by the differential.
Consider the quotient \dga, 
$\cM[A,i]:=\cM(A,i) /\mathcal{I}_i$.  Additively, we have that 
$\cM[A,i]= \k\oplus \cM(A,i)^1 \oplus  \dots \oplus \cM(A,i)^i\oplus \ker (d^{i+1})$.

\begin{lemma}
\label{lem:formalityeq} 
Suppose that $\dim H^{i+1}(\M(A,i))< \infty$.  
The following statements are equivalent:
\begin{enumerate*}
\item \label{qf1}  $(A^{\hdot},d_A)$ is $i$-formal; 
\item \label{qf2} $\cM(A,i)$ is $i$-formal; 
\item \label{qf3} $\cM[A,i]$ is $i$-formal; and 
\item \label{qf4} $\cM[A,i]$ is formal.
\end{enumerate*}
\end{lemma}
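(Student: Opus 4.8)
The plan is to establish the three equivalences $(1)\Leftrightarrow(2)$, $(2)\Leftrightarrow(3)$, and $(3)\Leftrightarrow(4)$ in turn. The first two will be formal consequences of the invariance of $i$-formality under $i$-weak equivalence (Corollary \ref{cor:formal-equiv}), so that all the genuine content sits in the implication $(3)\Rightarrow(4)$.

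For $(1)\Leftrightarrow(2)$, I would simply observe that by the definition of an $i$-minimal model there is an $i$-quasi-isomorphism $\cM(A,i)\to A$, whence $A\simeq_i \cM(A,i)$, and apply Corollary \ref{cor:formal-equiv}. For $(2)\Leftrightarrow(3)$, I would show that the quotient map $p\colon \cM(A,i)\to \cM[A,i]$ of \eqref{eq:minimal2} is an $i$-quasi-isomorphism. It is the identity in degrees $\le i$, and the only delicate degree is $i+1$: since $\im d^i\subseteq \ker d^{i+1}$, collapsing the complement $\cC_{i+1}$ (and the top range $\bigoplus_{s\ge i+2}\cM(A,i)^s$) changes neither the cocycles nor the coboundaries of $\cM(A,i)$ in degrees $\le i+1$, so $p$ is in fact an isomorphism on $H^{\le i+1}$. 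Thus $\cM(A,i)\simeq_i \cM[A,i]$, and Corollary \ref{cor:formal-equiv} gives the equivalence.

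The implication $(4)\Rightarrow(3)$ is immediate, so the heart of the argument is $(3)\Rightarrow(4)$. Write $B=\cM[A,i]$ and $H=H^*(B)$. The crucial structural facts are that $B$, and hence $H$, is concentrated in degrees $\le i+1$, and that $\cM(A,i)$ is an $i$-minimal model for $B$ (by the previous paragraph together with the uniqueness in Theorem \ref{thm:mm}). Assuming $B$ is $i$-formal, I start from the given $i$-quasi-isomorphism $\phi\colon \cM(A,i)\to (H,0)$ and extend it, one Hirsch extension at a time, to the full minimal model $\cM(B)$, which is obtained from $\cM(A,i)$ by adjoining generators in degrees $\ge i+1$. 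At the stage of a degree-$(k{+}1)$ generator $w$ with $k\ge i$, Lemma \ref{lem:Hirschext} permits the extension provided $\phi(dw)=d_H(\phi(w))$; but $dw$ has degree $k+2\ge i+2$, so $\phi(dw)$ lands in $H^{k+2}=0$ and the obstruction vanishes automatically. Setting $\phi(w)=0$ on all such generators, I obtain in the limit a quasi-isomorphism $\cM(B)\to(H,0)$, which is exactly the formality of $B$.

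The main obstacle I anticipate is the bookkeeping that shows the extended map remains a quasi-isomorphism at every finite stage, not merely a chain map. Concretely I must check that the degree-$(i{+}1)$ generators are all of ``coboundary type'' — this is where the equality $H^{i+1}(\cM(A,i))\cong H^{i+1}(B)$ from the second paragraph is used — so that they kill $H^{i+2}(\cM(A,i))$ without disturbing $H^{\le i+1}$, and that for $k>i$ the forced vanishing $H^{k+1}(\cM(B)_{\le k})\hookrightarrow H^{k+1}(B)=0$ keeps $\phi$ an isomorphism in each degree, hence an isomorphism in every degree in the limit. This is precisely the point at which the truncated shape of $\cM[A,i]$, whose cohomology is identically zero above degree $i+1$, upgrades a statement about degree-$i$ behaviour into one about all degrees.
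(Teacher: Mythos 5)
Your proposal is correct and follows essentially the same route as the paper: the equivalences \eqref{qf1}$\Leftrightarrow$\eqref{qf2}$\Leftrightarrow$\eqref{qf3} via Corollary \ref{cor:formal-equiv}, and \eqref{qf3}$\Rightarrow$\eqref{qf4} by extending the $i$-quasi-isomorphism $\cM(A,i)\to (H^*(\cM[A,i]),0)$ by zero over the degree $\geq i+1$ Hirsch extensions of the full minimal model, using $H^{\geq i+2}(\cM[A,i])=0$ together with the degree-$(i+1)$ dimension count. The only (cosmetic) difference is that the paper verifies the extension $\gamma$ is a quasi-isomorphism in one step from the identity $\gamma\circ\alpha=\beta$, rather than by your stage-by-stage bookkeeping.
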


\begin{proof}
Since $\cM(A,i)$ is an $i$-minimal model for $(A^{\hdot},d_A)$, the 
two $\dga$s are  $i$-quasi-isomorphic.  The equivalence  
\eqref{qf1} $\Leftrightarrow$ \eqref{qf2} follows from 
Corollary \ref{cor:formal-equiv}.

Now let $\psi\colon \cM(A,i)\to \cM[A,i]$ be the canonical projection.  
It is readily checked that the induced homomorphism, 
$H^j(\psi)\colon H^{j}(\cM(A,i))\to H^{j}(\cM[A,i])$, is an isomorphism 
in degrees up to and including $i+1$. 
In particular, this shows that $\cM(A,i)$ is an $i$-minimal model for 
$\cM[A,i]$.  The equivalence  
\eqref{qf2} $\Leftrightarrow$ \eqref{qf3} again follows from 
Corollary \ref{cor:formal-equiv}.

Implication \eqref{qf4} $\Rightarrow$ \eqref{qf3} is trivial, so it remains to establish  
\eqref{qf3} $\Rightarrow$ \eqref{qf4}.   Assume the $\dga$ $\cM[A,i]$ is $i$-formal.
Since $\cM(A,i)$ is an $i$-minimal model for 
$\cM[A,i]$,  there is an $i$-quasi-isomorphism 
$\beta$ as in diagram \eqref{eq:cdformal}.  
In particular, the homomorphism, $H^{i+1}(\beta) \colon 
H^{i+1}(\cM(A,i)) \to H^{i+1}(\cM[A,i])$, is injective.  
On the other hand, we know from the previous paragraph 
that  $H^{i+1}(\cM[A,i])$ and $H^{i+1}(\cM(A,i))$ have the same dimension.  
Since by assumption $\dim H^{i+1}(\M(A,i))< \infty$, we 
conclude that $H^{i+1}(\beta)$ is an isomorphism too.  
\vspace*{-3pt}
\begin{equation}
\label{eq:cdformal}
\begin{gathered}
\xymatrixrowsep{20pt}
\xymatrixcolsep{30pt}
\xymatrix{
\cM(A,i)\ar@{->>}[d]_{\psi} \ar[r]^(.42){\beta}\ar@{^{(}->}[rd]^{\alpha} &(H^{\hdot}(\cM[A,i]),0)\\
\cM[A,i]  & \cM \,. \ar@{.>}[l]^{\simeq}_{\phi} \ar@{.>}[u]^{\gamma}_{\simeq}\\
}
\end{gathered}
\end{equation}

Let $\cM=\cM(\cM[A,i])$ be the full minimal model of $\cM[A,i]$.  As 
mentioned right after Theorem \ref{thm:mm}, this model 
can be constructed by Hirsch extensions of degree $k\geq i+1$, 
starting from the $i$-minimal model of $\cM[A,i]$, which we 
can take to be  $\cM(A,i)$.  Hence, the inclusion map, $\alpha\colon \cM(A,i)\to \cM$,  
induces isomorphisms in cohomology up to degree $i$, and  a monomorphism 
in degree $i+1$.  Now, since $H^{i+1}(\cM)$ has the same dimension as 
$H^{i+1}(\cM[A,i])$, and thus as $H^{i+1}(\cM(A,i))$, 
the map $H(\alpha)$ is also an isomorphism in degree $i+1$. 

The $\dga$ morphism $\beta$ extends to a $\cga$ map 
$\gamma\colon \cM \to H^{\hdot}(\cM[A,i])$ as in diagram \eqref{eq:cdformal}, 
by sending the new generators to zero. 
Since the target of $\beta$ vanishes in degrees $k\ge i+2$ 
and has differential $d=0$, the map $\gamma$ is a $\dga$ 
morphism. Furthermore, since $\gamma \circ \alpha=\beta$, 
we infer that $\gamma$ induces isomorphisms in cohomology in degrees $k \leq i+1$. 
Since $H^{k}(\cM)= H^{k}(\cM[A,i])=0$ for $k\ge i+2$, we conclude that 
$H(\gamma)$ is an isomorphism in all degrees, i.e., $\gamma$ is a 
quasi-isomorphism.  

Finally, let $\phi\colon \cM\to \cM[A,i]$ be a quasi-isomorphism 
from the minimal model of $\cM[A,i]$ to this $\dga$. 
The maps $\phi$ and $\gamma$ define 
a weak equivalence between $\cM[A,i]$ and $ (H^{\hdot}(\cM[A,i]),0)$, 
thereby showing that $\cM[A,i]$ is formal. 
\end{proof}

Since $H^{\geq i+2}(\cM[A,i])=0$, the equivalence of conditions \eqref{qf3} and 
\eqref{qf4} in the above lemma also follows from the (quite different) proof 
of Proposition 3.4 from \cite{Macinic}; see Remark \ref{rem:maclem} for more 
on this.  We are now ready to prove descent for partial formality of $\dga$s.

\begin{theorem}
\label{thm:i-formalField}
Let $(A^{\hdot},d_A)$ be a \dga~over $\k$, and let $\k\subset \K$ be a 
field extension. Suppose $H^{\le i+1}(A)$ is finite-dimensional 
and $H^0(A)=\k$.  Then $(A^{\hdot},d_A)$ is $i$-formal 
if and only if $(A^{\hdot}\otimes \K,d_A\otimes \id_{\K})$ 
is $i$-formal.
\end{theorem}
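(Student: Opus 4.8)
The plan is to reduce the statement about $i$-formality to the full-formality descent already available in Corollary \ref{cor:Kformal}, using the truncated minimal model $\cM[A,i]$ of \eqref{eq:minimal2} as the bridge. The forward implication is immediate: tensoring an $i$-quasi-isomorphism $\cM(A,i)\to (H^*(A),d=0)$ with $\K$ yields an $i$-quasi-isomorphism over $\K$, since base change along a field extension is exact and hence preserves both isomorphisms and monomorphisms on cohomology. So the real content lies in the converse.

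For the converse, the first step is to invoke Lemma \ref{lem:formalityeq}, which tells us that $(A^*,d_A)$ is $i$-formal if and only if the truncated model $\cM[A,i]$ is formal (in the full sense), and likewise that $(A^*\otimes\K, d_A\otimes\id_{\K})$ is $i$-formal if and only if $\cM[A\otimes\K,i]$ is formal. This converts a problem about $i$-formality descent into one about full-formality descent, which is exactly what Corollary \ref{cor:Kformal} addresses, \emph{provided} we can identify the two truncated models correctly.

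The key step, and the main technical point, is to show that the $i$-minimal model and its truncation commute with the field extension, i.e.
\begin{equation*}
\cM(A\otimes\K,i)\cong \cM(A,i)\otimes\K
\qquad\text{and}\qquad
\cM[A\otimes\K,i]\cong \cM[A,i]\otimes\K.
\end{equation*}
For the first isomorphism, I would observe that $\cM(A,i)\otimes\K$ is again a minimal $\dga$ generated in degrees $\le i$ (minimality and decomposability of the differential are preserved under $-\otimes\K$), and that tensoring the defining $i$-quasi-isomorphism $\cM(A,i)\to A$ with $\K$ produces an $i$-quasi-isomorphism onto $A\otimes\K$, again because $H^*(-\otimes\K)\cong H^*(-)\otimes\K$ by flat base change. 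The uniqueness clause of Theorem \ref{thm:mm} then forces $\cM(A,i)\otimes\K$ to be \emph{the} $i$-minimal model of $A\otimes\K$. For the truncation, I would use that both $\ker d^{i+1}$ and a chosen complement $\cC_{i+1}$ base-change correctly, since $\ker(d^{i+1}\otimes\id_{\K})=(\ker d^{i+1})\otimes\K$ by exactness; hence the ideal $\mathcal{I}_i$ of \eqref{eq:ideal} satisfies $\mathcal{I}_i\otimes\K=\mathcal{I}_i^{\K}$, and the quotients \eqref{eq:minimal2} agree after base change.

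With these identifications in hand, the proof concludes quickly. The finite-dimensionality hypothesis on $H^{\le i+1}(A)$ guarantees that $H^*(\cM[A,i])$ is of finite type (its cohomology vanishes above degree $i+1$ and matches $H^{\le i+1}(\cM(A,i))$ below), so Corollary \ref{cor:Kformal} applies to the $\dga$ $\cM[A,i]$: it is formal over $\k$ if and only if $\cM[A,i]\otimes\K$ is formal over $\K$. Chaining the equivalences then gives
\begin{align*}
A\ \text{is $i$-formal}
&\iff \cM[A,i]\ \text{is formal}
\iff \cM[A,i]\otimes\K\ \text{is formal}\\
&\iff \cM[A\otimes\K,i]\ \text{is formal}
\iff A\otimes\K\ \text{is $i$-formal},
\end{align*}
as desired. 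I expect the main obstacle to be the careful verification that $\cM(A,i)\otimes\K$ is genuinely minimal and serves as an $i$-quasi-isomorphism target, so that the uniqueness part of Theorem \ref{thm:mm} can be legitimately invoked to identify it with $\cM(A\otimes\K,i)$.
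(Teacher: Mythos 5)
Your proposal is correct and follows essentially the same route as the paper: both reduce $i$-formality to full formality of the truncated model $\cM[A,i]$ via Lemma \ref{lem:formalityeq}, check that the finite-type hypothesis makes Corollary \ref{cor:Kformal} applicable, and then transfer back. The only difference is cosmetic: where you identify $\cM(A\otimes\K,i)\cong\cM(A,i)\otimes\K$ and $\cM[A\otimes\K,i]\cong\cM[A,i]\otimes\K$ by flat base change and the uniqueness clause of Theorem \ref{thm:mm}, the paper instead applies Lemma \ref{lem:formalityeq} a second time over $\K$ and relies on the invariance of $i$-formality under $i$-weak equivalence (Corollary \ref{cor:formal-equiv}); both devices are valid and carry the same content.
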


\begin{proof}
Since $\dim H^{i+1}(\M(A,i)) \leq \dim H^{i+1}(A) < \infty$, we may 
apply Lemma \ref{lem:formalityeq} to infer that 
the \dga~$(A^{\hdot},d_A)$ is $i$-formal if and only if
the \dga~$\cM[A,i]$ is formal. 
By construction, $H^q(\cM[A,i])$ equals $H^q(A)$ for $q\le i$, 
injects into $H^{q}(A)$ for $q=i+1$, and vanishes 
for  $q>i+1$. Hence, in view of our hypothesis,  
$H^{\hdot}(\cM[A,i])$ is of finite-type. 
By Corollary \ref{cor:Kformal}, $\cM[A,i]$ is formal if and only if 
$\cM[A,i]\otimes \K$ is formal. 
By Lemma \ref{lem:formalityeq} again, 
this is equivalent to the $i$-formality of $\cM[A,i]\otimes \K$.
\end{proof}

\subsection{Formality notions for spaces}
\label{subsec:formal spaces}

To every space $X$, Sullivan \cite{Sullivan} associated in a functorial 
way a \dga~of `rational polynomial forms', denoted $A_{PL}^{\hdot}(X)$. 
As shown in \cite[\S 10]{FHT}, there is a natural identification 
$H^{\hdot}(A_{PL}^{\hdot}(X)) = H^{\hdot}(X,\Q)$ under which the respective 
induced homomorphisms in cohomology correspond. In particular, 
the weak isomorphism type of $A_{PL}^{\hdot}(X)$ depends only on 
the rational homotopy type of $X$. 

A \dga~$(A,d)$ over $\k$ is called a {\em model}\/ for the 
space $X$ if $A$ is weakly equivalent to Sullivan's algebra 
$A_{PL}(X;\k):=A_{PL}(X)\otimes_{\Q} \k$.  In other words,  
$\cM(A)$ is isomorphic to $\cM(X;\k) :=\cM(X) \otimes_{\Q} \k$, 
where $\cM(A)$ is the minimal model of $A$ and 
$\cM(X)$ is the minimal model of $A_{PL}(X)$. 
In the same vein, $A$ is an {\em $i$-model}\/ for $X$ if 
$(A,d)\simeq_i A_{PL}(X;\k)$.  
For instance, if $X$ is a smooth manifold, then the 
de Rham algebra $\Omega^{\hdot}_{dR}(X)$ is a model for $X$ 
over $\R$.

A space $X$ is said to be {\em formal}\/  
over $\k$ if the model $A_{PL}(X; \k)$ is formal, 
that is, there is a quasi-isomorphism $\cM(X;\k) \to (H^{\hdot}(X;\k),d=0)$.
Likewise, $X$ is said to be {\em $i$-formal}, for some $i\ge 0$, if 
there is an $i$-quasi-isomorphism $\cM(A_{PL}(X;\k),i) \to (H^{\hdot}(X;\k),d=0)$. 
Note that $X$ is $0$-formal if and only if $X$ is path-connected. 
Also, since a homotopy equivalence $X\simeq Y$ induces 
an isomorphism $H^{\hdot}(Y;\Q)\isom H^{\hdot}(X;\Q)$, it follows from 
Corollary \ref{cor:formal-equiv} that the $i$-formality property 
is preserved under homotopy equivalences.  

The following theorem of Papadima and Yuzvinsky 
\cite{Papadima-Yuzvinsky} nicely relates the properties 
of the minimal model of $X$ to the Koszulness of its 
cohomology algebra.

\begin{theorem}[\cite{Papadima-Yuzvinsky}]
\label{thm:py}
Let $X$ be a connected space with finite Betti numbers.  
If $\cM(X)\cong \cM(X,1)$, then $H^{\hdot}(X;\Q)$ is a Koszul algebra.
Moreover, if $X$ is formal, then the converse holds.
\end{theorem}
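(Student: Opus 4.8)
The plan is to translate the condition ``$\cM(X)\cong\cM(X,1)$'' into a statement about the holonomy Lie algebra of the cohomology ring, and then to detect Koszulness through the Chevalley--Eilenberg cohomology of that Lie algebra. Write $A:=H^*(X;\Q)$. First I would note that, since $\cM(X)$ is obtained from $\cM(X,1)$ by Hirsch extensions in degrees $\ge 2$, the condition $\cM(X)\cong\cM(X,1)$ is equivalent to saying that the $1$-minimal model already carries the full cohomology, i.e.\ the canonical map $H^*(\cM(X,1))\to A$ is an isomorphism. When $X$ is formal we have $\cM(X)\simeq\cM(A)$ and $\cM(X,1)\cong\cM(A,1)$, so the whole assertion reduces to the purely algebraic equivalence: \emph{the minimal model of the $\cga$ $(A,d=0)$ is generated in degree one if and only if $A$ is Koszul}. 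The engine of the proof is the chain
\[
H^*(\cM(A,1))\;\cong\;H^*(\widehat{\fh}(A);\k)\;\cong\;\Ext_{U(\fh(A))}(\k,\k)\;\cong\;\Ext_{\bar A^{!}}(\k,\k),
\]
where the first isomorphism is Theorem \ref{thm:model-holonomy} (identifying $\cM(A,1)$ with the Chevalley--Eilenberg complex of $\widehat{\fh}(A)$), the second is the standard identification of Lie algebra cohomology with $\Ext$ over the enveloping algebra, and the third is Proposition \ref{prop:Papadima-Y}, which gives $U(\fh(A))=\bar A^{!}$. Under these identifications the cohomological degree in $\cM(A,1)$ matches the homological $\Ext$-degree, and the Hirsch weight matches the internal grading of $\bar A^{!}$; by Corollary \ref{cor:holomin} the weight-$i$ generators have dimension $\dim\fh_i(A)$, so the finite Betti number hypothesis keeps every graded piece finite-dimensional.

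For the converse (the ``moreover'' clause), assume $A$ is Koszul. Then $A$ is quadratic, so $\bar A=A$ and $U(\fh(A))=A^{!}$, and $A^{!}$ is again Koszul; hence $\Ext_{A^{!}}(\k,\k)=(A^{!})^{!}=A$, with $\Ext^s$ concentrated in internal degree $s$. Substituting into the chain yields $H^s(\cM(A,1))\cong A^s$ for every $s$, so $\cM(A,1)$ computes all of $A$ and therefore $\cM(A)\cong\cM(A,1)$ is generated in degree one. (Equivalently, by Löfwall's theorem, Proposition \ref{prop:yoneda}, Koszulness of $A$ is the collapse of the Yoneda algebra $\Ext_A(\k,\k)$ onto its linear strand $U(\fh(A))$, the dual formulation of the same statement.)

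For the forward implication I would run the chain backwards. The hypothesis $\cM(A)\cong\cM(A,1)$ forces $H^s(\cM(A,1))\cong A^s$, hence $\Ext^s_{\bar A^{!}}(\k,\k)\cong A^s$; and since a degree-one minimal model of a formal space detects the indecomposables of $A$, the algebra $A$ is generated in degree one. Then the weight-$s$ part of $H^s(\cM(A,1))$ already surjects onto $A^s$, and a dimension count (using $\dim H^s=\dim A^s$) forces $H^s$ to be concentrated in Hirsch weight exactly $s$; this diagonal concentration is precisely the Koszulness of $\bar A^{!}$, hence of $\bar A$, and comparing $\Ext_{\bar A^{!}}(\k,\k)=\bar A$ with the computed value $A$ gives $A=\bar A$, so $A$ is Koszul. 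I expect the main obstacle to lie in justifying the central chain rigorously: one must verify that the continuous Chevalley--Eilenberg cohomology of the completion $\widehat{\fh}(A)$ agrees with the graded cohomology of $\fh(A)=\Ext_{U(\fh(A))}(\k,\k)$, and that the three gradings (cohomological degree, $\Ext$ homological degree, Hirsch weight) are aligned, so that ``$1$-generated model'' and ``diagonal $\Ext$'' become literally the same condition---this is also the point where the finite-type hypothesis, and (for the forward direction) the reduction through formality, are genuinely used.
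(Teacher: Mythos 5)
This theorem is quoted in the paper from \cite{Papadima-Yuzvinsky} without proof, so there is no internal argument to compare yours against; I am assessing the proposal on its own terms. Your architecture---identifying $H^*(\cM(A,1))$ with $\Ext_{\bar A^{!}}(\k,\k)$ through Theorem \ref{thm:model-holonomy}, the Chevalley--Eilenberg interpretation of Lie algebra cohomology as $\Ext$ over $U(\fh(A))$, and Proposition \ref{prop:Papadima-Y}, then reading Koszulness off the alignment of cohomological degree with Hirsch weight---is essentially the Papadima--Yuzvinsky route, and the key observation in your forward direction (the model map kills every generator of weight $\geq 2$, so an isomorphism onto $A^s$ forces $H^s$ to concentrate in weight $s$, i.e.\ diagonal $\Ext$) is the right idea.

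Two genuine gaps remain. First, your reduction of the forward implication to the algebraic statement about $(A,d=0)$ requires $\cM(X,1)\cong \cM(A,1)$, which is precisely $1$-formality of $X$; the statement grants formality only for the converse. Without it, Theorem \ref{thm:model-holonomy} computes $\fL(\cM(A,1))\cong\widehat{\fh}(A)$ but says nothing about $\fL(\cM(X,1))\cong \fm(\pi_1(X);\Q)$, and the first link of your chain breaks. In fact some such hypothesis is unavoidable: the Heisenberg nilmanifold (a $K(G,1)$ for the group of Example \ref{ex:filt massey}) has minimal model $\bigwedge(x,y,z)$ with $dz=xy$, generated in degree one, yet $H^1\cup H^1=0$ while $H^2\neq 0$, so its cohomology ring is not even quadratic, let alone Koszul. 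You should state explicitly that you are proving the statement with formality assumed throughout, rather than present the reduction as automatic. Second, in the converse direction the equality $\dim H^s(\cM(A,1))=\dim A^s$ does not by itself show that the canonical $1$-quasi-isomorphism $\cM(A,1)\to (A,0)$ is a quasi-isomorphism in every degree, which is what ``no further Hirsch extensions are needed'' actually requires. This is fillable: the model map restricted to $\bigwedge^s V_1$ is the multiplication map $\bigwedge^s A^1\to A^s$, which is surjective because a Koszul algebra is generated in degree one, and Koszulness concentrates $H^s(\cM(A,1))$ in weight $s$, so surjectivity plus the dimension count yields the isomorphism. But that step, together with the compatibility of the three gradings in your central chain (which you flag but do not verify), is where the real content of the proof lives and cannot be left as an ``expected obstacle.''
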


\begin{remark}
\label{rem:maclem}
In \cite[Prop.~3.4]{Macinic}, M\u{a}cinic shows that every 
$i$-formal space $X$ for which $H^{\geq i+2}(X;\Q)$ vanishes 
 is formal. In particular, the notions 
of formality and $i$-formality coincide for $(i+1)$-dimensional 
CW-complexes. In general, though, 
full formality is a much stronger condition than partial formality. 
\end{remark}

\begin{remark}
\label{rem:fm} 
There is a competing notion of $i$-formality, due to Fern\'{a}ndez and 
Mu\~{n}oz \cite{FM05}.  As explained in \cite{Macinic}, the two notions 
differ significantly, even for $i=1$. In what follows, we will use exclusively 
the classical notion of $i$-formality given above.
\end{remark}

As is well-known, the (full) formality property behaves well with respect to field 
extensions of the form $\Q\subset \k$.  Indeed, it follows from Halperin and Stasheff's 
Corollary \ref{cor:Kformal} that a connected space $X$ with finite Betti numbers 
is formal over $\Q$ if and only if $X$ is formal over $\k$.  
This result was first stated and proved by Sullivan \cite{Sullivan}, using different techniques.
An independent proof was given by Neisendorfer and Miller \cite{Neisendorfer-Miller78}
in the  simply-connected case. 

These classical results on descent of formality may be strengthened to a result 
on descent of partial formality. More precisely, using Theorem \ref{thm:i-formalField}, 
we obtain the following immediate corollary.

\begin{corollary}
\label{cor:pfext}
Let $X$ be a connected space with finite Betti numbers $b_1(X),\dots,b_{i+1}(X)$.  
Then $X$ is $i$-formal over $\Q$ if and only if $X$ is $i$-formal over $\k$.
\end{corollary}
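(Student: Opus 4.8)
The plan is to reduce this statement about spaces directly to the $\dga$-level descent result established in Theorem~\ref{thm:i-formalField}. The key observation is that the notion of $i$-formality for a space $X$ over a field $\k$ was defined in \S\ref{subsec:formal spaces} to mean precisely that the $\dga$ $A_{PL}(X;\k)=A_{PL}(X)\otimes_\Q \k$ is $i$-formal in the sense of Definition~\ref{def:partialformal}. So the corollary is an instance of the general $\dga$ descent statement, once the hypotheses are matched up.

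First I would take $A=A_{PL}(X)$, viewed as a $\dga$ over $\Q$. Under the natural identification $H^*(A_{PL}(X))=H^*(X;\Q)$ noted in \S\ref{subsec:formal spaces}, the hypothesis that the Betti numbers $b_1(X),\dots,b_{i+1}(X)$ are finite translates exactly into the statement that $H^{\le i+1}(A)$ is finite-dimensional, which is the finiteness hypothesis required by Theorem~\ref{thm:i-formalField}. Next I would observe that extending scalars from $\Q$ to $\k$ gives $A_{PL}(X)\otimes_\Q \k = A_{PL}(X;\k)$, so the $\dga$ $(A^*\otimes\k,\, d_A\otimes\id_\k)$ appearing in Theorem~\ref{thm:i-formalField} is precisely the model $A_{PL}(X;\k)$ whose $i$-formality defines $i$-formality of $X$ over $\k$.

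With these identifications in place, the corollary follows immediately by applying Theorem~\ref{thm:i-formalField} with the field extension $\Q\subset \k$: the $\dga$ $A_{PL}(X)$ over $\Q$ is $i$-formal if and only if $A_{PL}(X)\otimes_\Q\k$ is $i$-formal, which by definition says that $X$ is $i$-formal over $\Q$ if and only if $X$ is $i$-formal over $\k$.

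I do not expect any serious obstacle here, since the hard analytic content—descent for full formality (Corollary~\ref{cor:Kformal}) together with the reduction of partial formality to full formality of the truncated model $\cM[A,i]$ (Lemma~\ref{lem:formalityeq})—has already been absorbed into Theorem~\ref{thm:i-formalField}. The only point requiring care is the bookkeeping in matching hypotheses: one must confirm that finiteness of $b_1(X),\dots,b_{i+1}(X)$ supplies exactly the finite-dimensionality of $H^{\le i+1}(A_{PL}(X))$ demanded by the theorem, and that the functorial identification $H^*(A_{PL}(X))\cong H^*(X;\Q)$ of \S\ref{subsec:formal spaces} is compatible with extension of scalars. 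Both are routine, so the proof is genuinely just the sentence ``apply Theorem~\ref{thm:i-formalField} to $A=A_{PL}(X)$ with the extension $\Q\subset\k$.''
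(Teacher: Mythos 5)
Your proposal is correct and matches the paper's proof exactly: the paper derives Corollary \ref{cor:pfext} as an immediate consequence of Theorem \ref{thm:i-formalField} applied to $A=A_{PL}(X)$ with the extension $\Q\subset\k$, using the identification $H^*(A_{PL}(X))\cong H^*(X;\Q)$ and the fact that $A_{PL}(X;\k)=A_{PL}(X)\otimes_\Q\k$ by definition. The only bookkeeping detail worth noting is that the finite-dimensionality of $H^{\le i+1}(A)$ also requires $H^0$, which is supplied by connectedness of $X$.
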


\section{Groups, Lie algebras, and graded-formality}
\label{sect:gr holo}

We now turn to finitely generated groups, and to two graded Lie 
algebras attached to each such group. We put special emphasis on the relationship 
between these Lie algebras, leading to the notion of graded-formality.  

\subsection{Central filtrations on groups}
\label{subsec:gr}
We start with some general background on lower 
central series and the associated graded Lie algebra of a group. 
For more details on this classical topic, we refer to Lazard \cite{Lazard54} 
and Magnus et al.~\cite{Magnus-K-S}.
Let $G$ be a group.  For elements $x,y\in G$, let 
$[x,y]=xyx^{-1}y^{-1}$ be their group commutator. Likewise, for 
subgroups $H,K< G$, let $[H,K]$ be the subgroup of $G$ generated by 
all commutators $[x,y]$ with $x\in H$, $y\in K$.  

A \textit{(central) filtration}\/ on the group $G$ is a decreasing 
sequence  of subgroups, $G=\cF_1G> \cF_2G> \cF_3G>\cdots$, such that 
$[\cF_rG,\cF_sG]\subset \cF_{r+s}G$.  It is readily verified that, 
for each $k>1$, the group $\cF_{k+1}G$ is a normal 
subgroup of $\cF_{k}G$, and the quotient group $\gr^{\cF}_k(G)=\cF_kG/\cF_{k+1}G$ 
is abelian.   As before, let $\k$ be a field of characteristic $0$. The direct sum 
\begin{equation}
\label{eq:gr}
\gr^{\cF}(G;\k)=\boplus_{k\geq 1} \gr^{\cF}_k(G) \otimes_{\Z}\k 
\end{equation}
 is a graded Lie algebra over $\k$, with Lie bracket induced from 
the group commutator:  If $x\in \cF_r G $ and $y\in \cF_s G$, then 
$[x+\cF_{r+1}G,y+\cF_{s+1}G]=xyx^{-1}y^{-1}+\cF_{r+s+1}G$. 
We can view $\gr^{\cF}(-;\k)$ as a functor from groups to graded $\k$-Lie algebras.  
Moreover, $\gr^{\cF}(G;\K)=\gr^{\cF}(G;\k)\otimes_{\k} \K$, for any field 
extension $\k\subset \K$. (Once again, if the underlying ring in a tensor product
is understood, we will write $\otimes$ for short.)

Let $H$ be a normal subgroup of $G$, and let $Q=G/H$ be the quotient 
group. Define filtrations on $H$ and $Q$ by $\widetilde{\cF}_kH=\cF_k G\cap H$ 
and $\overline{\cF}_kQ=\cF_k G/\widetilde{\cF}_k H$, respectively.  
We then have the following classical result of Lazard.

\begin{prop}[Thm.~2.4 in \cite{Lazard54}]
\label{prop:lazard}
The canonical projection $G\surj G/H$ induces a natural 
isomorphism of graded Lie algebras, 
$\gr^{\cF}(G;\k)/\gr^{\widetilde{\cF}}(H;\k) 
\isom  \gr^{\overline{\cF}}(G/H;\k)$.
\end{prop}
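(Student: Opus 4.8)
The plan is to realize the claimed isomorphism degreewise, out of a short exact sequence of graded pieces, and then assemble these into a single short exact sequence of graded Lie algebras to which I can apply the first isomorphism theorem. Throughout, write $\pi\colon G\surj Q=G/H$ for the canonical projection and fix an integer $k\ge 1$.

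First I would record the two structural facts that make the statement meaningful. Since $\widetilde{\cF}_k H=\cF_k G\cap H$, an element of $\cF_k G\cap H$ lying in $\cF_{k+1}G$ automatically lies in $\cF_{k+1}G\cap H=\widetilde{\cF}_{k+1}H$; hence the inclusion $H\inj G$ induces an injection $\alpha_k\colon \gr^{\widetilde{\cF}}_k(H)\inj \gr^{\cF}_k(G)$, and summing over $k$ exhibits $\gr^{\widetilde{\cF}}(H)$ as a graded Lie subalgebra of $\gr^{\cF}(G)$. To see it is in fact an \emph{ideal} — which is what gives meaning to the quotient Lie algebra on the left — I would take $x\in\cF_r G$ and $h\in\cF_s G\cap H$ and observe that $[x,h]=(xhx^{-1})h^{-1}$ lies in $H$ because $H$ is normal, and lies in $\cF_{r+s}G$ because $\cF$ is a central filtration; thus $[x,h]\in\cF_{r+s}G\cap H=\widetilde{\cF}_{r+s}H$, which is precisely the ideal condition at the level of associated graded brackets.

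Next I would produce the comparison map. Because $\overline{\cF}_k Q=\cF_k G/\widetilde{\cF}_k H$ is by construction the image $\pi(\cF_k G)$, the projection $\pi$ preserves filtrations and so induces, by the standard associated-graded construction for filtration-preserving homomorphisms, a morphism of graded Lie algebras $\gr(\pi)\colon \gr^{\cF}(G)\to \gr^{\overline{\cF}}(Q)$. In each degree the induced map $\beta_k\colon \gr^{\cF}_k(G)\surj \gr^{\overline{\cF}}_k(Q)$ is surjective, again because $\pi(\cF_k G)=\overline{\cF}_k Q$ essentially by definition, and the composite $\beta_k\circ\alpha_k$ vanishes since $\pi$ kills $H$. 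The one genuine computation — and the step I expect to be the main obstacle, though it is short — is the reverse inclusion $\ker\beta_k\subseteq\im\alpha_k$: given $g\in\cF_k G$ with $\pi(g)\in\overline{\cF}_{k+1}Q=\pi(\cF_{k+1}G)$, I choose $c\in\cF_{k+1}G$ with $\pi(g)=\pi(c)$, so that $h:=gc^{-1}\in\ker\pi=H$; since $g,c\in\cF_k G$ this forces $h\in\cF_k G\cap H=\widetilde{\cF}_k H$, while $g\equiv h \bmod \cF_{k+1}G$, so the class of $g$ lies in $\im\alpha_k$. This adjustment-by-one-filtration-step is the heart of Lazard's result; everything else is bookkeeping.

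Assembling these exact sequences over all $k$ yields a short exact sequence of graded Lie algebras $0\to\gr^{\widetilde{\cF}}(H)\to\gr^{\cF}(G)\xrightarrow{\gr(\pi)}\gr^{\overline{\cF}}(Q)\to 0$, and the first isomorphism theorem then gives the asserted $\gr^{\cF}(G)/\gr^{\widetilde{\cF}}(H)\isom \gr^{\overline{\cF}}(Q)$; naturality follows from the naturality of $\pi$ and of the $\gr$ construction. Finally, to pass from the integral graded pieces to the $\k$-forms of \eqref{eq:gr}, I would tensor the degreewise exact sequences with $\k$, which preserves exactness since $\k$ has characteristic $0$ and is therefore flat over $\Z$.
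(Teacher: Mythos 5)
Your proof is correct. The paper gives no argument of its own for this statement---it simply cites Lazard's Theorem 2.4---so there is no route to contrast with yours; your degreewise sequence $0\to\gr^{\widetilde{\cF}}_k(H)\to\gr^{\cF}_k(G)\to\gr^{\overline{\cF}}_k(Q)\to 0$, with the key kernel computation $g=hc$, $h\in\cF_kG\cap H$, $c\in\cF_{k+1}G$, is exactly the classical argument, and the two points one might worry about (that $[x,h]\in\cF_{r+s}G\cap H$ makes $\gr^{\widetilde{\cF}}(H)$ an honest Lie ideal, and that tensoring with the flat $\Z$-module $\k$ preserves the degreewise exactness) are both handled soundly.
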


\subsection{The associated graded Lie algebra}
\label{subsec:assoc gr}
Any group $G$ comes endowed with the lower central series (LCS) 
filtration $\{\Gamma_kG\}_{k\geq 1}$, defined inductively by 
$\Gamma_1G=G$ and $\Gamma_{k+1}G=[\Gamma_kG,G]$. 
If $\Gamma_kG\neq 1$ but $\Gamma_{k+1}G=1$, then 
$G$ is said to  be a \textit{$k$-step nilpotent group}.  In general, 
though, the LCS filtration does not terminate.

The Lie algebra $\gr(G;\k)=\gr^{\Gamma}(G;\k)$ is called the 
\textit{associated graded Lie algebra} (over $\k$) of the group $G$.  
For instance, if $F=F_n$ is a free group of rank $n$, then $\gr(F;\k)$ 
is the free graded Lie algebra $\Lie(\k^n)$.
A group homomorphism $f\colon G_1\rightarrow G_2$ induces a 
morphism of graded Lie algebras, $\gr(f;\k)\colon \gr(G_1;\k)\rightarrow 
\gr(G_2;\k)$; moreover, if $f$ is surjective, then $\gr(f;\k)$ is also surjective. 

For each $k\ge 2$, the factor group $G/\Gamma_k(G)$ is the 
maximal $(k-1)$-step nilpotent quotient of $G$. 
The canonical projection $G\to G/\Gamma_k(G)$ induces an epimorphism 
$\gr(G;\k) \to \gr(G/\Gamma_k(G);\k)$, which is an isomorphism in degrees 
$s< k$. 

From now on, unless otherwise specified, we will assume that the 
group $G$ is finitely generated.  That is, there is a 
free group $F$ of finite rank, and an epimorphism 
$\varphi\colon F\surj G$.  Let $R=\ker(\varphi)$; then $G=F/R$ 
is called a presentation for $G$.  Note that 
the induced morphism 
$\gr(\varphi;\k)\colon \gr(F;\k)\to \gr(G;\k)$ is surjective.  Thus, 
$\gr(G;\k)$ is a finitely generated Lie algebra, with generators 
in degree $1$.  

Let $H\triangleleft G$ be a normal subgroup, and let $Q=G/H$.  
Denote by $\widetilde{\Gamma}_r H=\Gamma_r{G}\cap  H$ the induced 
filtration on $H$. It is readily seen that the filtration $\overline{\Gamma}_rQ=
\Gamma_rG/\widetilde{\Gamma}_r H$ coincides with the LCS filtration on $Q$. 
Hence, by Proposition \ref{prop:lazard}, 
\vspace*{-3pt}
\begin{equation}
\label{eq:gammagr}
\gr(Q;\k) \cong \gr(G;\k)/\gr^{\widetilde{\Gamma}}(H;\k).  
\end{equation}

Now suppose $G=H\rtimes Q$ is a semi-direct product of groups.  In general, 
there is not much of a relation between the respective associated graded Lie algebras. 
Nevertheless, we have the following well-known result of Falk and Randell \cite{FalkRandell},
which shows that $\gr(G;\k)=\gr(H;\k)\rtimes \gr(Q;\k)$ for `almost-direct' products of groups. 

\begin{theorem}[Thm.~3.1 in \cite{FalkRandell}]
\label{thm:fr}
Let $G=H\rtimes Q$ be a semi-direct product of groups, and suppose $Q$ acts trivially 
on $H_{\ab}$.  Then the filtrations $\{\widetilde{\Gamma}_r H\}_{r\ge 1}$ and 
$\{\Gamma_r{H}\}_{r\ge 1}$ coincide, and there is a split exact sequence 
of graded Lie algebras,  $0\to \gr(H;\k) \to \gr(G;\k)\to \gr(Q;\k)\to 0$.
\end{theorem}

\subsection{The holonomy Lie algebra}
\label{subsec:holo lie}

The holonomy Lie algebra of a finitely generated group 
was introduced by K.-T.~Chen \cite{Chen73} and Kohno \cite{Kohno}, 
and was further studied in \cite{Markl-Papadima, Papadima-Suciu04, SW-holo}. 

\begin{definition}
\label{def:holo group}
Let $G$ be a finitely generated group.   The \emph{holonomy Lie algebra}\/  
of $G$ is the holonomy Lie algebra of the cohomology ring 
$A=H^{\hdot}(G;\k)$, that is, 
\begin{equation}
\label{eq:hololie}
\fh(G;\k)=\Lie(H_1(G;\k))/\langle\im (\partial_G)\rangle,
\end{equation}
where $\partial_G$ is the dual to the cup-product map 
$\cup_G\colon H^1(G;\k)\wedge H^1(G;\k)\to H^2(G;\k)$. 
\end{definition}

By construction, $\fh(G;\k)$ is a quadratic Lie algebra.  
If $f\colon G_1\to G_2$ is a group homomorphism, then 
the induced map in cohomology, $H^1(f)\colon H^1(G_2;\k)\to H^1(G_1;\k)$,  
yields a morphism of graded Lie algebras, $\fh(f;\k)\colon \fh(G_1;\k)\to \fh(G_2;\k)$.  
Moreover, if $f$ is surjective, then $\fh(f;\k)$ is also surjective. 
Finally, $\h(G;\K)=\h(G;\k)\otimes_{\k} \K$, for any field 
extension $\k\subset \K$

In the definition of the holonomy Lie algebra of $G$, we used the 
cohomology ring of a classifying space $K(G,1)$.  As the next 
lemma shows, we may replace this space by any other connected 
CW-complex with the same fundamental group. 

\begin{lemma}
\label{lem:holo inv}
Let $X$ be a connected CW-complex with $\pi_1(X)=G$.  Then 
$\fh(H^{\hdot}(X;\k))\cong \fh(G;\k)$.
\end{lemma}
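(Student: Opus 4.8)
The plan is to show that the holonomy Lie algebra $\fh(H^*(X;\k))$ depends only on the multiplication map $\mu\colon H^1 \wedge H^1 \to H^2$ in low degrees, and that this data agrees for $X$ and for any classifying space $K(G,1)$. By Definition \ref{def:holo alg}, the holonomy Lie algebra of a $\cga$ $A$ is built from $A^1$, $A^2$, and the cup-product map $\mu_A\colon A^1\wedge A^1\to A^2$ alone; no higher cohomology plays a role. Thus it suffices to produce isomorphisms $H^1(X;\k)\cong H^1(G;\k)$ and $H^2(X;\k)\supseteq \im(\cup_X)$ compatibly with the cup products, so that the map $\partial$ dual to the cup product has the same image. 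First I would recall the standard fact from the theory of Eilenberg--MacLane spaces: for any connected CW-complex $X$ with $\pi_1(X)=G$, there is a continuous map $f\colon X\to K(G,1)$ inducing the identity on fundamental groups, and this map induces an isomorphism on $H^1$ and a \emph{monomorphism} on $H^2$ (with $\k$-coefficients).

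The key point is the behavior in degrees $1$ and $2$. First I would observe that $H^1(X;\k)\cong \hom(G,\k)\cong H^1(G;\k)$, since $H^1$ of any space depends only on $\pi_1$. For degree $2$, the classifying map $f\colon X\to K(G,1)$ induces $f^*\colon H^2(G;\k)\to H^2(X;\k)$, which is injective; this is a consequence of the five-term exact sequence in low-degree cohomology (or equivalently of the fact that $K(G,1)$ can be built from $X$ by attaching cells of dimension $\ge 3$ to kill higher homotopy, which does not affect $H^2$ injectivity). Because $f^*$ is an isomorphism on $H^1$ and cup products are natural, the diagram relating $\mu_{H^*(X;\k)}$ and $\mu_{H^*(G;\k)}$ commutes. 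Consequently $\ker(\mu_X) = \ker(\mu_G)$ as subspaces of $H^1\wedge H^1$ (using that $f^*$ on $H^2$ is injective, so $\mu_X(\omega)=0$ iff $f^*\mu_G(\omega)=0$ iff $\mu_G(\omega)=0$ after the identification on $H^1$).

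Given this, I would invoke formula \eqref{eq:hololie bis}, which writes $\fh(A)=\Lie(A_1^*)/\langle \ker(\mu_A)^{\vee}\rangle$, expressing the holonomy Lie algebra purely in terms of $A_1$ and $\ker(\mu_A)\subset A_1\wedge A_1$. Since the identification $H^1(X;\k)\cong H^1(G;\k)$ carries $\ker(\mu_X)$ onto $\ker(\mu_G)$, the corresponding quotients of the free Lie algebra coincide, yielding $\fh(H^*(X;\k))\cong \fh(G;\k)$. Alternatively, and perhaps more cleanly, I would apply the functoriality of $\fh(-)$ discussed after Definition \ref{def:holo alg}: the $\cga$ morphism $f^*\colon H^*(G;\k)\to H^*(X;\k)$ is injective in degrees $\le 2$ and an isomorphism in degree $1$, so by the remark that an injective $\cga$ map induces a surjection on holonomy Lie algebras, together with the fact that $\fh(-)$ sees only the quadratic closure in degrees $1$ and $2$, this surjection is in fact an isomorphism.

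The main obstacle I anticipate is justifying rigorously that $f^*$ is injective on $H^2$, since the general existence of the classifying map and its low-degree properties must be cited carefully rather than reproved; the cleanest route is to note that $H^{\le 2}$ of a space is determined by its fundamental group, which is precisely the content needed. The remaining steps — that $\fh(-)$ depends only on $\mu$ in degrees up to $2$, and that the relevant kernels match — are essentially bookkeeping with the definitions already laid out in \S\ref{subsec:hololie}, so the argument reduces to correctly invoking the naturality of cup products and the low-degree isomorphisms.
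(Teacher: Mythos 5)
Your proposal is correct and follows essentially the same route as the paper: construct the classifying space by attaching cells of dimension $\ge 3$ to $X$, note that the induced map on cohomology is an isomorphism in degree $1$ and injective in degree $2$, and conclude that the data defining the holonomy Lie algebra (equivalently, $\ker(\mu)$ or $\im(\partial)$) is the same for $X$ and $K(G,1)$. The only cosmetic difference is that the paper phrases the final step via the images of the dual cup-product maps $\partial_X$ and $\partial_G$, while you phrase it via the kernels of $\mu_X$ and $\mu_G$; these are equivalent by the identification $\im(\mu_A^*)=\ker(\mu_A)^{\vee}$ already built into Definition \ref{def:holo alg}.
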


\begin{proof}
We may construct a classifying space for $G$ by adding cells of 
dimension $3$ and higher to $X$ in a suitable way. The inclusion 
map, $j\colon X\to K(G,1)$, induces a map on cohomology rings, 
$H(j)\colon H^{\hdot}(K(G,1);\k)\to H^{\hdot}(X;\k)$, which is an isomorphism 
in degree $1$ and an injection in degree $2$. 
Consequently, $H^2(j)$ restricts to an isomorphism from 
$\im(\cup_G)$ to $\im(\cup_{X})$. Taking duals, we find that 
$\im(\partial_{X})=\im(\partial_G)$. The conclusion follows.
\end{proof}

In particular, if $K_G$ is the $2$-complex associated to a 
presentation of $G$, then $\fh(G;\k)$ is isomorphic to $\fh(H^{\hdot}(K_G;\k))$.   Let 
$\bar{\phi}_n(G):=\dim \fh_n(G;\k)$ be the dimensions of the graded 
pieces of the holonomy Lie algebra of $G$.  The next corollary is an 
algebraic version of the LCS formula from Papadima and Yuzvinsky \cite{Papadima-Yuzvinsky}, 
but with no formality assumption.

\begin{corollary}
\label{cor:holokoszul}
Let $X$ be a connected CW-complex with $\pi_1(X)=G$, let $A=H^{\hdot}(X;\k)$ 
be its cohomology algebra, and let $\qA$ be the quadratic closure of $A$. 
Then $\prod_{n\geq 1}(1-t^n)^{\bar{\phi}_n(G)} = \sum_{i\ge 0} b_{ii} t^i$, 
where $b_{ii}=\dim \Ext^i_A(\k,\k)_i$.  Moreover, if 
$\qA$ is a Koszul algebra, then 
$\prod_{n\geq 1}(1-t^n)^{\bar{\phi}_n}=\Hilb(\qA,-t)$.
\end{corollary}

\begin{proof}
The first claim follows from Lemma \ref{lem:holo inv}, the Poincar\'e--Birkhoff--Witt 
formula \eqref{eq:PBW}, and L\"{o}f\-wall's formula from Proposition \ref{prop:yoneda}.  
The second claim follows from the Koszul duality formula stated in 
Corollary \ref{cor:kdual holo}.
\end{proof}

\subsection{A comparison map}
\label{subsec:Phimap}

Once again, let $G$ be a finitely generated group.   The 
next lemma is known; for a proof and more details, 
we refer to \cite{SW-holo}.

\begin{lemma}[\cite{Markl-Papadima, Papadima-Suciu04}]
 \label{lem:holoepi}
There exists a natural epimorphism of graded $\k$-Lie algebras, 
$\Phi_G\colon \fh(G; {\k}) \surj \gr(G;\k)$, 
inducing isomorphisms in degrees $1$ and $2$.  Furthermore, 
this epimorphism is natural with respect to field extensions $\k\subset \K$.
\end{lemma}
 
\begin{prop}
 \label{prop:closure}
Let $V=H_1(G;\k)$. 
Suppose the associated graded Lie algebra $\g=\gr(G;\k)$ 
has presentation $\Lie(V)/\fr$. 
Then the holonomy Lie algebra $\fh(G;\k)$ has presentation 
$\Lie(V)/\langle \fr_2\rangle $, where $\fr_2=\fr\cap \Lie_2(V)$.  Furthermore, 
if $A=U(\g)$, then $\fh(G;\k)=\fh\big((\qA)^{!}\big)$. 
\end{prop}

\begin{proof}
We have a natural exact sequence, which was first established 
in a particular case by Sullivan \cite{Sullivan75}  
and in general by Lambe \cite{Lambe86},
\vspace*{-4pt}
\begin{equation}
\label{eq:sl}
\xymatrixcolsep{18pt}
\xymatrix{0\ar[r]& (\Gamma_2G/\Gamma_3 G\otimes \k)^* 
\ar^(.46){\beta}[r]& H^1(G;\k)\wedge H^1(G;\k) \ar^(.62){\cup}[r]& H^2(G;\k)},  
\end{equation}
where $\beta$ is the dual of Lie bracket product. 
Taking the dual of \eqref{eq:sl}, we find that  
$\im(\partial_G)=\ker(\beta^*)$.
Hence, $\langle \fr_2 \rangle =\langle \im(\partial_G)\rangle$
as ideals of $\Lie(V)$; thus, $\fh(G;\k)=\Lie(V)/ \langle \fr_2 \rangle$. 
The last claim follows from Corollary \ref{cor:quadraticgr}.
\end{proof}

Recall that we denote by $\phi_n(G)$ and $\bar\phi_n(G)$ the dimensions 
on the $n$-th graded pieces of $\gr(G;\k)$ and $\fh(G;\k)$, respectively.  
By Lemma \ref{lem:holoepi}, 
$\bar\phi_n(G)\ge \phi_n(G)$, for all $n\ge 1$, and equality always holds for 
$n\le 2$.  Nevertheless, these inequalities can be strict for $n\ge 3$.

\subsection{Graded-formality}
\label{subsec:grf}

We continue our discussion of the associated graded 
and holonomy Lie algebras of a finitely generated group 
with a formality notion that will be important in the sequel.  

\begin{definition}
\label{def:graded-1-formal}
A finitely generated group $G$ is {\em graded-formal}\/ (over $\k$) if 
the canonical projection $\Phi_G\colon \fh(G;\k)\surj \gr(G;\k)$ 
is an isomorphism of graded Lie algebras. 
\end{definition}

This notion was introduced by Lee in \cite{Lee}, where it is 
called graded $1$-formality.   Next, we give two alternate 
definitions, which oftentimes are easier to verify (see \cite{SW-holo} for a proof).

\begin{lemma}
\label{lem:graded-1-formal}
A finitely generated group $G$ is graded-formal over $\k$ if and only if 
either 
\begin{enumerate}
\item \label{g1f1}  
$\gr(G;\k)$ is quadratic, or
\item \label{g1f2}
$\dim_{\k} \h_n(G;\k) = \dim_{\k} \gr_n(G;\k)$, for all $n\ge 1$.
\end{enumerate}
\end{lemma}
 
The lemma implies that the definition 
of graded-formality is independent of the choice of coefficient field $\k$ 
of characteristic $0$. More precisely, we have the following corollary.

\begin{corollary}
\label{lem:gf-ext}
A finitely generated group $G$ is graded-formal over $\k$ if and only if 
it is graded-formal over $\Q$. 
\end{corollary}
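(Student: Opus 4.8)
The plan is to deduce the corollary directly from the rank criterion established in Lemma \ref{lem:grf-crit}, combined with the base-change behavior of the two Lie algebras in play. The key observation is that both $\gr(G;-)$ and $\fh(G;-)$ commute with field extensions: as recorded in \S\ref{subsec:assoc gr}, one has $\gr(G;\K)=\gr(G;\k)\otimes_{\k}\K$, and as noted just after Definition \ref{def:holo group}, one likewise has $\fh(G;\K)=\fh(G;\k)\otimes_{\k}\K$. Since tensoring a $\k$-vector space with an extension field $\K$ preserves dimension, each graded piece satisfies
\[
\dim_{\K}\gr_n(G;\K)=\dim_{\k}\gr_n(G;\k),\qquad
\dim_{\K}\fh_n(G;\K)=\dim_{\k}\fh_n(G;\k),
\]
for every $n\ge 1$.

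From here the argument is essentially a tautology. First I would observe that it suffices to treat the case $\k=\Q$ and an arbitrary extension $\Q\subset\K$; the general statement for any characteristic-zero field follows because every such field contains $\Q$. By Lemma \ref{lem:grf-crit}, $G$ is graded-formal over $\K$ if and only if $\dim_{\K}\h_n(G;\K)=\dim_{\K}\gr_n(G;\K)$ for all $n\ge 1$, and graded-formal over $\Q$ if and only if $\dim_{\Q}\h_n(G;\Q)=\dim_{\Q}\gr_n(G;\Q)$ for all $n\ge 1$. By the dimension identities above, these two systems of equalities are literally the same set of numerical conditions. Hence $G$ is graded-formal over $\K$ if and only if it is graded-formal over $\Q$, which is the claim.

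There is no real obstacle here, since all the substantive work has already been done in Lemma \ref{lem:grf-crit} and in the base-change statements for $\gr$ and $\fh$. The only point requiring a moment's care is making sure the rank criterion is applied over the correct field on each side of the equivalence, and that the finite-dimensionality of each graded piece $\fh_n(G;\k)$ (which is what allowed the criterion to be phrased purely in terms of dimensions) is preserved under extension of scalars; this is immediate, as $\dim_{\K}(W\otimes_{\k}\K)=\dim_{\k}W$. Thus the corollary follows formally from the invariance of the Hilbert series of both $\gr(G;\k)$ and $\fh(G;\k)$ under the field extension $\k\supset\Q$.
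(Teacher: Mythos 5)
Your proof is correct and follows the same route as the paper: the authors likewise deduce the corollary immediately from Lemma \ref{lem:grf-crit} together with the invariance of dimension under extension of scalars. Your version merely spells out the base-change identities for $\gr(G;-)$ and $\fh(G;-)$ that the paper leaves implicit.
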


\begin{proof}
The dimension of a finite-dimensional vector space does not change 
upon extending scalars.  The conclusion follows 
at once from Lemma \ref{lem:graded-1-formal}\eqref{g1f2}.
\end{proof}

\subsection{Split injections}
\label{subsec:split}

We are now in a position to state and prove the main result of 
this section, which proves the first part of Theorem \ref{thm:intro formality} 
from the Introduction.

\begin{theorem}
\label{thm:graded1formality}
Let $G$ be a finitely generated 
group. Suppose there is a split monomorphism $\iota\colon K\rightarrow G$.
If $G$ is a graded-formal group, then $K$ is also graded-formal. 
\end{theorem}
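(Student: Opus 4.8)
The plan is to deduce the graded-formality of $K$ from that of $G$ by a short diagram chase, exploiting the functoriality of both Lie-algebra constructions together with the naturality of the comparison map $\Phi$ from Lemma \ref{lem:holoepi}. The hypothesis that $\iota\colon K\to G$ is a split monomorphism means there is a retraction $\rho\colon G\to K$ with $\rho\circ\iota=\id_K$. Since $\fh(-;\k)$ and $\gr(-;\k)$ are both covariant functors from groups to graded $\k$-Lie algebras, applying them to the identity $\rho\circ\iota=\id_K$ yields $\fh(\rho)\circ\fh(\iota)=\id_{\fh(K;\k)}$ and $\gr(\rho)\circ\gr(\iota)=\id_{\gr(K;\k)}$. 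In particular, $\fh(\iota)\colon\fh(K;\k)\to\fh(G;\k)$ is a split monomorphism of graded Lie algebras (we will not actually need injectivity of $\gr(\iota)$).

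Next I would record the naturality square for $\Phi$ associated to the homomorphism $\iota$, namely $\gr(\iota)\circ\Phi_K=\Phi_G\circ\fh(\iota)$, which is exactly the assertion that $\Phi$ is a natural transformation $\fh(-;\k)\Rightarrow\gr(-;\k)$. The core step is then to show that $\Phi_K\colon\fh(K;\k)\to\gr(K;\k)$ is injective. Given $x\in\ker\Phi_K$, commutativity of the square gives $\Phi_G(\fh(\iota)(x))=\gr(\iota)(\Phi_K(x))=0$; since $G$ is graded-formal, $\Phi_G$ is an isomorphism by Definition \ref{def:graded-1-formal}, so $\fh(\iota)(x)=0$, and since $\fh(\iota)$ is injective, $x=0$. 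Thus $\Phi_K$ is injective. Because $\Phi_K$ is always an epimorphism by Lemma \ref{lem:holoepi}, it is therefore an isomorphism, and $K$ is graded-formal.

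The argument is essentially formal, so the one point that genuinely needs care — and which I regard as the main (modest) obstacle — is verifying that $\Phi$ is natural with respect to the group homomorphism $\iota$, i.e.\ that the square above really commutes. This reduces to unwinding the construction of $\Phi_G$ in the proof of Lemma \ref{lem:holoepi}: it is induced by the surjection $\varphi_G\colon\Lie(H_1(G;\k))\surj\gr(G;\k)$ of \eqref{eq:phig}, which is the identity in degree $1$ and the Lie bracket map in degree $2$, and both of these are manifestly compatible with the maps on $H_1(-;\k)$ and on $\gr(-;\k)$ induced by $\iota$. Once commutativity is in hand, no further hypotheses on $\iota$ beyond the existence of the retraction $\rho$ are used, and the proof is complete.
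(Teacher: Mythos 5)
Your proof is correct and follows essentially the same route as the paper's: both use the retraction to see that $\fh(\iota)$ is a (split) monomorphism, invoke the naturality of $\Phi$ from Lemma \ref{lem:holoepi} to get the commuting square, and then chase the diagram to conclude that $\Phi_K$ is injective, hence an isomorphism. Your observation that injectivity of $\gr(\iota)$ is not actually needed is a small but accurate refinement of the paper's presentation.
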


\begin{proof}  
In view of our hypothesis, we have an epimorphism $\sigma\colon G\surj K$ 
such that $\sigma\circ\iota=\id$.  In particular, $K$ is also finitely generated.  
Furthermore, the induced maps $\fh(\iota)$ and $\gr(\iota)$ are also injective.
Let $\pi\colon F\surj G$ be a presentation for $G$. 
There is then an induced presentation for $K$, 
given by the composition $\sigma\circ \pi\colon F\surj K$. 
By Lemma \ref{lem:holoepi}, there exist epimorphisms $\Phi_1$ 
and $\Phi$ making the following diagram commute:
\vspace*{-4pt}
\begin{equation}
\label{eq:cd iota}
\begin{gathered}
\xymatrixcolsep{22pt}
\xymatrixrowsep{19pt}
\xymatrix{
\fh(K;\k)\ar@{^(->}[d]^{\fh(\iota)} \ar@{->>}[r]^{\Phi_1} & \gr(K;\k)\ar@{^(->}[d]^{\gr(\iota)} 
\phantom{\,.} \\
\fh(G;\k) \ar@{->>}[r]^{\Phi} & \gr(G;\k) \,.
}
\end{gathered}
\end{equation}

If the group $G$ is graded-formal, then $\Phi$ is an isomorphism of 
graded Lie algebras.   Hence, the epimorphism $\Phi_1$ is also 
injective, and so $K$ is a graded-formal.
\end{proof}

\begin{theorem}
\label{thm:rtimes graded}
Let $G=K\rtimes Q$ be a semi-direct product of finitely 
generated groups. If $G$ is graded-formal, then $Q$ is graded-formal.  
If, moreover, $Q$ acts trivially on $K_{\ab}$,  
then $K$ is also graded-formal. 
\end{theorem}

\begin{proof}
The first assertion follows at once from 
Theorem \ref{thm:graded1formality}.   So 
assume $Q$ acts trivially on $K_{\ab}$.  
By Theorem \ref{thm:fr}, there exists a split exact sequence
of graded Lie algebras, which we record in the top row of the 
next diagram. 
\begin{equation}
\label{eq:ladder}
\begin{gathered}
\xymatrix{
0\ar[r] & \gr(K;\k) \ar[r] &\gr(G;\k)\ar@/_8pt/[l] \ar[r] 
& \gr(Q;\k)\ar@/_8pt/[l]\ar[r] &0\,\,\\
& \fh(K;\k) \ar[r]\ar@{->>}[u]_{\Phi_K}&\fh(G;\k)\ar@{.>}@/_8pt/[l] \ar[r]  
\ar[u]^{\cong}_{\Phi_G}
& \fh(Q;\k)\ar[r]\ar@/_8pt/[l] \ar[u]^{\cong}_{\Phi_Q} &0\, .
}
\end{gathered}
\end{equation}

Let $\iota\colon K\to G$ be the inclusion map.  
By the above, we have an epimorphism 
$\sigma\colon \gr(G;\k)\surj \gr(K;\k)$ 
such that $\sigma\circ \gr(\iota)=\id$, and so  
$\gr(K;\k)$ is finitely generated. 
By Proposition \ref{prop:closure}, the map $\sigma$ induces a 
morphism  $\bar\sigma\colon \fh(G;\k) \to \fh(K;\k)$ such that 
$\bar\sigma\circ \h(\iota)=\id$.  Consequently, $\h(\iota)$  
is injective. 
Therefore, the morphism $\Phi_K\colon \fh(K;\k)\rightarrow \gr(K;\k)$ 
is also injective, and so $K$ is graded-formal.
\end{proof}

If the second hypothesis of Theorem \ref{thm:rtimes graded} 
does not hold, the subgroup $K$ may not be graded-formal, even 
when $G$ is $1$-formal. This is illustrated by the following example, 
adapted from \cite{Papadima-Suciu09}. 

\begin{example}
\label{ex:semiproduct1}
Let $K=\langle x,y \mid [x, [x, y]], [y, [x, y]] \rangle$ be the discrete 
Heisenberg group. Consider the semidirect product $G =K\rtimes_{\phi} \Z$, 
defined by the automorphism  $\phi\colon K\rightarrow K$ 
given by $x \rightarrow y$, $y\rightarrow xy$.
We have that $b_1(G)=1$, and so $G$ is $1$-formal, 
yet  $K$  is not graded-formal.
\end{example}

\subsection{Products and coproducts}
\label{subsec:prodcoprod}

We conclude this section with a discussion of the functors 
$\gr$ and $\fh$ and the notion of graded-formality behave 
with respect to products and coproducts. 

\begin{lemma}[\cite{Lichtman80, Papadima-Suciu14product}]
\label{lem: product}
The functors $\gr$ and $\fh$ preserve products and coproducts, that is, 
we have the following natural isomorphisms of graded Lie algebras, 
\[
\begin{cases}
 \gr(G_1\times G_2;\k)\cong \gr(G_1;\k)\times \gr(G_2;\k)\\
 \gr(G_1* G_2;\k)\cong \gr(G_1;\k)* \gr(G_2;\k),
\end{cases}
\text{and}\quad 
\begin{cases}
\fh(G_1\times G_2;\k)\cong \fh(G_1;\k)\times \fh(G_2;\k)\\
\fh(G_1* G_2;\k)\cong \fh(G_1;\k)* \fh(G_2;\k).
\end{cases}
\]
\end{lemma}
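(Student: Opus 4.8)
The plan is to split the four isomorphisms according to their two structural inputs: the low-degree cohomology and cup product (which determine $\fh$) and the lower central series (which determines $\gr$). I would build all comparison maps from the universal maps of the (co)products---the projections $p_i\colon G_1\times G_2\surj G_i$ and the inclusions $\iota_i\colon G_i\inj G_1*G_2$---together with the functoriality of $\gr$ and $\fh$ from \S\ref{subsec:assoc gr} and \S\ref{subsec:holo lie}, so that naturality is automatic and only bijectivity remains to be verified. I would dispose first of the three routine cases and isolate $\gr(G_1*G_2)$ as the one substantial point.

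For the two $\fh$ statements I would compute $H^{\le 2}$ and the cup-product map $\mu\colon H^1\wedge H^1\to H^2$, then feed $\ker\mu$ into the presentation $\fh(A)=\Lie(A_1^*)/\langle\ker(\mu)^\vee\rangle$ of \eqref{eq:hololie bis}. Writing $V_i=H^1(G_i;\k)$, the Künneth theorem gives, for the direct product, $H^1=V_1\oplus V_2$ and $H^2=H^2(G_1)\oplus(V_1\otimes V_2)\oplus H^2(G_2)$, with the cross summand $V_1\wedge V_2$ mapping isomorphically onto $V_1\otimes V_2$; hence $\ker\mu=\ker\mu_1\oplus\ker\mu_2$, its annihilator contains all of $V_1^*\wedge V_2^*$, and dualizing shows the relation ideal is generated by the relations of $\fh(G_1)$ and $\fh(G_2)$ together with all cross brackets $[V_1^*,V_2^*]$---exactly the presentation of $\fh(G_1)\times\fh(G_2)$. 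For the free product, $K(G_1*G_2,1)\simeq K(G_1,1)\vee K(G_2,1)$ gives $H^2=H^2(G_1)\oplus H^2(G_2)$ with vanishing cross cup products, so $\ker\mu\supseteq V_1\wedge V_2$ entirely, no cross relations survive, and one reads off the free product $\fh(G_1)*\fh(G_2)$.

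For $\gr$ of a direct product I would prove $\Gamma_k(G_1\times G_2)=\Gamma_k G_1\times\Gamma_k G_2$ by induction on $k$, using that commutators in a direct product are computed coordinatewise; the graded quotients then split with vanishing cross bracket, giving $\gr(G_1\times G_2)\cong\gr(G_1)\times\gr(G_2)$. For the coproduct I would pass to universal enveloping algebras through Quillen's isomorphism $U(\gr(G;\k))\cong\gr_I(\k G)$, the associated graded of the group algebra for the augmentation filtration. Since $\k(G_1*G_2)$ is the coproduct of the augmented algebras $\k G_1$ and $\k G_2$, the crux is that $\gr_I$ sends this coproduct to the coproduct (free product) of the graded algebras $\gr_I(\k G_i)$; combining this with the identity $U(\fg*\fh)\cong U(\fg)\sqcup U(\fh)$ and the recovery of a degree-$1$-generated Lie algebra as the primitives of its enveloping algebra then yields $\gr(G_1*G_2)\cong\gr(G_1)*\gr(G_2)$.

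The main obstacle is exactly this last step: showing that the augmentation-ideal filtration of the coproduct $\k G_1\sqcup_\k\k G_2$ has associated graded equal to the free product of the $\gr_I(\k G_i)$. This requires controlling how the augmentation filtration interacts with the mixed tensors $\bar a_1\otimes\bar a_2\otimes\cdots$ that span the coproduct, and it is precisely the filtered-algebra fact for which \cite{Lichtman80} is cited. I would either invoke that reference or give a direct Magnus-expansion argument embedding $\k(G_1*G_2)$ into a free product of power-series rings and identifying the leading terms; everything else in the lemma follows formally from the definitions, Künneth, and the wedge decomposition.
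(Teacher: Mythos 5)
Your proposal is correct, but it is doing something genuinely different from the paper: the paper's entire proof of this lemma is a deferral to the literature (the $\gr$ statements are declared ``well-known'' resp.\ the main theorem of \cite{Lichtman80}, and both $\fh$ statements are cited from \cite{Papadima-Suciu14product}), whereas you actually supply arguments. Your treatment of the two $\fh$ isomorphisms --- K\"unneth for $K(G_1,1)\times K(G_2,1)$ and the wedge decomposition for $K(G_1,1)\vee K(G_2,1)$, fed into the presentation \eqref{eq:hololie bis} so that the cross summand $V_1\otimes V_2$ of $H^1\wedge H^1$ either injects into $H^2$ (forcing the relations $[V_1^*,V_2^*]$, hence the product) or dies (imposing no cross relations, hence the coproduct) --- is a correct, self-contained reproof of the special case of \cite{Papadima-Suciu14product} that the lemma needs; it buys independence from that reference at the cost of a page of linear algebra. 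Your coordinatewise computation of $\Gamma_k(G_1\times G_2)$ is the standard argument for the first $\gr$ statement. For $\gr(G_1*G_2)$ you have correctly isolated the only substantial point --- that the augmentation-ideal filtration of the coproduct of augmented algebras has associated graded the free product of the associated gradeds --- and, like the paper, you ultimately lean on \cite{Lichtman80} for it, so no ground is lost there. One small step you should make explicit in the write-up: an isomorphism of enveloping algebras does not by itself recover an isomorphism of the Lie algebras, so after identifying $\gr_I(\k(G_1*G_2))$ with $U(\gr(G_1;\k)*\gr(G_2;\k))$ you should note that the natural map $\gr(G_1;\k)*\gr(G_2;\k)\to\gr(G_1*G_2;\k)$ is surjective (both sides are generated in degree one by $\gr_1$) and then conclude injectivity from the equality of Hilbert series via Poincar\'e--Birkhoff--Witt, rather than appealing loosely to ``primitives of the enveloping algebra''.
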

\begin{proof}
The first statement on the $\gr(-)$ functor is well-known, while the second statement 
is the main theorem from \cite{Lichtman80}.   The statements regarding the $\fh(-)$ functor 
can be found in \cite{Papadima-Suciu14product}.
\end{proof}

Regarding graded-formality, we have the following result, which sharpens  
and generalizes Lemma 4.5 from Plantiko \cite{Plantiko96}, and proves the first 
part of Theorem \ref{thm:product formality} from the Introduction.

\begin{prop}
\label{prop:productquadratic}
Let  $G_1$ and $G_2$ be two finitely generated groups. Then 
the following conditions are equivalent.
\begin{enumerate}
\item \label{gf1}  $G_1$ and $G_2$ are graded-formal.
\item \label{gf2}  $G_1* G_2$  is graded-formal.
\item \label{gf3}  $G_1\times G_2$ is graded-formal.
\end{enumerate}
\end{prop}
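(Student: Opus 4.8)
The plan is to prove the three conditions equivalent by treating separately the ``descent to the factors'' direction, which is immediate from the split-injection result already established, and the ``ascent to the product/coproduct'' direction, which follows from the good behavior of $\fh$, $\gr$, and the comparison map under products and coproducts.

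For the implications \eqref{gf2} $\Rightarrow$ \eqref{gf1} and \eqref{gf3} $\Rightarrow$ \eqref{gf1}, I would observe that each canonical inclusion $G_i\inj G_1*G_2$ and $G_i\inj G_1\times G_2$ is a split monomorphism: in the coproduct case the retraction collapses the other free factor to the identity, while in the product case it is the coordinate projection. Theorem \ref{thm:graded1formality} then forces graded-formality to pass from $G_1*G_2$ (resp.\ $G_1\times G_2$) down to each factor $G_i$, settling both implications at once.

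For the converse implications \eqref{gf1} $\Rightarrow$ \eqref{gf2} and \eqref{gf1} $\Rightarrow$ \eqref{gf3}, the key point is that the comparison map $\Phi_G$ of Lemma \ref{lem:holoepi} is a natural transformation between the functors $\fh(-;\k)$ and $\gr(-;\k)$, and that both of these functors preserve products and coproducts by Lemma \ref{lem: product}. A natural transformation between coproduct-preserving (resp.\ product-preserving) functors, evaluated at a coproduct (resp.\ product), is itself the coproduct (resp.\ product) of its components; concretely, under the identifications of Lemma \ref{lem: product} one has
\[
\Phi_{G_1 * G_2} = \Phi_{G_1} * \Phi_{G_2}, \qquad
\Phi_{G_1 \times G_2} = \Phi_{G_1} \times \Phi_{G_2}.
\]
Since a product of graded Lie algebra maps is an isomorphism exactly when each factor map is, and likewise a coproduct of maps is an isomorphism exactly when each factor map is (a retract of an isomorphism being again an isomorphism), the maps $\Phi_{G_1*G_2}$ and $\Phi_{G_1\times G_2}$ are isomorphisms whenever $\Phi_{G_1}$ and $\Phi_{G_2}$ are. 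By Definition \ref{def:graded-1-formal}, this yields graded-formality of the coproduct and of the product.

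The step I expect to require the most care is the verification of the displayed identities, that is, checking that the natural isomorphisms of Lemma \ref{lem: product} for $\fh$ and for $\gr$ are genuinely compatible with $\Phi$. This reduces to chasing the naturality squares for $\Phi$ against the structural inclusions and retractions of the (co)product, and I would either carry out that diagram chase directly or sidestep it by rephrasing the ascent step through Lemma \ref{lem:graded-1-formal}: a free product $\Lie(V_1)/\langle R_1\rangle * \Lie(V_2)/\langle R_2\rangle = \Lie(V_1\oplus V_2)/\langle R_1, R_2\rangle$ of quadratic Lie algebras is visibly quadratic, while the direct product is $\Lie(V_1\oplus V_2)/\langle R_1, R_2, [V_1,V_2]\rangle$, whose extra relations $[V_1,V_2]$ already lie in degree $2$, so it too is quadratic. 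Either route closes the equivalence.
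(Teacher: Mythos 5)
Your proposal is correct and follows essentially the same route as the paper: the descent implications via split injections and Theorem \ref{thm:graded1formality}, and the ascent implications via Lemma \ref{lem: product} together with the naturality of the comparison map $\Phi$ from \eqref{map:Phi}. The extra detail you supply (identifying $\Phi_{G_1*G_2}$ and $\Phi_{G_1\times G_2}$ with the (co)products of the $\Phi_{G_i}$, and the alternative check via quadratic presentations) is exactly the diagram chase the paper leaves implicit.
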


\begin{proof}
Since there exist split injections from $G_1$ and $G_2$ to 
the product $G_1\times G_2$ and coproduct $G_1*G_2$, 
Theorem \ref{thm:graded1formality} shows that implications 
\eqref{gf2}$\Rightarrow$\eqref{gf1} and \eqref{gf3}$\Rightarrow$\eqref{gf1} 
hold. Implications 
\eqref{gf1}$\Rightarrow$\eqref{gf2} and \eqref{gf1}$\Rightarrow$\eqref{gf3} 
follow from Lemma \ref{lem: product} and the naturality of the map 
$\Phi$ from Definition \ref{def:graded-1-formal}.
\end{proof}

\section{Malcev Lie algebras and filtered-formality}
\label{sect:malcev}

In this section we consider the Malcev Lie algebra of a finitely generated group, 
and study the ensuing notions of filtered-formality and $1$-formality.

\subsection{Prounipotent completions and Malcev Lie algebras}
\label{subsec:mg}

Let $G$ be a finitely generated group, and let 
$\{\Gamma_kG\}_{k\geq 1}$ be its LCS filtration. The successive 
quotients of $G$ by these normal subgroups form a tower of 
finitely generated, nilpotent groups,  
$\cdots \to  G/\Gamma_4 G \to G/\Gamma_3 G \to G/\Gamma_2G = G_{\ab}$.

Let $\k$ be a field of characteristic $0$.
It is possible to replace each nilpotent quotient $N_k=G/\Gamma_k G$ by 
$N_k \otimes \k$, the (rationally defined) nilpotent Lie group associated to 
the discrete, torsion-free nilpotent group $N_k/{\rm tors}(N_k)$ via a 
procedure which will be discussed in \S\ref{subsec:nilpotent groups}.
The corresponding inverse limit, 
$\fM(G;\k)=\varprojlim\nolimits_k \big((G/\Gamma_k G)\otimes \k\big)$,
is a prounipotent, filtered Lie group over $\k$, 
called the \textit{prounipotent completion}, or \textit{Malcev completion}\/ of $G$ 
over $\k$.  We denote by $\kappa\colon G\to \fM(G,\k)$ the canonical 
homomorphism from $G$ to its completion.

Let $\mathfrak{Lie}((G/\Gamma_k G)\otimes \k)$ be 
the Lie algebra of the nilpotent Lie group $(G/\Gamma_k G)\otimes \k$.
The pronilpotent Lie algebra 
\vspace*{-3pt}
\begin{equation}
\label{eq:malcevLie}
\fm(G;\k):=\varprojlim\nolimits_k \mathfrak{Lie}((G/\Gamma_k G)\otimes \k),
\end{equation}
with the inverse limit filtration, is called the {\em Malcev Lie algebra}\/ of $G$ (over $\k$).  
By construction, $\fm(-;\k)$ is a functor 
from the category of finitely generated groups to the category of
complete, separated, filtered $\k$-Lie algebras.

\subsection{Quillen's construction}
\label{subsec:quillen}

A different approach was taken by Quillen in \cite[App.~A]{Quillen69}. 
Let us briefly recall his construction.  
The group-algebra $\k{G}$ has a natural Hopf algebra structure,
with comultiplication $\Delta\colon \k{G}\to \k{G}\otimes \k{G}$ 
given by $\Delta(g)=g\otimes g$ for $g\in G$, and counit the 
augmentation map $\varepsilon\colon \k{G}\to \k$. 
Moreover, the set of {\em group-like}\/ elements in this Hopf algebra, 
i.e., those elements $x$ for which $\Delta(x)=x\otimes x$, 
gets  identified with $G$ under the canonical inclusion $G\inj \k{G}$. 

The powers of the augmentation ideal $I=\ker (\varepsilon)$ 
form a descending, multiplicative filtration of $\k{G}$. The $I$-adic 
completion of the group-algebra,  $\widehat{\k{G}}=\varprojlim_k \k{G}/I^k$, 
comes equipped with a descending filtration, whose $k$-th term is 
$\widehat{I^k}=\varprojlim_{j\geq k} I^k/I^j$.
Define the completed tensor product $\widehat{\k{G}}\, 
\hat{\otimes}\,  \widehat{\k{G}}$ as the
completion of ${\k{G}}\otimes {\k{G}}$ with respect 
to the natural tensor product filtration.  
Applying the $I$-adic completion functor to the map 
$\Delta$ yields a comultiplication map   
$\widehat{\Delta} \colon \widehat{\k{G}}\to \widehat{\k{G}}\, 
\hat{\otimes}\,  \widehat{\k{G}}$, which makes $\widehat{\k{G}}$ 
into a complete Hopf algebra. 
It is then apparent that the canonical map 
to the completion, $\iota\colon \k{G}\to \widehat{\k{G}}$, 
is a morphism of filtered Hopf algebras. 

An element $x\in \widehat{\k{G}}$ is called {\em primitive}\/ if 
$\widehat{ \Delta} x=x \hat{\otimes} 1+1\hat{\otimes} x$.
The set of all primitive elements in $\widehat{\k{G}}$, with bracket 
$[x,y]=xy-yx$, and endowed with the induced filtration, is a 
 Lie algebra, isomorphic to the Malcev Lie algebra of $G$, that is, 
 $\fm(G;\k) \cong  \Prim \big( \widehat{\k{G}} \big)$.

The filtration topology on $\widehat{\k{G}}$ is a metric topology; 
hence, the filtration topology on $\fm(G;\k)$ is also metrizable, 
and thus separated. We shall denote by $\gr(\fm(G;\k))$ the 
associated graded Lie algebra of $\fm(G;\k)$ with respect 
to the induced inverse limit filtration.
 
The set of all group-like elements in $ \widehat{\k{G}}$, 
with multiplication inherited from $\widehat{\k{G}}$, 
forms a group, denoted $M(G;\k)$.  This group comes 
endowed with a complete, separated filtration, with terms 
$M(G;\k) \cap (1+\widehat{I^k})$. As shown by S.~Jennings 
and Quillen, there is a filtration-preserving 
isomorphism $M(G,\k)\cong\fM(G;\k)$, 
see  Massuyeau \cite{Massuyeau12} for details. 
Furthermore, there is a one-to-one, filtration-preserving 
correspondence between primitive elements 
and group-like elements via the exponential 
and logarithmic maps 
\vspace*{-3pt}
\begin{equation}
\label{eq:explog}
\xymatrix{
 \fM(G;\k)\subset 1+\widehat{I} \ar@/_0.8pc/[rr]|{~\log~} &  
  & \widehat{I} \supset\fm(G;\k)\ar@/_1.1pc/[ll]|{~\exp~} }.
\end{equation} 

Restricting the canonical map $\iota\colon \k{G}\to \widehat{\k{G}}$ 
to group-like elements, we obtain a 
homomorphism from $G$ to its prounipotent completion, 
$\kappa\colon G\to \fM(G;\k)$.  Composing this homomorphism 
with the logarithmic map, $\log\colon \fM(G;\k)\to \fm(G;\k)$, we 
obtain a filtration-preserving map, $\rho\colon G\to \fm(G;\k)$.  
As shown by Quillen in \cite{Quillen68}, the map $\rho$ 
induces an isomorphism of graded Lie algebras,  
\vspace*{-3pt}
\begin{equation}
\label{eq:quillen}
\xymatrixcolsep{18pt}
\xymatrix{\gr(\rho)\colon \gr(G;\k) \ar^(.52){\cong}[r] & \gr(\fm(G;\k))}.
\end{equation}
In particular, $\gr(\fm(G;\k))$ is generated in degree $1$. 

\subsection{Minimal models and Malcev Lie algebras}
\label{subsec:sullivan-malcev}

Every group $G$ has a classifying space $K(G,1)$, which can be chosen 
to be a connected CW-complex.  Such a CW-complex is unique up to homotopy, 
and thus, up to rational homotopy equivalence.  Hence, by the discussion 
from \S\ref{subsec:formal spaces} the weak 
equivalence type of the Sullivan algebra $A=A_{PL}(K(G,1))$ depends 
only on the isomorphism type of $G$.  
By Theorem \ref{thm:mm}, the \dga~$A\otimes_{\Q} \k$
has a $1$-minimal model, $\cM(A\otimes_{\Q} \k,1)$, 
unique up to isomorphism.   Moreover, the assignment 
$G\leadsto \cM(A\otimes_{\Q} \k,1)$  is functorial.  

Assume now that the group $G$ is finitely generated. 
Let $\cM=\cM(G;\k)$ be a $1$-minimal model of $G$, with the canonical filtration constructed 
in \eqref{eq:filtration-minimal}. 
The starting point is the finite-dimensional 
vector space $\cM_1^1=V_1:=H^1(G;\k)$.   
Each sub-$\dga$~$\cM_i$ is a Hirsch extension of $\cM_{i-1}$ 
by the finite-dimensional vector space $V_i:=\ker\big(H^2(\cM_{i-1})\to H^2(A)\big)$.
Define $\fL(G;\k)=\varprojlim_i \fL_i(G;\k)$ as the 
pronilpotent Lie algebra  associated to the $1$-minimal model 
$\cM(G;\k)$ in the manner described 
in \S \ref{subsec:sullivan-holonomy}, and note that the assignment 
$G\leadsto \fL(G;\k)$ is also functorial. 

\begin{theorem}[\cite{Cenkl-Porter81, Sullivan}]
\label{thm:sullivan}
There exist a natural isomorphism between the towers of nilpotent 
Lie algebras $\{\fL_{i}(G; \k)\}_i$ and $\{\fm(G/\Gamma_iG; \k)\}_i$. 
Hence, there is a functorial isomorphism $\fL(G;\k)\cong \fm(G;\k)$ of 
complete, filtered Lie algebras.
\end{theorem}  

For further details, we refer to \cite[Thm. 7.5]{FHT2} 
and \cite[Thm. 13.2]{Griffiths-Morgan13}.
The functorial  isomorphism $\fm(G;\k) \cong \fL(G;\k)$, together with the 
dualization correspondence $\fL(G;\k)\leftrightsquigarrow\cM(G;\k)$ define adjoint functors 
between the category of Malcev Lie algebras of finitely generated groups and 
the category of $1$-minimal models of finitely generated groups.

\subsection{Filtered-formality of groups}
\label{subsec:f1f}
We now define the notion of filtered-formality  for groups (or, 
weak formality  \cite{Lee}), based on the notion of filtered-formality for 
Lie algebras from Definition \ref{def:filt formal}.

\begin{definition}
\label{def:filt-1-formal}
A finitely generated group $G$ is said to be \emph{filtered-formal}\/  
(over $\k$) if its Malcev Lie algebra $\fm(G;\k)$, endowed with 
the inverse limit filtration from \eqref{eq:malcevLie}, is filtered-formal. 

\end{definition}

Here are some more direct ways to think of this notion. 

\begin{prop}
\label{prop:ffcrit}
A finitely generated group $G$ is filtered-formal over $\k$ if and only if 
either 
\begin{enumerate}
\item \label{ffc1}  
$\fm(G;\k) \cong \widehat{\gr}(G;\k)$ as filtered Lie algebras, or 
\item \label{ffc2}
$\fm(G;\k)$  admits a homogeneous presentation.
\end{enumerate}
\end{prop}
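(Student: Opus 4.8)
The plan is to deduce both equivalences directly from the definitions, together with Quillen's identification $\gr(\fm(G;\k)) \cong \gr(G;\k)$ from \eqref{eq:quillen} and the two structural lemmas on completions, Lemma \ref{lem:filtiso} and Lemma \ref{lem:presbar}. First I would recall that, by Definition \ref{def:filt-1-formal}, the group $G$ is filtered-formal precisely when the complete, separated, filtered Lie algebra $\fm(G;\k)$ is filtered-formal in the sense of Definition \ref{def:filt formal}; that is, when there is a filtered Lie algebra isomorphism $\fm(G;\k) \cong \widehat{\gr}(\fm(G;\k))$ inducing the identity on associated graded Lie algebras. The whole argument then reduces to chaining this characterization with previously established results.

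For condition \eqref{ffc1}, I would proceed as follows. Applying the degree-completion functor to Quillen's graded isomorphism \eqref{eq:quillen} yields a filtration-preserving isomorphism $\widehat{\gr}(\fm(G;\k)) \cong \widehat{\gr}(G;\k)$; here one uses that both $\gr(\fm(G;\k))$ and $\gr(G;\k)$ are generated in degree $1$, so that the LCS and degree completions agree. Thus, if $G$ is filtered-formal, composing the defining isomorphism with this one gives $\fm(G;\k) \cong \widehat{\gr}(G;\k)$ as filtered Lie algebras. Conversely, suppose $\fm(G;\k) \cong \widehat{\gr}(G;\k)$ via a filtration-preserving isomorphism. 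Since $\gr(G;\k)$ is a graded Lie algebra and $\fm(G;\k)$ is complete and separated, Lemma \ref{lem:filtiso} applies verbatim, with $\fg=\fm(G;\k)$ and $\fh=\gr(G;\k)$, and shows that $\fm(G;\k)$ is filtered-formal, hence so is $G$.

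For condition \eqref{ffc2}, I would first make precise that a \emph{homogeneous presentation} of $\fm(G;\k)$ means an isomorphism $\fm(G;\k) \cong \widehat{\Lie}(V)/\overline{\fr}$ with $V$ concentrated in degree $1$ and $\fr \subset \Lie(V)$ a homogeneous ideal, as discussed after Definition \ref{def:filt formal}. Given such a presentation, Lemma \ref{lem:presbar} identifies $\widehat{\Lie}(V)/\overline{\fr}$ with the degree completion $\widehat{\Lie(V)/\fr}$ of the graded Lie algebra $\Lie(V)/\fr$; feeding this into Lemma \ref{lem:filtiso} gives filtered formality. Conversely, assuming \eqref{ffc1}, I would take $V=H_1(G;\k)=\gr_1(G;\k)$ and let $\fr$ be the homogeneous kernel of the canonical graded surjection $\varphi_G\colon \Lie(V)\surj \gr(G;\k)$ of \eqref{eq:phig}, so that $\gr(G;\k)=\Lie(V)/\fr$ is a homogeneous presentation of the associated graded Lie algebra. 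Completing and applying Lemma \ref{lem:presbar} once more converts this into $\fm(G;\k)\cong \widehat{\gr}(G;\k)\cong \widehat{\Lie}(V)/\overline{\fr}$, the desired homogeneous presentation.

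Since the argument is a bookkeeping exercise threading together prior results, I do not anticipate a substantive obstacle; the genuinely delicate points are only two. The first is keeping the distinction between ``filtration-preserving'' and ``inducing the identity on $\gr$'' straight: the isomorphism asserted in \eqref{ffc1} is only required to preserve filtrations, and bridging this gap to the stronger requirement in Definition \ref{def:filt formal} is exactly the content of Lemma \ref{lem:filtiso}, whose proof manufactures an identity-inducing isomorphism out of an arbitrary filtered one. The second is verifying that $\widehat{\gr}(G;\k)$ coincides with the completion of $\gr(\fm(G;\k))$ taken with respect to the correct inverse-limit filtration, so that Quillen's isomorphism \eqref{eq:quillen} may legitimately be applied degreewise before completing.
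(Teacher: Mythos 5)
Your proposal is correct and follows essentially the same route as the paper: part \eqref{ffc1} via Quillen's isomorphism \eqref{eq:quillen} together with Lemma \ref{lem:filtiso}, and part \eqref{ffc2} via a homogeneous presentation of $\gr(G;\k)$ and Lemma \ref{lem:presbar}. Your added care about the distinction between filtration-preserving isomorphisms and those inducing the identity on associated graded is a useful explicitation of what the paper leaves implicit, but it does not change the argument.
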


\begin{proof}
\eqref{ffc1} The isomorphism \eqref{eq:quillen} implies that 
$\gr(\fm(G;\k)) \cong \gr(G;\k)$.  The forward implication 
follows straight from the definitions, while the backward 
implication follows from Lemma \ref{lem:filtiso}. 

\eqref{ffc2} 
Choose a presentation $\gr(G;\k)=\Lie(H_1(G;\k))/\mathfrak{r}$, 
where $r$ is a homogeneous ideal.  Using Lemma \ref{lem:presbar}, 
we obtain a homogeneous presentation for the Malcev Lie algebra of $G$, 
of the form $\fm(G;\k)=\widehat{\Lie}(H_1(G;\k))/\overline{\mathfrak{r}}$. 
Conversely, if such a presentation exists, then 
$\fm(G;\k) \cong \widehat{\fg}$, where $\fg=\Lie(H_1(G;\k))/\mathfrak{r}$.
\end{proof}

The notion of filtered-formality can also be interpreted in terms of 
minimal models.  Let $\cM(G;\k)$ be the $1$-minimal model 
of $G$, endowed with the canonical filtration, 
which is the minimal \dga~ dual to the Malcev Lie algebra $\fm(G;\k)$  
under the correspondence described in \S\ref{subsec:sullivan-holonomy}.  
Likewise, let $\cN(G;\k)$ be the minimal \dga~ (generated in degree $1$) 
corresponding to the prounipotent Lie algebra $\widehat{\gr}(G;\k)$.  
Recall that both $\cM(G;\k)$ and $\cN(G;\k)$ come equipped with 
increasing  filtrations as in \eqref{eq:filtration-minimal}, which correspond to the 
inverse limit filtrations on $\fm(G;\k)$   and $\widehat{\gr}(G;\k)$, respectively.  

\begin{prop}
\label{prop:ffcrit2}
A finitely generated group $G$ is filtered-formal over $\k$ if and only if either
\begin{enumerate}
\item \label{ffc21}  
there is a filtration-preserving $\cdga$ isomorphism between $\cM(G;\k)$ 
and $\cN(G;\k)$, or 
\item \label{ffc22}
there is a $\cdga$ isomorphism $\cM(G;\k)\cong 
\cN(G;\k)$ inducing the identity on first cohomology.
\end{enumerate}
\end{prop}

\begin{proof}
\eqref{ffc21} Recall Proposition \ref{prop:ffcrit} that $G$ is filtered-formal if and only if 
$\fm(G;\k)\cong \widehat{\gr}(G;\k)$, as filtered Lie algebras.  Dualizing, 
this condition becomes equivalent to $\cM(G;\k)\cong \cN(G;\k)$, 
as filtered $\dga$s. 

\eqref{ffc22}  
Recall that $G$ is filtered-formal if and only if 
$\fm(G;\k)\cong \widehat{\gr}(\fm(G;\k))$ inducing the identity on their
associated graded Lie algebras. Likewise, 
both $\cM_1^1$ and $\cN_1^1$ can be canonically 
identified with $\gr_1(G;\k)^*=H^1(G;\k)$.    
The desired conclusion follows.
\end{proof}

Here is another description of filtered-formality, suggested to us by R.~Porter.

\begin{theorem}
\label{thm:filtmin}
A finitely generated group $G$ is filtered-formal over $\k$ if and only 
if the canonical $1$-minimal model $\cM(G;\k)$ is filtered-isomorphic 
to a $1$-minimal model $\cM$ with positive Hirsch weights.
\end{theorem}

\begin{proof}
First suppose $G$ is filtered-formal, and let $\cN=\cN(G;\k)$ be the minimal 
$\dga$ dual to $\fL=\widehat{\gr}(G,\k)$.  By Proposition \ref{prop:ffcrit2},  
this $\dga$ is a $1$-minimal model for $G$.  Since by construction 
$\fL=\widehat{\gr}(\fL)$, Lemma \ref{lem:positivegraded} shows 
that the differential on $\cN$ is homogeneous with respect to the 
Hirsch weights.

Now suppose $\cM$ is a $1$-minimal model for $G$ over $\k$, 
with homogeneous differential on Hirsch weights.  
By Lemma \ref{lem:positivegraded} 
again, the dual Lie algebra $\fL(\cM)$ is filtered-formal.
On the other hand,  the assumption that $\cM\cong \cM(G;\k)$ 
and Theorem \ref{thm:sullivan} together imply that $\fL(\cM)\cong \fm(G;\k)$.   
Hence, the group $G$ is filtered-formal by Definition \ref{def:filt-1-formal}.
\end{proof}

We conclude this section by showing that the definition of filtered-formality is 
independent of the choice of coefficient field $\k$ of characteristic $0$. 
We would like to thank Y.~Cornulier for asking whether 
the next result holds, and for pointing out the connection it 
would have with \cite[Thm.~3.14]{Cornulier14}. 

\begin{prop}
\label{prop:field-filtered}
Let $G$ be a finitely generated group, and let $\Q\subset \k$ 
be a field extension.  Then $G$ is filtered-formal over $\Q$ 
if and only if $G$ is filtered-formal over $\k$.
\end{prop}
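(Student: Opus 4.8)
The plan is to transport the problem to the finite-type truncated $\dga$s $\widetilde{\cM} = \widetilde{\cM}(G)$ and $\widetilde{\cN} = \widetilde{\cN}(G)$ (defined over $\Q$ as in \eqref{eq:minimal3}) and then invoke the Halperin--Stasheff descent theorem (Theorem \ref{thm:Halperin-Stasheff}); this is precisely the situation that Lemma \ref{lem:filtered-formalityeq} was engineered for. I first record the two features that make Theorem \ref{thm:Halperin-Stasheff} applicable. Since $G$ is finitely generated, $\g_1 = H_1(G;\Q)$ is finite-dimensional; and, as noted in the proof of Lemma \ref{lem:filtered-formalityeq}, one has $H^0(\widetilde{\cM}) = \Q$, $H^1(\widetilde{\cM}) = \g_1^*$, and $H^{\ge 2}(\widetilde{\cM}) = 0$, with the identical statements for $\widetilde{\cN}$. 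Hence both $\dga$s have connected cohomology of finite type, and there is a canonical graded-algebra isomorphism $f\colon H^*(\widetilde{\cM}) \to H^*(\widetilde{\cN})$, namely the identity on $\g_1^*$.

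For the forward implication, suppose $G$ is filtered-formal over $\Q$. By Lemma \ref{lem:filtered-formalityeq} there is a $\dga$ morphism $\widetilde{\cM} \to \widetilde{\cN}$ inducing the identity on cohomology. Because cohomology commutes with the flat extension of scalars $\Q \subset \k$, tensoring this morphism with $\k$ produces a morphism $\widetilde{\cM}\otimes\k \to \widetilde{\cN}\otimes\k$ that still induces the identity on $H^*(-\otimes\k) \cong H^*(-)\otimes\k$; another application of Lemma \ref{lem:filtered-formalityeq} then yields filtered formality over $\k$. (Alternatively, this direction is immediate from Lemma \ref{lem:ffext}, once one notes $\fm(G;\k) \cong \fm(G;\Q)\,\hat{\otimes}_{\Q}\,\k$.)

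For the backward implication, suppose $G$ is filtered-formal over $\k$. Lemma \ref{lem:filtered-formalityeq} supplies a $\dga$ morphism $\phi\colon \widetilde{\cM}\otimes\k \to \widetilde{\cN}\otimes\k$ inducing the identity on cohomology; since the identity is an isomorphism in each degree, $\phi$ is a quasi-isomorphism, hence a weak equivalence realizing $f\otimes\id_{\k}$. Theorem \ref{thm:Halperin-Stasheff}, applied to $\widetilde{\cM}$ and $\widetilde{\cN}$ over $\Q$ together with the graded-algebra isomorphism $f$, then produces a weak equivalence over $\Q$ realizing $f$. What remains is to turn this weak equivalence back into the honest morphism $\widetilde{\cM}\to\widetilde{\cN}$ over $\Q$ demanded by Lemma \ref{lem:filtered-formalityeq}, after which filtered formality over $\Q$ follows.

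I expect this last step to be the main obstacle. The difficulty is that Theorem \ref{thm:Halperin-Stasheff} delivers a weak equivalence, i.e.\ a zig-zag of quasi-isomorphisms, whereas the reconstruction $\widetilde{\cM} \rightsquigarrow \cM(G;\Q)$ is functorial only along genuine $\dga$ morphisms and is not a weak-homotopy invariant (it remembers the full differential $d^1\colon \cM^1\to \bigwedge^2\cM^1$, not merely its kernel). To close the gap I would either rectify the zig-zag to a single morphism over $\Q$ realizing $f$, exploiting that $\widetilde{\cM}$ and $\widetilde{\cN}$ have cohomology concentrated in degrees $\le 1$, or bypass it with a parallel Lie-theoretic descent: the filtered isomorphisms $\widehat{\gr}(G;\Q) \to \fm(G;\Q)$ inducing the identity on associated graded Lie algebras form a torsor under a pro-unipotent group defined over $\Q$, and such a torsor acquires a $\Q$-point as soon as it has a $\k$-point. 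The latter route is the natural point of contact with Cornulier's Theorem 3.14.
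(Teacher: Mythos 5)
Your argument is the paper's argument: the forward implication via Lemma \ref{lem:ffext} (or, equivalently, by tensoring the morphism of Lemma \ref{lem:filtered-formalityeq} with $\k$), and the backward implication via Lemma \ref{lem:filtered-formalityeq} combined with Theorem \ref{thm:Halperin-Stasheff}. The one step you leave open --- converting the weak equivalence over $\Q$ delivered by Halperin--Stasheff back into the single $\dga$ morphism $\widetilde{\cM}\to\widetilde{\cN}$ that Lemma \ref{lem:filtered-formalityeq} literally asks for --- is exactly the point at which the paper is silent: its proof simply asserts that Theorem \ref{thm:Halperin-Stasheff} produces a morphism, although the theorem only produces a zig-zag of quasi-isomorphisms.

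This gap closes without either of the two heavier devices you sketch. Since $H^{\geq 2}(\widetilde{\cM})=H^{\geq 2}(\widetilde{\cN})=0$ and the projections $\cM\to\widetilde{\cM}$, $\cN\to\widetilde{\cN}$ are $1$-quasi-isomorphisms, the $\dga$s $\cM=\cM(G;\Q)$ and $\cN=\cN(G;\Q)$ are $1$-minimal models of $\widetilde{\cM}$ and $\widetilde{\cN}$, respectively. By the uniqueness part of Theorem \ref{thm:mm} (in its standard strong form: a weak equivalence realizing a given cohomology isomorphism induces an isomorphism of $1$-minimal models compatible with that identification on $H^{\leq 1}$), the $\Q$-weak equivalence $\widetilde{\cM}\simeq\widetilde{\cN}$ realizing the identity yields an isomorphism $\cM\cong\cN$ inducing the identity on $H^1$. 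That is precisely the input required by Proposition \ref{prop:ffcrit2}, so filtered formality over $\Q$ follows; there is no need to rectify the zig-zag into a morphism between the truncations, nor to set up a pro-unipotent torsor. In other words, the criterion of Lemma \ref{lem:filtered-formalityeq} is insensitive to whether one has a single quasi-isomorphism or a zig-zag, because either datum determines the $1$-minimal model together with the identification of its first cohomology. With this observation your proof is complete and coincides with the paper's.
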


\begin{proof}
Write $\fm=\fm(G;\Q)$, and let $\fg=\gr(G;\Q)$, which 
we identify with $\gr^{\widehat{\Gamma}}(\fm)$.
The claim follows from Theorem \ref{thm:ffdescent}.
\end{proof}

\section{Filtered-formality and $1$-formality}
\label{sec:filtered-1-formality}

In this section, we consider the $1$-formality property of finitely generated groups, 
and the way it relates to Massey products, graded-formality, and filtered-formality. 
We also study the way various formality properties behave under free and direct 
products, as well as retracts.

\subsection{$1$-formality of groups}
\label{subsec:1-formal}
 
Let $\k$ be a field of characteristic $0$. We start with a basic definition.  

\begin{definition}
\label{def:1-f}
A finitely generated group $G$ is called \emph{$1$-formal}\/ (over $\k$) if a 
classifying space $K(G,1)$ is $1$-formal over $\k$. 
\end{definition}

Since any two classifying spaces for $G$ are homotopy equivalent, 
the discussion from \S\ref{subsec:formal spaces} shows that this 
notion is well-defined.  A similar argument shows that 
the $1$-formality property of a path-connected space $X$ 
depends only on its fundamental group, $G=\pi_1(X)$. 
 
The next, well-known theorem provides an equivalent, purely group-theoretic 
definition of $1$-formality.   Although proofs can be found in the literature 
(see for instance Markl--Papadima \cite{Markl-Papadima}, Carlson--Toledo 
\cite{Carlson-Toledo95}, and Remark \ref{rem:bezpap} below), 
we provide here an alternative proof, based on Theorem \ref{thm:model-holonomy} 
and the discussion from \S\ref{subsec:sullivan-malcev}.

\begin{theorem}
\label{thm:1-formal}
A finitely generated group $G$ is $1$-formal over $\k$ 
if and only if the 
Malcev Lie algebra of $G$ is isomorphic  to the degree 
completion of the holonomy Lie algebra $\fh(G;\k)$.
\end{theorem}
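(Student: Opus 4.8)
The plan is to reduce the statement to an isomorphism of $1$-minimal models and then transport it across the dualization correspondence of \S\ref{subsec:sullivan-malcev}. Set $A=H^*(G;\k)$, which is a connected $\cga$ with $\dim A^1<\infty$ since $G$ is finitely generated. Recall that $\cM(G;\k)$ is the $1$-minimal model of $A_{PL}(K(G,1);\k)$, with dual Lie algebra $\fm(G;\k)$ by Theorem \ref{thm:sullivan}, while $\cM(A,1)$ is the $1$-minimal model of the $\cga$ $(A,d=0)$, with dual Lie algebra $\fL(A)$. The crucial input is Theorem \ref{thm:model-holonomy}, which identifies $\fL(A)$ with the degree completion $\widehat{\fh}(A)=\widehat{\fh}(G;\k)$ as complete, filtered Lie algebras.

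The heart of the argument is the claim that $G$ is $1$-formal over $\k$ if and only if $\cM(G;\k)\cong \cM(A,1)$ as $\dga$s. For the forward direction, I would use that $1$-formality supplies, by definition, a $1$-quasi-isomorphism $\cM(G;\k)\to (A,0)$; since the source is a $1$-minimal $\dga$, this exhibits $\cM(G;\k)$ as a $1$-minimal model of $(A,0)$, so the uniqueness clause of Theorem \ref{thm:mm} gives $\cM(G;\k)\cong\cM(A,1)$. Conversely, composing such an isomorphism with the defining $1$-quasi-isomorphism $\cM(A,1)\to(A,0)=(H^*(G;\k),0)$ yields a $1$-quasi-isomorphism $\cM(G;\k)\to(H^*(G;\k),0)$, which is precisely the assertion that $G$ is $1$-formal.

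Finally, I would invoke the adjoint correspondence $\fL\leftrightsquigarrow\cM$ from \S\ref{subsec:sullivan-holonomy}--\S\ref{subsec:sullivan-malcev}, under which a $1$-minimal model generated in degree one is determined by (and determines) its dual filtered Lie algebra, the two constructions being mutually inverse. Hence isomorphic $1$-minimal models correspond exactly to isomorphic dual Lie algebras, so $\cM(G;\k)\cong\cM(A,1)$ holds if and only if $\fm(G;\k)\cong\fL(A)$. Combining this with the identification $\fL(A)\cong\widehat{\fh}(G;\k)$ from Theorem \ref{thm:model-holonomy} produces the desired equivalence.

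The main obstacle I expect is the bookkeeping around the two a priori different constructions of a $1$-minimal model for $G$---one built from Sullivan's algebra $A_{PL}(K(G,1);\k)$ and one built formally from the cohomology ring $(H^*(G;\k),0)$---and making sure the dualization functor is faithful enough that an abstract filtered Lie algebra isomorphism $\fm(G;\k)\cong\widehat{\fh}(G;\k)$ genuinely descends to a $\dga$ isomorphism of $1$-minimal models, rather than merely a weak equivalence. To handle this I would record that on both sides the generating space $\cM^1$ is canonically $H^1(G;\k)$ and the bottom graded piece $\gr_1$ is canonically $H_1(G;\k)$, so that the correspondence matches the two descriptions compatibly; the remaining steps are then a formal consequence of the equivalence of categories noted immediately after Theorem \ref{thm:sullivan}.
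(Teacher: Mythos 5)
Your proposal is correct and follows essentially the same route as the paper's proof: both reduce $1$-formality to the statement that $\cM(G;\k)$ is a $1$-minimal model for $(H^*(G;\k),d=0)$, then identify the dual Lie algebra of that model with $\fm(G;\k)$ via Theorem \ref{thm:sullivan} and with $\widehat{\fh}(G;\k)$ via Theorem \ref{thm:model-holonomy}. Your version merely makes explicit the appeal to the uniqueness clause of Theorem \ref{thm:mm} and the faithfulness of the dualization correspondence, which the paper leaves implicit.
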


\begin{proof}
Let $\cM(G;\k)=\cM(A_{PL}(K(G,1)),1)\otimes_{\Q}\k$ be the $1$-minimal model of $G$.
The group $G$ is $1$-formal if and only if 
there exists a \dga~ morphism $\cM(G;\k)\to (H^{\hdot}(G;\k),d=0)$ inducing an
isomorphism in first cohomology and a monomorphism in second 
cohomology, i.e., $\cM(G;\k)$ is a $1$-minimal model for $(H^{\hdot}(G;\k),d=0)$. 

Let $\fL(G;\k)$ be the dual Lie algebra of $\cM(G;\k)$. By Theorem \ref{thm:sullivan},
the Malcev Lie algebra of $G$ is isomorphic to $\fL(G;\k)$.
By Theorem \ref{thm:model-holonomy}, the degree completion
of the holonomy Lie algebra of $G$ is isomorphic to $\fL(G;\k)$. 
This completes the proof.
\end{proof}

\begin{remark}
\label{rem:bezpap}
Theorem \ref{thm:1-formal} admits the following generalization: 
if $G$ is a finitely generated group, 
and if $(A,d)$ is a connected $\dga$ with $\dim A^1<\infty$ 
whose $1$-minimal model is isomorphic to $\cM(G;\k)$, 
then the Malcev Lie algebra $\fm(G;\k)$ is isomorphic 
to the completion with respect to the degree filtration of the 
Lie algebra $\fh(A,d):=\Lie((A^1)^*)/\langle \im((d^1)^*+\mu_A^*)\rangle$.  
Proofs of this result are given in \cite{BMPP, PS17}; related results 
can be found in \cite{Bez, Bibby-H14, Polishchuk-Positselski}. 
\end{remark}

An equivalent formulation of Theorem \ref{thm:1-formal} is given by Papadima and Suciu in 
\cite{Papadima-Suciu09}: 
A finitely generated group $G$ is $1$-formal over $\k$ if and only if 
its Malcev Lie algebra $\fm(G;\k)$ is isomorphic 
to the degree completion of a quadratic Lie algebra, 
as filtered Lie algebras.   For instance, if $b_1(G)$ equals $0$ or $1$, 
then $G$ is $1$-formal.

Clearly, finitely generated free groups are $1$-formal; indeed, 
if $F$ is such a group, then $\fm(F;\k)\cong \widehat{\Lie}(H_1(F;\k))$.  
Other well-known examples of $1$-formal groups include fundamental groups 
of compact K\"ahler manifolds \cite{DGMS},  
fundamental groups of complements of complex algebraic 
hypersurfaces \cite{Kohno}, and the pure braid groups 
of surfaces of genus not equal to one \cite{Bez, Hain97}.

\subsection{Massey products}
\label{subsec:massey}
A well-known obstruction to $1$-formality is provided by the higher-order 
Massey products (introduced in \cite{Massey98}). For our purposes, we will discuss 
here only triple Massey products of degree $1$ cohomology classes. 

Let $\gamma_1$, $\gamma_2$ and $\gamma_3$ be cocycles of degrees 
$1$ in the (singular) chain complex $C^{\hdot}(G;\k)$, with cohomology classes $u_i=[\gamma_i]$ 
satisfying $u_1\cup u_2=0$ and $u_2 \cup u_3=0$. That is, we assume there are
$1$-cochains $\gamma_{12}$ and $\gamma_{23}$ such that 
$d \gamma_{12}=\gamma_1\cup \gamma_2$ and 
$d \gamma_{23}=\gamma_2\cup \gamma_3$. It is readily seen that the $2$-cochain 
$\omega=\gamma_{12}\cup \gamma_3+\gamma_1\cup \gamma_{23}$ is, in fact,  
a cocycle.  The set of all cohomology classes $[\omega]$ obtained in this way 
is the \emph{Massey triple product}\/ $\langle u_1, u_2, u_3\rangle$. 
Due to the ambiguity in the choice of $\gamma_{12}$ and $\gamma_{23}$, 
the Massey triple product $\langle u_1, u_2, u_3\rangle$ is a representative 
of the coset 
\begin{equation}
\label{eq:massey prod}
H^2(G;\k) \slash(u_1 \cup H^1(G;\k)+H^1(G;\k)\cup u_3).
\end{equation}

\begin{remark}
\label{rem:massey-magnus}
In  \cite[Thm.~2]{Porter80}, Porter gave a topological method for computing 
cup products and higher-order Massey products in $H^2(G;\k)$. 
Building on work of Dwyer \cite{Dwyer}, 
 Fenn and Sjerve  \cite{Fenn-S}  gave explicit formulas 
for Massey products in a com\-mutator-relator group. 
\end{remark}

If a group $G$ is $1$-formal, then all triple Massey products vanish 
in the quotient $\k$-vector space from \eqref{eq:massey prod}.
However, if $G$ is only graded-formal, these Massey products need 
not vanish. As we shall see in Example \ref{ex:1rel 1f}, even a one-relator 
group $G$ may be graded-formal, yet not $1$-formal.  

\begin{example}
\label{ex:1rel 1f}
Let $G=\langle x_1,\dots,x_5\mid [x_1,x_2][x_3,[x_4,x_5]]=1\rangle$. 
Using \cite[Thm.~8.3]{SW-holo}, we see that the group $G$ is 
graded-formal.  On the other hand, by  \cite{Fenn-S}, the triple 
Massey product $\langle u_3,u_4,u_5\rangle$ 
is non-zero (modulo indeterminacy).  
Thus, $G$ is not $1$-formal, and so $G$ is not filtered-formal. 
\end{example}

\subsection{Filtered-formality, graded-formality and $1$-formality}
\label{subsec:ff-gf-1f}
The next result pulls together the various formality notions 
for groups, and establishes the basic relationship among them. 
 
\begin{prop}
\label{prop:qwformal}
A finitely generated group $G$ is $1$-formal if and only if $G$ is 
graded-formal and filtered-formal.
\end{prop}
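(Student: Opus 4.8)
The plan is to unpack all three notions in terms of the Malcev Lie algebra $\fm(G;\k)$ and the holonomy Lie algebra $\fh(G;\k)$, and then chase the relevant isomorphisms. Recall the three characterizations available from the excerpt: by Definition \ref{def:filt-1-formal}, $G$ is filtered-formal iff $\fm(G;\k)\cong\widehat{\gr}(G;\k)$ (using Quillen's identification $\gr(\fm(G;\k))\cong\gr(G;\k)$ from \eqref{eq:quillen}, as in Proposition \ref{prop:ffcrit}\eqref{ffc1}); by Definition \ref{def:graded-1-formal}, $G$ is graded-formal iff the canonical epimorphism $\Phi_G\colon\fh(G;\k)\surj\gr(G;\k)$ is an isomorphism; and by Theorem \ref{thm:1-formal}, $G$ is $1$-formal iff $\fm(G;\k)$ is isomorphic to the degree completion $\widehat{\fh}(G;\k)$, as filtered Lie algebras.

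\textbf{Forward direction.} Suppose $G$ is $1$-formal, so that $\fm(G;\k)\cong\widehat{\fh}(G;\k)$ as filtered Lie algebras. Since $\fh(G;\k)$ is a graded Lie algebra generated in degree $1$, its degree completion satisfies $\gr\big(\widehat{\fh}(G;\k)\big)\cong\fh(G;\k)$ by \eqref{eq:subLiecompletion} (equivalently, the discussion following \eqref{eq:gghat}). Taking associated graded Lie algebras of the isomorphism $\fm(G;\k)\cong\widehat{\fh}(G;\k)$ and invoking Quillen's isomorphism $\gr(\fm(G;\k))\cong\gr(G;\k)$, I obtain a graded isomorphism $\gr(G;\k)\cong\fh(G;\k)$. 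I would then check that this isomorphism is compatible with $\Phi_G$, so that $\Phi_G$ is forced to be an isomorphism, giving graded formality. Filtered formality is then immediate: combining $\fm(G;\k)\cong\widehat{\fh}(G;\k)$ with the just-established $\fh(G;\k)\cong\gr(G;\k)$ yields $\fm(G;\k)\cong\widehat{\gr}(G;\k)$, which by Lemma \ref{lem:filtiso} exhibits $\fm(G;\k)$ as filtered-formal.

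\textbf{Backward direction.} Suppose $G$ is both graded-formal and filtered-formal. Filtered formality gives a filtered isomorphism $\fm(G;\k)\cong\widehat{\gr}(G;\k)$, while graded formality gives a graded isomorphism $\Phi_G\colon\fh(G;\k)\isom\gr(G;\k)$. Since both $\fh(G;\k)$ and $\gr(G;\k)$ are graded Lie algebras generated in degree $1$, the isomorphism $\Phi_G$ induces an isomorphism of degree completions $\widehat{\fh}(G;\k)\cong\widehat{\gr}(G;\k)$ (completion being functorial, cf.\ the discussion in \S\ref{subsec:filt lie}). Composing, I get $\fm(G;\k)\cong\widehat{\gr}(G;\k)\cong\widehat{\fh}(G;\k)$ as filtered Lie algebras, which by Theorem \ref{thm:1-formal} is exactly the $1$-formality of $G$.

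The argument is essentially a diagram chase, so there is no deep obstacle; the one point requiring care is \textbf{compatibility of the various isomorphisms}. In the forward direction I must verify that the graded isomorphism coming from $\gr(\,\cdot\,)$ applied to $\fm(G;\k)\cong\widehat{\fh}(G;\k)$ actually coincides with (or can be taken to coincide with) the canonical map $\Phi_G$, rather than being some unrelated abstract isomorphism $\fh(G;\k)\cong\gr(G;\k)$; otherwise one cannot conclude that $\Phi_G$ itself is an isomorphism. The cleanest way to secure this is to track degree-$1$ and degree-$2$ pieces, where $\Phi_G$ is known to be an isomorphism by Lemma \ref{lem:holoepi}, and to use that a surjection of finitely generated graded Lie algebras generated in degree $1$ whose source and target have equal graded dimensions in every degree must be an isomorphism (Lemma \ref{lem:grf-crit}). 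This reduces the compatibility issue to a dimension count, which I expect to go through routinely.
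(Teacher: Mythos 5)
Your proof is correct and follows essentially the same route as the paper's: both directions unpack the three notions via Theorem \ref{thm:1-formal}, Quillen's isomorphism \eqref{eq:quillen}, and Proposition \ref{prop:ffcrit}, then compose the resulting filtered isomorphisms. The only cosmetic difference is that where the paper passes from the abstract graded isomorphism $\gr(G;\k)\cong\fh(G;\k)$ to graded formality via the quadraticity criterion of Lemma \ref{lem:graded-1-formal}, you use the dimension-count criterion of Lemma \ref{lem:grf-crit}; both are the paper's own equivalent reformulations, and either one correctly disposes of the compatibility worry you raise about $\Phi_G$.
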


\begin{proof}
First suppose $G$ is $1$-formal.  Then, by Theorem \ref{thm:1-formal},
 $\fm(G;\k) \cong \widehat{\h}(G;\k)$, 
and thus, $\gr(G;\k)\cong \h(G;\k)$, by \eqref{eq:quillen}.  Hence, 
$G$ is graded-formal, by Lemma \ref{lem:graded-1-formal}\eqref{g1f1}.  
It follows that $\fm(G;\k) \cong \widehat{\gr}(G;\k)$, and hence  
$G$ is filtered-formal, by Proposition \ref{prop:ffcrit}.

Now suppose $G$ filtered-formal. 
Then, by Proposition \ref{prop:ffcrit}, we have that $\fm(G;\k) \cong \widehat{\gr}(G;\k)$. 
Thus, if $G$ is also graded-formal, $\fm(G;\k) \cong \widehat{\h}(G;\k)$.  
Hence, $G$ is $1$-formal.
\end{proof}

Using Propositions \ref{prop:field-filtered} and \ref{prop:qwformal},
and Corollary \ref{lem:gf-ext}, we obtain the following corollary.  

\begin{corollary}
\label{cor:1fdescent}
A finitely generated group $G$  
is $1$-formal over $\Q$ if and only if $G$ is $1$-formal over $\k$.
\end{corollary}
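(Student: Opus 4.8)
The plan is to reduce $1$-formality to the conjunction of its two constituent properties and then apply the field-descent results already established for each of those properties separately. The key structural fact is Proposition \ref{prop:qwformal}, which asserts that, over any field of characteristic zero, a finitely generated group is $1$-formal if and only if it is simultaneously graded-formal and filtered-formal. This decomposition is the crux of the argument: it splits the descent question for $1$-formality into two independent pieces, each of which has been handled earlier in the paper.

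Concretely, I would first invoke Corollary \ref{lem:gf-ext}, which states that $G$ is graded-formal over $\k$ if and only if it is graded-formal over $\Q$. Next I would invoke Proposition \ref{prop:field-filtered}, giving the analogous equivalence for filtered formality: $G$ is filtered-formal over $\k$ if and only if it is filtered-formal over $\Q$. Chaining these equivalences through Proposition \ref{prop:qwformal} (applied on both sides of the extension $\Q\subset\k$) then yields the desired string: $G$ is $1$-formal over $\k$ iff $G$ is both graded-formal and filtered-formal over $\k$, iff $G$ is both graded-formal and filtered-formal over $\Q$, iff $G$ is $1$-formal over $\Q$.

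Since every step is a direct application of a previously established equivalence, there is no real obstacle remaining at this stage; the genuine content lives entirely in the two descent results being cited (and, behind Proposition \ref{prop:field-filtered}, in the Halperin--Stasheff descent theorem). The only point deserving a moment's attention is that Proposition \ref{prop:qwformal} must be available over an \emph{arbitrary} characteristic-zero field $\k$, not merely over $\Q$, so that the decomposition can be deployed on both sides of the extension. Inspecting its proof, one sees it rests only on Theorem \ref{thm:1-formal}, the Quillen isomorphism \eqref{eq:quillen}, Lemma \ref{lem:graded-1-formal}, and Proposition \ref{prop:ffcrit}, all of which are phrased over a general coefficient field $\k$; hence the decomposition is legitimate and the corollary follows immediately.
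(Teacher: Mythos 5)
Your proposal is correct and coincides with the paper's argument: the corollary is derived by combining Proposition \ref{prop:qwformal} with Corollary \ref{lem:gf-ext} and Proposition \ref{prop:field-filtered}, exactly as you describe. Your added remark that Proposition \ref{prop:qwformal} holds over an arbitrary characteristic-zero field is a sensible sanity check, but otherwise there is nothing to add.
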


In other words, the $1$-formality property of a finitely generated group is 
independent of the choice of coefficient field of characteristic $0$. 

A filtered-formal group need not be $1$-formal.   
Examples include some of the free 
nilpotent groups from Example \ref{ex:nstep} and the unipotent 
groups from Example \ref{ex:upper}.  In fact, the triple Massey products 
in the cohomology of a filtered-formal group need not vanish 
(modulo indeterminacy). 

\begin{example}
\label{ex:filt massey}
Let $G=F_2/\Gamma_3 F_2=\langle x_1, x_2 \mid 
[x_1,[x_1, x_2]]=[x_2,[x_1, x_2]]=1\rangle$ be the 
Heisenberg group. Then $G$ is filtered-formal, 
yet has non-trivial triple Massey products, $\langle u_1,u_1,u_2\rangle$ 
and $\langle u_2,u_1,u_2\rangle$, in $H^2(G;\k)$.  
Hence, $G$ is not graded-formal.
\end{example}

As shown by in Hain in \cite{Hain97, Hain15} the Torelli groups in genus 
$4$ or higher are $1$-formal, but the Torelli group in 
genus $3$ is filtered-formal, yet not graded-formal. 
The next two examples show that there are groups which 
are graded-formal but not filtered-formal.

\begin{example}
\label{ex:qtr groups}
In \cite{Bartholdi-E-E-R}, Bartholdi et al.~consider two infinite families 
of groups corresponding to the {Y}ang--{B}axter equations. 
The first are the quasitriangular groups 
$\QTr_n$, which have presentations with generators $x_{ij}$ $(1\leq i\neq j \leq n)$,  
and relations $x_{ij}x_{ik}x_{jk}=x_{jk}x_{ik}x_{ij}$ and
$x_{ij}x_{kl}=x_{kl}x_{ij}$ for distinct $i,j,k,l$.  The second are 
the triangular groups $\Tr_n$, each of which is the quotient of $\QTr_n$ by 
the relations of the form $x_{ij}=x_{ji}$ for $i\neq j$.  As shown by Lee in \cite{Lee}, 
the groups $\QTr_n$ and $\Tr_n$ are all graded-formal.  On the other hand, 
as indicated in \cite{Bartholdi-E-E-R}, these groups are non-$1$-formal 
(and hence, not filtered-formal) for all $n\ge 4$.  
A detailed proof of this fact is given in \cite[Cor.~6.7]{SW-pvb}.
\end{example}

\begin{example}
\label{ex:quadrelation}
Let $G$ be the group with generators 
$x_1, \dots, x_4$  and relators $[x_2,x_3]$, 
$[x_1,x_4]$, and $[x_1,x_3][x_2,x_4]$.  
As noted in \cite{SW-holo}, $G$ is graded-formal.
On the other hand, using the Tangent Cone theorem of 
\cite{Dimca-Papadima-Suciu}, one can show that the group 
$G$ is not $1$-formal.  Therefore, $G$ is not filtered-formal.
\end{example}

\subsection{Examples from link theory}
\label{subsec:link}

Let $L=(L_1,\dots , L_n)$ be an $n$-component link in $S^3$.  
The link group, $G=\pi_1(X)$, is the fundamental group of 
the complement, $X=S^3\setminus \bigcup_{i=1}^n L_i$.
In general, a link group is not $1$-formal.  
This phenomenon was first detected by W.S.~Massey 
by means of his higher-order products \cite{Massey98}, but the 
absence of graded-formality and especially filtered-formality can be even 
harder to detect.  

\begin{example}
\label{ex:link}
Let $L$ be either the $2$-component Whitehead link or the 
$3$-component Borromean links. It follows from work of 
Hain \cite{Hain85} that the corresponding link 
groups $G$ are not graded formal, and thus not $1$-formal. 
The non-$1$-formality of these groups can also  be detected 
by suitable higher-order Massey products. We refer 
to \cite{Suciu-3manifold} for a complete answer to the 
formality question for complements of $2$-component links.
\end{example}

Next, we give an example of a link group which is graded-formal, 
yet not filtered-formal. 

\begin{example}
\label{ex:semiproduct}
Let $L$ be the link of $5$ great circles in $S^3$ corresponding to the 
arrangement of transverse planes through the origin of $\R^4$ 
denoted as $\mathcal{A}(31425)$ in Matei--Suciu \cite{MateiSuciu00}.  
As noted in \cite{SW-holo}, work of Berceanu and 
Papadima \cite{Berceanu-Papadima94} implies 
that the link group $G$ is graded-formal.  On the other hand, as noted in 
\cite[Ex.~8.2]{Dimca-Papadima-Suciu}, the Tangent Cone theorem 
does not hold for this group, and thus $G$ is not $1$-formal.
Consequently, $G$ is not filtered-formal.
\end{example}

\subsection{Propagation of filtered-formality}
\label{subsec:propff}
The next theorem shows that filtered-formality is inherited 
upon taking nilpotent quotients.
In \S\ref{sect:nilp}, we will focus on exploring the formality properties of
torsion-free nilpotent groups.

\begin{theorem}
\label{thm:formalityQuo}
Let $G$ be a finitely generated group, and suppose $G$ 
is filtered-formal.  Then all the nilpotent quotients 
$G/\Gamma_i(G)$ are filtered-formal.
\end{theorem}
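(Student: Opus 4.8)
The plan is to reduce the statement to the following assertion: for every $k\ge 2$, the Malcev Lie algebra $\fm(G/\Gamma_k G;\k)$ is isomorphic, as a filtered Lie algebra, to the degree completion of a graded Lie algebra. Once this is in hand, Lemma \ref{lem:filtiso} immediately yields that $G/\Gamma_k G$ is filtered-formal. The engine driving everything is the hypothesis that $G$ is filtered-formal: by Corollary \ref{cor:filtmin} this means $G$ admits a $1$-minimal model $\cM=\bigcup_i \cM_i$ whose differential is homogeneous with respect to the Hirsch weights, and by Lemma \ref{lem:positivegraded} the dual Lie algebra $\fL=\fL(\cM)$ then satisfies $\fL=\widehat{\gr}(\fL)$. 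Writing $\g=\gr(\fL)$, which is generated in degree $1$ since $\gr(\fL)\cong\gr(G;\k)$ by \eqref{eq:quillen}, we have $\fL=\widehat{\g}=\prod_{j\ge 1}\g_j$, and $\fL\cong\fm(G;\k)$ by Theorem \ref{thm:sullivan}.

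Next I would identify the nilpotent quotients explicitly. By Theorem \ref{thm:sullivan}, the tower quotient $\fL_i=\fL(\cM_i)$ is isomorphic to $\fm(G/\Gamma_{i+1}G;\k)$. Because the differential of $\cM$ is homogeneous on Hirsch weights, the computation in the proof of Lemma \ref{lem:positivegraded} shows that $\fL_i=\fL/\widehat{\Gamma}_{i+1}\fL$. Since $\g$ is generated in degree $1$, its degree filtration coincides with the lower central series filtration, so $\widehat{\Gamma}_{i+1}\fL=\prod_{j\ge i+1}\g_j$, and therefore
\[
\fm(G/\Gamma_{i+1}G;\k)\cong\widehat{\g}\Big/\prod_{j\ge i+1}\g_j=\bigoplus_{j=1}^{i}\g_j.
\]
Reindexing with $k=i+1$, this says that $\fm(G/\Gamma_k G;\k)$ is isomorphic, as a filtered Lie algebra, to the finite-dimensional graded Lie algebra $\bigoplus_{j<k}\g_j$, endowed with its grading-induced filtration.

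Finally I would close the argument: the truncation $\bigoplus_{j<k}\g_j$ is a finite-dimensional graded Lie algebra, hence coincides with its own degree completion. Thus the filtered isomorphism above exhibits $\fm(G/\Gamma_k G;\k)$ as filtration-preservingly isomorphic to the completion of a graded Lie algebra, and Lemma \ref{lem:filtiso} yields that $\fm(G/\Gamma_k G;\k)$ is filtered-formal, i.e.\ $G/\Gamma_k G$ is filtered-formal. The main obstacle — and the one place where the filtered-formality hypothesis is genuinely used — is the identification $\fL_i=\fL/\widehat{\Gamma}_{i+1}\fL$ together with the fact that $\fL$ is a \emph{completed graded} Lie algebra. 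For an arbitrary $1$-minimal model the tower quotient $\fL_i$ need only be $i$-step nilpotent, and need neither be the maximal such quotient nor carry a grading; it is precisely the homogeneity of the differential (Corollary \ref{cor:filtmin}, Lemma \ref{lem:positivegraded}) that makes each $\fL_i$ a graded truncation of $\widehat{\g}$, and hence forces it to be a completed graded Lie algebra.
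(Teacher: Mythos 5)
Your proof is correct, but it takes a genuinely different route from the paper's. The paper works directly at the level of Malcev Lie algebras: it constructs the comparison maps $\Phi_{k,i}\colon \fm/\widehat{\Gamma}_k\fm \surj \fm(G/\Gamma_iG;\k)$, shows $\gr(\Phi_{i,i})$ is an isomorphism, and invokes Lemma \ref{lem:grfilt} to conclude that $\Phi_{i,i}$ itself is an isomorphism; only then does it bring in filtered formality to identify $\fm/\widehat{\Gamma}_i\fm$ with $\g/\Gamma_i\g$. You instead route everything through the $1$-minimal model: Corollary \ref{cor:filtmin} and Lemma \ref{lem:positivegraded} give you $\fL=\widehat{\g}$ with homogeneous differential, and Theorem \ref{thm:sullivan} identifies the tower quotients $\fL_i$ with $\fm(G/\Gamma_{i+1}G;\k)$, so that the Malcev Lie algebra of the nilpotent quotient becomes, transparently, the truncation $\bigoplus_{j\le i}\g_j$. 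Both arguments in fact establish the same stronger statement, namely the isomorphisms \eqref{eq:nqff}. The one caveat in your version is the step ``$\fL_i=\fL/\widehat{\Gamma}_{i+1}\fL$'': this is not part of the \emph{statement} of Lemma \ref{lem:positivegraded} but an assertion made inside its proof (a priori the inverse-limit filtration on $\fL$ need only contain the LCS filtration, and equality is exactly what the paper's $\Phi_{i,i}$ argument certifies). Since you are borrowing that identification from the interior of the cited proof rather than reproving it, your argument is shorter but less self-contained; the paper's version is more robust precisely because it proves this comparison from scratch via the associated-graded criterion of Lemma \ref{lem:grfilt}.
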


\begin{proof}
Set $\g=\gr(G;\k)$ and $\fm=\fm(G;\k)$, and write 
$\g=\bigoplus_{k\geq 1} \fg_k$.   For each $i\ge 1$, 
the canonical projection $\phi_i\colon G\surj G/\Gamma_iG$ 
induces an epimorphism of complete, filtered Lie algebras, 
$\fm(\phi_i)\colon \fm \surj \fm(G/\Gamma_iG;\k)$. 
In each degree $k\ge i$, we have that 
$\widehat{\Gamma}_k \fm(G/\Gamma_iG;\k)=0$, 
and so $\fm(\phi_i)(\widehat{\Gamma}_k\fm)=0$.
Therefore, there exists an induced epimorphism 
$\Phi_{k,i}\colon \fm/\widehat{\Gamma}_k\fm 
\surj \fm(G/\Gamma_iG;\k)$.

Passing to the associated graded, we obtain an epimorphism  
$\gr(\fm/\widehat{\Gamma}_k\fm) \surj \gr(\fm(G/\Gamma_iG;\k))$,
which is readily seen to be an isomorphism for $k=i$. 
Using now Lemma \ref{lem:grfilt}, we conclude that  the map $\Phi_{i,i}$ is an 
isomorphism of (complete, separated) filtered Lie algebras.

On the other hand, our filtered-formality assumption on $G$ allows 
us to identify $\fm\cong \widehat{\fg}=\prod_{k\geq 1} \fg_k$.  
Let $\iota\colon \g\to \widehat{\g}$ be the canonical morphism. 
By construction,  we have isomorphisms 
$\iota_k \colon \g/\Gamma_k\g \to \widehat{\fg}/\widehat{\Gamma}_k\widehat{\fg}$
for all $k\ge 1$.  Thus, 
$\fm/\widehat{\Gamma}_k\fm\cong \widehat{\fg}/\widehat{\Gamma}_k\widehat{\fg}\cong 
\fg/\Gamma_k\fg$, for all $k\ge 1$.
Using these identifications for $k=i$, together with the isomorphism 
$\Phi_{i,i}$ from above, we obtain isomorphisms 
$\fm(G/\Gamma_iG;\k)\cong \fg/\Gamma_i\fg\cong \gr(G/\Gamma_iG;\k)$, 
thereby showing that the nilpotent quotient $G/\Gamma_iG$ is filtered-formal.
\end{proof}

\begin{prop}
\label{prop:formal stallings}
Suppose $\phi\colon G_1\to G_2$ is a homomorphism 
between two finitely generated groups, inducing an isomorphism 
$H_1(G_1;\k)\to H_1(G_2;\k)$ and an epimorphism 
$H_2(G_1;\k)\to H_2(G_2;\k)$. 
\begin{enumerate}
\item \label{fs0}
 If $G_2$ is $1$-formal, then $G_1$ is also $1$-formal.
\item \label{fs1}
If $G_2$ is filtered-formal, then $G_1$ is also filtered-formal. 
\item \label{fs2}
If $G_2$ is graded-formal, then $G_1$ is also graded-formal. 
\end{enumerate}
\end{prop}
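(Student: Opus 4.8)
The plan is to reduce all three implications to two isomorphisms induced by $\phi$: a graded isomorphism $\fh(\phi)\colon\fh(G_1;\k)\xrightarrow{\cong}\fh(G_2;\k)$ of holonomy Lie algebras, and a filtered isomorphism $\fm(\phi)\colon\fm(G_1;\k)\xrightarrow{\cong}\fm(G_2;\k)$ of Malcev Lie algebras. Since $\k$ is a field and the groups are finitely generated, the hypotheses dualize to the statement that $\phi^*$ is an isomorphism $H^1(G_2;\k)\to H^1(G_1;\k)$ and a monomorphism $H^2(G_2;\k)\to H^2(G_1;\k)$; these are the only two facts about $\phi$ I will use.

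First I would establish $\fh(\phi)$. By Definition \ref{def:holo group} and \eqref{eq:hololie bis}, the holonomy Lie algebra of $G_i$ is the quotient of $\Lie(H_1(G_i;\k))$ by the ideal generated by $\ker(\cup_{G_i})^{\vee}$, so it depends only on the kernel of the cup-product map $\cup_{G_i}\colon H^1(G_i;\k)\wedge H^1(G_i;\k)\to H^2(G_i;\k)$. The square relating $\cup_{G_1}$ and $\cup_{G_2}$ through $\phi^*$ commutes, and since $\phi^*$ is an isomorphism on $H^1$ (so $\phi^*\wedge\phi^*$ is an isomorphism) and a monomorphism on $H^2$, a short diagram chase shows that $\phi^*\wedge\phi^*$ carries $\ker(\cup_{G_2})$ bijectively onto $\ker(\cup_{G_1})$. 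Hence the defining ideals correspond and $\fh(\phi)$ is an isomorphism of graded Lie algebras; in particular $\dim\fh_n(G_1;\k)=\dim\fh_n(G_2;\k)$ for all $n$.

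The main step is to show $\fm(\phi)$ is an isomorphism, a Lie-algebra analogue of Stallings' theorem which I would prove by comparing $1$-minimal models. By Theorem \ref{thm:sullivan} it suffices to prove that the induced morphism of $1$-minimal models is an isomorphism. Writing each model as $\cM(G_i;\k)=\bigcup_j\cM(G_i;\k)_j$ as in \S\ref{subsec:sullivan-malcev}, where the new generators at stage $j$ are $V_j^{(i)}=\ker\big(H^2(\cM(G_i;\k)_{j-1})\to H^2(G_i;\k)\big)$, I would induct on $j$. The base case is the isomorphism $\phi^*$ on $H^1=\cM_1^1$. For the inductive step, assuming the comparison map is an isomorphism through stage $j-1$, the naturality square relating the two maps $H^2(\cM_{j-1})\to H^2(G_i;\k)$ commutes; as the left vertical map is an isomorphism (induction) and the right vertical map $\phi^*$ is a monomorphism on $H^2$, the chase shows the left map restricts to an isomorphism $V_j^{(2)}\to V_j^{(1)}$, so the comparison map is again an isomorphism through stage $j$. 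Passing to the union and dualizing yields the filtered isomorphism $\fm(\phi)$. I expect this inductive identification of the spaces $V_j$ — verifying that the monomorphism on $H^2$ propagates through every Hirsch extension — to be the crux of the proof.

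With these two facts in hand the conclusions follow formally. For \eqref{fs1}, filtered-formality is a property of the filtered Lie algebra $\fm(G;\k)$ (Definition \ref{def:filt-1-formal}) and is preserved under the filtered isomorphism $\fm(\phi)$, so $G_2$ filtered-formal forces $G_1$ filtered-formal. For \eqref{fs2}, applying $\gr$ to $\fm(\phi)$ and invoking Quillen's isomorphism \eqref{eq:quillen} gives $\gr(G_1;\k)\cong\gr(G_2;\k)$, whence $\dim\gr_n(G_1;\k)=\dim\gr_n(G_2;\k)$; combining this with the holonomy equality from the second paragraph and the graded-formality of $G_2$ via Lemma \ref{lem:grf-crit} yields $\dim\fh_n(G_1;\k)=\dim\gr_n(G_1;\k)$ for all $n$, so $G_1$ is graded-formal. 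Finally, \eqref{fs0} follows at once: if $G_2$ is $1$-formal it is both graded- and filtered-formal by Proposition \ref{prop:qwformal}, hence so is $G_1$ by \eqref{fs1} and \eqref{fs2}, and therefore $G_1$ is $1$-formal by Proposition \ref{prop:qwformal} again.
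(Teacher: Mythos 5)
Your argument is correct, but it takes a genuinely different route from the paper at the crucial step. The paper's proof is three lines: it invokes Stallings' theorem \cite{Stallings} (see also \cite{Dwyer, Freedman-Hain-Teichner}), which under exactly these hypotheses on $H_1$ and $H_2$ gives isomorphisms $(G_1/\Gamma_k G_1)\otimes\k \to (G_2/\Gamma_k G_2)\otimes\k$ for all $k$, hence an isomorphism $\fm(\phi)\colon \fm(G_1;\k)\to\fm(G_2;\k)$; all three claims are then read off from this together with Quillen's isomorphism \eqref{eq:quillen}, much as in your final paragraph. You instead re-prove the needed consequence of Stallings' theorem inside rational homotopy theory, by inductively comparing the Hirsch extensions of the two $1$-minimal models; your diagram chase (isomorphism on the left vertical arrow by induction, monomorphism on the right vertical arrow by hypothesis, hence an isomorphism between the kernels $V_j^{(2)}\to V_j^{(1)}$) is the correct mechanism, and is essentially the Sullivan--Dwyer style proof of the rational form of Stallings' theorem. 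What the citation buys the paper is brevity; what your argument buys is self-containedness within the minimal-model framework already set up in \S\ref{sec:formality}. One small remark: the first paragraph of your proposal, establishing $\fh(G_1;\k)\cong\fh(G_2;\k)$, is not actually needed --- once $\gr(G_1;\k)\cong\gr(G_2;\k)$ is known, graded-formality transfers directly via Lemma \ref{lem:graded-1-formal}, since quadraticity is an isomorphism invariant of the graded Lie algebra --- though your holonomy comparison is correct and does no harm.
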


\begin{proof}
A celebrated theorem of Stallings \cite{Stallings} 
(see also \cite{Dwyer, Freedman-Hain-Teichner}) 
insures that the homomorphism $\phi$ induces isomorphisms 
$\phi_k\colon (G_1/\Gamma_k G_1)\otimes \k\to 
(G_2/\Gamma_k G_2)\otimes \k$, 
for all $k\ge 1$. Hence, $\phi$ induces an isomorphism 
$\fm(\phi)\colon \fm(G_1;\k)\to \fm(G_2;\k)$ between the 
respective Malcev completions, thereby proving claim \eqref{fs0}. 
Using now the isomorphism \eqref{eq:quillen}, the other 
two claims follow at once.
\end{proof}

\subsection{Split injections}
\label{subsec:splif formal}

We are now ready to state and prove the main result of this section, 
which completes the proof of Theorem \ref{thm:intro formality} 
from the Introduction.

\begin{theorem} 
\label{thm:formality}
Let $G$ be a finitely generated group, and let 
$\iota\colon K\to G$ be a split injection.
\begin{enumerate}
\item \label{f1}
If $G$ is filtered-formal, then $K$ is also filtered-formal. 
\item \label{f2}
If $G$ is $1$-formal, then $K$ is also $1$-formal. 
\end{enumerate}
\end{theorem}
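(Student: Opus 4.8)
The plan is to establish part \eqref{f1} directly and then deduce part \eqref{f2} formally, using the fact that graded formality of retracts was already settled in Theorem \ref{thm:graded1formality}. For part \eqref{f1}, the starting point is that the split monomorphism provides a retraction $\sigma\colon G\surj K$ with $\sigma\circ\iota=\id_K$ (so in particular $K$ is again finitely generated). Applying the functor $\fm(-;\k)$ yields morphisms of complete, separated, filtered Lie algebras $i=\fm(\iota)\colon\fm(K;\k)\to\fm(G;\k)$ and $r=\fm(\sigma)\colon\fm(G;\k)\to\fm(K;\k)$ with $r\circ i=\id$. Passing to associated graded Lie algebras and using the naturality of Quillen's isomorphism \eqref{eq:quillen}, the induced maps $\gr(i)\colon\gr(K;\k)\to\gr(G;\k)$ and $\gr(r)$ again satisfy $\gr(r)\circ\gr(i)=\id$. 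Thus $\fm(K;\k)$ is a filtered retract of $\fm(G;\k)$, while $\gr(K;\k)$ is a graded retract of $\gr(G;\k)$.

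The key step is to transport the filtered-formality isomorphism of $G$ down to $K$ through this retraction. By Definition \ref{def:filt formal}, filtered formality of $G$ supplies a filtered Lie algebra isomorphism $\Theta\colon\fm(G;\k)\to\widehat{\gr}(\fm(G;\k))$ inducing the identity on associated graded. I would then form the composite
\[
\Psi:=\widehat{\gr(r)}\circ\Theta\circ i\colon \fm(K;\k)\longrightarrow \widehat{\gr}(\fm(K;\k)),
\]
where $\widehat{\gr(r)}$ is the degree completion of the graded retraction $\gr(r)$. This $\Psi$ is a morphism of complete, separated, filtered Lie algebras, and on associated graded it computes to $\gr(\Psi)=\gr(\widehat{\gr(r)})\circ\gr(\Theta)\circ\gr(i)=\gr(r)\circ\gr(i)=\id$, where one uses $\gr(\Theta)=\id$ together with the fact that taking associated graded of a completed graded map returns that map (as in \eqref{eq:grfg alpha}). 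Since $\gr(\Psi)$ is an isomorphism and both source and target are complete and separated, Lemma \ref{lem:grfilt} shows $\Psi$ is an isomorphism; Lemma \ref{lem:filtiso} then identifies $\fm(K;\k)$ with the completion of a graded Lie algebra in a filtration-preserving way, so $\fm(K;\k)$ is filtered-formal, i.e. $K$ is filtered-formal.

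For part \eqref{f2}, the argument is purely formal: if $G$ is $1$-formal, then by Proposition \ref{prop:qwformal} it is both graded-formal and filtered-formal; Theorem \ref{thm:graded1formality} gives that $K$ is graded-formal, and part \eqref{f1} gives that $K$ is filtered-formal; applying Proposition \ref{prop:qwformal} in the reverse direction then shows $K$ is $1$-formal.

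The main obstacle I anticipate is bookkeeping rather than conceptual. The crux is the verification that $\gr(\Psi)=\id$, and for this one must be careful that all associated-graded identifications are natural and compatible: that $\gr(\Theta)$ really is the identity under the canonical identification $\gr(\fm(G;\k))\cong\gr(G;\k)$ coming from \eqref{eq:quillen}, and that $\gr(\widehat{\gr(r)})=\gr(r)$. Once the retraction $r\circ i=\id$ is installed and these naturality checks are in place, the computation $\gr(\Psi)=\gr(r)\circ\gr(i)=\id$ is immediate, and the conclusion follows mechanically from the completion Lemmas \ref{lem:grfilt} and \ref{lem:filtiso}.
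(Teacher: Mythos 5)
Your argument is correct, but it takes a genuinely different route from the paper's. The paper fixes a presentation $\pi\colon F\surj G$, realizes $\fm(G;\k)$ and $\widehat{\gr}(G;\k)$ (and their counterparts for $K$) as quotients of the completed free Lie algebra $\widehat{\Lie}(F_{\k})$ by ideals $J,I$ (resp.\ $J_1,I_1$), argues that the filtered-formality isomorphism $\Phi$ of $G$ is induced by the identity of $\widehat{\Lie}(F_{\k})$, and then uses the injectivity of $\fm(\iota)$ and $\widehat{\gr}(\iota)$ to establish the containment $I_1\subset J_1$, so that the identity descends to the desired isomorphism $\Phi_1\colon \fm(K;\k)\to\widehat{\gr}(K;\k)$. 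You dispense with presentations entirely: you transport the formality isomorphism through the retraction by forming $\Psi=\widehat{\gr(r)}\circ\Theta\circ i$ and computing $\gr(\Psi)=\gr(r)\circ\gr(i)=\gr(r\circ i)=\id$, after which Lemma \ref{lem:grfilt} closes the argument. This buys you two things: you avoid the delicate step of lifting $\Phi$ to the identity of $\widehat{\Lie}(F_{\k})$, and the entire verification reduces to functoriality of $\gr(-)$, naturality of Quillen's isomorphism \eqref{eq:quillen}, and the identification $\gr^{\widehat{\Gamma}}(\widehat{\fh})\cong\fh$ of \eqref{eq:subLiecompletion} for graded Lie algebras generated in degree one --- precisely the compatibility checks you flag, all of which hold here. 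One small simplification: since $\gr(\Psi)=\id$, the isomorphism $\Psi$ already witnesses Definition \ref{def:filt formal} directly, so the final appeal to Lemma \ref{lem:filtiso} is not needed. Your deduction of part \eqref{f2} from part \eqref{f1}, Theorem \ref{thm:graded1formality}, and Proposition \ref{prop:qwformal} is exactly the paper's.
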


\begin{proof}  
By hypothesis, we have an epimorphism $\sigma\colon G\surj K$ 
such that $\sigma\circ\iota=\id$.  It follows that the induced maps 
$\fm(\iota)$ and $\widehat{\gr}(\iota)$ are also split injections. 

Let $\pi\colon F\surj G$ be an epimorphism from a finitely generated 
free group $F$ to $G$. Let  $\pi_1:=\sigma\circ \pi\colon F\surj K$;  
there is then a map $\iota_1\colon F\rightarrow F$ 
which is a lift of $\iota$, that is, $\iota\circ \pi_1=\pi\circ \iota_1$.
Consider the following diagram (we suppress the coefficient field 
$\k$ of characteristic zero from the notation).   
\begin{equation}
\label{eq:cube}
\begin{tikzpicture}[ 
baseline=(current  bounding  box.center),scale=0.9,every node/.style={scale=0.9}]
\matrix (m) [matrix of math nodes, row sep={2.8em,between origins}, 
column sep={4.3em,between origins}]{
 & J_1\vphantom{\hat{\Lie}} && \widehat{\Lie}(F)&& \widehat{\gr}(K)\vphantom{\widehat{\Lie}} \\
I_1\vphantom{\hat{\Lie}}  && \widehat{\Lie}(F) && \fm(K)\vphantom{\widehat{\Lie}} & \\ 
 & J && \widehat{\Lie}(F)&& \widehat{\gr}(G)\vphantom{\widehat{\Lie}} \\
I && \widehat{\Lie}(F) && \fm(G)\vphantom{\widehat{\Lie}} & \\ };
\path[->,>=angle 90]
(m-2-3) edge node[left,pos=0.6]{ $\id~$ } (m-1-4)
(m-4-3) edge node[left,pos=0.6]{$\id~$} (m-3-4)
(m-4-3) edge [-,line width=8pt,draw=white](m-2-3)
(m-4-3) edge (m-2-3)
 (m-2-1) edge(m-4-1)
 (m-1-2)  edge (m-3-2) 
 (m-1-4) edge  (m-3-4)
; 
\path[right hook->,>=angle 90]
(m-1-2) edge (m-1-4)
(m-3-2) edge (m-3-4)
(m-4-1) edge (m-3-2)
(m-4-1) edge (m-4-3)
;
\path[dotted,right hook->,>=angle 90]
(m-2-1) edge (m-1-2);
\path[->, >=angle 90]
(m-4-5) edge node[left, pos=0.65]{$\Phi~$} node[right,pos=0.4]{$\cong$}(m-3-6)
(m-4-5) edge (m-2-5)
;
\path[dotted,->>,>=angle 90]
(m-2-5) edge node[left]{$\Phi_1~$} (m-1-6)
;
\path[->, >=angle 90]
(m-4-3) edge [-,line width=10pt,draw=white](m-2-3)
(m-2-3) edge  node[pos=0.8,left]{$\fm(\iota_1)$}(m-4-3)
; 
\path[->>, >=angle 90]
(m-1-4) edge (m-1-6)
(m-3-4) edge (m-3-6)
(m-4-3) edge (m-4-5)
(m-2-3) edge [-,line width=8pt,draw=white](m-2-5)
(m-2-3) edge (m-2-5)
;
\path[right hook->,>=angle 90]
(m-2-1) edge [-,line width=8pt,draw=white] (m-2-3)
(m-2-1) edge (m-2-3)
(m-4-5) edge [-,line width=10pt,draw=white](m-2-5)
(m-2-5) edge node[pos=0.8,left]{$\fm(\iota)$} (m-4-5)
(m-1-6) edge node[right]{$\widehat{\gr}(\iota)$} (m-3-6)
;
\end{tikzpicture}
\end{equation}

We have $\fm(\iota_1)=\widehat{\gr}(\iota_1)$.
By assumption, $G$ is filtered-formal; hence, there exists a 
filtered Lie algebra isomorphism 
$\Phi\colon \fm(G) \to \widehat{\gr}(G)$ 
as in diagram \eqref{eq:cube},  
which induces the identity on associated graded algebras.  It 
follows that $\Phi$ is induced from the identity map of 
$\widehat{\Lie}(F)$ upon projecting onto source and target, 
i.e., the bottom right square in  the diagram commutes.

First, we show that the identity map 
$\id\colon \widehat{\Lie}(F)\rightarrow \widehat{\Lie}(F)$ 
in the above diagram
induces an inclusion map  $I_1\rightarrow J_1$.  Suppose there is an element 
$c\in \widehat{\Lie}(F)$ such that $c\in I_1$ and $c\notin J_1$, that is, $[c]= 0$
in $\fm(K)$ and $[c]\neq 0$ in $\widehat{\gr}(G)$.
Since $\widehat{\gr}(\iota)$ is injective, we have that $\widehat{\gr}(\iota)([c])\neq 0$, that is, 
$\widehat{\gr}(\iota_1)(c)\notin I$. We also have $\fm(\iota)([c])=0\in \fm(G)$, that is, 
$\fm(\iota_1)(c)\in J$. This contradicts the fact that the inclusion $I\inj J$ is induced by 
the identity map. Thus,  $I_1\subset J_1$.

In view of the above, we may define a Lie algebra morphism 
$\Phi_1\colon  \fm(K)\rightarrow\widehat{\gr}(K)$ as the quotient 
of the identity on $\widehat{\Lie}(F)$. 
By construction, $\Phi_1$ is an epimorphism.  
We also have $\widehat{\gr}(\iota)\circ\Phi_1=\Phi\circ\fm(\iota)$. 
Since the maps $\fm(\iota)$, $\widehat{\gr}(\iota)$ and $\Phi$ are all injective, 
the map $\Phi_1$ is also injective. Therefore, $\Phi_1$ is an 
isomorphism, and so the group $K$ is filtered-formal.
 
Finally, part \eqref{f2} follows at once from part \eqref{f1} and 
Theorem \ref{thm:graded1formality}.
\end{proof}

This completes the proof of Theorem \ref{thm:intro formality} from the Introduction.
As we shall see in Example \ref{ex:qtr groups}, this theorem is useful for deciding 
whether certain infinite families of groups are $1$-formal.  

We now proceed with the proof of Theorem \ref{thm:product formality}.  First, 
we need a lemma. 

\begin{lemma}[\cite{Dimca-Papadima-Suciu}]
\label{lem:ml isos}
Let  $G_1$ and $G_2$ be two finitely generated groups. Then 
$\fm(G_1\times G_2;\k)\cong \fm(G_1;\k)\times \fm(G_2;\k)$ and 
$\fm(G_1*G_2;\k)\cong \fm(G_1;\k)\cstar \fm(G_2;\k)$. 
\end{lemma}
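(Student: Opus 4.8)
The plan is to treat both isomorphisms uniformly: construct a natural morphism of complete, separated, filtered Lie algebras, show that the induced map on associated graded Lie algebras is an isomorphism, and then invoke Lemma \ref{lem:grfilt} to conclude that the morphism itself is an isomorphism. The associated graded computations are in turn reduced to Quillen's isomorphism \eqref{eq:quillen}, together with the behaviour of $\gr$ under products and coproducts recorded in Lemma \ref{lem: product}.

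For the direct product, the two projections $p_j\colon G_1\times G_2\surj G_j$ induce, by functoriality of $\fm(-;\k)$, a morphism of complete, separated, filtered Lie algebras $\Xi\colon \fm(G_1\times G_2;\k)\to \fm(G_1;\k)\times \fm(G_2;\k)$. Applying $\gr$ and using Quillen's isomorphism \eqref{eq:quillen} on all three terms, the map $\gr(\Xi)$ is identified with the canonical map $\gr(G_1\times G_2;\k)\to \gr(G_1;\k)\times \gr(G_2;\k)$, which is an isomorphism by Lemma \ref{lem: product}. Since the product of two complete, separated, filtered Lie algebras is again complete and separated, Lemma \ref{lem:grfilt} shows that $\Xi$ is an isomorphism of filtered Lie algebras.

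For the free product, the inclusions $\iota_j\colon G_j\inj G_1*G_2$ induce morphisms $\fm(\iota_j)\colon \fm(G_j;\k)\to \fm(G_1*G_2;\k)$, which by the universal property of the coproduct $\cstar$ in the category of complete, filtered Lie algebras assemble into a morphism $\Theta\colon \fm(G_1;\k)\cstar \fm(G_2;\k)\to \fm(G_1*G_2;\k)$. As in the proof of Lemma \ref{lem:filt coprod}, the result of Lazarev and Markl \cite[(9.3)]{LazarevMarkl} identifies the associated graded of the completed coproduct with the ordinary coproduct of the associated gradeds; combined with \eqref{eq:quillen} this yields natural identifications $\gr(\fm(G_1;\k)\cstar \fm(G_2;\k))\cong \gr(G_1;\k)*\gr(G_2;\k)$ and $\gr(\fm(G_1*G_2;\k))\cong \gr(G_1*G_2;\k)$. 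Under these identifications $\gr(\Theta)$ restricts on each factor $\gr(G_j;\k)$ to $\gr(\iota_j)$, the canonical inclusion, so $\gr(\Theta)$ coincides with the canonical isomorphism $\gr(G_1;\k)*\gr(G_2;\k)\isom \gr(G_1*G_2;\k)$ furnished by Lemma \ref{lem: product}. Lemma \ref{lem:grfilt} once more upgrades this to an isomorphism $\Theta$.

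The main obstacle is the free-product case: unlike the direct product, the lower central series of $G_1*G_2$ does not split as a product, so one cannot argue levelwise on nilpotent quotients. The essential input is the computation of $\gr$ of the complete coproduct $\cstar$, for which I rely on the Lazarev--Markl formula exactly as in Lemma \ref{lem:filt coprod}. The remaining verifications are routine: that $\Xi$ and $\Theta$ are genuine morphisms of complete, separated, filtered Lie algebras, that the images of the two factors generate, and that the source Lie algebras ($\fm(G_1;\k)\times\fm(G_2;\k)$ and $\fm(G_1;\k)\cstar\fm(G_2;\k)$) are complete and separated, as required for Lemma \ref{lem:grfilt}.
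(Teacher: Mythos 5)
The paper offers no proof of this lemma at all—it is imported verbatim from \cite{Dimca-Papadima-Suciu}—so there is no internal argument to compare against; what you have written is a self-contained proof assembled from the paper's own toolkit, and it is correct. Your strategy of reducing both isomorphisms to the associated graded level via Lemma \ref{lem:grfilt}, then identifying the graded maps through Quillen's isomorphism \eqref{eq:quillen} and Lemma \ref{lem: product}, works: the direct-product half is essentially formal (one could even argue levelwise, since $\Gamma_k(G_1\times G_2)=\Gamma_k G_1\times \Gamma_k G_2$ splits the nilpotent quotients, but your route is fine), and you correctly locate the genuine content in the free-product half. There the argument leans on two external facts: that $\cstar$ really is the coproduct in the category of complete filtered Lie algebras (needed to assemble $\Theta$ from the maps $\fm(\iota_j)$), and that the Lazarev--Markl identification $\gr(\fm_1\cstar\fm_2)\cong \gr(\fm_1)*\gr(\fm_2)$ is natural with respect to the inclusions of the factors, so that $\gr(\Theta)$ is determined on generators and hence coincides with the canonical isomorphism. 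Both of these are precisely the inputs the paper itself invokes in Lemma \ref{lem:filt coprod}, so your proof is no less rigorous than the surrounding text; what it buys over the bare citation is a uniform, internal derivation, at the cost of depending on the coproduct formalism of Lazarev and Markl rather than on the direct Malcev-completion arguments of the cited source.
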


\begin{prop} 
\label{prop:productweakly}
For finitely generated groups $G_1$ and $G_2$, the 
following conditions are equivalent.
\begin{enumerate}
\item \label{ff1}  $G_1$ and $G_2$ are filtered-formal.
\item \label{ff2}  $G_1* G_2$  is filtered-formal.
\item \label{ff3}  $G_1\times G_2$ is filtered-formal.
\end{enumerate}
\end{prop}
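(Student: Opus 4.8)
The plan is to follow the same pattern as the proof of Proposition \ref{prop:productquadratic}, using split injections in one direction and the behavior of the Malcev Lie algebra under products and coproducts in the other.

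First I would prove the implications \eqref{ff1} $\Rightarrow$ \eqref{ff3} and \eqref{ff1} $\Rightarrow$ \eqref{ff2}. By Definition \ref{def:filt-1-formal}, the hypothesis that $G_1$ and $G_2$ are filtered-formal means precisely that the Malcev Lie algebras $\fm(G_1;\k)$ and $\fm(G_2;\k)$ are filtered-formal. Lemma \ref{lem:ml isos} supplies filtered Lie algebra isomorphisms $\fm(G_1\times G_2;\k)\cong \fm(G_1;\k)\times \fm(G_2;\k)$ and $\fm(G_1* G_2;\k)\cong \fm(G_1;\k)\cstar \fm(G_2;\k)$. By Lemma \ref{lem:filt prod}, the product of two filtered-formal Lie algebras is filtered-formal, and by Lemma \ref{lem:filt coprod}, the same holds for the coproduct $\cstar$ in the category of complete, filtered Lie algebras. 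Transporting filtered-formality across the isomorphisms of Lemma \ref{lem:ml isos}, we conclude that $\fm(G_1\times G_2;\k)$ and $\fm(G_1* G_2;\k)$ are filtered-formal, so that $G_1\times G_2$ and $G_1* G_2$ are filtered-formal groups.

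For the reverse implications \eqref{ff3} $\Rightarrow$ \eqref{ff1} and \eqref{ff2} $\Rightarrow$ \eqref{ff1}, I would invoke the split injections. The inclusions $G_i\inj G_1\times G_2$ are split by the coordinate projections, and the inclusions $G_i\inj G_1* G_2$ are split by the homomorphisms $G_1* G_2\surj G_i$ that collapse the other free factor. Hence, by Theorem \ref{thm:formality}\eqref{f1}, the filtered formality of $G_1\times G_2$ (respectively $G_1* G_2$) descends to each of $G_1$ and $G_2$. This completes the cycle of equivalences.

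Since Lemmas \ref{lem:filt prod}, \ref{lem:filt coprod}, and \ref{lem:ml isos} together with Theorem \ref{thm:formality} already carry all the substance, no serious obstacle remains; the argument is a matter of assembling these results. The only point meriting attention is that the isomorphisms of Lemma \ref{lem:ml isos} be understood as isomorphisms of complete, filtered Lie algebras, so that filtered-formality---which by Definition \ref{def:filt formal} is a property of the filtered isomorphism type---genuinely transfers along them; this is exactly how those isomorphisms are furnished in \cite{Dimca-Papadima-Suciu}.
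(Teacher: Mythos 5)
Your proposal is correct and follows exactly the paper's own argument: split injections plus Theorem \ref{thm:formality} for the descent implications, and Lemmas \ref{lem:filt prod}, \ref{lem:filt coprod}, and \ref{lem:ml isos} for the ascent implications. The only difference is that you spell out the assembly in more detail than the paper does.
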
 

\begin{proof}
Since there exist split injections from $G_1$ and $G_2$ to 
the product $G_1\times G_2$ and coproduct $G_1*G_2$, 
we may apply Theorem \ref{thm:formality} to conclude that implications 
\eqref{ff2}$\Rightarrow$\eqref{ff1} and \eqref{ff3}$\Rightarrow$\eqref{ff1} 
hold. Implications 
\eqref{ff1}$\Rightarrow$\eqref{ff2} and \eqref{ff1}$\Rightarrow$\eqref{ff3} 
follow from Lemmas \ref{lem:filt prod}, \ref{lem:filt coprod}, and \ref{lem:ml isos}.
\end{proof}

As we shall see in Example \ref{ex:semiproduct}, 
the implication \eqref{ff1}$\Rightarrow$\eqref{ff3} 
from Proposition \ref{prop:productweakly} 
cannot be strengthened from direct products to 
arbitrary semi-direct products. More precisely, 
there exist split extensions of the form $G=F_{n}\rtimes_{\alpha} \Z$, 
for certain automorphisms $\alpha\in \Aut(F_n)$, such that the group 
$G$ is not filtered-formal, although of course both $F_n$ and 
$\Z$ are $1$-formal. 

The next corollary follows at once from Propositions \ref{prop:productquadratic} 
and \ref{prop:productweakly}.
\begin{corollary}
\label{cor:nongrfilt}
Suppose $G_1$ and $G_2$ are finitely generated groups such 
that $G_1$ is not graded-formal and $G_2$ is not filtered-formal. 
Then the product $G_1\times G_2$ and the free product 
$G_1* G_2$ are neither graded-formal, nor filtered-formal.  
\end{corollary}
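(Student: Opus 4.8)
The plan is to derive this statement directly from the two equivalence results established immediately above, namely Proposition \ref{prop:productquadratic} for graded formality and Proposition \ref{prop:productweakly} for filtered formality. The crucial point is that each of these propositions characterizes the relevant formality property of \emph{both} the direct product and the free product purely in terms of the corresponding property of the two factors. Consequently, the four non-formality assertions in the corollary (non-graded-formality and non-filtered-formality, each for $G_1\times G_2$ and for $G_1 * G_2$) will all follow simply by taking contrapositives of these equivalences; no new construction is needed.

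First I would dispose of the two graded-formality claims. By Proposition \ref{prop:productquadratic}, conditions \eqref{gf1}, \eqref{gf2}, and \eqref{gf3} are equivalent; in particular, $G_1 * G_2$ is graded-formal if and only if both $G_1$ and $G_2$ are graded-formal, and the same holds for $G_1 \times G_2$. Since by hypothesis $G_1$ is \emph{not} graded-formal, condition \eqref{gf1} fails, and therefore both \eqref{gf2} and \eqref{gf3} fail as well. Hence neither $G_1 * G_2$ nor $G_1 \times G_2$ is graded-formal.

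Next I would treat filtered formality in exactly the same fashion, now invoking Proposition \ref{prop:productweakly}, whose conditions \eqref{ff1}, \eqref{ff2}, and \eqref{ff3} are likewise equivalent. Because $G_2$ is assumed not to be filtered-formal, condition \eqref{ff1} fails, so both \eqref{ff2} and \eqref{ff3} fail; thus neither the free product nor the direct product is filtered-formal. Combining the two paragraphs gives all four assertions. I do not anticipate any genuine obstacle: the entire content of this corollary is packaged in Propositions \ref{prop:productquadratic} and \ref{prop:productweakly}, and the present statement merely records the logical consequence of assuming one factor fails graded formality while the other fails filtered formality.
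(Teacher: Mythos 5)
Your proposal is correct and follows exactly the paper's own argument, which likewise derives the corollary immediately from Propositions \ref{prop:productquadratic} and \ref{prop:productweakly} by contraposition. Your write-up simply spells out the logical steps that the paper leaves implicit.
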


As mentioned in \S\ref{subsec:formal}, concrete examples of 
groups which do not possess either formality property 
can be obtained by taking direct products of groups 
which enjoy one property but not the other. 

\section{Derived series and  Lie algebras}
\label{section:chen lie}

We now study some of the relationships between the derived 
series of a group and the derived series of the corresponding 
Lie algebras.

\subsection{Derived series}
\label{subsec:derived}
Consider the derived series of a group $G$, starting at 
$G^{(0)}=G$,  $G^{(1)}=G'$,  and $G^{(2)}=G''$, 
and defined inductively by $G^{(i+1)}=[G^{(i)},G^{(i)}]$.  Note 
that any homomorphism $\phi\colon G\to H$ takes $G^{(i)}$ to $H^{(i)}$. 
The quotient groups, $G/G^{(i)}$, are solvable; in particular, $G/G'=G_{\ab}$, 
while $G/G''$ is the maximal metabelian quotient of $G$. 

Assume that $G$ is a finitely generated group, and fix a coefficient 
field $\k$ of characteristic $0$.
\begin{prop}[\cite{SW-holo}]
\label{prop:derivedquo}
The holonomy Lie algebras $\fh(G/G^{(i)};\k)$ of the derived quotients of $G$ are isomorphic
to $\fh(G;\k)/\fh(G;\k)^{\prime}$ for $i=1$, and are isomorphic to
$\fh(G;\k)$ for $i\ge 2$.
\end{prop}

The next theorem is the Lie algebra version of Theorem 3.5 
from \cite{Papadima-Suciu04}.
\begin{theorem}[\cite{Papadima-Suciu04}]
\label{thm:chenLieAlgebra}
For each $i\ge 2$, there is an isomorphism of complete, separated filtered Lie algebras,
$
\fm(G/G^{(i)};\k)\cong \fm(G;\k)/\overline{\fm(G;\k)^{(i)}},
$
where $\overline{\fm(G;\k)^{(i)}}$ denotes the closure of $\fm(G;\k)^{(i)}$ 
with respect to the filtration topology on $\fm(G;\k)$.
\end{theorem}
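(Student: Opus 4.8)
The plan is to realize both sides as inverse limits of a single tower of finite-dimensional nilpotent Lie algebras built from the nilpotent quotients $N_k=G/\Gamma_kG$, and then to identify the closure $\overline{\fm(G)^{(i)}}$ with the kernel of the resulting projection. This is the Malcev-Lie-algebra transcription of the group-level argument in \cite[Theorem 3.5]{Papadima-Suciu04}.

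First I would set up the tower. From $\Gamma_k(G/G^{(i)})=\Gamma_k(G)\,G^{(i)}/G^{(i)}$ one obtains the identification of nilpotent quotients $(G/G^{(i)})/\Gamma_k(G/G^{(i)})\cong N_k/N_k^{(i)}$, exactly as in the proof of Proposition \ref{prop:derivedquo}. Next, for the \emph{finite-dimensional} Malcev Lie algebra $\fm(N_k)$ of the finitely generated nilpotent group $N_k$, the Malcev correspondence recalled in \S\ref{subsec:nilpotent groups} is exact and carries derived subgroups to derived subalgebras; hence $\fm(N_k/N_k^{(i)})\cong \fm(N_k)/\fm(N_k)^{(i)}$, with $\fm(N_k)^{(i)}$ automatically closed since $\fm(N_k)$ is finite-dimensional. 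Taking inverse limits over $k$ and using \eqref{eq:malcevLie} then yields $\fm(G/G^{(i)})\cong\varprojlim_k \fm(N_k)/\fm(N_k)^{(i)}$. I must stress here that one cannot replace this by $\gr(G)/\gr(G)^{(i)}$: passing to associated graded does \emph{not} commute with quotienting by a derived ideal, the discrepancy being precisely the non-injectivity of the map $\Psi_G^{(i)}$ of Theorem \ref{thm:chenlie intro}. This is why the whole argument must be carried out at the level of $\fm$ rather than $\gr$.

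The canonical projection $p\colon G\surj G/G^{(i)}$ induces $\fm(p)\colon \fm(G)=\varprojlim_k\fm(N_k)\to\varprojlim_k\fm(N_k)/\fm(N_k)^{(i)}=\fm(G/G^{(i)})$, which is the inverse limit of the short exact sequences $0\to\fm(N_k)^{(i)}\to\fm(N_k)\to\fm(N_k)/\fm(N_k)^{(i)}\to 0$. Since each $\fm(N_k)^{(i)}$ is finite-dimensional, the system $\{\fm(N_k)^{(i)}\}_k$ is Mittag--Leffler, so $\varprojlim$ is exact; thus $\fm(p)$ is surjective with $\ker\fm(p)=\varprojlim_k\fm(N_k)^{(i)}$.

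The heart of the argument, and the step I expect to be the main obstacle, is the closure identity $\overline{\fm(G)^{(i)}}=\varprojlim_k\fm(N_k)^{(i)}$ inside $\fm(G)$. The inclusion $\subseteq$ is easy: each projection $\pi_k\colon\fm(G)\to\fm(N_k)$ sends $\fm(G)^{(i)}$ into $\fm(N_k)^{(i)}$, so $\fm(G)^{(i)}\subseteq\varprojlim_k\fm(N_k)^{(i)}$, and the latter is a closed ideal. For $\supseteq$ I would argue by approximation in the filtration topology: since $\pi_k$ is surjective it carries $\fm(G)^{(i)}$ \emph{onto} $\fm(N_k)^{(i)}$, so given $x=(x_k)\in\varprojlim_k\fm(N_k)^{(i)}$ I can pick $y^{(k)}\in\fm(G)^{(i)}$ with $\pi_k(y^{(k)})=x_k$; then $x-y^{(k)}\in\ker\pi_k=\widehat{\Gamma}_k\fm(G)$, so $y^{(k)}\to x$ and $x\in\overline{\fm(G)^{(i)}}$. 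Combining this with the previous paragraph gives $\ker\fm(p)=\overline{\fm(G)^{(i)}}$, so $\fm(p)$ descends to a continuous bijective morphism $\fm(G)/\overline{\fm(G)^{(i)}}\isom\fm(G/G^{(i)})$. Finally, both sides are complete and separated (the source because we quotient the complete, separated algebra $\fm(G)$ by a closed ideal, cf.\ the remark following \eqref{eq:closureBracket}), and the tower identifications show that this map induces isomorphisms on all filtration quotients; hence it is an isomorphism of filtered Lie algebras, as required.
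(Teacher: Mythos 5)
The paper does not prove this statement itself\,---\,it imports it from \cite{Papadima-Suciu04} (Theorem 3.5 there)\,---\,so there is no internal proof to measure you against; your argument is a correct, self-contained reconstruction, and it follows the same strategy as the original: work with the tower of nilpotent quotients $N_k=G/\Gamma_kG$, use exactness at each finite stage, and identify the closure with an inverse limit of derived subalgebras. The individual steps all check out. The identification $(G/G^{(i)})/\Gamma_k(G/G^{(i)})\cong N_k/N_k^{(i)}$ is elementary (though it is not actually contained in the proof of Proposition \ref{prop:derivedquo}, which only uses $G^{(i)}\subseteq\Gamma_{2^i}G$). Surjections of Lie algebras carry derived subalgebras \emph{onto} derived subalgebras, which simultaneously gives the Mittag--Leffler property of the system $\{\fm(N_k)^{(i)}\}$ and the approximation argument for $\varprojlim_k\fm(N_k)^{(i)}\subseteq\overline{\fm(G)^{(i)}}$; and $\varprojlim_k\fm(N_k)^{(i)}=\bigcap_k\pi_k^{-1}(\fm(N_k)^{(i)})$ is closed because each $\ker\pi_k=\widehat{\Gamma}_k\fm(G)$ is open, so the two inclusions do combine to give $\ker\fm(p)=\overline{\fm(G)^{(i)}}$. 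The one input you use without justification is the finite-level isomorphism $\fm(N_k/N_k^{(i)})\cong\fm(N_k)/\fm(N_k)^{(i)}$: this rests on the exactness of the Malcev completion for finitely generated nilpotent groups and on the fact that, for a simply-connected nilpotent Lie group, derived subgroups correspond under $\exp$ to derived subalgebras. That is standard, but \S\ref{subsec:nilpotent groups} does not state it, so it needs an explicit reference rather than a pointer to material in this paper. Your observation that the argument must be run at the level of $\fm$ rather than $\gr$\,---\,since passing to associated graded does not commute with quotienting by a derived ideal\,---\,is exactly right, and is consistent with the fact that $\Psi_G^{(i)}$ in Theorem \ref{thm:chenlieiso} is only an epimorphism in general.
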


\subsection{Chen Lie algebras}
\label{subsec:chen lie}
As before, let $G$ be a finitely generated group. 
For each $i\ge 2$, the \textit{$i$-th Chen Lie algebra}\/ of $G$ is defined to 
be the associated graded Lie algebra of the corresponding solvable quotient, 
$\gr(G/G^{(i)};\k)$.  
Clearly, this construction is functorial. 

The quotient map, $q_i\colon G\surj G/G^{(i)}$, induces a surjective 
morphism $\gr(q_i)$ between associated graded Lie algebras
  $\gr_k(G;\k)$ and  $\gr_k(G/G^{(i)};\k)$.  Plainly,  
this morphism is the canonical identification in degree $1$.  
In fact, the map  $\gr(q_i)$ is an isomorphism for each $k\leq 2^i-1$, see \cite{SW-holo}.

We now specialize to the case when $i=2$, 
originally studied by K.-T. Chen in \cite{Chen51}. 
The {\em Chen ranks}\/ of $G$ are defined as 
$\theta_k(G):=\dim_{\k} (\gr_k(G/G^{\prime\prime};\k))$. 
For free groups, Chen showed that
$\theta_k(F_n)=(k-1)\binom{n+k-2}{k}$ 
for all $k\ge 2$.
By analogy, let us define the {\em holonomy Chen ranks}\/ of $G$ as 
$\bar\theta_k(G)=\dim_{\k} (\h/\h'')_k$, where $\h=\h(G;\k)$.  
It is readily seen that $\bar\theta_k(G)\ge \theta_k(G)$, 
with equality for $k\le 2$. 

\subsection{Chen Lie algebras and formality}
\label{subsec:chen formal}
We are now ready to state and prove the main result 
of this section, which (together with the first corollary following it) 
proves Theorem \ref{thm:chenlie intro} from the Introduction.

\begin{theorem} 
\label{thm:chenlieiso}
Let $G$ be a finitely generated group. 
For each $i\ge 2$, the quotient map $q_i\colon G\surj G/G^{(i)}$ induces a 
natural epimorphism of graded $\k$-Lie algebras, 
\[
\xymatrix{\Psi_G^{(i)}\colon \gr(G;\k)/\gr(G;\k)^{(i)} \ar@{->>}[r]
& \gr(G/G^{(i)};\k)}.
\] 
Moreover, if $G$ is a filtered-formal group, then $\Psi_G^{(i)}$ is an isomorphism
and the solvable quotient $G/G^{(i)}$ is filtered-formal.
\end{theorem}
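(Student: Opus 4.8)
The plan is to first build the map $\Psi_G^{(i)}$ and prove surjectivity with no formality assumption, and then, under the filtered-formality hypothesis, to compute the Malcev Lie algebra of $G/G^{(i)}$ explicitly and read off both conclusions.

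For the construction, I would start from the surjection $\gr(q_i)\colon \gr(G;\k)\surj \gr(G/G^{(i)};\k)$ induced by $q_i$. By the isomorphism \eqref{eq:gammagr}, applied to the normal subgroup $H=G^{(i)}$, its kernel is $\gr^{\widetilde{\Gamma}}(G^{(i)})$, which I identify with the graded Lie subalgebra of $\gr(G;\k)$ consisting of those classes admitting a representative in $G^{(i)}$. The key point is the inclusion $\gr(G;\k)^{(i)}\subseteq \gr^{\widetilde{\Gamma}}(G^{(i)})$. I would prove by induction on $j$ that every element of $\gr(G;\k)^{(j)}$ has a representative in $G^{(j)}$: a generator $[\bar x,\bar y]$ of $\gr(G;\k)^{(j)}=[\gr(G;\k)^{(j-1)},\gr(G;\k)^{(j-1)}]$, with $\bar x,\bar y$ represented by $x,y\in G^{(j-1)}$, is represented by the group commutator $[x,y]\in [G^{(j-1)},G^{(j-1)}]=G^{(j)}$. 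Hence $\gr(G;\k)^{(i)}$ lies in the kernel of $\gr(q_i)$, so $\gr(q_i)$ factors through $\gr(G;\k)/\gr(G;\k)^{(i)}$, yielding the epimorphism $\Psi_G^{(i)}$; its naturality is inherited from that of $\gr(q_i)$ and of the derived series.

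Now assume $G$ is filtered-formal, and write $\g=\gr(G;\k)$ with homogeneous presentation $\g=\Lie(V)/\fr$, where $V=H_1(G;\k)$. By Proposition \ref{prop:ffcrit} and formula \eqref{eq:fmgk}, filtered formality gives $\fm(G;\k)=\widehat{\Lie}(V)/\overline{\fr}$. Combining this with Theorem \ref{thm:chenLieAlgebra}, which supplies the filtered isomorphism $\fm(G/G^{(i)})\cong \fm(G)/\overline{\fm(G)^{(i)}}$, and pulling the $i$-th derived ideal back along $\widehat{\Lie}(V)\surj \fm(G)$, I obtain $\fm(G/G^{(i)})\cong \widehat{\Lie}(V)/\overline{\widehat{\Lie}(V)^{(i)}+\overline{\fr}}$. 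The crux is the inclusion $\widehat{\Lie}(V)^{(i)}\subseteq \overline{\Lie(V)^{(i)}}$, which I would establish by induction from the continuity of the Lie bracket (compare \eqref{eq:closureBracket}): since $\Lie(V)^{(j-1)}$ is a homogeneous ideal, its closure equals $\prod_k \Lie(V)^{(j-1)}_k$, and a bracket of two such elements lands, by homogeneity, in $\prod_k \Lie(V)^{(j)}_k=\overline{\Lie(V)^{(j)}}$. Together with the elementary identities $\overline{A+\overline{B}}=\overline{A+B}$ for closures, this yields $\overline{\widehat{\Lie}(V)^{(i)}+\overline{\fr}}=\overline{\Lie(V)^{(i)}+\fr}$, whence $\fm(G/G^{(i)})\cong \widehat{\Lie}(V)/\overline{\fr+\Lie(V)^{(i)}}$. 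As $\fr+\Lie(V)^{(i)}$ is homogeneous, Lemma \ref{lem:presbar} identifies the right-hand side with the degree completion of $\Lie(V)/(\fr+\Lie(V)^{(i)})$, which by Lemma \ref{lem:DerivedLieIso} is precisely $\widehat{\g/\g^{(i)}}$.

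Finally I would extract the two assertions. The filtered isomorphism $\fm(G/G^{(i)})\cong \widehat{\g/\g^{(i)}}$ with $\g/\g^{(i)}$ graded shows, via Lemma \ref{lem:filtiso}, that $G/G^{(i)}$ is filtered-formal. Passing to associated graded Lie algebras and invoking Quillen's isomorphism \eqref{eq:quillen} gives $\gr(G/G^{(i)};\k)\cong \g/\g^{(i)}=\gr(G;\k)/\gr(G;\k)^{(i)}$; since $G$ is finitely generated, $\g$ has finite-dimensional graded pieces, so $\Psi_G^{(i)}$ is a graded epimorphism between graded vector spaces of equal dimension in each degree, hence an isomorphism. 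I expect the main obstacle to be the bookkeeping with closures in the third paragraph—specifically verifying that the derived ideal of the completion and the closure of the derived ideal of $\Lie(V)$ agree—since this is the one step where the interaction between the purely algebraic derived series and the filtration topology is genuinely used; the remaining steps are formal consequences of results already established.
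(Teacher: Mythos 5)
Your argument is correct, and while the first half matches the paper, the second half takes a genuinely different route. For the construction of $\Psi_G^{(i)}$ you prove the key inclusion $\gr(G;\k)^{(i)}\subseteq \widetilde{\gr}(G^{(i)};\k)$ directly by induction on the derived series (just be careful that elements of $\gr(G;\k)$ are $\k$-linear combinations of classes of group elements, so the induction should be phrased in terms of spans of brackets of such classes); the paper instead quotes Labute's Proposition~10 for exactly this inclusion, so the two are interchangeable. For the formality statement, the paper maps $\fg/\fg^{(i)}$ into $\fm/\overline{\fm^{(i)}}$, compares the resulting pair of short exact sequences, establishes $\widetilde{\gr}\big(\overline{\fm^{(i)}}\big)=\fg^{(i)}$ from \eqref{eq:closureBracket} and \eqref{eq:subLiecompletion}, and concludes via Lemma \ref{lem:grfilt}; you instead compute $\fm(G/G^{(i)};\k)$ as the explicit quotient $\widehat{\Lie}(V)/\overline{\fr+\Lie(V)^{(i)}}$ of the completed free Lie algebra and identify it with $\widehat{\fg/\fg^{(i)}}$ via Lemmas \ref{lem:presbar} and \ref{lem:DerivedLieIso}, finishing with a dimension count for $\Psi_G^{(i)}$ (legitimate, since $\fg/\fg^{(i)}$ has finite-dimensional graded pieces because $G$ is finitely generated). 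The essential technical input is the same in both versions---the derived series commutes with completion up to closure, which for you is the containment $\widehat{\Lie}(V)^{(i)}\subseteq\overline{\Lie(V)^{(i)}}$---but your presentation-based packaging yields the explicit homogeneous presentation of $\fm(G/G^{(i)};\k)$ as a byproduct, at the cost of some closure bookkeeping you should spell out: that the image of $\widehat{\Lie}(V)^{(i)}$ in $\fm(G;\k)$ is all of $\fm(G;\k)^{(i)}$, that closures in the quotient topology pull back to closures of preimages, and that $\overline{A+\overline{B}}=\overline{A+B}$ for subspaces in this linear topology.
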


\begin{proof}
The map $q_i\colon G\surj G/G^{(i)}$ induces a 
natural epimorphism $\gr(q_i)$ of graded $\k$-Lie algebras 
$\gr(G;\k)$ and $\gr(G/G^{(i)};\k)$.
By Proposition \ref{prop:lazard}, this epimorphism factors   
through an isomorphism, 
$\gr(G;\k)/\widetilde{\gr} (G^{(i)};\k) \isom \gr(G/G^{(i)};\k)$, 
where $\widetilde{\gr}$ denotes the graded Lie algebra 
associated with the filtration 
$\widetilde{\Gamma}_k G^{(i)}= \Gamma_k G \cap G^{(i)}$. 

On the other hand, 
as shown by Labute in \cite[Prop.~10]{Labute08}, the 
Lie ideal $\gr(G;\k)^{(i)}$ is contained in $\widetilde{\gr} (G^{(i)};\k)$.  
Therefore, the map $\gr(q_i)$ factors through the claimed epimorphism 
$\Psi_G^{(i)}$, as indicated in the following commuting diagram,
\begin{equation}
\label{eq:psigi}
\xymatrixcolsep{32pt}
\xymatrixrowsep{20pt}
\xymatrix{
\gr(G;\k) \ar@{->>}[r] \ar@{->>}_{\gr(q_i)}[dr]
& \gr(G;\k)/\gr(G;\k)^{(i)}   \ar@{->>}[r] \ar@{->>}[d]^{\Psi_G^{(i)}}
& \gr(G;\k)/\widetilde{\gr}(G^{(i)};\k)  \ar^{\cong}[dl] \\
& \gr(G/G^{(i)};\k) \, . &  \\
}
\end{equation}

Suppose now that $G$ is filtered-formal, and set 
$\fm=\fm(G;\k)$ and $\fg=\gr(G;\k)$. We may identify $\widehat{\fg} \cong \fm$.
Let $\fg\inj \widehat{\fg}$ be the inclusion into 
the completion. Passing to solvable quotients, we obtain a morphism 
of filtered Lie algebras, $\varphi^{(i)}\colon \fg/\fg^{(i)}\to \fm/\overline{\fm^{(i)}}$.
Passing to the associated graded Lie algebras, we obtain the following 
commuting diagram:
\begin{equation}
\label{eq:mapgr}
\begin{gathered}
\xymatrixcolsep{20pt}
\xymatrixrowsep{20pt}
\xymatrix{\fg/\fg^{(i)} \ar[d]^(.42){\gr(\varphi^{(i)})}
\ar[r]^(.4){\Psi_G^{(i)}}&  \gr(G/G^{(i)};\k) \ar[d]^(.45){\cong} \\
\gr(\fm/\overline{\fm^{(i)}})  \ar[r]^(.42){\cong}& \gr(\fm(G/G^{(i)};\k) ).
}
\end{gathered}
\end{equation}

All the graded Lie algebras in this diagram are generated in degree $1$, and 
all the morphisms induce the identity in this degree. Therefore, the diagram 
is commutative.  Moreover, the right vertical arrow from \eqref{eq:mapgr} 
is an isomorphism by Quillen's isomorphism \eqref{eq:quillen}, while the lower 
horizontal arrow is an isomorphism by Theorem \ref{thm:chenLieAlgebra}.

Recall that, by assumption, $\fm=\widehat{\fg}$; therefore, the inclusion  
of filtered Lie algebras $\fg\inj \widehat{\fg}$ induces a morphism between 
the following two exact sequences, 
\begin{equation}
\label{eq:gradediso}
\xymatrixcolsep{18pt}
\xymatrixrowsep{16pt}
\begin{gathered}
\xymatrix{ 0 \ar[r] & \widetilde{\gr}(\overline{\fm^{(i)}}) \ar[r]  
&\gr(\fm)\ar[r] & \gr(\fm)/\widetilde{\gr}(\overline{\fm^{(i)}}) \ar[r] &0 
\phantom{\,.}\\
0\ar[r] &\fg^{(i)}\ar[r]\ar@{.>}[u]&\fg\ar[r]\ar[u]^{\cong}&\fg/\fg^{(i)}
\ar[r]\ar@{.>}[u] &0 \,.
}
\end{gathered}
\end{equation}
Here $\widetilde{\gr}$ means taking the associated graded 
Lie algebra corresponding to the induced filtration. 
Using formulas \eqref{eq:closureBracket} and \eqref{eq:subLiecompletion}, 
it can be shown that 
$\widetilde{\gr}(\overline{\fm^{(i)}})= \fg^{(i)}$. Therefore, the morphism
$\fg/\fg^{(i)}\to \gr(\fm)/\widetilde{\gr}(\overline{\fm^{(i)}})$ is an isomorphism.
We also know that $\gr(\fm/\overline{\fm^{(i)}})=
\gr(\fm)/\widetilde{\gr}(\overline{\fm^{(i)}})$.
Hence, the map $\gr(\varphi^{(i)})$ is an isomorphism, and so, 
by \eqref{eq:mapgr}, the map $\Psi_G^{(i)}$ is an isomorphism, too. 

By Lemma \ref{lem:grfilt}, the map $\varphi^{(i)}$ induces an 
isomorphism of complete, filtered Lie algebras between the 
degree completion of $\fg/\fg^{(i)}$ and $\fm/\overline{\fm^{(i)}}$.
As shown above, $\Psi_G^{(i)}$ is an isomorphism; 
hence, its degree completion is also an isomorphism. 
Composing with the isomorphism from Theorem \ref{thm:chenLieAlgebra}, 
we obtain  an isomorphism between the degree 
completion $\widehat{\gr}(G/G^{(i)};\k)$ and the Malcev Lie 
algebra $\fm(G/G^{(i)};\k)$.  This shows that the solvable 
quotient $G/G^{(i)}$ is filtered-formal.
\end{proof}

\begin{remark}
\label{rem:labute fab}
As shown in \cite[\S 3]{Labute08}, the analogue of Theorem \ref{thm:chenlieiso} 
does not hold if the ground field $\k$ has characteristic $p>0$. 
More precisely, there are pro-$p$ groups $G$ for which the 
morphisms $\Psi_G^{(i)}$ ($i\ge 2$) are not isomorphisms. 
\end{remark}
 
Returning now to the setup from Lemma \ref{lem:holoepi}, let us 
compose the canonical projection $\gr(q_i)\colon$ 
$\gr(G;\k)\surj \gr(G/G^{(i)};\k)$ with the epimorphism 
$\Phi_G\colon \fh(G;\k)\surj \gr(G;\k)$.  We obtain in 
this fashion an epimorphism $\fh(G;\k)\surj \gr(G/G^{(i)};\k)$, 
which fits into the following commuting diagram (we will 
suppress the coefficient field $\k$):
\begin{equation}
\label{eq:cd}
\begin{tikzpicture}[baseline=(current  bounding  box.center)]
\matrix (m) [matrix of math nodes, 
row sep={2em,between origins}, 
column sep={5em,between origins}]{
 & \fh(G) && \gr(G)   \\
\fh(G/G^{(i)}) && \gr(G/G^{(i)}) &  \\ 
 & \fh(G)/\fh(G)^{(i)}&   &\gr(G)/\gr(G)^{(i)}\,. \\ };
\path[->>]
(m-1-2) edge node[above]{$\Phi_G$} (m-1-4) 
edge (m-2-3) edge  (m-2-1)
(m-2-1) edge (m-3-2)
(m-3-2) edge (m-2-3) edge (m-3-4)
(m-1-4) edge (m-2-3) edge (m-3-4)
(m-3-4) edge (m-2-3)
;
\end{tikzpicture} 
\end{equation}

Putting things together, we obtain the following corollary, which 
recasts Theorem 4.2 from \cite{Papadima-Suciu04} in a setting which is both 
functorial, and holds in wider generality. This corollary provides a way to 
detect non-$1$-formality of groups. 

\begin{corollary}
\label{cor:chenlie bis}
For each $i\geq 2$, there is a natural epimorphism 
of graded $\k$-Lie algebras, 
$\Phi_G^{(i)}\colon$ $ \fh(G;\k)/\fh(G;\k)^{(i)} \surj 
\gr(G/G^{(i)};\k)$. 
Moreover, if $G$ is $1$-formal, then $\Phi_G^{(i)}$ is an isomorphism. 
\end{corollary}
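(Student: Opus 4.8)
Looking at Corollary \ref{cor:chenlie bis}, I need to prove two things: first, the existence of the natural epimorphism $\Phi_G^{(i)}\colon \fh(G;\k)/\fh(G;\k)^{(i)} \surj \gr(G/G^{(i)};\k)$, and second, that this map is an isomorphism under the $1$-formality hypothesis. Let me think through how I would establish each.

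The plan is to factor the desired map as a composition of two epimorphisms already at our disposal: the holonomy-to-associated-graded comparison map $\Phi_G$ from Lemma \ref{lem:holoepi}, and the map $\Psi_G^{(i)}$ from Theorem \ref{thm:chenlieiso}. To construct $\Phi_G^{(i)}$, I first observe that $\Phi_G\colon \fh(G;\k)\surj \gr(G;\k)$ is a surjective morphism of graded Lie algebras. Any Lie algebra homomorphism carries the derived series into the derived series, and a surjection carries it onto the derived series; hence $\Phi_G(\fh(G;\k)^{(i)})=\gr(G;\k)^{(i)}$. Consequently $\Phi_G$ descends to an epimorphism of graded Lie algebras $\bar\Phi_G\colon \fh(G;\k)/\fh(G;\k)^{(i)}\surj \gr(G;\k)/\gr(G;\k)^{(i)}$. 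Composing with the epimorphism $\Psi_G^{(i)}$ then yields $\Phi_G^{(i)}:=\Psi_G^{(i)}\circ\bar\Phi_G$, which is an epimorphism as a composite of epimorphisms. This is exactly the route encoded in the commuting diagram \eqref{eq:cd}. Naturality of $\Phi_G^{(i)}$ is inherited from the naturality of $\Phi_G$ (Lemma \ref{lem:holoepi}), of $\Psi_G^{(i)}$ (Theorem \ref{thm:chenlieiso}), and of the derived-series-quotient construction.

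For the second assertion, suppose $G$ is $1$-formal. By Proposition \ref{prop:qwformal}, $G$ is then both graded-formal and filtered-formal. Graded formality means precisely that $\Phi_G$ is an isomorphism (Definition \ref{def:graded-1-formal}); in particular $\ker\Phi_G=0\subseteq \fh(G;\k)^{(i)}$, so the induced map $\bar\Phi_G$ on derived quotients is an isomorphism as well. Filtered formality, via Theorem \ref{thm:chenlieiso}, guarantees that $\Psi_G^{(i)}$ is an isomorphism. Therefore $\Phi_G^{(i)}=\Psi_G^{(i)}\circ\bar\Phi_G$ is an isomorphism, being a composite of two isomorphisms.

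The only point requiring genuine care—and the natural candidate for the main obstacle—is verifying that $\Phi_G$ restricts to a surjection $\fh(G;\k)^{(i)}\surj \gr(G;\k)^{(i)}$, so that $\bar\Phi_G$ is well defined with the expected image and kernel. This follows by a one-line induction: with $\fg^{(0)}=\fg$ and $\fg^{(i+1)}=[\fg^{(i)},\fg^{(i)}]$, a surjective Lie map $\phi$ satisfies $\phi(\fg^{(i+1)})=[\phi(\fg^{(i)}),\phi(\fg^{(i)})]=[\fh^{(i)},\fh^{(i)}]=\fh^{(i+1)}$, using compatibility of $\phi$ with the bracket together with the inductive hypothesis. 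Once this is in place, the corollary is a formal consequence of Lemma \ref{lem:holoepi}, Theorem \ref{thm:chenlieiso}, and Proposition \ref{prop:qwformal}, with no further computation needed.
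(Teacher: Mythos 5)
Your proposal is correct and follows essentially the same route as the paper: the paper obtains $\Phi_G^{(i)}$ by composing $\Phi_G$ with $\gr(q_i)$ and observing, via the commuting diagram \eqref{eq:cd}, that this factors as $\Psi_G^{(i)}$ after $\bar\Phi_G$, then deduces the isomorphism statement from graded-formality (making $\bar\Phi_G$ an isomorphism) together with Theorem \ref{thm:chenlieiso} (making $\Psi_G^{(i)}$ an isomorphism). Your explicit verification that a surjective Lie algebra map carries the derived series onto the derived series is exactly the point the paper leaves implicit in the diagram.
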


\begin{corollary}
\label{cor:chen formal}
Suppose the group $G$ is $1$-formal. Then, for each for $i\geq 2$, 
the solvable quotient $G/G^{(i)}$ is graded-formal if and only if 
$\fh(G;\k)^{(i)}$ vanishes. 
\end{corollary}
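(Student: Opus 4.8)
The plan is to identify the comparison map of the solvable quotient $Q:=G/G^{(i)}$ with the map $\Phi_G^{(i)}$ from Corollary~\ref{cor:chenlie bis}, and then read off the result from the fact that $\Phi_G^{(i)}$ is already known to be an isomorphism. Throughout, fix $i\ge 2$. Two preliminary observations hold for any finitely generated group. First, by Proposition~\ref{prop:derivedquo}, the epimorphism $\fh(q_i)\colon \fh(G;\k)\to \fh(Q;\k)$ induced by $q_i\colon G\surj Q$ is an isomorphism, so I may identify $\fh(Q;\k)$ with $\fh(G;\k)$. Second, by Definition~\ref{def:graded-1-formal}, the group $Q$ is graded-formal precisely when its comparison map $\Phi_Q\colon \fh(Q;\k)\surj \gr(Q;\k)$ is an isomorphism.

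The crux is to show that, under the identification $\fh(Q;\k)=\fh(G;\k)$, the map $\Phi_Q$ equals the composite
\[
\fh(G;\k)\xrightarrow{\ \pi\ }\fh(G;\k)/\fh(G;\k)^{(i)}\xrightarrow{\ \Phi_G^{(i)}\ }\gr(Q;\k),
\]
with $\pi$ the canonical projection. Naturality of the comparison map (Lemma~\ref{lem:holoepi}) gives $\Phi_Q\circ\fh(q_i)=\gr(q_i)\circ\Phi_G$, which under $\fh(q_i)=\id$ becomes $\Phi_Q=\gr(q_i)\circ\Phi_G$. This is exactly the composite $\fh(G;\k)\surj\gr(Q;\k)$ whose factorization through $\pi$ defines $\Phi_G^{(i)}$; that it does factor through $\pi$ is the content of diagram~\eqref{eq:cd}, which rests on the containment $\gr(Q;\k)^{(i)}\subseteq\widetilde{\gr}(Q^{(i)};\k)=0$ (Labute's result, as used in the proof of Theorem~\ref{thm:chenlieiso}) together with $\Phi_G(\fh(G;\k)^{(i)})=\gr(G;\k)^{(i)}$. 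Hence $\Phi_Q=\Phi_G^{(i)}\circ\pi$.

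With this identification the conclusion is immediate. Since $G$ is $1$-formal, Corollary~\ref{cor:chenlie bis} guarantees that $\Phi_G^{(i)}$ is an isomorphism. Therefore $\Phi_Q=\Phi_G^{(i)}\circ\pi$ is an isomorphism if and only if $\pi$ is, and $\pi$ is an isomorphism if and only if its kernel $\fh(G;\k)^{(i)}$ vanishes. Combining this with the graded-formality criterion for $Q$ from the first paragraph shows that $G/G^{(i)}$ is graded-formal if and only if $\fh(G;\k)^{(i)}=0$, as claimed.

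I expect the single genuine difficulty to be the middle paragraph: pinning down that $\Phi_{G/G^{(i)}}$ is literally $\Phi_G^{(i)}\circ\pi$, and not merely some epimorphism sharing its source and target. Once the naturality square and diagram~\eqref{eq:cd} are invoked correctly, the remainder is a formal deduction from the already-established isomorphism $\Phi_G^{(i)}$ and the definition of graded-formality.
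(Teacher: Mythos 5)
Your proposal is correct and follows essentially the same route as the paper: both rest on Proposition~\ref{prop:derivedquo} (to identify $\fh(G/G^{(i)};\k)$ with $\fh(G;\k)$), Corollary~\ref{cor:chenlie bis} (to know $\Phi_G^{(i)}$ is an isomorphism under $1$-formality), and the left portion of diagram~\eqref{eq:cd}. You merely spell out what the paper leaves implicit in the phrase ``the claim follows from the left square of diagram~\eqref{eq:cd},'' namely the identification $\Phi_{G/G^{(i)}}=\Phi_G^{(i)}\circ\pi$, and your justification of the factorization via $\gr(G/G^{(i)};\k)^{(i)}=0$ is a valid (and slightly more direct) variant of the paper's appeal to Labute's containment.
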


\begin{proof}
By Proposition \ref{prop:derivedquo}, the projection 
$q_i\colon G\to G/G^{(i)}$ induces an isomorphism 
$\fh(q_i)\colon \fh(G;\k)\rightarrow \fh(G/G^{(i)};\k)$. 
Since $G$ is $1$-formal,  
Corollary \ref{cor:chenlie bis} guarantees that the map  
$\Phi_G^{(i)} \colon \fh(G;\k)/\fh(G;\k)^{(i)} \to \gr(G/G^{(i)};\k)$ 
is an isomorphism. 
The claim follows from the left square of diagram \eqref{eq:cd}.
\end{proof}

\section{Torsion-free nilpotent groups}
\label{sect:nilp}

In this section we study the graded-formality and filtered-formality properties of a 
well-known class of groups: that of finitely generated, torsion-free 
nilpotent groups. In the process, we prove Theorem \ref{thm:nilp intro} 
from the Introduction.

\subsection{Nilpotent groups and Lie algebras}
\label{subsec:nilpotent groups}

We start by reviewing the construction of the Malcev Lie algebra of 
a finitely generated, torsion-free nilpotent group $G$  
(see \cite{Cenkl-Porter00, Lambe-Priddy82, Malcev51} for more details).
There is a refinement of the upper central series of such a group,  
$G=G_1 >  \cdots > G_n > G_{n+1}=1$, 
with each subgroup $G_i$ a normal subgroup of $G_{i+1}$, and 
each quotient $G_i/G_{i+1}$ an infinite cyclic group. 
(The integer $n$ is an invariant of the group, called the length of $G$.) 
Using this fact, we can choose a \textit{Malcev basis}\/ $\{u_1,\dots, u_n\}$ 
for $G$, which satisfies $G_i=\langle G_{i+1}, u_i\rangle$. Consequently, 
each element $u\in G$ can be  written  uniquely as 
$u_1^{a_1}u_2^{a_2}\cdots u_n^{a_n}$. 

Using this basis, the group $G$, as a set, can be identified with $\Z^n$ 
via the map sending $u_1^{a_1}\cdots u_n^{a_n}$ to $a=(a_1,\dots, a_n)$. 
The multiplication in $G$  takes the form
$u_1^{a_1}\cdots u_n^{a_n} \cdot u_1^{b_1}\cdots u_n^{b_n}=
u_1^{\rho_1(a,b)}\cdots u_n^{\rho_n(a,b)}$, 
where each map $\rho_i\colon \Z^n\times \Z^n \to \Z$ 
is a rational polynomial function.  This procedure identifies 
the group $G$ with  the group $(\Z^n,\rho)$, with multiplication the map 
$\rho=(\rho_1,\dots,\rho_n)\colon \Z^n\times \Z^n \to \Z^n$. 
Thus, we can define a simply-connected nilpotent Lie group 
$G\otimes \k=(\k^n,\rho)$ by extending the domain of $\rho$, 
which is called the {\em Malcev completion}\/ of $G$. 

The discrete group $G$ is a subgroup of the real Lie group $G\otimes \R$. 
The quotient space, $M=(G\otimes \R)/G$, is a compact manifold, called 
a {\em nilmanifold}.  As shown  in \cite{Malcev51}, the Lie 
algebra of $G\otimes \R$ 
is isomorphic to $\fm(G;\R)$. It is readily apparent that 
the nilmanifold $M$ is an Eilenberg--MacLane space of type $K(G,1)$.  
As shown by K.~Nomizu, the cohomology ring $H^{\hdot}(M,\R)$ is isomorphic 
to the cohomology ring of the Lie algebra $\fm(G;\R)$. 

The polynomial functions $\rho_i$ have the form 
$\rho_i(a,b)=a_i+b_i+\tau_i(a_1,\dots,a_{i-1},b_1,\dots,b_{i-1})$. 
Denote by $\sigma=(\sigma_1,\dots,\sigma_n)$ the quadratic part of $\rho$.
Then $\k^n$ can be given a Lie algebra structure, with bracket 
$[a,b]=\sigma(a,b)-\sigma(b,a)$. As shown in
\cite{Lambe-Priddy82}, this Lie algebra is isomorphic to  $\fm(G;\k)$.

The group $(\Z^n,\rho)$ has canonical basis $\{e_i\}_{i=1}^n$, where 
$e_i$ is the $i$-th standard basis vector. 
Then the Malcev Lie algebra $\fm(G;\k)=(\k^n,[\:,\:])$ 
has Lie bracket given by 
$[e_i,e_j]=\sum_{k=1}^ns_{i,j}^ke_k$, 
where $s_{i,j}^k=b_k(e_i,e_j)-b_k(e_j,e_i)$.

As is well-known, the Chevalley--Eilenberg complex $\bwedge^{\!\hdot}\big(\fm(G;\k)^*\big)$ is a 
minimal model for $M=K(G,1)$; see e.g.~\cite[Thm.~3.18]{FOT}.  
Clearly, this model is generated in degree $1$; thus, it is also a 
$1$-minimal model for $G$.  As shown in \cite{Hasegawa}, 
the nilmanifold $M$ is formal if and only if $M$ is a torus. 

\subsection{Nilpotent groups and filtered-formality}
\label{subsec:nilp}

Let $G$ be a finitely generated, torsion-free nilpotent group,
and let $\fm=\fm(G;\k)$ be its Malcev Lie algebra, as described above.   
Note that $\gr(\fm)=\k^n$ has the same basis $e_1,\dots, e_n$ as
$\fm$, but, as we shall see, the Lie bracket on $\gr(\fm)$ may be different. 
The Lie algebra $\fm$ (and thus, the group $G$) is filtered-formal 
if and only if $\fm \cong \widehat{\gr}(\fm)=\gr(\fm)$, as filtered Lie algebras. 
In general, though, this isomorphism need not preserve the chosen basis.  

\begin{example}
\label{ex:nstep}
For any finitely generated free group  $F$, the $k$-step, 
free nilpotent group $F/\Gamma_{k+1}F$ is filtered-formal.
Indeed,  $F$ is $1$-formal, and thus filtered-formal.  
Hence, by Theorem \ref{thm:formalityQuo}, each nilpotent 
quotient of $F$ is also filtered-formal. 
In fact, as shown in \cite[Cor.~2.14]{Massuyeau12}, 
$\fm(F/\Gamma_{k+1} F)\cong \mathbf{L}/(\Gamma_{k+1}\mathbf{L})$, 
where $\mathbf{L}=\Lie(F)$.
\end{example}

\begin{example}
\label{ex:3step}
Let $G$ be the $3$-step, rank $2$ free nilpotent group $F_2/\Gamma_4F_2$.
Identifying $G$ with $\Z^5$ as a set,
then the Malcev Lie algebra $\fm(G;\k)=\k^5$ has 
Lie brackets given by $[e_1,e_2]=e_3-e_4/2-e_5$, $[e_1,e_3]=e_4$, 
$[e_2,e_3]=e_5$, and $[e_i,e_j]=0$, otherwise (see \cite{Lambe-Priddy82, Cenkl-Porter00}).
It is readily checked that the identity map of $\k^5$ 
is not a Lie algebra isomorphism
between $\fm=\fm(G;\k)$ and $\gr(\fm)$.
Moreover, the differential of the $1$-minimal model $\cM(G)=\bwedge^{\!\hdot}(\fm^*)$
is not homogeneous on the Hirsch weights, although $\fm$ (and $G$) 
are filtered-formal. 
\end{example}

Now consider a finite-dimensional, nilpotent Lie algebra $\fm$ 
over a field $\k$ of characteristic $0$.  It is readily seen that the 
filtered-formality of such a Lie algebra coincides with the notions 
of `Carnot', `naturally graded', `homogeneous' and `quasi-cyclic' which 
appear in \cite{Cornulier14, DL, Jo, Kasuya, Leger63}.  

The question whether the Carnot property descends from $\k=\R$ to $\Q$ was first 
raised by Johnson in \cite{Jo}.  A positive answer was given 
in \cite[Cor.~4.2]{DL}, but, as pointed out by Cornulier in 
\cite[Rem.~3.15]{Cornulier14}, the proof of that result had a gap. 
The following proposition gives a complete solution to Johnson's question.  

\begin{prop}[\cite{Cornulier14}]
\label{prop:carnot}
Let $\fm$ be a finite-dimensional, nilpotent Lie algebra over a field 
$\k$ of characteristic $0$, and let $\k\subset \K$ be a field extension.  
Then $\fm$ is Carnot over $\k$ if and only if $\fm\otimes_{\k} \K$ is Carnot over $\K$.
\end{prop}

Our more general Theorem \ref{thm:ffdescent} allows us to recover 
Proposition \ref{prop:carnot} as an immediate corollary. 

\subsection{Torsion-free nilpotent groups and filtered-formality}
\label{subsec:nilp ff}

We now study in more detail the filtered-formality properties of 
torsion-free nilpotent groups.  We start by singling out a rather 
large class of groups which enjoy this property. 

\begin{theorem}
\label{thm:step2nilpotent}
Let $G$ be a finitely generated, torsion-free, $2$-step nilpotent group. 
If $G_{\ab}$ is torsion-free, then $G$ is filtered-formal. 
\end{theorem}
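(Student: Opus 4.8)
The plan is to pass to the Malcev Lie algebra and exploit the fact that $2$-step nilpotency forces the relevant Lie algebra to coincide with its own associated graded. Write $\fm=\fm(G;\k)$ and $\fg=\gr(G;\k)$. By Quillen's isomorphism \eqref{eq:quillen} we have $\gr(\fm)\cong\fg$, so by Definition \ref{def:filt-1-formal} it is enough to show that $\fm$ is a filtered-formal Lie algebra, i.e.\ that $\fm\cong\widehat{\gr}(\fm)$ as filtered Lie algebras inducing the identity on associated graded. First I would record that $\fm$ is a finite-dimensional $2$-step nilpotent Lie algebra: since $\Gamma_3 G=1$, the graded Lie algebra $\fg$ is concentrated in degrees $1$ and $2$, and $\fm$ has the same nilpotency class as $\gr(\fm)\cong\fg$; in particular $\widehat{\gr}(\fm)=\gr(\fm)$, as all the spaces in sight are finite-dimensional.

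The heart of the matter is the following elementary structural fact, which I would isolate as the key step: every $2$-step nilpotent Lie algebra $\fm$ is isomorphic, as a filtered Lie algebra, to its associated graded. Indeed, $2$-step nilpotency gives $[\fm,[\fm,\fm]]=\Gamma_3\fm=0$, so $[\fm,\fm]$ is central and every bracket lands in $[\fm,\fm]$. Choosing a vector-space section $s\colon\fm/[\fm,\fm]\to\fm$ yields a decomposition $\fm=s(\fm/[\fm,\fm])\oplus[\fm,\fm]$; assigning weight $1$ to the first summand and weight $2$ to the second makes the bracket homogeneous, since a bracket of two weight-$1$ elements lies in $[\fm,\fm]$ (weight $2$) while any bracket involving the weight-$2$ part vanishes by centrality. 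This produces a graded Lie algebra $\fh$ together with a filtration-preserving isomorphism $\fh\isom\fm$ whose associated graded map is the identity. Equivalently $\fm\cong\widehat{\fh}$, and Lemma \ref{lem:filtiso} then certifies that $\fm$ is filtered-formal; combined with the first paragraph, $G$ is filtered-formal.

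The torsion-free hypotheses on $G$ and $G_{\ab}$ would enter to make this splitting explicit via the Malcev-basis description of \S\ref{subsec:nilpotent groups}. Since $G$ is torsion-free nilpotent, $\Gamma_2 G$ is a finitely generated torsion-free abelian group, and torsion-freeness of $G_{\ab}$ lets me select a Malcev basis $u_1,\dots,u_n$ in which $u_1,\dots,u_r$ project to a basis of $G_{\ab}$ and $u_{r+1},\dots,u_n$ give a basis of $\Gamma_2 G$; note this is exactly the point where torsion-freeness of $G_{\ab}$ is used, as it guarantees that $\Gamma_2 G$ occurs as a term of a Malcev chain with free abelian complementary quotient. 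In the ensuing identification $\fm=(\k^n,[\,,\,])$ the vectors $e_1,\dots,e_r$ then span a complement of $[\fm,\fm]$ while $e_{r+1},\dots,e_n$ span $[\fm,\fm]=\gr_2(\fm)$, so the structure constants $s^k_{ij}$ vanish unless both inputs are among $e_1,\dots,e_r$. Thus $\fm$ is \emph{manifestly} graded, realizing the abstract splitting on the nose and giving $\fm\cong\widehat{\gr}(G;\k)$ as filtered Lie algebras.

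The only genuine verification in all of this — and hence the main obstacle — is the homogeneity of the bracket after the splitting, namely that $[\fm,[\fm,\fm]]=0$ so that the weight-$2$ summand is central and absorbs all brackets. This is precisely the input of $2$-step nilpotency; the remaining ingredients (finite-dimensionality, triviality of the completion, and the passage back to the group through Quillen's theorem and Definition \ref{def:filt-1-formal}) are formal.
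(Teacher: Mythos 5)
Your proof is correct, but it takes a genuinely different route from the paper's. The paper argues concretely: it invokes the Igusa--Orr presentation of $G$ (this is precisely where the torsion-freeness of $G_{\ab}$ enters, to get generators $x_1,\dots,x_n$ for $G/\Gamma_2G\cong\Z^n$ and $y_1,\dots,y_m$ for $\Gamma_2G\cong\Z^m$), computes the multiplication polynomials $\rho_i$ for the resulting Malcev coordinates, and reads off from the Lambe--Priddy formula that the bracket of $\fm(G;\k)$ is already homogeneous, hence visibly equal to that of $\gr(G;\k)$. You instead isolate the purely Lie-theoretic statement that \emph{every} finite-dimensional $2$-step nilpotent Lie algebra is isomorphic, as a filtered Lie algebra, to its associated graded (the ``Carnot'' property): choose any vector-space complement $V$ of $[\fm,\fm]$, note that centrality of $[\fm,\fm]$ makes the bracket homogeneous for the weights $1$ on $V$ and $2$ on $[\fm,\fm]$, and check (as you implicitly do) that the resulting isomorphism $\gr(\fm)\isom\fm$ does not depend on the choice of lifts and induces the identity on associated graded; Lemma \ref{lem:filtiso} then finishes the job. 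Your abstract argument is in fact \emph{more general}: it nowhere uses that $G_{\ab}$ is torsion-free, so it shows that any finitely generated, $2$-step nilpotent group is filtered-formal, whereas in your third paragraph the torsion-freeness only serves to realize the splitting by an explicit Malcev basis, which is decorative rather than necessary. What the paper's computation buys in exchange is explicit structure constants for $\fm(G;\k)$ and $\gr(G;\k)$ in terms of the presentation, which is useful elsewhere in the text; what your argument buys is brevity, independence from the presentation, and the observation that the hypothesis on $G_{\ab}$ can be dropped.
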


\begin{proof}
The lower central series of our group takes the form 
$G=\Gamma_1G > \Gamma_2G > \Gamma_3G=1$. 
Let $\{x_1,\dots,x_n\}$ be a basis for $G/\Gamma_2G=\Z^n$,  
and let $\{y_1,\dots,y_m\}$ be a basis for $\Gamma_2G=\Z^m$.
Then, as shown for instance by Igusa and Orr in \cite[Lem.~6.1]{Igusa-Orr01}, 
the group $G$ has presentation
\begin{equation}
\label{eq:2step}
G=\Bigl\langle x_1,\dots,x_n,y_1,\dots,y_m \:\Big| \: [x_i,x_j]=
\prod\nolimits_{k=1}^my_k^{c^k_{i,j}}, \, 
[y_i,y_j]=1, \text{for  $i<j$}; [x_i,y_j]=1\Big. \Bigl\rangle.
\end{equation}

Let $a,b\in \Z^{n+m}$. A routine computation shows that 
$\rho_i(a,b)=a_i+b_i$ for $1\le i\le n$ and 
$\rho_{n+k}(a,b)=a_{n+k}+b_{n+k}-\sum_{j=1}^k\sum_{i=j+1}^n c^k_{j,i}a_ib_j$
for $1\le k\le m$.
Set $c^k_{j,i}=-c^k_{i,j}$ if $j>i$. It follows that the Malcev Lie 
algebra $\fm(G;\k)=(\k^{n+m},[~,~])$ has Lie bracket  
given on generators by $[e_i,e_j]= \sum_{k=1}^mc_{i,j}^k 
e_{n+k}$ for $1\leq i\ne j\leq n$, and zero otherwise. 

Turning now to the associated graded Lie algebra of our group, 
we have an additive decomposition, 
$\gr(G;\k)=\gr_1(G;\k)\oplus \gr_2(G;\k)=\k^n\oplus \k^m$, 
where the first factor has basis $\{e_1,\dots,e_n\}$, the second factor has 
basis $\{e_{n+1},\dots,e_{n+m} \}$, and the Lie bracket is given as above. 
Therefore, $\fm(G;\k)\cong \gr(G;\k)$, as filtered Lie algebras. 
Hence, $G$ is filtered-formal.
\end{proof}

It is known that all nilpotent Lie algebras of dimension $4$ or less are 
filtered-formal, see for instance \cite{Cornulier14}. In general, though, 
finitely generated, torsion-free nilpotent groups need not be filtered-formal. 
We illustrate this phenomenon with two examples: the first one extracted 
from the work of Cornulier \cite{Cornulier14}, and the second one adapted 
from the work of Lambe and Priddy \cite{Lambe-Priddy82}.  In both examples, 
the nilpotent Lie algebra $\fm$ in question may be 
realized as the Malcev Lie algebra of a finitely generated, 
torsion-free nilpotent group $G$.  

\begin{example}
\label{ex:Cornulier}
Let $\fm$ be the $5$-dimensional Lie algebra with non-zero 
Lie brackets given by $[e_1,e_3]=e_4$ and $[e_1,e_4]=[e_2,e_3]=e_5$. 
The center of $\fm$ is $1$-dimensional, generated by $e_5$, while the 
center of $\gr(\fm)$ is $2$-dimensional, generated by $e_2$ and $e_5$.
Therefore, $\fm\not\cong \gr(\fm)$, and so $\fm$ is not filtered-formal. 
It follows that the nilpotent group $G$ is not filtered-formal, either.
Using Theorem \ref{thm:filtmin}, it is readily checked that 
the $1$-minimal model  $\cM(G)=\bwedge^{\!\hdot}(\fm^*)$ 
does not have positive Hirsch weights; 
nevertheless, $\cM(G)$ has positive weights, given by the index 
of the chosen basis.  
\end{example}

\begin{example}
\label{ex:LPex7}
Let $\fm$ be the $7$-dimensional $\k$-Lie algebra with 
non-zero Lie brackets given on basis elements by $[e_2,e_3]=e_6$, 
$[e_2,e_4]=e_7$, $[e_2,e_5]=-e_7$, $[e_3,e_4]=e_7$, and
$[e_1,e_i]=e_{i+1}$ for $2\leq i\leq 6$.  
Then $\gr(\fm)$ has the same additive basis as $\fm$, with non-zero 
brackets given by $[e_1,e_i]=e_{i+1}$ for $2\leq i\leq 6$. Plainly, 
$\gr(\fm)$ is metabelian, (i.e., its derived subalgebra is abelian), 
while $\fm$ is not metabelian.
Thus, once again, $\fm\not\cong \gr(\fm)$, and so both $\fm$  
and $G$ are not filtered-formal.  In this case, though, we cannot 
use the indexing of the basis to put positive weights on 
$\cM(G)$.
\end{example}

\subsection{Filtered-formality and Koszulness}
\label{subsec: Nilpotent-graded}

Carlson and Toledo \cite{Carlson-Toledo95} classified finitely generated,
$1$-formal, nilpotent groups with first Betti number $5$ or less, while 
Plantiko \cite{Plantiko96} gave sufficient conditions for the associated 
graded Lie algebras of such groups to be non-quadratic. The following 
proposition follows from Theorem 4.1 in 
\cite{Plantiko96} and Lemma 2.4 in \cite{Carlson-Toledo95}.

\begin{prop}[\cite{Carlson-Toledo95, Plantiko96}]
\label{prop:non-gradedformal}
Let $G$ be a finitely generated, torsion-free, nilpotent group,  
and suppose there exists a non-zero decomposable element in the kernel 
of the cup product map $H^1(G;\k)\wedge H^1(G;\k)\to H^2(G;\k)$.  
Then $G$ is not graded-formal.
\end{prop}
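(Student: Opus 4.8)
The plan is to obtain a contradiction with graded-formality by exhibiting $\fh(G;\k)$ as an infinite-dimensional Lie algebra, while $\gr(G;\k)$ is finite-dimensional. Indeed, since $G$ is finitely generated and nilpotent, say of class $c$, the lower central series terminates at $\Gamma_{c+1}G=1$, so $\gr(G;\k)=\bigoplus_{k=1}^{c}\gr_k(G;\k)$ is a finite sum of finite-dimensional vector spaces. If $G$ were graded-formal, then by Definition \ref{def:graded-1-formal} the natural surjection $\Phi_G\colon \fh(G;\k)\surj \gr(G;\k)$ of Lemma \ref{lem:holoepi} would be an isomorphism, forcing $\fh(G;\k)$ to be finite-dimensional. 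Hence it suffices to construct a surjection of $\fh(G;\k)$ onto an infinite-dimensional Lie algebra.

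Write $V=H_1(G;\k)$, so that $V^*=H^1(G;\k)$, and let $\mu\colon V^*\wedge V^*\to H^2(G;\k)$ be the cup-product map. By Definition \ref{def:holo group}, $\fh(G;\k)=\Lie(V)/\langle R\rangle$, where $R=\im(\partial_G)=\im(\mu^*)$. For finite-dimensional spaces one has $\im(\mu^*)=(\ker\mu)^{\perp}$, the annihilator of $\ker\mu$ inside $\Lie_2(V)=V\wedge V$ under the natural pairing with $V^*\wedge V^*$; in particular $R^{\perp}=\ker\mu$. By hypothesis there is a nonzero decomposable element $a\wedge b\in\ker\mu$, with $a,b\in V^*$ linearly independent.

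First I would choose adapted coordinates: extend $a_1=a$, $a_2=b$ to a basis $a_1,\dots,a_n$ of $V^*$, and take the dual basis $x_1=x,\dots,x_n$ of $V$, so that $\langle a_i,x_j\rangle=\delta_{ij}$. Let $p\colon \Lie(V)\to \Lie(\k^2)$ be the Lie algebra epimorphism induced by the linear projection $V\to\langle x,y\rangle$ that sends $x_i\mapsto 0$ for every $i\ge 3$. The crucial step is to check that $p$ kills the relation ideal; as $p$ is a Lie algebra map, it is enough to verify $p(R)=0$. Given $r=\sum_{i<j}c_{ij}[x_i,x_j]\in R$, the pairing $\langle a\wedge b,\,r\rangle$ equals the single coefficient $c_{12}$, which vanishes because $a\wedge b\in\ker\mu=R^{\perp}$. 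On the other hand, $p$ annihilates every $x_i$ with $i\ge 3$, so $p(r)=c_{12}[x,y]=0$. Therefore $p$ descends to a surjection $\bar p\colon \fh(G;\k)\surj\Lie(\k^2)$.

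Since $a$ and $b$ are linearly independent, the images $x$ and $y$ are two distinct generators, so $\Lie(\k^2)$ is the free Lie algebra on two generators and is infinite-dimensional. Consequently $\fh(G;\k)$ is infinite-dimensional, which contradicts the finite-dimensionality forced by graded-formality; hence $G$ is not graded-formal. The main obstacle is the bookkeeping of the third paragraph, namely matching the pairing $\langle a\wedge b,\,r\rangle$ with the coefficient $c_{12}$ that alone survives the projection $p$. This identification is precisely what makes a \emph{decomposable} kernel element (rather than an arbitrary one) retract $\fh(G;\k)$ onto a free Lie algebra, and is the heart of the argument of Plantiko and Carlson--Toledo.
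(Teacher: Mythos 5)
Your proof is correct, and it is essentially the standard argument underlying the sources the paper cites: the paper itself gives no proof of this proposition, deferring to Plantiko's Theorem~4.1 and Carlson--Toledo's Lemma~2.4, whose content is exactly your observation that a nonzero decomposable element of $\ker\mu$ lets one project $\fh(G;\k)$ onto the (infinite-dimensional) free Lie algebra $\Lie(\k^2)$, contradicting the finite-dimensionality of $\gr(G;\k)$ for a finitely generated nilpotent group. The key identification $\im(\partial_G)=(\ker\mu)^{0}$ and the coefficient bookkeeping showing $p(r)=c_{12}[x,y]=0$ are carried out correctly (note that your argument does not even use the torsion-free hypothesis).
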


Here an element $u\in H^2(G;\k)$ is said to decomposable if
 $u=v\wedge w$ for some $v,w\in H^1(G;\k)$. 

\begin{example}
\label{ex:upper}
Let $U_n(\R)$ be the nilpotent Lie group of upper triangular matrices 
with $1$'s along the diagonal. The quotient $M=U_n(\R)/U_n(\Z)$ 
is a nilmanifold of dimension $N=n(n-1)/2$. The unipotent group 
$U_n(\Z)$ has canonical basis $\{u_{ij} \mid 1\leq i<j\leq n\}$, 
where $u_{ij}$ is the matrix obtained from the identity matrix by putting 
$1$ in position $(i,j)$. Moreover, $U_n(\Z)\cong (\Z^N,\rho)$, where 
$\rho_{ij}(a,b)=a_{ij}+b_{ij}+\sum_{i<k<j} a_{ik}b_{kj}$, see \cite{Lambe-Priddy82}. 
The unipotent group $U_n(\Z)$ is filtered-formal; nevertheless, Proposition 
\ref{prop:non-gradedformal} shows that this group is 
not graded-formal for $n\geq 3$.
\end{example}
 
\begin{prop}
\label{prop:koszul-formality}
Let $G$ be a finitely generated, torsion-free, nilpotent group, and  
suppose $G$ is filtered-formal.  Then $G$ is abelian if and only if 
the algebra $U(\gr(G;\k))$ is Koszul.
\end{prop}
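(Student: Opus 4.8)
The plan is to prove the two implications separately, the forward one being routine and the backward one resting on a Koszul-duality degree count.

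For the forward implication, I would observe that a finitely generated, torsion-free abelian group $G$ is isomorphic to $\Z^n$, so that $\Gamma_2 G=1$ and hence $\gr(G;\k)=H_1(G;\k)=\k^n$ is an abelian Lie algebra concentrated in degree $1$. Consequently $U(\gr(G;\k))=\Sym(\k^n)$ is a polynomial algebra, whose defining relations $x_ix_j-x_jx_i$ form a quadratic Gr\"obner basis; by Fr\"oberg's criterion recalled in \S\ref{subsec:koszul} (see \cite{Froberg97}), this algebra is Koszul.

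For the backward implication, set $A=U(\gr(G;\k))$ and $d_i=\dim_\k \gr_i(G;\k)$. Since $G$ is finitely generated and nilpotent, $\gr(G;\k)$ is a finite-dimensional graded Lie algebra, say of total dimension $N=\sum_i d_i$, with $d_i=0$ for $i$ larger than the nilpotency class. The Poincar\'e--Birkhoff--Witt formula \eqref{eq:PBW} gives $\Hilb(A,t)=\prod_{i\ge 1}(1-t^i)^{-d_i}$, and since $A$ is assumed Koszul, the Koszul duality formula \eqref{eq:kdual} yields
\[
\Hilb(A^!,t)=\Hilb(A,-t)^{-1}=\prod_{i\ge 1}\bigl(1-(-1)^i t^i\bigr)^{d_i},
\]
a polynomial whose leading term sits in degree $\sum_i i\,d_i$. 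On the other hand, the Chevalley--Eilenberg complex $A\otimes\bigwedge^\bullet \gr(G;\k)$ is a free resolution of the trivial module $\k$ over $A$ of length $N$; hence $\Ext^j_A(\k,\k)=0$ for $j>N$, and, using the identification $A^!\cong \Ext_A(\k,\k)$ for the Koszul algebra $A$ (consistent with Proposition \ref{prop:yoneda}), the Koszul dual $A^!$ is concentrated in degrees at most $N$. Comparing the two descriptions of $A^!$ forces $\sum_i i\,d_i\le N=\sum_i d_i$, that is, $\sum_i (i-1)d_i\le 0$; as every $d_i\ge 0$, this gives $d_i=0$ for all $i\ge 2$, so $\gr(G;\k)$ is abelian.

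It then remains to promote this to the group. Since $G$ is filtered-formal, $\fm(G;\k)\cong\widehat{\gr}(G;\k)=\gr(G;\k)$ is an abelian Lie algebra; invoking the field-independence of filtered formality (Proposition \ref{prop:field-filtered}) I may take $\k=\R$, so that $\fm(G;\R)$ is abelian. By Malcev's construction (\S\ref{subsec:nilpotent groups}), $G$ embeds in the simply connected nilpotent Lie group $G\otimes\R=\exp(\fm(G;\R))$, which is now the abelian group $\R^N$; hence the subgroup $G$ is abelian, as desired. The main obstacle is the homological degree bound in the third paragraph: one must be certain that $A^!$ vanishes above degree $N=\dim_\k\gr(G;\k)$, which I would secure from the length-$N$ Chevalley--Eilenberg resolution bounding the projective dimension of $\k$ over $A$. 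The elegant point is that the Hilbert-series polynomial $\Hilb(A^!,t)$ carries weight $\sum_i i\,d_i$ but degree at most $\sum_i d_i$, and these can be reconciled only when $\gr(G;\k)$ is supported entirely in degree $1$.
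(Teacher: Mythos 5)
Your proof is correct, but it takes a genuinely different route from the one in the paper. The paper's argument stays inside rational homotopy theory: Koszulness of $U=U(\gr(G;\k))$ forces $\gr(G;\k)$ to be quadratic, hence $G$ is graded-formal and (using the filtered-formality hypothesis) $1$-formal; Nomizu's theorem then identifies $H^*(M;\k)$ with $\Ext_U(\k,\k)\cong U^!$, which is Koszul, so the Papadima--Yuzvinsky criterion (Theorem \ref{thm:py}) upgrades $1$-formality of the nilmanifold $M$ to formality, and Hasegawa's theorem forces $M$ to be a torus. You replace all of this with a numerical argument: PBW together with the Koszul duality formula \eqref{eq:kdual} show that $\Hilb(U^!,t)=\prod_i\bigl(1-(-1)^it^i\bigr)^{d_i}$ is a polynomial of degree exactly $\sum_i i\,d_i$ (the leading coefficients are $\pm 1$, so there is no cancellation at the top), while the length-$N$ Chevalley--Eilenberg resolution bounds the projective dimension of $\k$ over $U$ by $N=\sum_i d_i$, so the diagonal Yoneda algebra $U^!$ vanishes above degree $N$; the inequality $\sum_i i\,d_i\le \sum_i d_i$ then kills $\gr_{\ge 2}(G;\k)$. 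This is more elementary and self-contained (no Nomizu, no Theorem \ref{thm:py}, no Hasegawa). It is also worth noting that your proof of the backward implication does not really use filtered-formality: once $\gr(G;\k)$ is concentrated in degree $1$, each $\Gamma_iG/\Gamma_{i+1}G$ with $i\ge 2$ is torsion, so $\Gamma_2G$ is a torsion subgroup of the torsion-free group $G$ and hence trivial; thus your argument proves the equivalence for an arbitrary finitely generated, torsion-free nilpotent group. (Your detour through the real Malcev completion is fine as well, and passing to $\k=\R$ is harmless since abelianness of $\gr(G;\k)$ is visibly independent of the characteristic-zero coefficient field.)
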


\begin{proof}
We only need to prove the non-trivial direction. 
If the algebra $U=U(\gr(G;\k))$ is Koszul, then the Lie algebra 
$\gr(G;\k)$ is quadratic, i.e., the group $G$ is graded-formal. 
Under the assumption that $G$ is filtered-formal, we 
then have that $G$ is $1$-formal.

Let $M$ be the nilmanifold with fundamental group $G$. Then 
$M$ is also $1$-formal.  By Nomizu's theorem, the cohomology 
ring $A=H^{\hdot}(M;\k)$ is isomorphic to the Yoneda algebra 
$\Ext_{U}^{\hdot}(\k,\k)$. On the other hand, since $U$ is Koszul, 
the Yoneda algebra is isomorphic to $U^!$, which is also Koszul.  
Hence, $A$ is a Koszul algebra.  As shown in 
\cite{Papadima-Yuzvinsky}, if $M$ is $1$-formal 
and if $A$ is Koszul, then $M$ is formal. 
By \cite{Hasegawa}, this happens if and only if $M$ is a torus.
This completes the proof.
\end{proof}

\begin{corollary}
\label{cor:koszul-ff}
Let $G$ be a finitely generated, torsion-free, $2$-step nilpotent group.
If $G_{\ab}$ is torsion-free, then $U(\gr(G;\k))$ is not Koszul.
\end{corollary}

\begin{example}
\label{ex:not koszul nil}
Let  $G=\langle x_1,x_2,x_3,x_4 \mid  
[x_1,x_3], [x_1,x_4], [x_2,x_3], [x_2,x_4], [x_1,x_2][x_3,x_4]\rangle$. 
The group $G$ is a $2$-step, commutator-relators nilpotent group. 
Hence, by the above corollary, the enveloping algebra $U(\fh(G;\k))$ 
is not Koszul.  In fact, $U(\fh(G;\k))^!$ is isomorphic to the quadratic 
algebra from Example \ref{ex:non-koszul}, which is not Koszul.
\end{example}
  
\section{Seifert fibered manifolds} 
\label{sect:seifert}

We conclude with an analysis of the fundamental groups of 
orientable Seifert manifolds from a rational homotopy viewpoint. 

\subsection{Riemann surfaces and Seifert fibered spaces}
\label{subsec:surf}
To start with, let $\Sigma_g$ be a closed, orientable surface of genus $g$. 
The fundamental group $\Pi_g=\pi_1(\Sigma_g)$ is a $1$-relator group, 
with generators $ x_1,y_1,\dots, x_g,y_g$ and a single relation, 
$[x_1,y_1]\cdots [x_g,y_g]=1$.  Since this group is trivial for 
$g=0$, we will assume for now that $g>0$. 
The cohomology algebra $A=H^{\hdot}(\Sigma_g;\k)$ is the quotient 
of  the exterior algebra on generators $a_1,b_1,\dots, a_{g},b_{g}$, 
in degree $1$ by the ideal $I$ generated by $a_ib_i-a_j b_j$, 
for $1\le i< j\le g$, together with $a_ia_j$, $b_ib_j$, $a_i b_j$, 
$a_jb_i$, for $1\le i< j\le g$.  It is readily seen that the generators 
of $I$ form a quadratic Gr\"{o}bner basis for this ideal; therefore, 
$A$ is a Koszul algebra. 

The Riemann surface $\Sigma_g$ is a compact K\"{a}hler manifold, 
and thus, a formal space.  It follows from Theorem \ref{thm:py} that the 
minimal model of $\Sigma_g$ is generated in degree one, i.e., 
$\cM(\Sigma_g)=\cM(\Sigma_g,1)$.  The formality of $\Sigma_g$ 
also implies the $1$-formality of $\Pi_g$.  As a consequence, the associated 
graded Lie algebra $\gr(\Pi_g;\k)$ is isomorphic to the holonomy Lie 
algebra $\fh(\Pi_g;\k)$. 
Using again the fact that $A$ is a Koszul algebra, we deduce from 
Corollary \ref{cor:holokoszul}  that 
$\prod_{k\geq 1}(1-t^k)^{\phi_k(\Pi_g)}=1-2gt+t^2$.  

We will consider here only orientable, closed Seifert manifolds 
with orientable base.   Every such manifold $M$ admits an effective circle action, 
with orbit space an orientable surface of genus $g\ge 0$, and finitely many 
exceptional orbits, encoded in pairs of coprime integers $(\alpha_1,\beta_1), 
\dots,  (\alpha_s,\beta_s)$ with $\alpha_j\ge 2$. The obstruction to trivializing 
the bundle $\eta\colon M\to \Sigma_g$ outside tubular neighborhoods of the 
exceptional orbits is given by an integer $b=b(\eta)$.  The 
group  $\pi_{\eta}:=\pi_1(M)$ has presentation 
with generators $x_1, y_1,\dots, x_g, y_g, z_1,\dots,z_s, h$
and relators $[x_1,y_1]\cdots[x_g,y_g]z_1\cdots z_s=h^{b}$ and 
$ z_i^{\alpha_i}h^{\beta_i}=1$ $(i=1,\dots,s)$, where $h$ is central.

For instance, if $s=0$, the corresponding manifold, $M_{g,b}$, 
is the $S^1$-bundle over $\Sigma_g$ with Euler number $b$. 
Let $\pi_{g,b}:=\pi_1(M_{g,b})$ be the fundamental group of this 
manifold.  If $b=0$, then $\pi_{g,0}=\Pi_g\times \Z$, whereas if 
$b=1$, then  
$\pi_{g,1}=\langle x_1, y_1,\dots, x_g, y_g, h\mid [x_1,y_1]\cdots[x_g,y_g]=h, 
h~{\rm central}\rangle$. 
In particular, $M_{1,1}$ is the Heisenberg $3$-dimensional nilmanifold 
and $\pi_{1,1}$ is the group from Example~\ref{ex:filt massey}.

\subsection{Minimal model}
\label{subsec:quad model}

As shown in \cite{Scott83}, the Euler number $e(\eta)$ of  the 
Seifert bundle $\eta \colon M\rightarrow \Sigma_g$ satisfies
$e(\eta)=-b(\eta)-\sum_{i=1}^{s}\beta_i/\alpha_i$.  
If $g=0$, the group $\pi_\eta$ 
has first Betti number $0$ or $1$, according to whether $e(\eta)$ is 
non-zero or $0$. Thus, $\pi_{\eta}$ is $1$-formal, and the Malcev 
Lie algebra $\fm(\pi_\eta;\k)$ is either $0$, or the completed free 
Lie algebra of rank $1$. To analyze the case when $g>0$, 
we will employ the minimal model of $M$, 
as constructed by Putinar in \cite{Putinar98} (see also \cite[\S8.8]{FHT2}).

 \begin{theorem}[\cite{Putinar98}]
\label{thm:seifertmodel}  
Let $\eta\colon M\to \Sigma_g$ be an orientable Seifert fibered space 
with $g>0$. The minimal model $\cM(M)$ is the Hirsch extension 
$\cM(\Sigma_g)\otimes_{\k} (\bigwedge(c),d)$, where the differential
is given by $d(c)=0$ if $e(\eta)=0$, and $d(c)\in \cM^2(\Sigma_g)$ 
represents a generator of $H^2(\Sigma_g;\k)$ if $e(\eta)\neq 0$.
\end{theorem}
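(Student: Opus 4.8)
The plan is to produce $\cM(M)$ as the relative Sullivan model of the Seifert fibration $\eta\colon M\to\Sigma_g$, exploiting the fact established in \S\ref{subsec:surf} that $\Sigma_g$ is formal with minimal model $\cM(\Sigma_g)$ generated in degree one. Since the generic fiber of $\eta$ is a circle carrying the single cohomology class dual to the central element $h$, Sullivan's theory of fibrations produces a relative model of the shape $\cM(\Sigma_g)\otimes(\bigwedge(c),d)$ in which $c$ sits in degree $1$, the differential restricts to that of $\cM(\Sigma_g)$ on the first factor, and $dc=\tau\in\cM^2(\Sigma_g)$ is the transgression of the fiber class. First I would check that this extension is automatically minimal: $\cM(\Sigma_g)$ is minimal and generated in degree $1$, so $\cM^2(\Sigma_g)=\bigwedge^2(\cM^1(\Sigma_g))$ consists entirely of decomposable elements, whence $\tau$ is decomposable no matter what it is. Together with Lemma \ref{lem:Hirschext}, this shows the extension is a genuine one-step Hirsch extension of a minimal \dga, so that $\cM(M)$ is generated in degree one.

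The core computation is to identify the transgression $\tau$. I would argue that $[\tau]\in H^2(\Sigma_g;\k)\cong\k$ is the rational Euler class of $\eta$, so that $[\tau]=e(\eta)\cdot\omega$ for $\omega$ a generator of $H^2(\Sigma_g;\k)$, with $e(\eta)$ given by Scott's formula \eqref{eq:seifertgamma}. Concretely, one reads off the cohomology of the candidate model from the short exact sequence of complexes $0\to\cM(\Sigma_g)\to\cM(\Sigma_g)\otimes\bigwedge(c)\to\cM(\Sigma_g)\,c\to 0$, whose connecting map is cup product with $[\tau]$; this is the Gysin sequence, and matching its output against $H^{\le 2}(\pi_\eta;\k)$ computed from the presentation of $\pi_\eta$ pins down $[\tau]$. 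The presentation shows that the relations $z_i^{\alpha_i}h^{\beta_i}=1$ force $z_i=-(\beta_i/\alpha_i)\,h$ in $H_1(M;\k)$, so the class of $h$ survives precisely when $b(\eta)+\sum_i\beta_i/\alpha_i=-e(\eta)$ vanishes; thus $b_1(M)=2g+1$ and $\tau=0$ when $e(\eta)=0$, while $b_1(M)=2g$ and $[\tau]\ne 0$ when $e(\eta)\ne 0$, exactly the claimed dichotomy. Since the relative model is minimal and comes equipped with a quasi-isomorphism to $A_{PL}(M)$, Theorem \ref{thm:mm} identifies it with $\cM(M)$.

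I expect the main obstacle to be the exceptional orbits, which prevent $\eta$ from being an honest circle bundle and hence block a naive application of the relative-model machinery. The plan here is to show that the exceptional fibers are rationally invisible: the relations $z_i^{\alpha_i}h^{\beta_i}=1$ make each $z_i$ a rational multiple of the central class $h$, so that after tensoring with $\k$ the Seifert fibration has the same transgression as a rational circle bundle over $\Sigma_g$ with Euler class $e(\eta)\,\omega$. Since for $g>0$ the manifold $M$ is aspherical, its rational homotopy type, and therefore $\cM(M)$, depends only on $\pi_\eta\otimes\k$; this lets me replace the genuine Seifert fibration by its rational circle-bundle avatar and run the transgression computation above. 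A secondary point requiring care is that when $e(\eta)\ne 0$ the space $M$ is generally non-formal (for instance $M_{1,1}$ is the non-formal Heisenberg nilmanifold of Example \ref{ex:filt massey}), so I cannot invoke the converse in Theorem \ref{thm:py}; instead the conclusion that $\cM(M)$ is generated in degree one must come, as above, from directly exhibiting the one-step Hirsch extension as the minimal model.
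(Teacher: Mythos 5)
The paper states this theorem as a citation to \cite{Putinar98} and supplies no proof of its own, so there is no internal argument to compare against; I will judge your outline on its own terms. Several ingredients are correct and well chosen: the observation that the extension is automatically minimal because $\cM(\Sigma_g)$ is generated in degree one (so every element of $\cM^2(\Sigma_g)$ is decomposable), the Gysin-type long exact sequence computing the cohomology of the candidate model, and the computation of $b_1(M)$ from the presentation of $\pi_{\eta}$, which correctly ties the dichotomy to the vanishing of $e(\eta)$ via Scott's formula. The overall shape of the argument --- a one-step degree-one Hirsch extension whose transgression is the rational Euler class --- is the right one, and you are also right that formality of $M$ cannot be invoked when $e(\eta)\neq 0$.

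The genuine gap is the step that produces a quasi-isomorphism from the candidate model to $A_{PL}(M)$. A Seifert fibration with exceptional orbits is not a fiber bundle, so Sullivan's relative model of a fibration does not apply as stated, and your fallback rests on a false principle: the rational homotopy type of an aspherical space is \emph{not} determined by $\pi_1\otimes\k$; the Malcev completion controls only the $1$-minimal model, whereas the theorem concerns the full minimal model, whose cohomology must also match $H^3(M;\k)$ and vanish above degree $3$. Even granting asphericity, replacing $M$ by the honest circle bundle $M_{g,\epsilon(\eta)}$ requires knowing that the classifying spaces of $\pi_{\eta}$ and $\pi_{g,\epsilon(\eta)}$ have quasi-isomorphic Sullivan models --- but that is precisely the content of \eqref{eq:normalize}, which the paper derives as a \emph{consequence} of this theorem, so the reduction is circular. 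Similarly, ``pinning down $\tau$ by matching against $H^{\le 2}(\pi_{\eta};\k)$'' presupposes that $\cM(M)$ has the stated form for \emph{some} $\tau$, which is exactly what is in question once the fibration machinery is unavailable. What is missing is an explicit map from $\cM(\Sigma_g)\otimes\bigwedge(c)$ to a concrete model of $M$: for instance, pull back forms from the base orbifold (whose rational cohomology agrees with that of $\Sigma_g$) and send $c$ to a connection $1$-form for the circle action whose curvature represents $e(\eta)$ times the fundamental class of the base. That construction, or an equivalent obstruction-theoretic one, is the substance of Putinar's proof and is the step your sketch elides.
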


More precisely, recall that $\Sigma_g$ is formal, and so there is a 
quasi-isomorphism $f\colon  \cM(\Sigma_g) \to (H^{\hdot}(\Sigma_g;\k),d=0)$.  
Thus, there is an element 
$a\in \mathcal{M}^2(M)$ such that $d(a)=0$ and $H(f)([a])\ne 0$ in  
$H^2(\Sigma_g;\k)=\k$.  We then set $d(c)=a$ in the second case. 

To each Seifert fibration $\eta\colon M\to \Sigma_g$ as above, let us associate the 
$S^1$-bundle $\bar{\eta}\colon M_{g,\epsilon(\eta)}\to \Sigma_g$, where 
$\epsilon(\eta)=0$ if $e(\eta)=0$, and $\epsilon(\eta)=1$ if $e(\eta)\neq 0$. 
For instance, $M_{0,0}=S^2\times S^1$ and $M_{0,1}=S^3$. 
The above theorem implies that $\cM(M) \cong \cM(M_{g,\epsilon(\eta)})$.
Hence, we have the following corollary.

\begin{corollary}
\label{cor:seifertMalcev} 
Let $\eta \colon M \rightarrow \Sigma_g$ be an orientable Seifert fibered space. 
The Malcev Lie algebra of the fundamental group $\pi_{\eta}=\pi_1(M)$ is given by 
$\fm(\pi_{\eta};\k)\cong \fm(\pi_{g,\epsilon(\eta)};\k)$. 
\end{corollary}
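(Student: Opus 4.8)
The plan is to deduce this corollary directly from the preceding structural results, treating it as a formal consequence of the minimal-model normalization in \eqref{eq:normalize} together with Sullivan's correspondence between $1$-minimal models and Malcev Lie algebras. The key input is Theorem \ref{thm:seifertmodel}, which, via the isomorphism \eqref{eq:normalize}, gives an isomorphism of (full) minimal models $\cM(M)\cong \cM(M_{g,\epsilon(\eta)})$. Since the Malcev Lie algebra of a finitely generated group depends only on its $1$-minimal model (by the discussion in \S\ref{subsec:sullivan-malcev} and the functorial isomorphism $\fm(G;\k)\cong \fL(G;\k)$ of Theorem \ref{thm:sullivan}), the task reduces to passing from the full minimal model to its degree-$1$ truncation.

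First I would observe that $M$ and $M_{g,\epsilon(\eta)}$ are both aspherical closed $3$-manifolds (for $g>0$), so that $\cM(M)$ and $\cM(M_{g,\epsilon(\eta)})$ are indeed the minimal models of the respective classifying spaces $K(\pi_\eta,1)$ and $K(\pi_{g,\epsilon(\eta)},1)$. Next I would extract the $1$-minimal models: since the minimal model $\cM(A)$ is built from the $1$-minimal model $\cM(A,1)$ by Hirsch extensions in degrees $\ge 2$ (as noted right after Theorem \ref{thm:mm}), an isomorphism of full minimal models restricts, upon taking the subalgebra generated in degree $1$, to an isomorphism of $1$-minimal models $\cM(M,1)\cong \cM(M_{g,\epsilon(\eta)},1)$. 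Dualizing under the correspondence $\cM\leadsto \fL(\cM)$ from \S\ref{subsec:sullivan-holonomy}, this yields an isomorphism of the associated pronilpotent Lie algebras $\fL(\pi_\eta;\k)\cong \fL(\pi_{g,\epsilon(\eta)};\k)$.

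Finally, I would invoke Theorem \ref{thm:sullivan} to identify each $\fL(\pi;\k)$ with the corresponding Malcev Lie algebra $\fm(\pi;\k)$ as complete, filtered Lie algebras, giving the desired isomorphism $\fm(\pi_\eta;\k)\cong \fm(\pi_{g,\epsilon(\eta)};\k)$. For completeness I would separately dispatch the genus-$0$ case: there $b_1(\pi_\eta)\in\{0,1\}$, so $\fm(\pi_\eta;\k)$ is either $0$ or the completed free Lie algebra of rank $1$, matching $\fm(\pi_{0,\epsilon(\eta)};\k)$ by the same Betti-number dichotomy, as already noted in \S\ref{subsec:quad model}.

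The main obstacle I anticipate is the passage from the full minimal model isomorphism to a \emph{canonical} $1$-minimal model isomorphism: one must verify that the degree-$1$ generators are genuinely intrinsic, i.e., that restricting an isomorphism $\cM(M)\cong\cM(M_{g,\epsilon(\eta)})$ to the subalgebra generated in degree $1$ is well-defined and again an isomorphism of minimal $\dga$s. This relies on the uniqueness clause of Theorem \ref{thm:mm} and the fact that the degree-$1$ part of the Hirsch filtration is preserved by $\dga$ isomorphisms; since $\cM(M)$ is free as a graded-commutative algebra, its degree-$1$ generating subspace $\cM^1$ is canonically determined, so an algebra isomorphism must carry $\cM^1(M)$ isomorphically onto $\cM^1(M_{g,\epsilon(\eta)})$ and thus restrict to the $1$-minimal models. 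Once this point is settled, the remainder is a routine application of the cited theorems.
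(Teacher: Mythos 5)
Your proposal is correct and follows essentially the same route as the paper: the genus-zero case is handled by the Betti-number dichotomy, and for $g>0$ the isomorphism $\cM(M)\cong\cM(M_{g,\epsilon(\eta)})$ of \eqref{eq:normalize} is combined with the Sullivan correspondence of Theorem \ref{thm:sullivan} to identify the Malcev Lie algebras. The only difference is that you spell out the (routine but worth noting) passage from the full minimal model isomorphism to the $1$-minimal model isomorphism, and you insert an unnecessary appeal to asphericity --- the $1$-minimal model of $M$ itself already determines $\fm(\pi_1(M);\k)$ for any connected CW-complex with finitely generated fundamental group, so no comparison with $K(\pi_\eta,1)$ is needed.
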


\begin{corollary}
\label{cor:seifertHirsch} 
Let $\eta\colon M\to \Sigma_g$ be an orientable Seifert fibered space with $g>0$.   
Then $M$ admits a minimal model with positive Hirsch weights. 
\end{corollary}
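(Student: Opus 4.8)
The statement to prove is Corollary~\ref{cor:seifertHirsch}: an orientable Seifert fibered space $\eta\colon M\to \Sigma_g$ with $g>0$ admits a minimal model with positive Hirsch weights.

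The plan is to exploit the explicit description of the minimal model from Theorem~\ref{thm:seifertmodel}, which exhibits $\cM(M)$ as a Hirsch extension $\cM(\Sigma_g)\otimes(\bigwedge(c),d)$. First I would equip the base model $\cM(\Sigma_g)$ with positive Hirsch weights. Since $\Sigma_g$ is formal (being a compact K\"ahler manifold) and its cohomology is Koszul, Theorem~\ref{thm:py} gives that $\cM(\Sigma_g)=\cM(\Sigma_g,1)$ is generated in degree one. Concretely, its degree-one generators $x_1,y_1,\dots,x_g,y_g$ dualize to $\fh(\Pi_g;\k)$ via Theorem~\ref{thm:model-holonomy}, and the single relation $\sum_i[x_i,y_i]=0$ from \eqref{grRiemann} is homogeneous of bracket-length $2$. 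Thus the differential on $\cM(\Sigma_g)$ is homogeneous with respect to the Hirsch weights that assign weight $1$ to each degree-one generator; equivalently, by Lemma~\ref{lem:positivegraded}, the dual Lie algebra $\fh(\Pi_g;\k)=\widehat{\gr}(\fh(\Pi_g;\k))$, which holds since $\Pi_g$ is graded-formal. This installs positive Hirsch weights on the base.

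Next I would extend these weights across the Hirsch extension by the single generator $c$. There are two cases from Theorem~\ref{thm:seifertmodel}. If $e(\eta)=0$, then $d(c)=0$, so I may assign $c$ any positive weight, say weight $1$, and the differential remains homogeneous trivially. If $e(\eta)\neq 0$, then $d(c)=a$ where $a\in\cM^2(\Sigma_g)$ represents a generator of $H^2(\Sigma_g;\k)$. The cohomology of $\Sigma_g$ in degree $2$ is spanned by the products $x_iy_i$, each of which has Hirsch weight $1+1=2$; so $a$ lives in weight $2$, and assigning $c$ weight $2$ makes $d(c)=a$ homogeneous with respect to the Hirsch weights. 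Since the differential is extended multiplicatively by the Leibniz rule, homogeneity on generators propagates to all of $\cM(M)$.

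The main point to verify carefully is that the representative $a$ can indeed be chosen to be Hirsch-homogeneous of weight $2$, rather than merely cohomologous to such an element. This is where I would invoke the explicit K\"ahler/Koszul structure of $\cM(\Sigma_g)$: because the model is generated in degree one and the multiplication respects Hirsch weights, the space $\cM^2(\Sigma_g)$ decomposes into weighted pieces, and a closed form representing a nonzero class in $H^2(\Sigma_g;\k)$ can be selected inside the weight-$2$ summand $\bigwedge^2(V_1)$. I expect this weight-purity of the closed representative $a$ to be the only real obstacle; once it is settled, the conclusion follows immediately from Lemma~\ref{lem:positivegraded} (or from the construction itself), since a minimal model generated in degree one whose differential is Hirsch-homogeneous is precisely one with positive Hirsch weights. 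In fact, the statement can also be read off from Corollary~\ref{cor:filtmin} together with the filtered-formality of $\pi_\eta$, but the direct weight-assignment argument above is cleaner and more self-contained.
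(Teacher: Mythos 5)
Your proof is correct and follows essentially the same route as the paper's: both start from Theorem~\ref{thm:seifertmodel}, put positive Hirsch weights on $\cM(\Sigma_g)$ using the formality of $\Sigma_g$ (the paper cites Corollary~\ref{cor:filtmin}; you unpack it via Lemma~\ref{lem:positivegraded} and the quadratic presentation of $\fh(\Pi_g;\k)$), and then assign $c$ weight $1$ or $2$ according to whether $e(\eta)$ vanishes. Your extra care in choosing the closed representative $a$ inside the weight-two piece $\bigwedge^2(V_1)$ is precisely what justifies the paper's ``clearly'' (compare Corollary~\ref{cor:seifertmodel}, where $d(c)=a_1\wedge b_1$).
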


\begin{proof}
We know from \S\ref{subsec:surf} that the minimal 
model $\cM(\Sigma_g)$ is formal, and generated in degree one 
(since $g>0$). 
By Theorem \ref{thm:filtmin}, $\cM(\Sigma_g)$ is isomorphic to  
a minimal model of $\Sigma_g$ with positive Hirsch weights;  
denote this model by  $\mathcal{H}(\Sigma_g)$.
By Theorem \ref{thm:seifertmodel} and Lemma \ref{lem:Hirschext}, 
the Hirsch extension $\mathcal{H}(\Sigma_g)\otimes_{\k} \bigwedge(c)$ 
is a minimal model for $M$,  generated in degree one.
Moreover, the weight of $c$ equals $1$ if $e(\eta)= 0$, and 
equals $2$ if $e(\eta)\neq 0$.
Clearly, the differential $d$ is homogeneous 
with respect to these weights, and this completes the proof.
\end{proof}
  
\begin{corollary}
\label{cor:ff seifert}
The fundamental groups of orientable Seifert manifolds are filtered-formal.
\end{corollary} 

\begin{proof}
The claim follows at once from Theorem \ref{thm:filtmin} and Corollary \ref{cor:seifertHirsch}.
Alternatively, the claim also follows from Theorem \ref{thm:seifertmalcev} 
and the definition of filtered-formality.
\end{proof}  
  
Using Theorem \ref{thm:seifertmodel} and Lemma \ref{lem:Hirschext} again,
we obtain a quadratic model for the Seifert manifold $M$ 
in the case when the base has positive genus.

\begin{corollary}
\label{cor:seifertmodel}
Suppose $g>0$.  Then $M$ has a quadratic model of the form 
$\big(H^{\hdot}(\Sigma_g;\k)\otimes \bigwedge(c), d\big)$, where $\deg(c)=1$  
and the differential $d$ is given by $d(a_i)=d(b_i)=0$ for $1\leq i \leq g$, 
$d(c)=0$ if $e(\eta)= 0$, and $d(c)=a_1\wedge b_1$ if $e(\eta)\neq 0$.
\end{corollary}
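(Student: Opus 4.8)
The plan is to deduce Corollary \ref{cor:seifertmodel} directly from the structural description of the minimal model provided by Theorem \ref{thm:seifertmodel}, combined with the formality of the base surface $\Sigma_g$ and the extension lemma for Hirsch extensions (Lemma \ref{lem:Hirschext}). The target object is the commutative differential graded algebra $\big(H^*(\Sigma_g;\k)\otimes \bigwedge(c), d\big)$, and I must show that it is a model for $M$, i.e., that it is weakly equivalent to $A_{PL}(M;\k)$. Since $\cM(M)$ is by definition the minimal model of $A_{PL}(M;\k)$, it suffices to exhibit a weak equivalence between the claimed \cdga~ and $\cM(M)$.

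First I would invoke the formality of $\Sigma_g$, established in \S\ref{subsec:surf}: since $\Sigma_g$ is a compact K\"ahler manifold, it is formal, so there is a quasi-isomorphism $f\colon \cM(\Sigma_g)\to (H^*(\Sigma_g;\k),d=0)$. By Theorem \ref{thm:seifertmodel}, the minimal model of $M$ is the Hirsch extension $\cM(\Sigma_g)\otimes(\bigwedge(c),d)$ with $d(c)$ either $0$ (when $e(\eta)=0$) or a cocycle $a\in\cM^2(\Sigma_g)$ representing a generator of $H^2(\Sigma_g;\k)$ (when $e(\eta)\neq 0$). The second key step is to apply Lemma \ref{lem:Hirschext} to the quasi-isomorphism $f$: one extends $f$ to a quasi-isomorphism of Hirsch extensions $\bar f\colon \cM(\Sigma_g)\otimes\bigwedge(c)\to \big(H^*(\Sigma_g;\k)\otimes\bigwedge(c),d'\big)$, where $d'(c)=f(d(c))$. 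When $e(\eta)=0$ this gives $d'(c)=f(0)=0$; when $e(\eta)\neq 0$, one has $d'(c)=f(a)$, which is a representative of a generator of $H^2(\Sigma_g;\k)$, and after normalizing the generators $a_i,b_i$ of $H^*(\Sigma_g;\k)$ this can be taken to be $a_1\wedge b_1$. Lemma \ref{lem:Hirschext} guarantees $\bar f$ is again a quasi-isomorphism, so the resulting \cdga~ is weakly equivalent to $\cM(M)$, hence a model for $M$. Since its underlying algebra is generated in degree $1$ with a degree $2$ relation inherited from $\bigwedge(a_1,\dots,b_g)/I$ and a quadratic differential, it is a quadratic model.

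The main obstacle I anticipate is the identification of the cohomology class $f(a)$ with the specific element $a_1\wedge b_1$ in a well-chosen basis. A priori, $f(a)$ is merely some nonzero class in $H^2(\Sigma_g;\k)\cong\k$, expressed as a multiple of the fundamental class, which in the standard presentation equals $a_i\wedge b_i$ for any single $i$ (these all coincide in $H^2$ modulo the ideal $I$). I would handle this by first scaling $c$ to absorb the nonzero scalar, so that $d'(c)$ becomes exactly a generator of $H^2(\Sigma_g;\k)$, and then using the relations $a_ib_i=a_jb_j$ in $A=H^*(\Sigma_g;\k)$ to write this generator as $a_1\wedge b_1$. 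A secondary point requiring a short verification is that $\big(H^*(\Sigma_g;\k)\otimes\bigwedge(c),d\big)$ genuinely deserves to be called a \emph{quadratic} model: its generators $a_1,b_1,\dots,a_g,b_g,c$ all sit in degree $1$, and the differential is homogeneous of degree $2$ in these generators, so one only needs to confirm that the defining relations of $H^*(\Sigma_g;\k)$ are quadratic --- which was already observed in \S\ref{subsec:surf}, where the generators of the ideal $I$ were shown to form a (quadratic) Gr\"obner basis. Assembling these observations yields the stated description of $d$ and completes the argument.
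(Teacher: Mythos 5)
Your proposal is correct and follows essentially the same route as the paper, which derives the corollary from Theorem \ref{thm:seifertmodel} together with Lemma \ref{lem:Hirschext} applied to the formality quasi-isomorphism $f\colon \cM(\Sigma_g)\to (H^*(\Sigma_g;\k),0)$. The extra care you take in rescaling $c$ and identifying $f(a)$ with $a_1\wedge b_1$ via the relations $a_ib_i=a_jb_j$ is a worthwhile elaboration of details the paper leaves implicit.
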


\subsection{Malcev Lie algebra}
\label{subsec:MalcevLieSeifert}

We give now an explicit presentation for the Malcev Lie algebra of $\pi_{\eta}$ 
as the degree completion of a certain graded Lie algebra. 

\begin{theorem}
\label{thm:seifertmalcev}
The Malcev Lie algebra of $\pi_{\eta}$ is the degree completion of the 
graded Lie algebra
\begin{equation}
\label{eq:seifertmalcev}
L(\pi_{\eta})=
\begin{cases}
\Lie(x_1,y_1,\dots,x_g,y_g,z )/\langle 
\sum_{i=1}^{g} [x_i,y_i]=0,\: z {~\rm central} \rangle 
 & \text{if $e(\eta)=0$};
 \\[2pt]
\Lie( x_1,y_1,\dots,x_g,y_g,w)/\langle 
\sum_{i=1}^{g} [x_i,y_i]=w,\: w {~\rm central} \rangle 
 & \text{if $e(\eta)\ne 0$},
\end{cases}
\end{equation}
where $\deg(w)=2$ and the other generators have degree $1$.
Moreover, $\gr(\pi_{\eta};\k)\cong L(\pi_{\eta})$. 
\end{theorem}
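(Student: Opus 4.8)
The plan is to leverage the machinery already assembled in the excerpt, reducing the statement to an application of Theorem \ref{thm:model-holonomy} together with the quadratic model produced in Corollary \ref{cor:seifertmodel}. First I would recall that by Corollary \ref{cor:seifertmodel}, when $g>0$ the Seifert manifold $M$ admits the quadratic model $B=\big(H^*(\Sigma_g;\k)\otimes \bigwedge(c),d\big)$, and since this is a model for $M$ it is in particular a $1$-model, so its $1$-minimal model is isomorphic to $\cM(\pi_{\eta};\k)$. By Theorem \ref{thm:sullivan} the dual Lie algebra $\fL(B)$ is isomorphic to $\fm(\pi_{\eta};\k)$, and by Theorem \ref{thm:model-holonomy} this equals the degree completion of the holonomy Lie algebra $\fh(B)$ of the cohomology algebra of $B$, provided we identify that cohomology correctly.

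The key computation is therefore to determine $\fh(A)$ where $A=H^*(B)$, and to check it agrees with $L(\pi_{\eta})$ from \eqref{eq:seifertmalcev}. I would split into the two cases. When $e(\eta)=0$, the differential vanishes, so $A=H^*(\Sigma_g;\k)\otimes \bigwedge(c)$ with $c$ a degree-$1$ exterior generator; dualizing the cup-product map and using Definition \ref{def:holo alg}, the relations are exactly $\sum_{i=1}^g[x_i,y_i]=0$ coming from the surface relation (as in \eqref{grRiemann}) together with the centrality of $z$, the dual of $c$, since $c$ multiplies trivially against everything modulo the surface relations. When $e(\eta)\neq 0$, one has $d(c)=a_1\wedge b_1$, which kills one of the top classes; here the cohomology $A=H^*(M_{g,1};\k)$ has a degree-$2$ class $w$ surviving, and the holonomy relations become $\sum_{i=1}^g[x_i,y_i]=w$ with $w$ central of degree $2$. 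In both cases I would verify the presentation of $\fh(A)$ directly against the cup-product structure of $A$, matching the generators and the single quadratic relation plus centrality.

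Once $\fm(\pi_{\eta};\k)\cong \widehat{L(\pi_{\eta})}$ is established, the second assertion $\gr(\pi_{\eta};\k)\cong L(\pi_{\eta})$ follows by passing to associated graded Lie algebras: Quillen's isomorphism \eqref{eq:quillen} gives $\gr(\pi_{\eta};\k)\cong \gr(\fm(\pi_{\eta};\k))$, and since $L(\pi_{\eta})$ is graded and generated in degree $1$, the completion-then-associated-graded operation recovers it via \eqref{eq:grfg alpha} and \eqref{eq:subLiecompletion}. Equivalently, Corollary \ref{cor:seifertHirsch} shows $M$ has a minimal model with positive Hirsch weights, so by Corollary \ref{cor:filtmin} the group $\pi_{\eta}$ is filtered-formal, whence $\fm(\pi_{\eta};\k)\cong\widehat{\gr}(\pi_{\eta};\k)$ and the graded statement is immediate once $\fh(A)\cong L(\pi_{\eta})$ is known (the holonomy and associated graded Lie algebras coincide because the quadratic model forces graded-formality).

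The main obstacle will be the explicit holonomy computation in the case $e(\eta)\neq 0$: one must carefully track how the nonzero differential $d(c)=a_1\wedge b_1$ alters the degree-$2$ cohomology of the model, ensuring that exactly the right top-dimensional surface class is eliminated while a new central degree-$2$ generator $w$ persists, and that the dual cup-product relation reads $\sum_{i=1}^g[x_i,y_i]=w$ rather than $=0$. This requires a clean identification of $H^2$ of the quadratic model $B$ and of its multiplication map $\mu_B\colon A^1\wedge A^1\to A^2$, together with the dualization conventions of \S\ref{subsec:hololie}. I would isolate this as a lemma-style verification, confirming that the weight of $w$ is forced to be $2$ by the Hirsch weight of $c$, consistent with Corollary \ref{cor:seifertHirsch}.
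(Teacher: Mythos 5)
Your plan for the case $e(\eta)=0$ is sound (there the model has zero differential and the group is $1$-formal, so Theorem \ref{thm:model-holonomy} applies; the paper instead just identifies $\fm(\pi_\eta;\k)$ with $\fm(\Pi_g\times\Z;\k)$ via Corollary \ref{cor:seifertMalcev} and quotes \eqref{grRiemann}). The case $e(\eta)\neq 0$, however, contains a genuine gap. Theorem \ref{thm:model-holonomy} is a statement about a \emph{cga with zero differential}: it identifies $\fL(\cM(A,1))$ with $\widehat{\fh}(A)$. Applying it to $A=H^*(B)\cong H^*(M;\k)$ does not compute $\fm(\pi_\eta;\k)$ unless $\cM(B,1)\cong\cM(H^*(B),1)$, i.e.\ unless $\pi_\eta$ is $1$-formal --- and by Corollary \ref{cor:seifertgraded} it is not even graded-formal when $g>0$ and $e(\eta)\neq 0$. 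Worse, if you did carry out the holonomy computation you describe, you would get the wrong answer: when $e(\eta)\neq 0$ the cup product $H^1(M;\k)\wedge H^1(M;\k)\to H^2(M;\k)$ vanishes identically (the fundamental class of $\Sigma_g$ dies under pullback), so $\fh(H^*(M;\k))=\Lie(2g)$ is free (this is exactly Theorem \ref{thm:seifertholo}), and its completion is not $\widehat{L(\pi_\eta)}$. Your proposed relation ``$\sum_i[x_i,y_i]=w$ with $w$ central of degree $2$'' cannot arise from Definition \ref{def:holo alg} at all, since the holonomy Lie algebra of a cga is by construction quadratic and generated in degree $1$. Relatedly, your parenthetical claim that ``the quadratic model forces graded-formality'' is false for the same reason.

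What is actually needed in the case $e(\eta)\neq 0$ is the \emph{dga} version of the holonomy Lie algebra from Remark \ref{rem:bezpap}, namely $\fh(B,d)=\Lie((B^1)^*)/\langle\im((d^1)^*+\mu_B^*)\rangle$, applied to the quadratic model $B$ of Corollary \ref{cor:seifertmodel} \emph{with its nonzero differential} $d(c)=a_1\wedge b_1$; the nontrivial $(d^1)^*$ is precisely what converts the surface relation into $\sum_i[x_i,y_i]=w$ and what places the extra generator in weight $2$ rather than weight $1$. This is the route the paper takes, citing the theorems of Bibby--Hilburn and Berceanu et al.\ to pass from the quadratic model to the completed Lie algebra. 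Your derivation of the final assertion $\gr(\pi_\eta;\k)\cong L(\pi_\eta)$ from Quillen's isomorphism \eqref{eq:quillen} and \eqref{eq:subLiecompletion} is fine once the first assertion is in place, and agrees with the paper.
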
 

\begin{proof}
The case $g=0$ was already dealt with, so assume $g>0$. 
There are two cases to consider.  

If $e(\eta)=0$, Corollary \ref{cor:seifertMalcev} 
says that $\fm(\pi_{\eta};\k)$ is isomorphic to the Malcev Lie algebra of 
$\pi_{g,0}=\Pi_g\times \Z$, which is a $1$-formal group. 
Furthermore, we know that $\gr(\Pi_g;\k)$ is the 
quotient of  $\Lie(2g )$ by the ideal generated by 
$\sum_{i=1}^{g} [x_i,y_i]$.
Hence, $\fm(\pi_{\eta};\k)$ is isomorphic to the degree completion of 
$\gr(\Pi_g\times \Z)=\gr(\Pi_g;\k)\times \gr (\Z;\k)$, 
which is precisely the Lie algebra $L(\pi_{\eta})$ from 
\eqref{eq:seifertmalcev}.

If $e(\eta)\neq 0$, Corollary \ref{cor:seifertmodel} provides a quadratic 
model for our Seifert manifold. Taking the Lie algebra dual to this quadratic 
model and using \cite[Thm.~4.3.6]{Bibby-H14} or \cite[Thm.~3.1]{BMPP}, 
we infer that $\fm(\pi_{\eta})$ is isomorphic to 
the degree completion of the graded Lie algebra $L(\pi_{\eta})$.  
Furthermore, by formula \eqref{eq:quillen}, there is an 
isomorphism $\gr(\fm(\pi_{\eta};\k))\cong \gr(\pi_{\eta};\k)$.   
This completes the proof.
\end{proof}
 
In follow-up work \cite{SW-holo}, we give a presentation for the holonomy 
Lie algebra of an orientable Seifert manifold group, and derive the following result.

\begin{prop}
\label{cor:seifertgraded}
If $g=0$, the group $\pi_{\eta}$ is always $1$-formal,  
while if $g>0$, the group $\pi_{\eta}$ is graded-formal if and only if $e(\eta)= 0$. 
\end{prop} 

\begin{ack}
We wish to thank Yves Cornulier, \c{S}tefan Papadima, 
and Richard Porter for several useful comments regarding this work.
\end{ack}

\newcommand{\arxiv}[1]
{\texttt{\href{http://arxiv.org/abs/#1}{arXiv:#1}}}
\newcommand{\arx}[1]
{\texttt{\href{http://arxiv.org/abs/#1}{arxiv:}}
\texttt{\href{http://arxiv.org/abs/#1}{#1}}}
\newcommand{\doi}[1]
{\texttt{\href{http://dx.doi.org/#1}{doi:#1}}}
\renewcommand*\MR[1]{%
\StrBefore{#1 }{ }[\temp]%
\href{http://www.ams.org/mathscinet-getitem?mr=\temp}{MR#1}}

\def\cprime{$'$}

\bibliographystyle{amsplain}

\end{document}